\newcommand{\noun}[1]{\textsc{#1}}
\numberwithin{equation}{section}
\numberwithin{figure}{section}
  \theoremstyle{plain}
  \newtheorem*{thm*}{\protect\theoremname}
  \theoremstyle{plain}
  \newtheorem*{cor*}{\protect\corollaryname}
\theoremstyle{plain}
\newtheorem{thm}{\protect\theoremname}[section]
  \theoremstyle{definition}
  \newtheorem{defn}[thm]{\protect\definitionname}
  \theoremstyle{remark}
  \newtheorem*{rem*}{\protect\remarkname}
  \theoremstyle{plain}
  \newtheorem{lem}[thm]{\protect\lemmaname}
  \theoremstyle{plain}
  \newtheorem{cor}[thm]{\protect\corollaryname}
  \theoremstyle{plain}
  \newtheorem{prop}[thm]{\protect\propositionname}
  \theoremstyle{definition}
  \newtheorem*{example*}{\protect\examplename}
  \providecommand{\corollaryname}{Corollary}
  \providecommand{\definitionname}{Definition}
  \providecommand{\examplename}{Example}
  \providecommand{\lemmaname}{Lemma}
  \providecommand{\propositionname}{Proposition}
  \providecommand{\remarkname}{Remark}
  \providecommand{\theoremname}{Theorem}
\providecommand{\theoremname}{Theorem}
\begin{document}

\title[Transport maps, non-branching and measure rigidity ]{Transport maps, non-branching sets of geodesics and measure rigidity
}

\author{Martin Kell}

\thanks{\\Fachbereich Mathematik, Universität Tübingen, Germany \\ \texttt{e-mail: martin.kell@math.uni-tuebingen.de}}
\begin{abstract}
In this paper we investigate the relationship between a general existence
of transport maps of optimal couplings with absolutely continuous
first marginal and the property of the background measure called essentially
non-branching introduced by Rajala\textendash Sturm (\emph{Calc.Var.PDE}
2014). In particular, it is shown that the qualitative non-degenericity
condition introduced by Cavalletti\textendash Huesmann (\emph{Ann.
Inst. H. Poincar}é\emph{ Anal. Non Linéaire} 2015) implies that any
essentially non-branching metric measure space has a unique transport
maps whenever initial measure is absolutely continuous. This generalizes
a recently obtained result by Cavalletti\textendash Mondino (\emph{Commun.
Contemp. Math.} 2017) on essentially non-branching spaces with the
measure contraction condition $\mathsf{MCP}(K,N)$. 

In the end we prove a measure rigidity result showing that any two
essentially non-branching, qualitatively non-degenerate measures on
a fixed metric spaces must be mutually absolutely continuous. This
result was obtained under stronger conditions by Cavalletti\textendash Mondino
(\emph{Adv.Math.} 2016). It applies, in particular, to metric measure
spaces with generalized finite dimensional Ricci curvature bounded
from below.
\end{abstract}

\maketitle
\hypersetup{linkcolor=black}\tableofcontents{}\hypersetup{linkcolor=red}

\global\long\def\m{\mathsf{m}}
\global\long\def\GTB{\mathsf{(GTB)}}
\global\long\def\sfd{\mathsf{d}}
\global\long\def\restr{\mathsf{restr}}
\global\long\def\Geo{\mathsf{Geo}}
\global\long\def\OptGeo{\mathsf{OptGeo}}
\global\long\def\Opt{\mathsf{Opt}}
\global\long\def\supp{\operatorname{supp}}
\global\long\def\CD{\mathsf{CD}}
\global\long\def\MCP{\mathsf{MCP}}
\global\long\def\RCD{\mathsf{RCD}}
\global\long\def\ND{\mathsf{(ND)}}
\global\long\def\sND{\mathsf{(sND)}}
\global\long\def\IP{\mathsf{(IP)}}
\global\long\def\sIP{\mathsf{(sIP)}}
\global\long\def\cl{\operatorname{cl}}
\global\long\def\interior{\operatorname{int}}

\section{Introduction}

In the theory of optimal transport one of the first questions one
asks is whether an optimal coupling $\pi_{\operatorname{opt}}$ that
minimizes the functional 
\[
\pi\mapsto\int c(x,y)d\pi(x,y)
\]
among all $\pi$ with fixed marginals $(p_{1})_{*}\pi=\mu$ and $(p_{2})_{*}\pi=\nu$
can be written as a coupling induced by a transport map, i.e. whether
there is a measurable map $T:M\to M$ such that $\pi=(\operatorname{id}\times T)_{*}\mu$.
On sufficiently nice spaces this result can be deduced from Rademacher's
Theorem, the (weak) differential structure and the exponential map
whenever $\mu$ is absolutely continuous, see Brenier \cite{Brenier1991}
in the Euclidean setting, McCann \cite{McCann2001} on Riemannian
manifolds, Ambrosio\textendash Rigot \cite{AR2004Heisenberg} and
Figalli\textendash Rigot \cite{FR2010subRiem} on sub-class of sub-Riemannian
manifolds and also Bertrant \cite{Bertrand2008} on Alexandrov spaces.

A first proof for non-smooth non-branching spaces with generalized
Ricci curvature bounded from below was obtained by Gigli \cite{Gigli2012}.
He showed that the non-existence of a transport map would imply that
two disjoint parts of a Wasserstein geodesic whose initial densities
with respect to the background measure $\m$ are bounded would overlap
at intermediate times. This, however, cannot happen for non-branching
spaces (compare Lemma \ref{lem:cyc-mon-intersect-implies-same-length}
below).

Consequently, Gigli's idea was adapted to essentially non-branching
spaces with generalized Ricci curvature bounded from below (see \cite{RS2014NonbranchStrongCD,GRS2016,CM2016TransMapsMCP}).
Here essentially non-branching is a weak version of the non-overlapping
property described above, i.e. it prohibits that initially disjoint
parts of a Wasserstein geodesic overlap at intermediate times whenever
the initial and final measures are absolutely continuous. 

Both Gigli\textendash Rajala\textendash Sturm \cite{GRS2016} and
Cavalletti\textendash Mondino \cite{CM2016TransMapsMCP} had to prove
that there are absolutely continuous interpolations along which a
corresponding interpolation inequality holds. 

Rather than using density bounds of intermediate measures, Cavalletti\textendash Huesmann
\cite{CH2015NBTrans} showed that if not too much mass is lost in
a uniform way when transported towards a fixed point (compare Definition
\ref{def:qual-non-deg}), then, together with the non-overlapping
property implied by non-branching property, the non-existence of transport
maps would yield a contradiction.

Whereas the interpolation inequality implied density bounds, Cavalletti\textendash Huesmann's
approach only relied on an ``easier to measure'' quantity. In this
paper we want to combine this approach with the one of Cavalletti\textendash Mondino
\cite{CM2016TransMapsMCP}. The difficulty is that an arbitrary interpolation
might only ``see'' part of the interpolation points. We avoid this
by proving the following:
\begin{itemize}[label=--]
\item at finitely many fixed times $\{t_{i}\}_{i=1}^{n}$ there are absolutely
continuous interpolations $\mu_{t_{n}}\ll\m$ (see first part of Theorem
\ref{thm:abs-cts-interpolation} and Corollary \ref{cor:abs-cts-interpolation})
\item at a fixed time $t$ the interpolation $\mu_{t}$ ``sees'' $\m$-almost
every possible interpolation point $\Gamma_{t}$ (see second part
of Theorem \ref{thm:abs-cts-interpolation})
\end{itemize}
Whereas the first part gives us the non-overlapping if the space is
essentially non-branching, the second part makes sure that the set-wise
interpolation $\Gamma_{t}$ does not contain a set of positive $\m$-measure
that is not seen by the interpolation $\mu_{t}$.

The remaining parts follow along the line of Cavalletti\textendash Huesmann
\cite{CH2015NBTrans}, more precisely, if the background measure is
\emph{qualitatively non-degenerate}, i.e. there is a function $f:(0,1)\to(0,\infty)$
with $\limsup_{t\to0}f(t)>\frac{1}{2}$ such that for all Borel set
$A$ and all $x\in X$ it holds $\m(A_{t,x})\ge f(t)\m(A)$ where
$A_{t,x}=\{\gamma_{t}\,|\,\gamma_{0}\in A,\gamma_{1}=x\}$, then
\begin{itemize}[label=--]
\item every optimal coupling between $\mu_{0}\ll\m$ and $\mu_{1}=\sum\lambda_{i}\delta_{x_{i}}$
is induced by a transport map (Lemma \ref{lem:no-overlap-non-deg}
and Corollary \ref{cor:maps-between-abs-to-discrete})
\item any $c_{p}$-cyclically monotone set $\Gamma$ satisfies a qualitative
non-degenericity condition, i.e. $\m(\Gamma_{t})\ge f(t)\m(\Gamma_{0})$
(Lemma \ref{lem:p-ess-nb-implies-p-str-non-deg})
\item every optimal coupling between $\mu_{0}\ll\m$ and $\mu_{1}$ is induced
by a transport map (Theorem \ref{thm:MCPimpliesGTB}).
\end{itemize}
As it turns out the general existence of transport maps implies not
only uniqueness of the optimal coupling, but also unique geodesics
between almost all points coupled via the optimal transport, see Lemma
\ref{lem:GTB-superdiff}. This in return gives uniqueness of the interpolation
measures. Furthermore, only assuming a priori the existence of transport
maps from an absolutely continuous initial measure implies that the
space must be essentially non-branching (Proposition \ref{prop:GTB-ENB}).
We call spaces with such an \emph{a priori existence of transport
maps} spaces having \emph{good transport behavior} $\GTB_{p}$, see
Definition \ref{def:GTB}.

We may summarize one of the results of this note as follows.
\begin{thm*}
[see Proposition \ref{prop:GTB-ENB} and Theorem \ref{thm:MCPimpliesGTB}]Let
$(M,d,\m)$ be a metric measure space and assume $\m$ is (uniformly)
qualitatively non-degenerate then the following properties are equivalent:
\begin{enumerate}[label=(\roman*)]
\item $(M,d,\m)$ is $p$-essentially non-branching, i.e. any $p$-optimal
dynamical coupling $\pi\in\mathcal{P}(\Geo_{[0,1]}(M,d))$ with $(e_{0})_{*}\sigma,(e_{1})_{*}\sigma\ll\m$
is concentrated on a set of non-branching geodesics. 
\item for every $\mu_{0},\mu_{1}\in\mathcal{P}_{p}(M)$ with $\mu_{0}\ll\m$
there is a unique $p$-optimal dynamical coupling $\sigma\in\mathcal{P}_{p}(\Geo_{[0,1]}(M,d))$
between $\mu_{0}$ and $\mu_{1}$ and for this coupling $\sigma$
the $p$-optimal coupling $(e_{0},e_{1})_{*}\sigma$ is induced by
a transport map and every interpolation $\mu_{t}=(e_{t})_{*}\sigma$,
$t\in[0,1)$, is absolutely continuous, i.e. $\mu_{t}\ll\m$.
\end{enumerate}
\end{thm*}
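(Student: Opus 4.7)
The plan is to prove the two directions separately.

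For $(ii) \Rightarrow (i)$: Assume (ii) and suppose for contradiction that some $p$-optimal dynamical coupling $\sigma$ with $(e_0)_*\sigma,(e_1)_*\sigma \ll \m$ gives positive mass to branching geodesics. Then there exist a time $t \in (0,1)$ and disjoint Borel sets $G_1, G_2 \subset \supp \sigma$ of positive $\sigma$-measure whose geodesics coincide on $[0,t]$ but have disjoint evaluations at $1$. The restriction of $\sigma$ to $[t,1]$ (suitably reparametrized) is a $p$-optimal dynamical coupling from $\mu_t = (e_t)_*\sigma$ to $\mu_1$. By hypothesis (ii) applied at the initial time $0$, we have $\mu_t \ll \m$; (ii) applied with $\mu_t$ as new initial measure then forces this restricted coupling to be induced by a transport map. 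But the branching configuration produces a point in $\supp\mu_t$ from which two distinct geodesics start and end in disjoint subsets of $\supp\mu_1$, contradicting the transport-map property.

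For $(i) \Rightarrow (ii)$: I would follow the roadmap laid out in the introduction. First, use Theorem \ref{thm:abs-cts-interpolation} together with Corollary \ref{cor:abs-cts-interpolation} to secure an absolutely continuous interpolation $\mu_t \ll \m$ at any finite collection of prescribed times, exploiting qualitative non-degeneracy to push mass bounds from the endpoints inward. Second, combine this with $p$-essential non-branching via Lemma \ref{lem:cyc-mon-intersect-implies-same-length} to deduce a no-overlap property: disjoint $c_p$-cyclically monotone restrictions with absolutely continuous initial pushforwards cannot have intermediate evaluations overlapping on a set of positive $\m$-measure. Third, apply Lemma \ref{lem:no-overlap-non-deg} to reach Corollary \ref{cor:maps-between-abs-to-discrete}, i.e.\ the existence of transport maps whenever the target is a finitely supported discrete measure.

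To treat a general target $\mu_1 \in \mathcal{P}_p(M)$, I would approximate $\mu_1$ by finitely supported $\mu_1^n$ with converging $p$-moments, obtain transport maps for the couplings $(\mu_0,\mu_1^n)$, and pass to the limit. Lemma \ref{lem:p-ess-nb-implies-p-str-non-deg} supplies the uniform qualitative non-degeneracy $\m(\Gamma_t) \ge f(t)\m(\Gamma_0)$ for $c_p$-cyclically monotone sets, which is precisely the uniform control needed to keep the limiting coupling induced by a transport map and the intermediate measures absolutely continuous. This establishes good transport behavior $\GTB_p$, after which Lemma \ref{lem:GTB-superdiff} upgrades the conclusion to uniqueness of the optimal dynamical coupling and absolute continuity of $\mu_t$ for every $t \in [0,1)$.

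The main obstacle will be this third step, where passage to the limit must preserve both the transport-map structure and the absolute continuity of all interpolations simultaneously. The key technical ingredient making this work is the uniform non-degeneracy estimate in Lemma \ref{lem:p-ess-nb-implies-p-str-non-deg}, which in turn rests on the second part of Theorem \ref{thm:abs-cts-interpolation}: the guarantee that the interpolation $\mu_t$ sees $\m$-almost every point of the set-wise evaluation $\Gamma_t$, so that no hidden mass escapes detection along the approximation.
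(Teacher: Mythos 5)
There is a genuine gap in your third step for $(i)\Rightarrow(ii)$. Approximating $\mu_{1}$ by finitely supported $\mu_{1}^{n}$ and ``passing to the limit'' in the transport maps is not how the paper proceeds, and it is exactly the point where the argument breaks: weak limits of the couplings $(\operatorname{id}\times T_{n})_{*}\mu_{0}$ are in general only plans, not maps, and the uniform estimate $\m(\Gamma_{t})\ge f(t)\m(\Gamma_{0})$ does not by itself prevent the limit plan from splitting mass. You assert that this estimate ``is precisely the uniform control needed to keep the limiting coupling induced by a transport map,'' but you give no mechanism for converting the estimate into convergence (or even subsequential rigidity) of the maps $T_{n}$. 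The paper never takes a limit of maps. Theorem \ref{thm:MCPimpliesGTB} argues by contradiction directly on a general target: if the coupling were not a map, the Selection Dichotomy (Theorem \ref{thm:selection-dichotomy-set}) produces a compact $K$ of positive measure and two selections $T_{1},T_{2}$ with disjoint images; the GKS-construction (Corollary \ref{cor:abs-cts-interpolation-under-non-deg}) yields \emph{maximal} absolutely continuous interpolations $\mu_{t}^{(i)}$ with $\m\big|_{\tilde{\Gamma}_{t}^{(i)}}\ll\mu_{t}^{(i)}$; essential non-branching (Theorem \ref{thm:ess-nb-summary}(iii)) forces $\mu_{t}^{(1)}\perp\mu_{t}^{(2)}$, hence $\m(\tilde{\Gamma}_{t}^{(1)}\cap\tilde{\Gamma}_{t}^{(2)})=0$; and Lemma \ref{lem:p-ess-nb-implies-p-str-non-deg} gives $\m(\tilde{\Gamma}_{t}^{(i)})\ge f(t)\m(K)>(\tfrac12+\epsilon)\m(K)$ for small $t$, contradicting $\tilde{\Gamma}_{t}^{(1)}\cup\tilde{\Gamma}_{t}^{(2)}\subset K_{\delta}$. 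The finite-target approximation you have in mind lives only \emph{inside} Lemma \ref{lem:p-ess-nb-implies-p-str-non-deg}, where it is used to prove an inequality between measures of sets (via $\Gamma_{t}^{(n)}\subset(\Gamma_{t})_{\epsilon}$); no map has to converge there. The maximality clause of Theorem \ref{thm:abs-cts-interpolation}, which you correctly flag as essential, is what lets one pass from mutual singularity of the measures $\mu_{t}^{(i)}$ to $\m$-a.e.\ disjointness of the sets $\tilde{\Gamma}_{t}^{(i)}$.

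Your $(ii)\Rightarrow(i)$ direction also has a smaller but real gap: the negation of ``concentrated on a non-branching set'' says only that every full-measure set contains a branching \emph{pair}; producing from this two disjoint Borel sets of positive $\sigma$-measure whose geodesics coincide on $[0,t]$ with a compatible projection to $\mu_{t}$ requires a selection/disintegration argument (a single branching point never contradicts a transport map, which is defined only up to null sets). The paper's route is different and cleaner: it works entirely with the $c_{p}$-superdifferential, showing via Lemma \ref{lem:from-mid-to-superdiff} that two selected geodesics meeting at an interior time must have $(\gamma_{0},\eta_{1}),(\eta_{0},\gamma_{1})\in\partial^{c_{p}}\varphi$, whence $\gamma_{1}=\eta_{1}$ by a.e.\ single-valuedness (Lemma \ref{lem:GTB-superdiff}), and then concludes by a concatenation argument (Proposition \ref{prop:GTB-ENB}).
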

It is well-known that on smooth spaces, there is an abundance of measures
satisfying all but the last property of the second statement in the
theorem above, in particular, they have the good transport behavior
$\GTB_{p}$. 

If interpolations We call the last property of the second statement
in the theorem above \emph{strong interpolation property} $\sIP_{p}$.
Note that two reference measures $\m_{1}$ and $\m_{2}$ have the
strong interpolation property and $\mu_{0}\ll\m_{1}$ and $\mu_{1}\ll\m_{2}$
then the interpolation $(e_{t})_{*}\sigma$ must be absolutely continuous
with respect to both $\m_{1}$ and $\m_{2}$ implying $\m_{1}$ and
$\m_{2}$ cannot be mutually singular. This property can be used to
obtain the following measure rigidity theorem. 
\begin{thm*}
[Measure Rigidity] Any two measures on a complete separable metric
space $(M,d)$ which are $p$-essentially non-branching and qualitatively
non-degenerate must be mutually absolutely continuous.
\end{thm*}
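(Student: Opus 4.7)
By the main theorem above (Proposition~\ref{prop:GTB-ENB} and Theorem~\ref{thm:MCPimpliesGTB}), each of $\m_1$ and $\m_2$ enjoys the strong interpolation property $\sIP_p$. The first step is the two-sided interpolation observation flagged just before the statement: given probability measures $\mu_0 \ll \m_1$ and $\mu_1 \ll \m_2$ of finite $p$-moments, $\sIP_p$ for $\m_1$ produces a unique $p$-optimal dynamical coupling $\sigma$ with $\mu_t := (e_t)_*\sigma \ll \m_1$ for $t \in [0,1)$, and the uniqueness clause of $\sIP_p$ for $\m_2$ applied to the reversed problem (initial $\mu_1 \ll \m_2$) forces the time-reversal of $\sigma$ to be the unique $p$-optimal dynamical coupling from $\mu_1$ to $\mu_0$; hence $\mu_t \ll \m_2$ for $t \in (0,1]$. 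Thus for every $t \in (0,1)$, $\mu_t$ is absolutely continuous with respect to both $\m_1$ and $\m_2$.

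Next, suppose for contradiction that $\m_1 \not\ll \m_2$. By inner regularity on the Polish space $(M,d)$, pick a bounded Borel set $A \subset M$ with $0 < \m_1(A) < \infty$ and $\m_2(A) = 0$, and set $\mu_0 := \frac{1}{\m_1(A)}\,\m_1\llcorner A$; let $\mu_1 \ll \m_2$ be any probability measure of bounded support. The two-sided observation then yields $\mu_t \ll \m_2$ for $t \in (0,1]$, so $\mu_t(A) = 0$ throughout $(0,1]$ while $\mu_0(A) = 1$.

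To reach a contradiction from $\mu_t(A) \equiv 0$, $\mu_0(A) = 1$, and the $W_p$-continuity $\mu_t \to \mu_0$, I would invoke the qualitative non-degeneracy of $\m_1$ through Lemma~\ref{lem:p-ess-nb-implies-p-str-non-deg}, which bounds $\m_1(\Gamma_t) \ge f(t)\m_1(\Gamma_0)$ on the cyclically monotone support $\Gamma = \supp\sigma$ (with $\Gamma_0 \subseteq A$). Along a sequence $t_n \to 0$ with $f(t_n) > \tfrac{1}{2}$, this estimate is meant to deliver equiintegrability of the densities $\rho_{t_n} := d\mu_{t_n}/d\m_1$ in $L^1(\m_1)$; any weak $L^1$-subsequential limit $\rho_\infty$ satisfies $\rho_\infty\,\m_1 = \mu_0$ by $W_p$-convergence, so $\rho_\infty = \m_1(A)^{-1}\,1_A$ $\m_1$-a.e.\ and $\int_A \rho_\infty\,d\m_1 = 1$. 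On the other hand, testing the weak $L^1$ convergence against $1_A \in L^\infty(\m_1)$ (valid since $\m_1(A) < \infty$) gives $\int_A \rho_\infty\,d\m_1 = \lim_n \mu_{t_n}(A) = 0$, the desired contradiction. The symmetric argument rules out $\m_2 \not\ll \m_1$, yielding mutual absolute continuity. The principal obstacle is precisely upgrading the set-level estimate of Lemma~\ref{lem:p-ess-nb-implies-p-str-non-deg} to equiintegrability of $\rho_{t_n}$: this is the subtle point beyond the hint's simpler ``not mutually singular'' observation, and is where the full interplay between qualitative non-degeneracy and the uniqueness supplied by $\sIP_p$ must do the work.
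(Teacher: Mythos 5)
Your overall architecture is sound, and the two-sided interpolation step is correct (in fact you do not even need the uniqueness clause of $\sIP_{p}$: the property applies to \emph{every} $p$-optimal dynamical coupling, in particular to the time-reversal of $\sigma$, which is automatically $p$-optimal from $\mu_{1}\ll\m_{2}$). The genuine gap is exactly the step you flag yourself and then leave open: Lemma \ref{lem:p-ess-nb-implies-p-str-non-deg} only controls the $\m_{1}$-measure of the full midpoint set, $\m_{1}(\Gamma_{t})\ge f(t)\m_{1}(\Gamma_{0})$, and says nothing about how the mass of $\mu_{t}$ is distributed over $\Gamma_{t}$; the interpolation could a priori concentrate on a tiny portion of $\Gamma_{t}$, so no equiintegrability of the densities $\rho_{t_{n}}$ can be extracted from that set-level estimate, and your final weak-$L^{1}$ limit argument does not close. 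The tool that does close it is already in the paper: Proposition \ref{prop:qND-ess-nb-implies-bdd-dens} upgrades qualitative non-degenericity plus $p$-essential non-branching to the pointwise bound $\rho_{t}(\gamma_{t})\le f_{R,x_{0}}(t)^{-1}\rho_{0}(\gamma_{0})$, hence $\|\rho_{t}\|_{\infty}\le f_{R,x_{0}}(t)^{-1}\m_{1}(A)^{-1}$ for your choice of $\mu_{0}$; picking $t_{n}\to0$ with $f_{R,x_{0}}(t_{n})>\tfrac{1}{2}$ gives a uniform $L^{\infty}(\m_{1})$ bound on a fixed bounded set, after which either your Dunford--Pettis argument or, more simply, the Self-Intersection Lemma used in Theorem \ref{thm:measure-rigidity-ess-nb-CD} produces the contradiction $\mu_{t_{n}}(A)=0$ versus $\mu_{0}(A)=1$. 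Without citing this density bound (or reproving it), the proposal is incomplete at its decisive step.

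It is also worth noting that the paper's own proof of Theorem \ref{thm:measure-rigidity-qual-non-deg} takes a different route: it first rules out mutually singular pairs by the two-sided interpolation observation (as you do), then writes $\m_{1}=\m+\m_{1}^{s}$, $\m_{2}=f\m+\m_{2}^{s}$ and shows, using the proposition on the sets $R_{t}(x)$ together with $\sIP_{p}$, that $\m_{2}(A_{t,x})=0$ whenever $\m_{2}(A)=0$ and $\m_{1}^{s}(A)>0$; hence the singular part $\m_{1}^{s}$ inherits qualitative non-degenericity while being mutually singular to $\m_{2}$, contradicting the first step. Your outline, once the density bound is supplied, is instead a variant of the paper's second rigidity proof (Theorem \ref{thm:measure-rigidity-ess-nb-CD} and the remark following it), which is a perfectly legitimate alternative -- but as written it rests on an unproved equiintegrability claim attributed to a lemma that cannot deliver it.
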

Under a more involved inversion property and a stronger qualitative
non-degenericity with $\limsup_{t\to0}f(t)=1$ such a statement was
obtained by Cavalletti\textendash Mondino \cite{CM2016MeasRigid}. 

By \cite{RS2014NonbranchStrongCD,AGS2014RCD} the statement applies
in particular to spaces with finite dimensional Ricci curvature bounded
from below.
\begin{cor*}
If both $(M,d,\m_{1})$ and $(M,d,\m_{2})$ are $\RCD^{*}(K,N)$-spaces
with $N\in[1,\infty)$ then $\m_{1}$ and $\m_{2}$ must be mutually
absolutely continuous. 
\end{cor*}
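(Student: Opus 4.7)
The plan is to apply the Measure Rigidity Theorem above, with $p=2$, to the pair $(\m_{1},\m_{2})$. It therefore suffices to verify that every $\RCD^{*}(K,N)$-space with $N\in[1,\infty)$ is both $2$-essentially non-branching and qualitatively non-degenerate in the sense of Definition~\ref{def:qual-non-deg}. The corollary then follows at once from the theorem.

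For essential non-branching, I would invoke the result of Rajala--Sturm \cite{RS2014NonbranchStrongCD}: since $\RCD^{*}(K,N)$ implies $\RCD(K,\infty)$ (in particular a strong $\CD(K,\infty)$-space, by \cite{AGS2014RCD}), any $2$-optimal dynamical coupling between absolutely continuous measures is concentrated on a set of non-branching geodesics. Hence both $(M,d,\m_{i})$ are $2$-essentially non-branching.

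For qualitative non-degeneracy, the key input is the standard implication $\CD^{*}(K,N)\Rightarrow\MCP(K,N)$ for finite $N$. Unwinding the definition of $\MCP(K,N)$ in terms of the distortion coefficients $\sigma_{K,N}^{(1-t)}$ produces, for every Borel set $A$ of diameter at most $R$ and every point $x$ within distance $R$ of $A$, a lower bound $\m(A_{t,x})\ge f_{R}(t)\m(A)$ with $f_{R}(t)\to 1$ as $t\to 0$. In particular $\limsup_{t\to 0}f_{R}(t)>\tfrac{1}{2}$, which is precisely the quantitative content needed.

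The subtlety I expect to be the main obstacle is that for $K<0$ the coefficient $f_{R}$ genuinely depends on $R$, so $\MCP(K,N)$ does not literally produce a single function $f$ valid on all of $M$ at once; what one gets is \emph{locally} uniform qualitative non-degeneracy. This should however be harmless: mutual absolute continuity is a countable-local property, so one can cover $M$ by countably many bounded Borel pieces and apply the Measure Rigidity Theorem on each, where the bound is uniform. Alternatively, one verifies that the proof of the Measure Rigidity Theorem itself localizes (the required cyclical monotonicity and non-branching arguments are intrinsically local once the initial and final measures are restricted to bounded sets), which is the main bookkeeping step of the verification.
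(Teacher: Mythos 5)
Your proposal is correct and follows essentially the same route as the paper, which deduces the corollary from Theorem~\ref{thm:measure-rigidity-qual-non-deg} with $p=2$ by citing \cite{AGS2014RCD,RS2014NonbranchStrongCD} for $2$-essential non-branching of $\RCD^{*}(K,N)$-spaces and the $\MCP(K,N)$-type contraction estimates (equivalently, Rajala's density bounds together with Lemma~\ref{lem:bdd-dens-imply-qual-non-deg}) for qualitative non-degeneracy. The ``main obstacle'' you flag is actually vacuous: Definition~\ref{def:qual-non-deg} already allows the function $f_{R,x_{0}}$ to depend on $R$ and $x_{0}$, and the measure rigidity theorem is stated and proved with precisely this local notion, so no covering of $M$ by bounded pieces and no re-localization of its proof is required (which is just as well, since restricting the measures $\m_{i}$ to bounded, non-convex pieces as in your first alternative would not obviously preserve the hypotheses).
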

A slightly different kind of measure rigidity of the background measure
$\m$ for metric measure spaces satisfying the $\RCD(K,N)$-condition
with $N\in[1,\infty)$ was obtain independently by Gigli\textendash Pasqualetto
\cite{GP2016RCDMeasRigid} and by Mondino and the author \cite{KM2017volRCD}
using the weak converse of Rademacher's theorem proven in \cite{DePR2016LipDiffRigid}.

The section dealing with the measure rigidity theorem (Section \ref{sec:Measure-rigidity})
only relies on few properties which are restated in the beginning
of that section and can be read independently of the rest of this
note. We also present a proof of the rigidity theorem that relies
on a bounded density property and applies to strong $\CD_{p}(K,\infty)$-spaces
with the strong interpolation property, see Theorem \ref{thm:measure-rigidity-ess-nb-CD}.

\section{Preliminaries}

Throughout this paper we always assume that $(M,d,\m)$ is a\emph{
geodesic metric measure space}, i.e. $(M,d)$ is a complete separable
geodesic metric space and $\m$ is measure on $M$ which is finite
on bounded sets. It is \emph{proper} if every closed bounded set $B\subset M$
is compact.

\subsection*{Selection Dichotomy and Disintegration Theorem}

We present two technical results which help to classify couplings
that are induced by a transport map. The first can be obtained by
combining the Measurable Selection Theorem and Lusin's Theorem. This
form of the selection dichotomy was used by Cavalletti\textendash Huesmann
\cite{CH2015NBTrans} to select $p$-optimal couplings that overlap
at the initial measure. 

In the following for a set $\Gamma\subset M\times M$ we let $\Gamma(x)=\{y\in M\,|\,(x,y)\in\Gamma\}$.
We say a map $T:M\to M$ is a selection of $\Gamma$ is $(x,T(x))\in\Gamma$
for all $x\in p_{1}(\Gamma)$.
\begin{thm}
[Selection Dichotomy of Sets]\label{thm:selection-dichotomy-set}Assume
$\mu$ is a probability measure on $M$ and $\Gamma\subset M\times M$
a Borel set with $\mu(p_{1}(\Gamma))=1$. Then exactly one of the
following holds:
\begin{enumerate}[label=(\roman*)]
\item For $\mu$-almost all $x\in M$ the set $\Gamma(x)$ contains exactly
one element. Furthermore, if $\pi\in\mathcal{P}(M\times M)$ with
$\supp\pi\subset\Gamma$ and $(p_{1})_{*}\pi=\mu$ then $\pi=(\operatorname{id}\times T)_{*}\mu$
for a $\mu$-measurable selection $T:M\to M$ of $\Gamma$. In particular,
$T$ is unique up to $\mu$-measure zero and $\pi$ is unique among
all measure $\tilde{\pi}\in\mathcal{P}(M\times M)$ with support in
$\Gamma$ and first marginal $\mu$.
\item There are a compact set $K\subset\supp\mu$ with $\mu(K)>0$ and two
$\mu$-measurable selections $T_{1},T_{2}:M\to M$ of $\Gamma$ which
are continuous when restricted to $K$ and $T_{1}(K)\cap T_{2}(K)=\varnothing$.
Furthermore, if the function 
\[
\varphi_{\Gamma}(x)=\begin{cases}
\sup_{(x,y)\in\Gamma}d(x,y)-\inf_{(x,y)\in\Gamma}d(x,y) & x\in p_{1}(\Gamma)\\
0 & \text{otherwise}.
\end{cases}
\]
is positive on a set of positive $\mu$-measure then $K$ can be chosen
such that for some $\delta>0$ 
\[
\sup_{(x,y_{1})\in K\times T_{1}(K)}d(x,y_{1})+\delta\le\inf_{(x,y_{2})\in K\times T_{2}(K)}d(x,y_{2}).
\]
\end{enumerate}
\end{thm}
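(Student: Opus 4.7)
The plan is to base the proof on the single measurable set $A = \{x \in p_1(\Gamma) : \#\Gamma(x) \geq 2\}$; its $\mu$-measurability follows from the measurable projection theorem applied to the Borel set $\{(x, y_1, y_2) \in M^3 : (x, y_1), (x, y_2) \in \Gamma,\ y_1 \neq y_2\}$. The two alternatives are mutually exclusive, since two selections agreeing $\mu$-a.e.\ cannot have images disjoint on a set of positive $\mu$-measure, so it suffices to show that $\mu(A) = 0$ forces (i) while $\mu(A) > 0$ forces (ii).

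For (i), I would apply a measurable selection theorem (Kuratowski--Ryll-Nardzewski, or Jankov--von Neumann on the standard Borel space $(M, \mathcal{B}(M))$) to the multifunction $x \mapsto \Gamma(x)$ to extract a $\mu$-measurable selection $T : M \to M$ of $\Gamma$. For the uniqueness assertion, given $\pi \in \mathcal{P}(M \times M)$ with $\supp \pi \subset \Gamma$ and $(p_1)_* \pi = \mu$, I would disintegrate $\pi = \int \pi_x\, d\mu(x)$ along $p_1$; since $\supp \pi_x \subset \Gamma(x) = \{T(x)\}$ for $\mu$-a.e.\ $x$, necessarily $\pi_x = \delta_{T(x)}$, hence $\pi = (\operatorname{id} \times T)_* \mu$. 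The $\mu$-a.e.\ uniqueness of both $T$ and $\pi$ is then immediate.

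For (ii), the heart of the argument is a countable decomposition. Fix a countable basis $\{U_k\}$ of the (separable) topology of $M$ and let $\mathcal{I} = \{(k,l) : \overline{U_k} \cap \overline{U_l} = \varnothing\}$. For $(k,l) \in \mathcal{I}$, set
\[
A_{k,l} = \{x \in A : \Gamma(x) \cap U_k \neq \varnothing \text{ and } \Gamma(x) \cap U_l \neq \varnothing\},
\]
which is $\mu$-measurable by the projection theorem. Since any two distinct points of $M$ admit disjoint basis neighborhoods, $A = \bigcup_{(k,l) \in \mathcal{I}} A_{k,l}$, so some $A_{k,l}$ has positive $\mu$-measure. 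Applying measurable selection to $\Gamma \cap (A_{k,l} \times \overline{U_k})$ and to $\Gamma \cap (A_{k,l} \times \overline{U_l})$ produces $\mu$-measurable selections $T_1, T_2$ on $A_{k,l}$ with images in $\overline{U_k}$ and $\overline{U_l}$ respectively, which I then extend arbitrarily off $A_{k,l}$ via any selection of $\Gamma$. Lusin's theorem yields a compact $K \subset A_{k,l} \subset \supp\mu$ with $\mu(K) > 0$ on which both $T_i$ are continuous, and the disjointness of $\overline{U_k}$ and $\overline{U_l}$ forces $T_1(K) \cap T_2(K) = \varnothing$.

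For the distance-gap strengthening when $\varphi_\Gamma > 0$ on a positive-measure set, I would refine the decomposition by intersecting with sets of the form $\{x : \exists\, y_1, y_2 \in \Gamma(x) \text{ with } d(x, y_1) \leq r,\ d(x, y_2) \geq r + \delta\}$ for rational $r, \delta > 0$, choosing $(r, \delta)$ for which the intersection has positive measure, and then selecting $(U_k, U_l)$ inside the corresponding concentric shells $\{y : d(x,y) \leq r\}$ and $\{y : d(x,y) \geq r + \delta\}$. The main obstacle throughout is the measurability bookkeeping in (ii): turning the pointwise fact that $\Gamma(x)$ is non-singleton into two jointly measurable selections with distinguishable values genuinely requires the projection theorem and a careful countable decomposition, and tracking the function $d(x, \cdot)$ simultaneously is what produces the quantitative gap. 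Once this decomposition is set up, the rest reduces to a routine application of Lusin's theorem.
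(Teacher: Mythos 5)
Your treatment of the main dichotomy is correct but takes a genuinely different route from the paper. The paper produces one measurable selection $T$, removes its graph (over a Borel set of full $\mu$-measure on which the graph is Borel) from $\Gamma$, selects again from the remainder $\Gamma'$, and splits on whether $\mu(p_{1}(\Gamma'))=0$; disjointness of the two images is then forced only after restricting to a small ball around a point of a Lusin compactum. You instead work with the set $A=\{x\,:\,\#\Gamma(x)\ge2\}$ and decompose it over pairs of basis elements with disjoint closures, which yields $T_{1}(K)\cap T_{2}(K)=\varnothing$ directly from $T_{1}(K)\subset\overline{U_{k}}$ and $T_{2}(K)\subset\overline{U_{l}}$, with no further localization. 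Both are valid; your version costs a countable decomposition but buys the disjointness for free, while the paper's graph-removal device is more economical and is reused verbatim in its Selection Dichotomy for Measures. Two small points: $A$ and $A_{k,l}$ are only analytic, and $\Gamma(x)$ need not be closed, so the selection theorem you actually need on $\Gamma\cap(A_{k,l}\times\overline{U_{k}})$ is Jankov--von Neumann rather than Kuratowski--Ryll-Nardzewski; and $A_{k,l}\not\subset\supp\mu$ in general, so you must intersect $K$ with $\supp\mu$ at the Lusin step.

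The quantitative refinement, however, has a gap. The inequality to be proved runs over the full products $K\times T_{1}(K)$ and $K\times T_{2}(K)$, i.e.\ it bounds $d(x,T_{1}(x'))$ and $d(x'',T_{2}(x'''))$ for \emph{independent} points of $K$, whereas your shells $\{y\,:\,d(x,y)\le r\}$ and $\{y\,:\,d(x,y)\ge r+\delta\}$ only control the diagonal quantities $d(x,T_{i}(x))$. Moreover these shells depend on $x$, so ``selecting $(U_{k},U_{l})$ inside'' them is not meaningful for fixed basis elements, and fixed disjoint sets give no distance gap in any case (a point of $K$ may be close to $\overline{U_{l}}$ and far from $\overline{U_{k}}$). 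The repair is the step the paper makes explicitly: select $T^{\pm}$ from the sets $\Gamma\cap\{(x,y)\,:\,x\in B,\ d(x,y)\le r\}$ and $\Gamma\cap\{(x,y)\,:\,x\in B,\ d(x,y)\ge r+\delta\}$, pass to a compact set where both are continuous, and then shrink $K$ further to a set of diameter less than $\delta/2$ (e.g.\ $\bar{B}_{\rho}(x_{0})\cap K$ for a suitable density point $x_{0}$ and small $\rho$), so that the triangle inequality converts the diagonal bounds into the product bounds with gap $\delta-2\operatorname{diam}K>0$. Without this final localization the claimed inequality does not follow from your construction.
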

\begin{proof}
It is easy to see that the conditions are mutually exclusive. Indeed,
the measures $(\operatorname{id}\times T_{1})_{*}\mu$ and $(\operatorname{id}\times T_{2})_{*}\mu$
are distinct and supported on $\Gamma$.

By the Measurable Selection Theorem there is a $\mu$-measurable map
$T$ such that $(x,T(x))\in\Gamma$ for all $x\in p_{1}(\Gamma)$.
Using Lusin's Theorem one can show that there is a Borel set $\Omega\subset M$
of full $\mu$-measure such that 
\[
\operatorname{graph}_{\Omega}T:=\{(x,T(x)\,|\,x\in\Omega\}
\]
is a Borel subset of $M\times M$. Thus $\Gamma^{'}=\Gamma\backslash\operatorname{graph}_{\Omega}T$
is a Borel set and there is a $\mu$-measurable selection $S:M\to M$
with $(x,S(x))\in\Gamma^{'}$ for all $x\in p_{1}(\Gamma^{'})$. Since
$p_{1}(\Gamma^{'})$ is $\mu$-measurable, we can redefine $S$ outside
of $p_{1}(\Gamma^{'})$ and assume $T(x)=S(x)$ for all $x\in M\backslash p_{1}(\Gamma^{'})$. 

If $\mu(p_{1}(\Gamma^{'}))=0$ then $\mu(p_{1}(\Gamma^{'})\cap\Omega)=0$
and thus $S(x)=T(x)$ for $\mu$-almost all $x\in M$. In particular,
the first case holds. 

Otherwise, by Lusin's Theorem there is a compact set $K_{1}\subset\supp\mu\cap p_{1}(\Gamma')\cap\Omega$
with $\mu(K_{1})>0$ such that $T$ and $S$ are continuous on $K_{1}$
and $S(x)\ne T(x)$ for $x\in K_{1}$. Thus for sufficiently small
$r>0$ and a fixed $x_{0}\in K_{1}$, it holds 
\[
T(x)\ne S(x')\quad\text{for all }x,x'\in\bar{B}_{r}(x_{0})\cap K_{1}.
\]
In case $\mu(\{\varphi_{\Gamma}=0\})=0$ we can choose $K=\bar{B}_{r}(x_{0})\cap K_{1}$,
$T_{1}=T$ and $T_{2}=S$ and conclude. 

If $\mu(\{\varphi_{\supp\pi}>0\})>0$ then there are $\epsilon>0$
and a compact set $K_{2}\subset\supp\mu$ with $\mu(K_{2})>0$ and
$\varphi_{\supp\pi}(x)>\epsilon$ for all $x\in K_{2}$. Note also
that the sets 
\begin{align*}
\Gamma^{+} & =\{(x,y)\in\Gamma\,|\,x\in K_{2},d(x,y)\ge\sup_{(x,y')\in\Gamma}d(x,y')-\frac{\epsilon}{2}\}\\
\Gamma^{-} & =\{(x,y)\in\Gamma\,|\,x\in K_{2},d(x,y)\le\inf_{(x,y')\in\Gamma}d(x,y')+\frac{\epsilon}{2}\}
\end{align*}
are non-empty Borel subsets of $\Gamma$ with $p_{1}(\Gamma^{+})=p_{1}(\Gamma^{-})=K_{2}$.
Thus there are two $\mu$-measurable selection $T^{+}$ and $T^{-}$
with $(x,T^{\pm}(x))\in\Gamma^{\pm}$ for all $x\in K_{2}$. As above
we may assume that $T^{\pm}$ agree with $T$ outside of $K_{2}$. 

Choose another compact $K_{3}\subset K_{2}$ such that the maps $T^{\pm}$
are continuous on $K_{3}$. In particular, for some $x_{0}\in K_{3}$
and sufficiently small $r>0$ it holds 
\[
d(x,T^{-}(x'))+\delta\le d(x,T^{+}(x''))\qquad\text{for all }x,x',x''\in\bar{B}_{r}(x_{0})\cap K_{3}
\]
To conclude observe that the compact set $K=\bar{B}_{r}(x_{0})\cap K_{3}$
and the maps $T_{1}=T^{-}$ and $T_{2}=T^{+}$ satisfy the last part
of the second statement.
\end{proof}
Note that in general the second possibility of the Selection Dichotomy
above does not say anything about the relationship of the measures
$(\operatorname{id}\times T_{i})_{*}\mu$ and a fixed measure $\pi\in\mathcal{P}(M\times M)$
with $(p_{1})_{*}\pi=\mu$ and $\supp\pi\subset\Gamma$. More precisely,
in general, $(T_{i})_{*}\mu$ might be singular with respect to $(p_{2})_{*}\pi$,
or more generally, it is possible that $(T_{i})_{*}\mu\bot\m$ even
if $(p_{2})_{*}\pi\ll\m$. 

The following lemma is a more general version of the Selection Dichotomy
and can be extracted from Gigli's work \cite[Proof of Theorem 3.3]{Gigli2012}.
It shows that any measure $\pi$, regarded as a generalized transport
map $\int\delta_{x}\otimes\mu_{x}d\mu(x)$, is either already induced
by a transport map, i.e. $\mu_{x}=\delta_{T(x)}$, or can be decomposed
into (at least) two partial transport with target transport on a compact
set $K$ of positive $\mu$-measure. We give a simpler proof relying
on the Selection Dichotomy for Sets.

First, recall the the statement of the Disintegration Theorem. Let
$(X,d)$ and $(Y,d)$ be two complete separable metric spaces. Denote
the Borel $\sigma$-algebra of $X$ and $Y$ by $\mathcal{B}(X)$
and resp. $\mathcal{B}(Y)$.
\begin{defn}
[Disintegration over $S$] Let $\sigma$ a probability measure on
$X$, $S:X\to Y$ a Borel map and $\varpi=S_{*}\sigma$. An assignment
$\sigma:\mathcal{B}(X)\times Y\to[0,1]$, denoted $(B,y)\mapsto\sigma_{y}(B)$,
is called \emph{a disintegration of $\sigma$ over $S$ }if 
\begin{enumerate}
\item $\sigma_{y}(\cdot)$ is a probability measure on $X$ for all $y\in Y$
\item $y\mapsto\mu_{y}(B)$ is $\varpi$-measurable for all Borel sets $B\in\mathcal{B}(X)$. 
\item $\mu_{y}(S^{-1}(y))=1$ for all $y\in Y$ 
\end{enumerate}
and it holds 
\[
\sigma(C\cap S^{-1}(B))=\int_{C}\sigma_{y}(B)d\varpi(y).
\]
Regarding the assignment $y\mapsto\sigma_{y}=\sigma_{y}(\cdot)$ as
a map from $Y$ to $\mathcal{P}(X)$ we abbreviate this as 
\[
\sigma=\int\sigma_{y}d\varpi(y).
\]
\end{defn}
\begin{rem*}
(1) A disintegration of $\sigma$ over $S$ as above is usually called
a disintegration of $\sigma$ which strongly consistent with $S$. 

(2) If $\pi\in\mathcal{P}(M\times M)$ for some complete separable
metric space $M$ with $\mu=(p_{1})_{*}\pi$, then any disintegration
$\pi=\int\pi_{x}d\mu(x)$ must satisfy $\supp\pi_{x}\subset\{x\}\times M$
and hence $\pi_{x}=\delta_{x}\otimes\mu_{x}$ for a measure $\mu_{x}\in\mathcal{P}(M)$.
\end{rem*}
The following theorem can be deduced from the general Disintegration
Theorem \cite[Section 452]{Fremlin2006}.
\begin{lem}
[Disintegration Theorem] For every $\sigma\in\mathcal{P}(X)$ and
every Borel map $S:X\to Y$ there exists a unique disintegration $\sigma_{\cdot}(\cdot)$
of $\sigma$ over $S$ and for any other disintegration $\tilde{\sigma}_{\cdot}(\cdot)$
of $\sigma$ over $S$ it holds $\sigma_{y}(\cdot)=\tilde{\sigma}_{y}(\cdot)$
for $S_{*}\sigma$-almost all $y\in Y$.
\end{lem}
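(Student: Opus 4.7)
The plan is to deduce the statement from Fremlin's general disintegration theorem \cite[Section 452]{Fremlin2006} by checking that the Polish setting supplies all the required regularity. Since $(X,d)$ and $(Y,d)$ are complete separable metric spaces, every Borel probability is Radon; in particular $\sigma$ is a Radon probability on $X$ and $\varpi=S_{*}\sigma$ is Radon on $Y$. Applied to $\sigma$ and the Borel map $S$, Fremlin's theorem produces a family $\{\sigma_{y}\}_{y\in Y}$ of subprobability measures on $X$ satisfying measurability condition (2) together with the integral identity
\[
\sigma(C\cap S^{-1}(B))=\int_{B}\sigma_{y}(C)\,d\varpi(y)
\]
for all Borel $B\subset Y$ and $C\subset X$. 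Separability of $Y$ ensures that singletons and hence the fibers $S^{-1}(y)$ are Borel, and in the Polish setting one obtains the strong consistency $\sigma_{y}(S^{-1}(y))=1$ for $\varpi$-a.e.\ $y$.

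Next I would upgrade this $\varpi$-a.e.\ concentration to condition (3) of the definition, which requires it pointwise. Let $N\subset Y$ denote the $\varpi$-null exceptional set where the concentration fails. For $y\notin S(X)$ the fiber is empty, but this $y$ lies in a $\varpi$-null set so $\sigma_{y}$ there may be redefined arbitrarily. For $y\in N\cap S(X)$ the Measurable Selection Theorem (as invoked in Theorem \ref{thm:selection-dichotomy-set}) provides a Borel selector $s$ of $y\mapsto S^{-1}(y)$, and replacing $\sigma_{y}$ by $\delta_{s(y)}$ on $N$ preserves the integral identity because the modification occurs on a $\varpi$-null set; the resulting family satisfies all three conditions of the definition.

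For uniqueness, suppose $\{\sigma_{y}\}$ and $\{\tilde{\sigma}_{y}\}$ are two disintegrations of $\sigma$ over $S$. By separability one can fix a countable collection $\mathcal{A}$ of open subsets of $X$ which generates $\mathcal{B}(X)$ and is closed under finite intersections. For each $A\in\mathcal{A}$ and each Borel $B\subset Y$ the two integral identities give
\[
\int_{B}\sigma_{y}(A)\,d\varpi(y)=\sigma(A\cap S^{-1}(B))=\int_{B}\tilde{\sigma}_{y}(A)\,d\varpi(y),
\]
so $\sigma_{y}(A)=\tilde{\sigma}_{y}(A)$ for $y$ outside a $\varpi$-null set $N_{A}$. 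Outside the countable union $\bigcup_{A\in\mathcal{A}}N_{A}$, still $\varpi$-null, the measures $\sigma_{y}$ and $\tilde{\sigma}_{y}$ agree on the $\pi$-system $\mathcal{A}$, and hence on all of $\mathcal{B}(X)$ by the Dynkin $\pi$-$\lambda$ theorem.

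The only genuinely delicate point I expect is the upgrade from the $\varpi$-a.e.\ concentration supplied by Fremlin's abstract theorem to the pointwise condition (3); once a Borel selector on the negligible exceptional set is in hand, this is essentially bookkeeping, and the remainder is a direct invocation of the abstract theorem together with a routine $\pi$-system argument.
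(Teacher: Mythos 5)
Your proposal is correct and follows the same route as the paper, which simply cites Fremlin's general Disintegration Theorem \cite[Section 452]{Fremlin2006}; you merely fill in the standard bookkeeping (Radon regularity in the Polish setting, strong consistency, the $\pi$-system uniqueness argument) that the paper leaves implicit. The only caveat is that condition (3) of the paper's definition, as literally stated, cannot hold for $y\notin S(X)$ where the fiber is empty — but that is a defect of the definition itself, not of your argument.
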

The theorem allows us to say that up to a $\mu$-null set $\sigma=\int\sigma_{x}d\mu$
is \emph{the} disintegration of $\sigma$ over $S$. 
\begin{thm}
[Selection Dichotomy for Measures]\label{thm:selection-dichotomy-product-measures}Let
$\pi$ be a probability measure on $M\times M$ and $\mu=(p_{1})_{*}\pi$.
Then exactly one of the following holds:
\begin{enumerate}[label=(\roman*)]
\item There is a $\mu$-measurable map $T:M\to M$ such that $\pi(\operatorname{graph}T)=1$.
\item There are compact set $K\subset M$ and two closed bounded sets $A_{1},A_{2}\subset M$
with $A_{1}\cap A_{2}=\varnothing$ and
\[
\pi(K\times A_{1}),\pi(K\times A_{2})>0.
\]
Furthermore, there are two measure $\pi_{1},\pi_{2}\in\mathcal{P}(M\times M)$
with $\frac{1}{\mu(K)}\mu\big|_{K}=(p_{1})_{*}\pi_{1}=(p_{1})_{*}\pi_{2}$
and $\pi_{1},\pi_{2}\ll\pi$. 
\end{enumerate}
\end{thm}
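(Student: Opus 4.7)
The plan is to reduce to Theorem~\ref{thm:selection-dichotomy-set} applied not to $\supp \pi$ itself but to a Borel set encoding the fibrewise supports of the conditionals of $\pi$. Disintegrate $\pi = \int \delta_x \otimes \nu_x\, d\mu(x)$, fix a countable base $\{U_n\}$ of the topology of $M$, choose Borel representatives $g_n$ of the $\mu$-measurable functions $x \mapsto \nu_x(U_n)$ and set
\[
\tilde\Gamma := (M \times M) \setminus \bigcup_{n} \{x \in M : g_n(x) = 0\} \times U_n.
\]
Then $\tilde\Gamma$ is Borel, each fibre is closed, and for $\mu$-a.e.\ $x$ one has $\tilde\Gamma_x = \supp \nu_x$; since $\nu_x$ is a probability measure this gives $\mu(p_1(\tilde\Gamma)) = 1$, so Theorem~\ref{thm:selection-dichotomy-set} applies to $\tilde\Gamma$ and $\mu$.

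If the first alternative of that theorem holds then $\supp \nu_x = \{T(x)\}$ for $\mu$-a.e.\ $x$ with $T$ a $\mu$-measurable selection, forcing $\nu_x = \delta_{T(x)}$ and hence $\pi = (\operatorname{id} \times T)_* \mu$, which is case~(i). Mutual exclusivity will follow a posteriori since the measures $\pi_1,\pi_2 \ll \pi$ produced below have the same first marginal but disjointly supported second marginals, preventing $\pi$ from being concentrated on any graph.

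Otherwise Theorem~\ref{thm:selection-dichotomy-set} yields a compact $K$ with $\mu(K)>0$ and $\mu$-measurable selections $T_1,T_2$ of $\tilde\Gamma$ continuous on $K$ with $T_1(K),T_2(K)$ compact and disjoint. Pick $r>0$ smaller than half the distance between these sets and take $A_i$ to be the closed $r$-neighbourhood of $T_i(K)$, so $A_1\cap A_2=\varnothing$. For every $x\in K$ the selection property gives $T_i(x)\in\supp\nu_x$, hence $\nu_x(A_i)\ge\nu_x(B_r(T_i(x)))>0$; integrating yields $\pi(K\times A_i)>0$. Setting
\[
\pi_i := \frac{1}{\mu(K)}\int_K \delta_x \otimes \frac{\nu_x\big|_{A_i}}{\nu_x(A_i)}\, d\mu(x)
\]
then produces probability measures with $(p_1)_*\pi_i = \frac{1}{\mu(K)}\mu\big|_K$; absolute continuity $\pi_i\ll\pi$ follows because any $\pi$-null Borel set $C$ has $\nu_x(C_x)=0$ for $\mu$-a.e.\ $x$ and hence $\pi_i(C)=0$.

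The delicate point is the construction of $\tilde\Gamma$: applying the set dichotomy directly to $\Gamma = \supp\pi$ would not suffice, since a selection satisfying $(x,T_i(x))\in\supp\pi$ for a specific $x$ does not force $T_i(x)\in\supp\nu_x$, and this is precisely what is needed to obtain $\nu_x(A_i)>0$ uniformly on $K$ and hence the absolute continuity $\pi_i\ll\pi$. The auxiliary Borel set built from a countable topological base closes this gap and lets the set dichotomy deliver exactly the decomposition demanded by~(ii).
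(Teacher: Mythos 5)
Your proof is correct, and it takes a genuinely different route from the paper's. The paper applies Theorem~\ref{thm:selection-dichotomy-set} directly to $\Gamma=\supp\pi$ and, in the second branch, argues that $(x,T_{i}(x))\in\supp(\pi\big|_{K})$ so that $\pi(K\times(T_{i}(K))_{\epsilon})>0$, implicitly identifying $\supp(\pi\big|_{K})$ with $\supp\pi\cap(K\times M)$; it then produces a compact $K'$ on which $\mu_{x}(A_{1}),\mu_{x}(A_{2})$ are bounded away from $0$ and defines $\pi_{1},\pi_{2}$ by normalizing $\mu_{x}\big|_{A_{i}}$, exactly as you do at the end. Your replacement of $\supp\pi$ by the Borel set $\tilde\Gamma$ whose fibres are $\supp\nu_{x}$ is not a cosmetic variation but a real strengthening: a selection of $\supp\pi$ need not select from $\supp\nu_{x}$ (take $\nu_{x}=\delta_{2}$ for $x$ in a fat Cantor set $C\subset[0,1]$ and $\nu_{x}=\delta_{3}$ otherwise, with $\mu$ Lebesgue; then $\supp\pi$ has two-point fibres over all of $C$ although $\pi$ is induced by a map, and for $K\subset C$ one has $\pi(K\times B_{\epsilon}(3))=0$, so $\supp(\pi\big|_{K})\subsetneq\supp\pi\cap(K\times M)$). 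Consequently the ``otherwise'' branch of the set dichotomy applied to $\supp\pi$ does not by itself deliver $\nu_{x}(A_{i})>0$ on a common positive-measure set of $x$, which is precisely what both the positivity $\pi(K\times A_{i})>0$ and the definition of $\pi_{1},\pi_{2}$ require; your countable-base construction of $\tilde\Gamma$ gives $T_{i}(x)\in\supp\nu_{x}$ pointwise and makes the two branches of the set dichotomy correspond exactly to the two alternatives of the statement ($\supp\nu_{x}$ a singleton $\mu$-a.e.\ if and only if $\nu_{x}=\delta_{T(x)}$ $\mu$-a.e.). The only points to polish are routine: $\tilde\Gamma_{x}=\supp\nu_{x}$, and hence $T_{i}(x)\in\supp\nu_{x}$, holds only for $\mu$-almost every (not every) $x\in K$, and one should record that $x\mapsto\nu_{x}(A_{i})$ is $\mu$-measurable so that $\pi_{i}$ is well defined.
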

\begin{rem*}
(1) The construction shows that for some $\epsilon>0$ it holds 
\[
\left(A_{1}\right)_{\epsilon}\cap\left(A_{2}\right)_{\epsilon}=\varnothing
\]
where $A_{\epsilon}=\bigcup_{x\in A}B_{\epsilon}(x)$ for a set $A\subset M$.
Furthermore, as above it is possible to choose $K$, $A_{1}$ and
$A_{2}$ such that for some $\delta>0$ 
\[
\sup_{(x,y_{1})\in K\times A_{1}}d(x,y)+\delta\le\inf_{(x,y_{1})\in K\times A_{1}}d(x,y).
\]

(2) If $\pi=\int\delta_{x}\otimes\mu_{x}d\mu(x)$ is the disintegration
over $p_{1}$ and $\pi$ is not induced by a map then for $\mu$-almost
all $x\in K$ the measure $\mu_{x}$ is not a delta measure. Indeed,
for $\mu$-almost all $x\in K$ it holds 
\[
\mu_{x}=\mu_{x}\big|_{A_{1}}+\mu_{x}\big|_{A_{2}}+\mu_{x}\big|_{M\backslash(A_{1}\cup A_{2})}
\]
and the choice of $K$ shows that $\mu_{x}\big|_{A_{1}}$ and $\mu_{x}\big|_{A_{2}}$
are non-trivial for $\mu$-almost all $x\in K$.
\end{rem*}
\begin{proof}
We apply Theorem \ref{thm:selection-dichotomy-set} to $\Gamma=\supp\pi$.
If the first option of the dichotomy holds then $\pi=(\operatorname{id}\times T)_{*}\mu$
and thus $\pi(\operatorname{graph}T)=1$.

Otherwise let $K$, $T_{1}$ and $T_{2}$ as in the second possibility
of Theorem \ref{thm:selection-dichotomy-set}. We may restrict $K$
further and assume $\supp(\mu\big|_{K})=K$. 

We claim that for all $\epsilon>0$ and $i=1,2$ it holds
\[
\pi(K\times(T_{i}(K))_{\epsilon})>0.
\]
Indeed, note that $\pi\big|_{K}=\pi(\cdot\cap(K\times M))\ne\mathbf{0}$,
so that $(x,T_{1}(x)),(x,T_{2}(x))\in\supp(\pi\big|_{K})=\supp\pi\cap(K\times M)$.
Because $B_{\epsilon}^{M\times M}(x,y)\subset B_{\epsilon}(x)\times B_{\epsilon}(y)$
for $i=1,2$ it holds 
\begin{align*}
0 & <\pi\big|_{K}(B_{\epsilon}^{M\times M}(x,T_{i}(x)))\\
 & \le\pi((B_{\epsilon}(x)\cap K)\times B_{\epsilon}(T(x)))\\
 & \le\pi(K\times B_{\epsilon}(T(x)))\le\pi(K\times(T_{i}(K))_{\epsilon}).
\end{align*}
Since $T_{1}$ and $T_{2}$ are continuous on $K$ and $K$ is compact
there is an $\epsilon>0$ such that 
\[
(T_{1}(K))_{2\epsilon}\cap(T_{2}(K))_{2\epsilon}=\varnothing.
\]
Choosing $A_{1}=\cl(T_{1}(K))_{\epsilon}$ and $A_{1}=\cl(T_{2}(K))_{\epsilon}$
gives first part of the claim.

To obtain the second part, note that there are a $\delta>0$ and compact
$K'$ of positive $\mu$-measure such that 
\[
\mu_{x}(A_{1}),\mu_{x}(A_{2})\in(\delta,1-\delta)\qquad\text{for all }x\in K'.
\]
Restricting $K'$ again, assume $K'=\supp(\mu\big|_{K'})$ and define
two non-trivial measures $\pi_{1},\pi_{2}\in\mathcal{P}(M\times M)$
as follows 
\begin{align*}
\pi_{1} & =\frac{1}{\mu(K')}\int_{K'}\frac{1}{\mu_{x}(A_{1})}\delta_{x}\otimes\mu_{x}d\mu(x)\\
\pi_{2} & =\frac{1}{\mu(K')}\int_{K'}\frac{1}{\mu_{x}(A_{2})}\delta_{x}\otimes\mu_{x}d\mu(x).
\end{align*}
It is easy to see that $\pi_{1},\pi_{2}\ll\pi$ and $\frac{1}{\mu(K')}\mu\big|_{K'}=(p_{1})_{*}\pi_{1}=(p_{1})_{*}\pi_{2}$
which proves the claim.
\end{proof}
For completeness we present the following more general form of the
Selection Dichotomy.
\begin{cor}
[General Selection Dichotomy] Assume $(X,d)$ and $(Y,d)$ are complete
separable metric spaces. Let $\sigma$ be a measure on $X$ and $S:X\to Y$
a Borel map. Then exactly one of the following holds:
\begin{enumerate}[label=(\roman*)]
\item There is a measurable map $T:Y\to X$ such that $S(T(y))=y$ and
$T_{*}\varpi=\sigma$ where $\varpi=S_{*}\sigma$. In particular,
the disintegration of $\sigma$ via $S$ is given by 
\[
\sigma=\int\delta_{T(y)}d\varpi(y).
\]
\item There are a compact set $K\subset X$ and two closed bounded sets
$A_{1},A_{2}\subset X$ with $A_{1}\cap A_{2}=\varnothing$ and
\[
\sigma_{x}\big|_{A_{i}}\ne0,\;x\in K,i=1,2.
\]
In particular, for $\varpi$-almost all $x\in K$ the measures $\sigma_{x}$
are not delta measures.
\end{enumerate}
\end{cor}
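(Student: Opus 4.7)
The plan is to reduce the statement to Theorem \ref{thm:selection-dichotomy-product-measures} by passing through the graph of $S$. First, I would set $\pi:=(S,\operatorname{id})_{*}\sigma\in\mathcal{P}(Y\times X)$, so that $(p_{1})_{*}\pi=S_{*}\sigma=\varpi$; if $\sigma=\int\sigma_{y}d\varpi(y)$ denotes the disintegration of $\sigma$ over $S$, a direct check shows that the disintegration of $\pi$ over $p_{1}:Y\times X\to Y$ is $\pi=\int(\delta_{y}\otimes\sigma_{y})d\varpi(y)$. As a preliminary step I would remark that Theorem \ref{thm:selection-dichotomy-product-measures} and its proof carry over verbatim to the product $Y\times X$ of two possibly distinct complete separable metric spaces, since only Polishness, Lusin's theorem and inner regularity by compact sets are ever invoked.

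Now I would apply the extended dichotomy to $\pi$. In case (i) one obtains a $\varpi$-measurable $T:Y\to X$ with $\pi(\operatorname{graph}T)=1$. Pulling back via $(S,\operatorname{id})$, this says $T(S(x))=x$ for $\sigma$-almost every $x\in X$, so in particular $T_{*}\varpi=\sigma$; and since $\sigma_{y}$ is concentrated on $S^{-1}(y)$, uniqueness in the Disintegration Theorem forces $\sigma_{y}=\delta_{T(y)}$ for $\varpi$-almost every $y$. This is precisely the first alternative of the corollary.

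In case (ii) the dichotomy yields a compact $K$ of positive $\varpi$-measure and disjoint closed bounded sets $A_{1},A_{2}$ with $\pi(K\times A_{i})>0$, which translates to $\int_{K}\sigma_{y}(A_{i})d\varpi(y)>0$ for $i=1,2$. The last paragraph of the proof of Theorem \ref{thm:selection-dichotomy-product-measures} then provides, after restricting $K$ further, a uniform $\delta>0$ and a compact subset on which $\sigma_{y}(A_{i})\in(\delta,1-\delta)$ for every $y$ in the restricted $K$ and every $i=1,2$. In particular $\sigma_{y}\big|_{A_{i}}\ne0$, and the ``in particular'' claim is immediate from $A_{1}\cap A_{2}=\varnothing$.

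The only non-trivial point will be verifying that Theorem \ref{thm:selection-dichotomy-product-measures} really extends to distinct factors, but this amounts to observing that in its proof the two copies of $M$ play independent roles: one copy only contributes $\mu$-measurability and compact inner approximation, the other only contributes the topology used for Lusin's theorem and for the $\epsilon$-thickenings of $T_{i}(K)$. Everything else is bookkeeping under the graph map $(S,\operatorname{id})$.
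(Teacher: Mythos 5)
Your argument is correct, but it takes a genuinely different route from the paper. The paper's own proof is a one-line abstraction: it asserts that the proofs of Theorem \ref{thm:selection-dichotomy-set} and Theorem \ref{thm:selection-dichotomy-product-measures} never use the product structure of $M\times M$ or the fact that $p_{1}$ is a projection, so one may replace $M\times M$ by $X$ and $p_{1}$ by $S$ and rerun them verbatim. You instead perform a genuine reduction: push $\sigma$ forward under the graph map $(S,\operatorname{id})$ to a measure $\pi$ on $Y\times X$, identify the disintegration of $\pi$ over $p_{1}$ with $\int(\delta_{y}\otimes\sigma_{y})\,d\varpi(y)$, and invoke Theorem \ref{thm:selection-dichotomy-product-measures} on $\pi$, translating each alternative back through the graph map (including the use of disintegration uniqueness to get $\sigma_{y}=\delta_{T(y)}$, and the $\sigma_{y}(A_{i})\in(\delta,1-\delta)$ refinement from the last paragraph of that theorem's proof). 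What your approach buys is that the only generalization you must justify is the harmless passage from $M\times M$ to a product $Y\times X$ of two distinct Polish spaces, where all the ingredients (Lusin on each factor, inner regularity, $\epsilon$-thickenings of $T_{i}(K)$) apply literally; the paper's shortcut instead asks the reader to re-audit two full proofs in a setting with no second factor at all, where, e.g., the function $\varphi_{\Gamma}$ built from $d(x,y)$ no longer even typechecks (though that part is not needed for the corollary). The price you pay is the graph-map bookkeeping, which you carry out correctly. Note also that your reduction naturally yields $K\subset Y$ of positive $\varpi$-measure with $\sigma_{y}\big|_{A_{i}}\ne0$ for $y\in K$; this is the intended reading of alternative (ii), whose printed version ($K\subset X$, $\sigma_{x}$ for $x\in K$) contains a typo, so your conclusion matches what the corollary actually means.
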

\begin{proof}
Just note that the proofs above did not rely on the product structure
of $M\times M$ and that $p_{1}$ is a projection. Thus replace $M\times M$
by $X$ and $p_{1}$ by $S$ we can follow the proofs above line by
line. 
\end{proof}

\subsection*{Wasserstein spaces on geodesic spaces}

Let $(X,d)$ be a complete, separable metric space. A map $\gamma:[0,1]\to X$
satisfying 
\[
d(\gamma_{t},\gamma_{s})=|t-s|d(\gamma_{0},\gamma_{1})\qquad\text{for }t,s\in[0,1]
\]
is called a \emph{geodesic connecting $\gamma_{0}$ and $\gamma_{1}$}.
Note that our terminology implies that any geodesic is the curve of
minimal length between its endpoints. Denote by $\Geo_{[0,1]}(X,d)$
the set of geodesics. On $\Geo_{[0,1]}(X,d)$ there are natural evaluation
maps $e_{t}:\Geo_{[0,1]}(M,d)\to M$, $t\in[0,1]$, defined by $e_{t}:\gamma\mapsto\gamma_{t}$.
Denote the length of a geodesic $\gamma$ by $\ell(\gamma):=d(\gamma_{0},\gamma_{1})$
and define a restriction map $\restr_{s,t}:\Geo_{[0,1]}(M,d)\to\Geo_{[0,1]}(M,d)$
for all $0\le s,t\le1$ by 
\[
(\restr_{s,t}\gamma)(r)=\gamma_{s+(t-s)r}.
\]
We say the metric space $(X,d)$ is a \emph{geodesic metric space
}if between each\emph{ }$x,y\in X$ there is a geodesic connecting
$x$ and $y$, i.e. \emph{$(e_{0},e_{1})(\Geo_{[0,1]}(X,d))=X\times X$.}

In the following we introduce the main concepts used from the theory
of optimal transport. For a comprehensive introduction we refer the
reader to Villani's book \cite{Villani2008OptTrans}. Recall that
$(M,d)$ is a complete separable geodesic metric space. Let $\mathcal{P}(M)$
be the set of probability measures on $M$ and for a fixed $x_{0}\in M$
let 
\[
\mathcal{P}_{p}(M)=\left\{ \mu\in\mathcal{P}(M)\,|\,\int d(x,x_{0})^{p}d\mu(x)\right\} 
\]
the space of probability measures with \emph{finite $p$-th moment}.
On $\mathcal{P}_{p}(M)$ we define the \emph{$p$-Wasserstein metric
$W_{p}$} as follows
\[
W_{p}(\mu_{0},\mu_{1})=\left(\inf_{\pi\in\Pi(\mu_{0},\mu_{1})}\int d(x,y)^{p}d\pi(x,y)\right)^{\frac{1}{p}}
\]
where $\Pi(\mu_{0},\mu_{1})$ is the set of $\pi\in\mathcal{P}(M\times M)$
with $(p_{1})_{*}\pi=\mu_{0}$ and $(p_{2})_{*}\pi=\mu_{1}$. This
defines a complete metric on $\mathcal{P}_{p}(M)$ with a topology
which is strictly stronger than the subspace topology induced by $\mathcal{P}_{p}(M)\subset\mathcal{P}(M)$
unless $(M,d)$ is bounded. We call the convergence induced by the
subspace topology \emph{weak convergence}.

One can show that for each $\mu_{0},\mu_{1}\in\mathcal{P}_{p}(M)$
there is a $\pi_{\operatorname{opt}}\in\Pi(\mu_{0},\mu_{1})$ such
that 
\[
W_{p}(\mu_{0},\mu_{1})=\left(\int d(x,y)^{p}d\pi_{\operatorname{opt}}(x,y)\right)^{\frac{1}{p}}.
\]
In this case we say $\pi_{\operatorname{opt}}$ is a \emph{$p$-optimal
coupling}. Let $\Opt_{p}(\mu_{0},\mu_{1})$ denote the set of all
$p$-optimal couplings between $\mu_{0}$ and $\mu_{1}$. A general
measure $\pi\in\mathcal{P}(M\times M)$ is said to be $p$-optimal
if it is a $p$-optimal coupling between $(e_{0})_{*}\pi$ and $(e_{1})_{*}\pi$.

Since $(M,d)$ is geodesic it is possible to show that $(\mathcal{P}_{p}(M),W_{p})$
is geodesic as well. Just note that $(x,y)\mapsto(e_{0},e_{1})^{-1}(x,y)\subset\Geo_{[0,1]}(M,d)$
is a measurable closed-valued map and any selection $\mathcal{T}$
will lift a coupling $\pi$ to a dynamical coupling $\sigma=\mathcal{T}_{*}\pi\in\mathcal{P}(\Geo_{[0,1]}(M,d))$
between two measure $\mu_{0}$ and $\mu_{1}$. If $\pi$ is $p$-optimal
than we say $\sigma\in\mathcal{P}(\Geo_{[0,1]}(M,d))$ is a $p$-optimal
dynamical coupling. Now one may readily verify that $t\mapsto(e_{t})_{*}\sigma$
is a geodesic connecting $(e_{0})_{*}\sigma$ and $(e_{1})_{*}\sigma$.
Denote the set of of $p$-optimal dynamical couplings between $\mu_{0}$
and $\mu_{1}$ by $\OptGeo_{p}(\mu_{0},\mu_{1})$. 

Also note each geodesic $t\mapsto\mu_{t}$ in $\mathcal{P}_{p}(M)$
is induced by a (unique) measure $\sigma\in\mathcal{P}(\Geo_{[0,1]}(M,d))$
such that $(e_{t})_{*}\sigma=\mu_{t}$. In this case it is easy to
see that $(e_{t},e_{s})_{*}\sigma$ is a $p$-optimal coupling between
$\mu_{t}$ and $\mu_{s}$. 

Recall that disintegrating a dynamical coupling $\sigma$ over $(e_{0},e_{1}):\Geo_{[0,1]}(M,d)\to M\times M$
shows that 
\[
\sigma=\int\sigma_{x,y}d\pi(x,y)
\]
where $\pi=(e_{0},e_{1})_{*}\sigma$ and $(x,y)\mapsto\sigma_{x,y}$
is a measurable assignment of dynamical couplings between $\delta_{x}$
and $\delta_{y}$. Similarly, we can disintegrate $\sigma$ over $e_{0}$
to obtain $\sigma=\int\sigma_{x_{0}}d\mu_{0}(x_{0})$ such that for
$\mu_{0}$-almost all $x_{0}\in M$ the probability measure $\sigma_{x_{0}}$
is a dynamical coupling between $\delta_{x_{0}}$ and a probability
measure $\mu_{x_{0}}$ with $\pi=\int\delta_{x_{0}}\otimes\mu_{x_{0}}d\mu(x_{0})$.
Furthermore, if $\sigma$ is $p$-optimal then $\sigma_{x_{0}}$ is
$p$-optimal for $\mu_{0}$-almost all $x_{0}\in M$. 

The following is the well-known restriction property of optimal couplings,
see \cite{Villani2008OptTrans}. Compare the following notation also
the the concept push-forward via a plan, see e.g. \cite[Definition 2.1]{AGS2014RCD}.
\begin{lem}
\label{lem:restriction-property}Assume $\mu_{0},\mu_{1}\in\mathcal{P}_{p}(M)$
and $\sigma$ is a $p$-optimal dynamical coupling between $\mu_{0}$
and $\mu_{1}$. Then for $f:M\times M\to[0,\infty)$ with $\lambda=\int fd\pi\in(0,\infty)$
the measure 
\[
\sigma_{f}=\lambda^{-1}\int\sigma_{x,y}f(x,y)d\pi(x,y)
\]
is a $p$-optimal coupling between $\mu_{0}^{f}$ and $\mu_{1}^{f}$
where
\begin{align*}
\mu_{0}^{f} & =(p_{1})_{*}\sigma_{f}\\
\mu_{1}^{1} & =(p_{2})_{*}\sigma_{f}.
\end{align*}
Furthermore, if, in addition $f\le1$,$\int fd\pi\in(0,1)$ and $\tilde{\sigma}_{f}$
is another $p$-optimal dynamical coupling between $\mu_{0}^{f}$
and $\mu_{1}^{f}$ then 
\[
\tilde{\sigma}=\lambda\tilde{\sigma}_{f}+(1-\lambda)\sigma_{1-f}
\]
is also a $p$-optimal coupling between $\mu_{0}$ and $\mu_{1}$.
\end{lem}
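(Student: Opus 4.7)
The plan is to verify the two claims by direct computation using the disintegration of $\sigma$ over $(e_0,e_1)$, together with the standard fact that couplings supported on $c_p$-cyclically monotone sets are $p$-optimal.

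First I would compute the $(e_0,e_1)$-pushforward of $\sigma_f$. Since $\sigma=\int\sigma_{x,y}\,d\pi(x,y)$ with $\sigma_{x,y}\in\mathcal{P}(\Geo_{[0,1]}(M,d))$ concentrated on geodesics from $x$ to $y$, one has $(e_0,e_1)_*\sigma_{x,y}=\delta_{(x,y)}$, and therefore
\[
(e_0,e_1)_*\sigma_f=\lambda^{-1}\int\delta_{(x,y)}f(x,y)\,d\pi(x,y)=:\pi_f=\lambda^{-1}f\cdot\pi.
\]
Thus $\mu_0^f$ and $\mu_1^f$ are the two marginals of $\pi_f$, and they belong to $\mathcal{P}_p(M)$ because the $p$-moments are controlled by those of $\mu_0,\mu_1$ up to the factor $\lambda^{-1}\sup f$ (or by Jensen/dominated convergence if $f$ is merely integrable, using that $\pi$ already has finite $p$-cost). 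Since $\pi_f\ll\pi$ we get $\supp\pi_f\subset\supp\pi$, and the latter is $c_p$-cyclically monotone because $\pi$ is $p$-optimal. The standard result that any coupling with marginals in $\mathcal{P}_p(M)$ concentrated on a $c_p$-cyclically monotone set is $p$-optimal then yields that $\pi_f$ is $p$-optimal between $\mu_0^f$ and $\mu_1^f$; hence $\sigma_f$ is a $p$-optimal dynamical coupling by definition.

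For the \emph{furthermore} part, assume $f\le1$ and $\lambda\in(0,1)$. Checking marginals of $\tilde\sigma=\lambda\tilde\sigma_f+(1-\lambda)\sigma_{1-f}$ is routine: by linearity and the identity above, $(e_0)_*\tilde\sigma=\lambda\mu_0^f+(1-\lambda)\mu_0^{1-f}=(p_1)_*(f\pi)+(p_1)_*((1-f)\pi)=(p_1)_*\pi=\mu_0$, and symmetrically $(e_1)_*\tilde\sigma=\mu_1$. To see $p$-optimality, I would bound the transport cost of $\tilde\pi=(e_0,e_1)_*\tilde\sigma$ from above and compare with $W_p^p(\mu_0,\mu_1)$. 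Since $\tilde\sigma_f$ is a $p$-optimal dynamical coupling between $\mu_0^f$ and $\mu_1^f$, and so is $\sigma_f$, the two induced couplings have the same cost $W_p^p(\mu_0^f,\mu_1^f)=\lambda^{-1}\int d(x,y)^p f(x,y)\,d\pi$. Therefore
\[
\int d(x,y)^p\,d\tilde\pi=\lambda\int d^p\,d\tilde\pi_f+(1-\lambda)\int d^p\,d\pi_{1-f}=\int d^p f\,d\pi+\int d^p(1-f)\,d\pi=\int d^p\,d\pi,
\]
which equals $W_p^p(\mu_0,\mu_1)$; together with the correct marginals this forces $\tilde\sigma$ to be $p$-optimal.

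I do not expect a serious obstacle: the only non-tautological input is the classical implication ``$c_p$-cyclically monotone support plus finite $p$-cost $\Rightarrow$ $p$-optimal'', which I would simply invoke from Villani's text. The mildly delicate bookkeeping step is verifying that $\mu_0^f,\mu_1^f\in\mathcal{P}_p(M)$ so that $W_p$ is well-defined on them; this is immediate once one notes $\int d(x,y)^p f\,d\pi\le\int d(x,y)^p\,d\pi<\infty$ because $\pi$ is already a $p$-optimal (hence finite-cost) coupling of measures in $\mathcal{P}_p(M)$ and $f$ is bounded or has finite $\pi$-integral.
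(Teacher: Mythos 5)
Your proposal is correct and follows essentially the same route as the paper: the first claim is the restriction property of optimal transport, which you re-derive in a self-contained way from the $c_{p}$-cyclical monotonicity of $\supp\pi$ (noting $(e_{0},e_{1})_{*}\sigma_{f}=\lambda^{-1}f\pi\ll\pi$), and the second claim follows from linearity of the cost functional together with the marginal check, exactly as in the paper. The only small caveat is your moment bookkeeping: the inequality $\int d^{p}f\,d\pi\le\int d^{p}\,d\pi$ requires $f\le1$ (or boundedness of $f$), so for merely $\pi$-integrable $f$ the marginals $\mu_{0}^{f},\mu_{1}^{f}$ need not lie in $\mathcal{P}_{p}(M)$ — a point the paper also glosses over and which is harmless in all its applications, where $f$ is bounded.
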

\begin{rem*}
(1) If $f$ is a function depending only on the first coordinate then
$\mu_{0}^{f}=\lambda^{-1}f\mu_{0}$.

(2) If $\Gamma\subset M\times M$ is Borel set with $\sigma(\Gamma)>0$
then we write $\sigma_{\Gamma}=\sigma_{\chi_{\Gamma}}=\frac{1}{\pi(\Gamma)}\sigma\big|_{\hat{\Gamma}}$
where $\hat{\Gamma}=(e_{0},e_{1})^{-1}(\Gamma)$.
\end{rem*}
\begin{proof}
The first part follows from the restriction property of optimal transport
and the second from linearity of the cost functional 
\[
\sigma\mapsto\int d(\gamma_{0},\gamma_{1})^{p}d\sigma(\gamma)
\]
and the fact that $\tilde{\sigma}$ is still a dynamical coupling
between $\mu_{0}$ and $\mu_{1}$.
\end{proof}
It is easy to see that whenever there are two distinct $p$-optimal
coupling $\pi_{1}$ and $\pi_{2}$ between $\mu_{0}$ and $\mu_{1}$
then there are at least two distinct $p$-optimal dynamical coupling
$\sigma_{1}$ and $\sigma_{2}$ between $\mu_{0}$ and $\mu_{1}$.
By convexity this would actually give a continuum of $p$-optimal
(dynamical) couplings. If, however, the dynamical coupling is unique
we get the following for restrictions of the endpoints.
\begin{cor}
\label{cor:uniqueness-of-restrictions}Let $\mu_{0}$ and $\mu_{1}$
be two measures in $\mathcal{P}_{p}(M)$. If there is a unique $p$-optimal
dynamical coupling $\sigma$ between $\mu_{0}$ and $\mu_{1}$ then
for any $f\in L^{\infty}(\pi)$ the $p$-optimal dynamical coupling
$\sigma_{f}$ is unique between $\mu_{0}^{f}$ and $\mu_{1}^{f}$.
\end{cor}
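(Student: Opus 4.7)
The plan is to argue by contradiction, using the second half of Lemma \ref{lem:restriction-property} to promote a putative non-uniqueness of $\sigma_f$ to a non-uniqueness of $\sigma$ itself. First I would reduce to a normalized situation: since both $\sigma_f$ and the marginals $\mu_0^f,\mu_1^f$ are unchanged under replacing $f$ by $cf$ for any $c>0$, I may scale $f$ so that $0\le f\le 1$. If $f$ is $\pi$-almost everywhere constant then $\sigma_f$ coincides with $\sigma$ up to renormalization, and $\mu_i^f=\mu_i$, so the statement follows directly from the hypothesis. Hence I may further assume $\lambda:=\int f\,d\pi\in(0,1)$, which puts us in the exact setting where the second part of Lemma~\ref{lem:restriction-property} applies.

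Next, observe that by construction the given optimal dynamical coupling itself splits as
\[
\sigma \;=\; \lambda\,\sigma_f \;+\; (1-\lambda)\,\sigma_{1-f},
\]
simply because
\[
\lambda\sigma_f+(1-\lambda)\sigma_{1-f} \;=\; \int \sigma_{x,y}\,f(x,y)\,d\pi(x,y) \;+\; \int \sigma_{x,y}\,(1-f(x,y))\,d\pi(x,y) \;=\; \sigma.
\]
Now suppose $\tilde{\sigma}_f$ is a second $p$-optimal dynamical coupling between $\mu_0^f$ and $\mu_1^f$. Applying the second part of Lemma~\ref{lem:restriction-property} with this $\tilde{\sigma}_f$ in place of $\sigma_f$ shows that
\[
\tilde{\sigma} \;:=\; \lambda\,\tilde{\sigma}_f \;+\; (1-\lambda)\,\sigma_{1-f}
\]
is a $p$-optimal dynamical coupling between $\mu_0$ and $\mu_1$.

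Finally, the uniqueness hypothesis forces $\tilde{\sigma}=\sigma$. Subtracting the common piece $(1-\lambda)\sigma_{1-f}$ and dividing by $\lambda>0$ yields $\tilde{\sigma}_f=\sigma_f$, contradicting the assumption. Hence $\sigma_f$ is the unique $p$-optimal dynamical coupling between $\mu_0^f$ and $\mu_1^f$. The only mildly delicate point is the scaling/normalization step, since Lemma~\ref{lem:restriction-property} is stated for $f$ with values in $[0,1]$ and $\int f\,d\pi\in(0,1)$; beyond that, the argument is essentially a two-line linearity calculation driven entirely by the gluing formula.
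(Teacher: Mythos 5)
Your proof is correct and follows essentially the same route as the paper: normalize $f$ so that $f\le 1$ and $\lambda=\int f\,d\pi\in(0,1)$, then use the gluing statement of Lemma \ref{lem:restriction-property} to turn a second coupling $\tilde{\sigma}_f$ into a second coupling $\tilde{\sigma}=\lambda\tilde{\sigma}_f+(1-\lambda)\sigma_{1-f}$ between $\mu_0$ and $\mu_1$, contradicting uniqueness. Your explicit treatment of the degenerate case where $f$ is $\pi$-a.e.\ constant (so that $\int f\,d\pi=1$ after normalization) is a small but welcome addition that the paper's proof elides.
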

\begin{proof}
Note that $\sigma_{f}=\sigma_{cf}$ for all $c>0$. Thus if $f\in L^{\infty}(\pi)$
then it is possible to replace $f$ by $\frac{1}{\|f\|_{\infty}}f$
and assume without loss of generality $f\le1$ and $\int fd\m\in(0,1)$.
Thus the dynamical coupling $\tilde{\sigma}:=\lambda\tilde{\sigma}_{f}+(1-\lambda)\sigma_{1-f}$
is $p$-optimal between $\mu_{0}$ and $\mu_{1}$ whenever $\tilde{\sigma}_{f}$
is $p$-optimal between $\mu_{0}^{f}$ and $\mu_{1}^{f}$. Furthermore,
if $\tilde{\sigma}_{f}$ is distinct from $\sigma_{f}$ then $\tilde{\sigma}$
is also distinct from $\sigma$ proving the claim.
\end{proof}

\subsection*{Non-branching geodesics}

Given a set $\Gamma\subset M\times M$ we frequently use the following
abbreviations 
\[
\hat{\Gamma}=(e_{0},e_{1})^{-1}(\Gamma)
\]
and for $t,s\in[0,1]$
\begin{align*}
\Gamma_{t} & =e_{t}(\hat{\Gamma})\\
\Gamma_{t,s} & =(e_{t},e_{s})(\hat{\Gamma}).
\end{align*}
Thus $\hat{\Gamma}$ is the set of geodesics with endpoints $(x,y)\in\Gamma$
and $\Gamma_{t}$ is the set of $t$-midpoints, where $z$ is a $t$-midpoint
of $x$ and $y$ if $\gamma_{t}=z$ for a geodesic connecting $x$
and $y$.

For a set $A\subset M$ and $x\in M$ we also use the abbreviation
\[
A_{t,x}=\{\gamma_{t}\,|\,\text{for some }\gamma\in\Geo_{[0,1]}(M,d)\,\text{ with }\gamma_{0}\in A\,\text{ and }\gamma_{1}=1\}.
\]

Let $\mathsf{L}$ be a subset of geodesics, then we denote by $\mathsf{L}^{-1}$
the set of reversed geodesics, i.e. 
\[
\mathsf{L}^{-1}=\{t\mapsto\gamma_{1-t}\,|\,\gamma\in\mathsf{L}\}.
\]
 Similarly, let $\Gamma^{-1}=\{(y,x)\,|\,(x,y)\in\Gamma\}$. It is
easy to see that $(\Gamma^{-1})^{\wedge}=(\hat{\Gamma})^{-1}$. 
\begin{defn}
[non-branching set]A set of geodesics $\mathsf{L}\subset\operatorname{Geo}_{[0,1]}(M,d)$
is \emph{non-branching to the right} if for all $\gamma,\eta\in\mathsf{L}$
with $\restr_{0,t}\gamma=\restr_{0,t}\eta$ it holds $\gamma\equiv\eta$.
Similarly, $\mathsf{L}$ is \emph{non-branching to the left} if $\mathsf{L}^{-1}$
is non-branching to the right. Furthermore, $\mathsf{L}$ is non-branching
if it is both non-branching to the left and to the right.
\end{defn}
\begin{rem*}
Non-branching to the left is the same as Rajala\textendash Sturm's
non-branching condition \cite[Section 2.2]{RS2014NonbranchStrongCD}.
This lack of symmetry is irrelevant in their study as essentially
non-branching is a symmetric condition when one changes initial and
final points (see below).
\end{rem*}
The following is well-known and follows from strict convexity of $r\mapsto r^{p}$
and the triangle inequality \cite{CH2015NBTrans,Kell2015}.%

\begin{lem}
\label{lem:cyc-mon-intersect-implies-same-length}Assume for $\gamma,\eta\in\Geo_{[0,1]}(M,d)$
it holds 
\[
d^{p}(\gamma_{0},\gamma_{1})+d^{p}(\eta_{0},\eta_{1})\le d^{p}(\gamma_{0},\eta_{1})+d^{p}(\eta_{0},\gamma_{1}).
\]
Then $\gamma_{t}=\eta_{t}$ for some $t\in(0,1)$ implies $\ell(\gamma)=\ell(\eta)$
and if $(M,d)$ is, in addition, non-branching then $\gamma\equiv\eta$.
\end{lem}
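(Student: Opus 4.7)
The plan is to combine the triangle inequality with the strict convexity of $r\mapsto r^{p}$ for $p>1$. Setting $a=\ell(\gamma)$, $b=\ell(\eta)$, and $z=\gamma_{t}=\eta_{t}$, the triangle inequality gives $d(\gamma_{0},\eta_{1})\le ta+(1-t)b$ and $d(\eta_{0},\gamma_{1})\le tb+(1-t)a$. Strict convexity of $r\mapsto r^{p}$, together with $t\in(0,1)$, yields
\[
(ta+(1-t)b)^{p}+(tb+(1-t)a)^{p}\le a^{p}+b^{p},
\]
with equality precisely when $a=b$. Combined with the cyclic monotonicity hypothesis this produces the chain
\[
a^{p}+b^{p}\le d^{p}(\gamma_{0},\eta_{1})+d^{p}(\eta_{0},\gamma_{1})\le(ta+(1-t)b)^{p}+(tb+(1-t)a)^{p}\le a^{p}+b^{p},
\]
so all inequalities are equalities; the convexity step then forces $a=b$, i.e.\ $\ell(\gamma)=\ell(\eta)$.

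For the second statement, equality in the triangle inequalities implies that the concatenation $\alpha$ that follows $\gamma$ on $[0,t]$ and $\eta$ on $[t,1]$ has length equal to $d(\gamma_{0},\eta_{1})=a$, hence is itself a geodesic in $\Geo_{[0,1]}(M,d)$. The geodesics $\gamma$ and $\alpha$ coincide on $[0,t]$, so $\restr_{0,t}\gamma=\restr_{0,t}\alpha$, and non-branching to the right forces $\gamma\equiv\alpha$; in particular $\gamma_{s}=\eta_{s}$ for $s\in[t,1]$. Next, $\alpha$ and $\eta$ coincide on $[t,1]$, so the reversed curves $\alpha^{-1}$ and $\eta^{-1}$ agree on $[0,1-t]$; non-branching to the left, applied to $\alpha^{-1}$ and $\eta^{-1}$, forces $\alpha\equiv\eta$, hence $\gamma_{s}=\eta_{s}$ also for $s\in[0,t]$. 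The two identifications combine to give $\gamma\equiv\eta$.

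The main obstacle is conceptual rather than computational: one must use the full non-branching hypothesis, invoking both the forward and the reverse direction, since each application of non-branching only yields coincidence on one of the two sub-intervals $[0,t]$ or $[t,1]$. The strict-convexity sandwich step is routine once one notes that $p>1$ is implicit in the lemma (for $p=1$ the cyclic monotonicity inequality is vacuous via this route).
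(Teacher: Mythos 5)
Your proof is correct and is exactly the argument the paper has in mind: the paper does not write out a proof of this lemma, stating only that it "follows from strict convexity of $r\mapsto r^{p}$ and the triangle inequality," which is precisely your sandwich argument, followed by the standard concatenation-plus-non-branching step (applied once to the right and once to the left) for the second claim. No gaps; your remark that $p>1$ is needed for the strictness is also consistent with the paper's standing assumption $p\in(1,\infty)$.
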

Finally recall some properties of Wasserstein geodesics on essential
non-branching spaces. We collect the result which can be deduced from
\cite{RS2014NonbranchStrongCD,CM2016MeasRigid}. The reader may consult
the appendix for a proof of the theorem and its corollary.
\begin{defn}
[$p$-essentially non-branching] A metric measure space $(M,d,\m)$
is \emph{$p$-essentially non-branching} if for all $\mu_{0},\mu_{1}\in\mathcal{P}_{p}(M)$
with $\mu_{0},\mu_{1}\ll\m$, any optimal dynamical coupling $\sigma\in\OptGeo_{p}(\mu_{0},\mu_{1})$
is concentrated on a set of non-branching geodesics, i.e. there is
a measurable set $\mathsf{L}\subset\Geo_{[0,1]}(M,d)$ such $\sigma(\mathsf{L})=1$.
For brevity we say a measure $\m$ is $p$-essentially non-branching
if $(M,d,\m)$ is a metric measure space which is $p$-essentially
non-branching.
\end{defn}
\begin{rem*}
The property essentially non-branching introduced in \cite{RS2014NonbranchStrongCD}
is equivalent the property $2$-essentially non-branching.
\end{rem*}
\begin{thm}
\label{thm:ess-nb-summary}Assume $(M,d,\m)$ is a $p$-essentially
geodesic measure space. Then for every $p$-optimal dynamical coupling
$\sigma\in\mathcal{P}_{p}(M)$ with $(e_{0})_{*}\sigma,(e_{1})_{*}\sigma\ll\m$
the following holds:
\begin{enumerate}[label=(\roman*)]
\item For each $t\in(0,1)$ there is a Borel map $\mathsf{T}_{t}:M\to\Geo_{[0,1]}(M,d)$
such that the disintegration of $\sigma$ over $e_{t}$ 
\[
\sigma=\int\delta_{\mathsf{T}_{t}(x)}d\mu_{t}(x)
\]
where $\mu_{t}=(e_{t})_{*}\sigma$.
\item For each $t\in(0,1)$ there is a measurable set of geodesics $\mathsf{L}\subset\Geo_{[0,1]}(M,d)$
with $\sigma(\mathsf{L})=1$ and whenever $\gamma_{t}=\eta_{t}$ for
$\gamma,\eta\in\mathsf{L}$ then $\gamma\equiv\eta$.
\item For all disjoint Borel sets $\Gamma^{(1)},\Gamma^{(2)}\subset M\times M$
of positive $(e_{0},e_{1})_{*}\sigma$-measure the $t$-midpoints
of the restricted geodesics $s\mapsto(e_{s})_{*}\sigma_{\Gamma^{(i)}}$,
$i=1,2$, are mutually singular, i.e. 
\[
(e_{t})_{*}\sigma_{\Gamma^{(1)}}\bot(e_{t})_{*}\sigma_{\Gamma^{(2)}}.
\]
\end{enumerate}
\end{thm}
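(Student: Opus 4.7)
The central claim is the following strengthening of (iii): for any two disjoint Borel sets $A_{1},A_{2}\subset\Geo_{[0,1]}(M,d)$ of positive $\sigma$-measure, the $t$-midpoint measures $(e_{t})_{*}(\sigma|_{A_{1}})$ and $(e_{t})_{*}(\sigma|_{A_{2}})$ are mutually singular. Part (iii) is this statement applied to $A_{i}=(e_{0},e_{1})^{-1}(\Gamma^{(i)})$, and parts (i), (ii) follow from it by applying the General Selection Dichotomy (Corollary after Theorem~\ref{thm:selection-dichotomy-product-measures}) to $\sigma$ and $e_{t}$: the first alternative produces $\mathsf{T}_{t}$ realizing (i) and, intersected with the non-branching set of the definition, (ii); the second alternative would contradict the strengthened statement.

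\emph{Setup via essential non-branching.} By $p$-essential non-branching applied both to $\sigma$ and to its time reversal $\sigma^{-1}$ (whose marginals are still absolutely continuous with respect to $\m$), $\sigma$ is concentrated on a Borel set $\mathsf{L}\subset\Geo_{[0,1]}(M,d)$ that is non-branching in both directions: if $\gamma,\eta\in\mathsf{L}$ satisfy $\restr_{0,s}\gamma=\restr_{0,s}\eta$ or $\restr_{s,1}\gamma=\restr_{s,1}\eta$ for some $s\in(0,1)$, then $\gamma\equiv\eta$.

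\emph{Swap and new geodesics.} Suppose the strengthened statement fails: disjoint $A_{1},A_{2}\subset\mathsf{L}$ of positive $\sigma$-measure have non-mutually-singular $t$-midpoints. By Lebesgue decomposition and reweighting through Lemma~\ref{lem:restriction-property}, obtain probability measures $\tau_{1},\tau_{2}$, each a normalized restriction of $\sigma$ (hence $p$-optimal and concentrated on a $c_{p}$-cyclically monotone set), with $\supp\tau_{i}\subset A_{i}$ and common $t$-midpoint $(e_{t})_{*}\tau_{1}=(e_{t})_{*}\tau_{2}=:\rho$. For any $x\in\supp\rho$ and any $\gamma\in\supp\tau_{1}$, $\eta\in\supp\tau_{2}$ with $\gamma_{t}=\eta_{t}=x$, the non-branching property of $\mathsf{L}$ (in both directions) forces $\restr_{0,t}\gamma\ne\restr_{0,t}\eta$ and $\restr_{t,1}\gamma\ne\restr_{t,1}\eta$, while Lemma~\ref{lem:cyc-mon-intersect-implies-same-length} gives $\ell(\gamma)=\ell(\eta)=:L$. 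The chain
\[
2L^{p}=d(\gamma_{0},\gamma_{1})^{p}+d(\eta_{0},\eta_{1})^{p}\le d(\gamma_{0},\eta_{1})^{p}+d(\eta_{0},\gamma_{1})^{p}\le 2L^{p},
\]
where the second step uses $d(\gamma_{0},\eta_{1})\le d(\gamma_{0},x)+d(x,\eta_{1})=tL+(1-t)L=L$ and the symmetric bound, forces equality throughout. Hence the concatenations $\alpha_{1}:=\restr_{0,t}\gamma\cdot\restr_{t,1}\eta$ and $\alpha_{2}:=\restr_{0,t}\eta\cdot\restr_{t,1}\gamma$ are genuine geodesics of length $L$, distinct from both $\gamma$ and $\eta$.

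\emph{Partial swap yields branching.} Disintegrate $\tau_{i}=\int\tau_{i}^{x}\,d\rho(x)$ over $e_{t}$ and perform the pointwise swap $(\gamma,\eta)\mapsto(\alpha_{1},\alpha_{2})$ fiberwise in $x$ to obtain a dynamical coupling $\tilde\tau$ with $(e_{j})_{*}\tilde\tau=\tfrac{1}{2}((e_{j})_{*}\tau_{1}+(e_{j})_{*}\tau_{2})$ for $j=0,1$ and total $p$-cost equal to $\int L_{x}^{p}\,d\rho(x)=\mathrm{cost}(\sigma_{f})$, where $\sigma_{f}:=\tfrac{1}{2}(\tau_{1}+\tau_{2})$. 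The averaged coupling $\tilde\sigma_{f}:=\tfrac{1}{2}\sigma_{f}+\tfrac{1}{2}\tilde\tau$ has the same marginals and $p$-cost as $\sigma_{f}$, and is therefore $p$-optimal between $\mu_{0}^{f}$ and $\mu_{1}^{f}$. The gluing half of Lemma~\ref{lem:restriction-property} then produces $\tilde\sigma\in\OptGeo_{p}(\mu_{0},\mu_{1})$ with $\mu_{0},\mu_{1}\ll\m$; $p$-essential non-branching forces $\tilde\sigma$ to be concentrated on a non-branching set. But by construction $\supp\tilde\sigma$ carries positive mass on pairs $(\gamma,\alpha_{1})$ with $\restr_{0,t}\gamma=\restr_{0,t}\alpha_{1}$ yet $\restr_{t,1}\gamma\ne\restr_{t,1}\alpha_{1}$, the required contradiction. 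The main obstacle is the opening reduction: simultaneously arranging equal masses, disjoint supports in $\mathsf{L}$, and a common midpoint $\rho$ for $\tau_{1},\tau_{2}$ while preserving $p$-optimality, and carrying out the swap as a measurable construction in $x$; everything else is then a direct application of cyclic monotonicity, the restriction property, and essential non-branching.
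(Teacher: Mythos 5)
Your argument is sound in substance but organizes the proof in a genuinely different way from the paper. The paper proves (i) first: fiberwise over a midpoint $x$ it recombines \emph{all} incoming and outgoing halves into the product $\mu_{x}\otimes\nu_{x}$ glued through $x$ (Lemma \ref{lem:nb-implies-transport-to-delta} and Proposition \ref{prop:nb-implies-transport-to-delta}), concludes that every coupling $(e_{t},e_{s})_{*}\sigma$ is induced by a map, and then upgrades to the Dirac disintegration of (i) by running this for a dense set of times $s_{n}$; (ii) is read off from the set where the disintegration is Dirac, and (iii) is then immediate. You instead prove a strengthened (iii) first -- mutual singularity of the $t$-midpoints of any two disjoint-restriction pieces of $\sigma$ -- by equalizing midpoints (take $\nu_{1}\wedge\nu_{2}$ and weight by the Radon--Nikodym derivatives evaluated at $\gamma_{t}$), swapping first and second halves through the common midpoints, and contradicting essential non-branching; (i) then drops out in one stroke from the General Selection Dichotomy applied to the disintegration over $e_{t}$, since its second alternative produces two disjoint pieces with non-singular midpoints, and this avoids the paper's dense-times argument entirely. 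Two points should still be made explicit. First, for (ii) intersecting with the non-branching set from the definition is not enough, because coincidence at the single time $t$ is not covered by the non-branching property; instead take $\mathsf{L}=\{\gamma\,:\,\mathsf{T}_{t}(\gamma_{t})=\gamma\}$, which has full $\sigma$-measure by (i) and trivially has the property in (ii). Second, in the swap argument, asserting that $\supp\tilde{\sigma}$ ``carries positive mass on branching pairs'' is not yet a contradiction with ``$\tilde{\sigma}$ is concentrated on a non-branching set'', since one may a priori discard a $\tilde{\sigma}$-null set; the contradiction does follow from your construction because the swapped part is product-form fiberwise and, by the two-sided non-branching of $\mathsf{L}$ and the disjointness of $A_{1},A_{2}$, its second-half marginal is almost surely distinct from the second halves charged by $\tau_{1}^{x}$, so any set of full $\tilde{\sigma}$-measure still contains, for almost every first half, two geodesics with that first half and different continuations. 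This is exactly the robustness that the paper's full product recombination delivers automatically in Lemma \ref{lem:nb-implies-transport-to-delta} (``mass $<1$ on every non-branching set''). With that remark and the routine measurability of the fiberwise concatenation (as in the Remark after Lemma \ref{lem:nb-implies-transport-to-delta}), your proof is complete.
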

\begin{cor}
\label{cor:ess-nb-intermediate-transport}Assume $(M,d,\m)$ is $p$-essentially
non-branching. Then for each geodesic $t\mapsto\mu_{t}$ in $\mathcal{P}_{p}(M)$
connecting $\mu_{0},\mu_{1}\ll\m$ and $t\in(0,1)$ the geodesics
$s\mapsto\mu_{st}$ and $s\mapsto\mu_{t+s(1-t)}$ are the unique geodesics
connecting $\mu_{0}$ and $\mu_{t}$ and respectively $\mu_{t}$ and
$\mu_{1}$. Furthermore, the (unique) $p$-optimal couplings of $(\mu_{t},\mu_{0})$
and $(\mu_{t},\mu_{1})$ are induced by transport maps $T_{t,0},T_{t,1}:M\to M$.
\end{cor}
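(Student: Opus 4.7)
The plan is to exploit the unique dynamical lift $\sigma\in\mathcal{P}(\Geo_{[0,1]}(M,d))$ of the given Wasserstein geodesic $s\mapsto\mu_{s}$ together with Theorem \ref{thm:ess-nb-summary}, and to reduce the uniqueness claim at the intermediate time $t$ (where $\mu_{t}$ is not a priori absolutely continuous) to information at the endpoints $\mu_{0},\mu_{1}\ll\m$ via a concatenation trick. By Theorem \ref{thm:ess-nb-summary}(i) there is a Borel map $\mathsf{T}_{t}:M\to\Geo_{[0,1]}(M,d)$ with $\sigma=\int\delta_{\mathsf{T}_{t}(x)}d\mu_{t}(x)$; setting $T_{t,j}:=e_{j}\circ\mathsf{T}_{t}$ for $j=0,1$, a direct computation gives $(e_{t},e_{j})_{*}\sigma=(\operatorname{id},T_{t,j})_{*}\mu_{t}$, so these $p$-optimal couplings are induced by transport maps.

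For the uniqueness of the geodesic I would argue as follows, treating only $s\mapsto\mu_{st}$ (the other case being symmetric). Let $s\mapsto\nu_{s}$ be any $W_{p}$-geodesic from $\mu_{0}$ to $\mu_{t}$ with dynamical lift $\tilde{\sigma}'\in\OptGeo_{p}(\mu_{0},\mu_{t})$, and form the reparameterized concatenation $\tilde{\mu}_{s}:=\nu_{s/t}$ on $[0,t]$ and $\tilde{\mu}_{s}:=\mu_{s}$ on $[t,1]$. Using $W_{p}(\mu_{0},\mu_{t})=tW_{p}(\mu_{0},\mu_{1})$ and $W_{p}(\mu_{t},\mu_{1})=(1-t)W_{p}(\mu_{0},\mu_{1})$ together with a triangle-inequality pinch, one checks $W_{p}(\tilde{\mu}_{r},\tilde{\mu}_{s})=|r-s|W_{p}(\mu_{0},\mu_{1})$ for all $r,s\in[0,1]$, so $\tilde{\mu}$ is a $W_{p}$-geodesic from $\mu_{0}$ to $\mu_{1}$; let $\tilde{\sigma}$ denote its unique dynamical lift.

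Both $\sigma$ and $\tilde{\sigma}$ are $p$-optimal dynamical couplings between $\mu_{0},\mu_{1}\ll\m$, hence so is $\frac{1}{2}(\sigma+\tilde{\sigma})$. By $p$-essentially non-branching, Theorem \ref{thm:ess-nb-summary}(ii) applied to this convex combination at time $t$ supplies a measurable set $\mathsf{L}\subset\Geo_{[0,1]}(M,d)$ of full measure for both $\sigma$ and $\tilde{\sigma}$ in which any two elements coinciding at time $t$ must be equal. Applying Theorem \ref{thm:ess-nb-summary}(i) also to $\tilde{\sigma}$ yields $\tilde{\mathsf{T}}_{t}$ with $\tilde{\sigma}=\int\delta_{\tilde{\mathsf{T}}_{t}(x)}d\mu_{t}(x)$; for $\mu_{t}$-almost every $x$ both $\mathsf{T}_{t}(x)$ and $\tilde{\mathsf{T}}_{t}(x)$ pass through $x$ at time $t$ and belong to $\mathsf{L}$, so they coincide, $\sigma=\tilde{\sigma}$, and $\nu_{s}=\tilde{\mu}_{st}=\mu_{st}$.

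The uniqueness of the $p$-optimal coupling $\tilde{\pi}$ from $\mu_{t}$ to $\mu_{0}$ then follows, because any dynamical lift of $\tilde{\pi}$ induces a $W_{p}$-geodesic from $\mu_{t}$ to $\mu_{0}$, which by the uniqueness just established is the reversal of $s\mapsto\mu_{st}$; uniqueness of dynamical lifts identifies the coupling with $(e_{t},e_{0})_{*}\sigma=(\operatorname{id},T_{t,0})_{*}\mu_{t}$, and the case of $(\mu_{t},\mu_{1})$ is identical. The main obstacle is the concatenation step: verifying that the glued path $\tilde{\mu}$ really is a $W_{p}$-geodesic is what permits Theorem \ref{thm:ess-nb-summary} to be invoked on $\tilde{\sigma}$ through its absolutely continuous endpoints, thereby transferring the non-branching information to the intermediate time $t$.
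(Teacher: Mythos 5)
Your argument is correct and follows essentially the same route as the paper: concatenate the competitor geodesic from $\mu_{0}$ to $\mu_{t}$ with the given one from $\mu_{t}$ to $\mu_{1}$, lift to a $p$-optimal dynamical coupling with absolutely continuous endpoints, and use the delta-disintegration over $e_{t}$ from Theorem \ref{thm:ess-nb-summary} applied to the convex combination $\frac{1}{2}(\sigma+\tilde{\sigma})$ to force $\sigma=\tilde{\sigma}$. The only cosmetic difference is that you invoke part (ii) of that theorem together with separate applications of part (i), whereas the paper compares the disintegration maps of the average directly; both are the same mechanism.
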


\section{Spaces with good transport behavior}

In this section we study spaces where the existence of transport maps
is a priori assumed whenever the initial measure is absolutely continuous.
It turns out that such spaces are already $p$-essentially non-branching. 
\begin{defn}
[Good transport behavior]\label{def:GTB} A metric measure space
$(M,\sfd,\m)$ has \emph{good transport behavior} $\GTB_{p}$ for
$p\in(1,\infty)$, if for all $\mu,\nu\in\mathcal{P}_{p}(M)$ with
$\mu\ll\m$ any optimal transport plan between $\mu$ and $\nu$ is
induced by a map.
\end{defn}
\begin{rem*}
The condition was used in a recent work by F. Galaz-García, A. Mondino,
G. Sosa and the author \cite{GKMS2017GroupCD} in order to study the
orbit structure of groups acting isometrically on metric measure spaces
with $\GTB_{p}$. In particular, it can be used to exclude isometries
with too large fixed point set, see also \cite[Lemma 4.1]{Sosa2016}.
\end{rem*}
\begin{prop}
\label{prop:spaces-with-GTB}The following spaces have $\GTB_{p}$: 
\begin{enumerate}[label=(\roman*)]
\item Essentially non-branching $\mathsf{MCP}(K,N)$-spaces for $p=2$,
$K\in\mathbb{R}$, and $N\in[1,\infty)$. In particular, this includes,
essentially non-branching $\mathsf{CD}^{*}(K',N')$-spaces, essentially
non-branching $\text{\ensuremath{\mathsf{CD}}}(K,N)$-spaces, and
$\mathsf{RCD^{*}(K',N')}$-spaces, see \cite{GRS2016,CM2016TransMapsMCP}.
\item Non-branching, qualitatively non-degenerate spaces for all $p\in(1,\infty)$,
see \cite{CH2015NBTrans} and Definition \ref{def:qual-non-deg} below.
\item Any (local) doubling measure $\mu$ on $(\mathbb{R}^{n},\|\cdot\|_{\operatorname{Euclid}})$
or more generally on a Riemannian manifold, see \cite{GMc1996OptTrans}.
\end{enumerate}
\end{prop}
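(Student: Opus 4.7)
The plan is to dispatch each of the three claims either to Theorem \ref{thm:MCPimpliesGTB} of this paper or to a cited external result, since the proposition is really a catalogue of applications rather than the source of a genuinely new argument.

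For item (i), the first step is to verify that any $\MCP(K,N)$-space is qualitatively non-degenerate in the sense of Definition \ref{def:qual-non-deg}. Recall that $\MCP(K,N)$ supplies, for each bounded Borel set $A$ with $0<\m(A)<\infty$ and each $x$, a $W_{2}$-geodesic between $\tfrac{1}{\m(A)}\m\vert_{A}$ and $\delta_{x}$ whose intermediate density is bounded above by $\tfrac{1}{\m(A)}\bigl((1-t)\tau_{K,N}^{(1-t)}(\theta)\bigr)^{-N}$ along the contraction to $x$. A change-of-variables along this contraction map yields the set-level bound
\[
\m(A_{t,x}) \geq (1-t)^{N}\, c_{K,N}\bigl(\operatorname{diam}(A\cup\{x\})\bigr)\, \m(A),
\]
where $c_{K,N}(\cdot)\to 1$ as $t\to 0$ for fixed bounded $A\cup\{x\}$, so that $f(t):=(1-t)^{N}c_{K,N}(\cdot)$ satisfies $\limsup_{t\to 0}f(t)=1>\tfrac{1}{2}$. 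With the essentially non-branching hypothesis, Theorem \ref{thm:MCPimpliesGTB} applied for $p=2$ then gives $\GTB_{2}$. The listed sub-cases follow because $\CD^{*}(K',N')$ and $\CD(K,N)$ both imply $\MCP$ for suitable parameters, while $\RCD^{*}(K',N')$-spaces are automatically essentially non-branching by \cite{RS2014NonbranchStrongCD} and also satisfy $\MCP$.

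For item (ii), I would simply observe that non-branching immediately implies $p$-essentially non-branching for every $p$, since the whole geodesic space $\Geo_{[0,1]}(M,\sfd)$ is itself the required measurable non-branching set; Theorem \ref{thm:MCPimpliesGTB} then delivers $\GTB_{p}$ for every $p\in(1,\infty)$. Item (iii) is the classical result of Gangbo--McCann \cite{GMc1996OptTrans}: for the cost $d^{p}$ with $p\in(1,\infty)$ on a Riemannian manifold, a doubling initial measure does not charge the $c$-$c$-hypersurfaces on which the $c$-superdifferential is multi-valued, which combined with Rademacher-type differentiability forces any optimal coupling to be induced by a map via the exponential.

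The hard part will be the bookkeeping in (i): one must verify that the $f$ coming from $\MCP$ matches whichever variant (pointwise or ``uniformly'' qualitatively non-degenerate) is required by the cited form of Theorem \ref{thm:MCPimpliesGTB}, localizing to bounded regions as needed so that the distortion correction $c_{K,N}$ can be controlled uniformly. All other steps are routine.
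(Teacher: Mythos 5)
Your proposal is correct in substance, but it takes a genuinely different route from the paper: in the paper this proposition carries no proof at all -- it is a catalogue of previously known results, with item (i) attributed to \cite{GRS2016,CM2016TransMapsMCP}, item (ii) to \cite{CH2015NBTrans}, and item (iii) to \cite{GMc1996OptTrans}, and the paper explicitly presents its own Theorem \ref{thm:MCPimpliesGTB} as the \emph{extension} of this list rather than its source. You instead re-derive (i) and (ii) internally: for (i) you check that $\MCP(K,N)$ forces qualitative non-degenericity in the sense of Definition \ref{def:qual-non-deg} (with $f_{R,x_{0}}(t)\to 1$ as $t\to0$ after localizing the distortion coefficients to $B_{R}(x_{0})$) and then invoke Theorem \ref{thm:MCPimpliesGTB} with $p=2$; for (ii) you observe that metric non-branching makes every dynamical coupling trivially concentrated on a non-branching set, so the same theorem applies for every $p$. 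This is logically sound -- Theorem \ref{thm:MCPimpliesGTB} nowhere uses the proposition, so there is no circularity even though the proposition is stated earlier in the text -- and it buys a self-contained treatment in which (i) and (ii) become corollaries of the paper's machinery; note that the paper itself later supplies essentially your $\MCP\Rightarrow$ qualitative non-degenericity step via the bounded density property (Lemma \ref{lem:bdd-dens-imply-qual-non-deg} together with \cite[Theorem 3.1]{CM2016TransMapsMCP}), so your direct change-of-variables sketch duplicates an argument the paper records anyway. What your route does \emph{not} buy is independence: the original citations prove (i) and (ii) without the present paper, which is why the author can legitimately call Theorem \ref{thm:MCPimpliesGTB} a generalization. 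Two small bookkeeping points you should make explicit: $\MCP$ is usually formulated for $x\in\supp\m$ (harmless under the standing full-support convention, but Definition \ref{def:qual-non-deg} quantifies over all $x\in B_{R}(x_{0})$), and for the sub-cases of (i) you should say precisely which implications $\CD(K,N)\Rightarrow\MCP(K,N)$ and $\CD^{*}(K',N')\Rightarrow\MCP(K',N')$ you use under the essentially non-branching hypothesis. Item (iii) you handle exactly as the paper does, by deferring to \cite{GMc1996OptTrans}.
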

The last example shows that there is an abundance of spaces with good
transport behavior. However, we will show that the existence of transport
maps prevents too much branching and excludes therefore normed spaces
whose norm is not strictly convex. Note that the main theorem of this
note extends the list above to $p$-essentially non-branching, qualitatively
non-degenerate spaces, see Theorem \ref{thm:MCPimpliesGTB}.

The first two lemmas were proved in a slightly different form in \cite{GKMS2017GroupCD}.
Recall that for $\Gamma\subset M\times M$ and $s,t\in[0,1]$ we define
$\Gamma_{s,t}:=(e_{s},e_{t})\left((e_{0},e_{1})^{-1}\Gamma\right)$.
\begin{lem}
\label{lem:middle-cp-cyl-mon}Let $\Gamma\subset M\times M$ be a
$c_{p}$-cyclically monotone set. Then for any $s,t\in[0,1]$ the
set $\Gamma_{s,t}$ is $c_{p}$-cyclically monotone.
\end{lem}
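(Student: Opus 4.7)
The plan is to discretise: lift a finite test family from $\Gamma_{s,t}$ back into $\Gamma$ via geodesics, and then exploit the fact that an intermediate slicing of a $p$-optimal dynamical coupling is itself $p$-optimal and therefore has $c_p$-cyclically monotone support. The case $s=t$ is trivial and swapping $s,t$ only relabels the cyclical inequality, so I would reduce to $0 \le s < t \le 1$.

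Given a finite family $(z^i,w^i) \in \Gamma_{s,t}$, $i=1,\dots,n$, I select geodesics $\gamma^i \in \Geo_{[0,1]}(M,d)$ with $(\gamma^i_0,\gamma^i_1) \in \Gamma$, $\gamma^i_s = z^i$ and $\gamma^i_t = w^i$, and form the uniform discrete dynamical coupling $\sigma = \tfrac{1}{n}\sum_i \delta_{\gamma^i}$. Its endpoint coupling $\pi := (e_0,e_1)_*\sigma$ is supported on a finite subset of the $c_p$-cyclically monotone set $\Gamma$; for a finitely supported coupling, $c_p$-cyclical monotonicity of the support is equivalent to $p$-optimality, since it reduces to the combinatorial optimality criterion for a finite assignment problem (cf.\ \cite{Villani2008OptTrans}). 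Hence $\pi$ is a $p$-optimal coupling between its marginals, and consequently $\sigma$ is a $p$-optimal dynamical coupling.

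By the construction of Wasserstein geodesics recalled in the preliminaries, $r \mapsto \mu_r := (e_r)_*\sigma$ is then a geodesic in $(\mathcal{P}_p(M),W_p)$, and
\[
\int d(\gamma_s,\gamma_t)^p\, d\sigma(\gamma) = (t-s)^p W_p(\mu_0,\mu_1)^p = W_p(\mu_s,\mu_t)^p,
\]
so the intermediate coupling $(e_s,e_t)_*\sigma = \tfrac{1}{n}\sum_i \delta_{(z^i,w^i)}$ is itself $p$-optimal between $\mu_s$ and $\mu_t$. Its support $\{(z^i,w^i)\}_{i=1}^n$ is therefore $c_p$-cyclically monotone, and since the test family was arbitrary, $\Gamma_{s,t}$ is $c_p$-cyclically monotone.

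The only step that is not purely formal is the cyclical-monotonicity-implies-optimality direction invoked for $\pi$; in the discrete setting this is an elementary cycle-swap argument, so I would simply cite the standard reference rather than redo it. A direct attack via triangle inequalities and convexity of $r \mapsto r^p$ would handle the two-point case but becomes cumbersome for arbitrary permutations, which is why the discretisation route is preferable.
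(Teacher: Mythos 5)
Your proof is correct and is essentially the argument the paper gives: lift a finite test family in $\Gamma_{s,t}$ to geodesics with endpoints in $\Gamma$, observe that the resulting uniform discrete dynamical coupling is $p$-optimal because its endpoint support is a finite $c_p$-cyclically monotone set, and conclude via optimality of the intermediate coupling $(e_s,e_t)_*\sigma$. The only difference is that you spell out the ``cyclical monotonicity implies optimality'' step and the $W_p(\mu_s,\mu_t)$ computation, which the paper leaves implicit.
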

\begin{proof}
Choose $(x_{s}^{i},x_{t}^{i})\in\Gamma_{s,t}$, $i=1,\ldots,n$, and
note that there are geodesics $\gamma^{(i)}\in\hat{\Gamma}$, $i=1,\ldots,n$,
with $(\gamma_{s}^{(i)},\gamma_{t}^{(i)})=(x_{s}^{(i)},x_{t}^{(i)}).$
By assumption 
\[
\bigcup_{i=1}^{n}\{(\gamma_{0}^{(i)},\gamma_{1}^{(i)})\}\subset\Gamma
\]
is $c_{p}$-cyclically monotone and hence 
\[
\sigma=\frac{1}{n}\sum\delta_{\gamma^{(i)}}
\]
is a $p$-optimal dynamical coupling. Observe that 
\[
\bigcup_{n=1}^{n}\{(\gamma_{s}^{(i)},\gamma_{t}^{(i)})\}=\supp(e_{s},e_{t})_{*}\sigma
\]
is $c_{p}$-cyclically monotone because $(e_{s},e_{t})_{*}\sigma$
is $p$-optimal. Since $\bigcup_{n=1}^{n}\{(\gamma_{0}^{(i)},\gamma_{1}^{(i)})\}\subset\Gamma_{s,t}$
this shows that $\Gamma_{s,t}$ is $c_{p}$-cylically monotone.
\end{proof}
Recall that for a subset $\Gamma\subset M\times M$ we define for
$x\in M$
\[
\Gamma(x)=\{y\in M\,|\,(x,y)\in\Gamma\}.
\]

\begin{lem}
[{\cite[Lemma 4.5]{GKMS2017GroupCD}}]\label{lem:GTB-superdiff}A
metric measure space $(M,\sfd,\m)$ has $\GTB_{p}$ if and only if
for every $c_{p}$-cyclically monotone $\Gamma$, the set $\Gamma(x)$
contains at most one point for $\m$-almost all $x\in M$.

In particular, if $(M,\sfd,\m)$ has $\GTB_{p}$ then for any closed
$c_{p}$-cyclically monotone set $\Gamma$ and $\m$-almost all $x\in M$
there exists a unique geodesic connecting $x$ and $\Gamma(x)$, whenever
the set $\Gamma(x)$ is non-empty.
\end{lem}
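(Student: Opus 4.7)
The proof has two directions plus the geodesic-uniqueness addendum; both directions rest on the Selection Dichotomy (Theorem \ref{thm:selection-dichotomy-set}) together with the standard fact that any coupling supported on a $c_p$-cyclically monotone set is $p$-optimal between its marginals. For the forward direction I would argue by contradiction: suppose a Borel $c_p$-cyclically monotone $\Gamma$ has $\#\Gamma(x) \ge 2$ on a Borel set of positive $\m$-measure; restrict such a set to a bounded $A$ and put $\mu := \m\big|_A/\m(A)$. Applying Theorem \ref{thm:selection-dichotomy-set} to $(\mu, \Gamma)$ excludes case (i) and therefore supplies a compact $K \subset A$ of positive $\mu$-measure together with two selections $T_1, T_2$ of $\Gamma$ with $T_1(K) \cap T_2(K) = \varnothing$; shrinking $K$ further one may assume $T_i(K)$ is bounded. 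Then $\mu_0 := \m\big|_K/\m(K) \ll \m$ and
\[
\pi := \tfrac{1}{2}(\operatorname{id} \times T_1)_*\mu_0 + \tfrac{1}{2}(\operatorname{id} \times T_2)_*\mu_0
\]
has $\supp \pi \subset \Gamma$ which is $c_p$-cyclically monotone, so $\pi$ is $p$-optimal between $\mu_0 \ll \m$ and $(p_2)_*\pi \in \mathcal{P}_p(M)$. By $\GTB_p$ it must be induced by a map, contradicting $\pi_x = \tfrac{1}{2}(\delta_{T_1(x)} + \delta_{T_2(x)})$ on $K$.

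For the reverse direction, fix $\mu_0 \ll \m$, $\mu_1 \in \mathcal{P}_p(M)$, and any $p$-optimal $\pi \in \Opt_p(\mu_0, \mu_1)$. The set $\Gamma := \supp \pi$ is $c_p$-cyclically monotone and satisfies $\mu_0(p_1(\Gamma)) = 1$. By hypothesis $\Gamma(x)$ is a singleton for $\m$-almost every, hence $\mu_0$-almost every, $x$. Case (ii) of the Selection Dichotomy applied to $(\mu_0, \Gamma)$ would yield a compact $K$ of positive $\mu_0$-measure -- and hence of positive $\m$-measure -- on which $\#\Gamma(x) \ge 2$, contradicting the hypothesis. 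So case (i) holds and $\pi$ is induced by a measurable map.

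For the geodesic-uniqueness addendum, let $\Gamma$ be closed and $c_p$-cyclically monotone. By Lemma \ref{lem:middle-cp-cyl-mon}, for each $t \in (0,1)$ the set $\Gamma_{0,t}$ is also $c_p$-cyclically monotone, so the main part (already proved) yields that $\Gamma_{0,t}(x)$ is at most a singleton for $\m$-almost every $x$. Intersecting the corresponding full-measure sets over a countable dense $\{t_n\} \subset (0,1)$ produces $\Omega \subset M$ of full $\m$-measure on which, for $x \in \Omega$ with $\Gamma(x) \ne \varnothing$, any two geodesics $\gamma, \eta$ starting at $x$ with endpoints in $\Gamma(x)$ satisfy $\gamma_{t_n} = \eta_{t_n}$ for all $n$ and hence coincide by continuity. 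The main obstacle throughout is the bookkeeping in the forward direction -- choosing $A$ and $K$ so that boundedness, absolute continuity of $\mu_0$, and $c_p$-cyclical monotonicity of $\supp \pi$ are all simultaneously preserved -- but everything is handled by the compactness conclusion of the Selection Dichotomy.
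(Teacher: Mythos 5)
Your proof is correct and follows essentially the same route as the paper's: both directions are handled by the Selection Dichotomy for Sets (Theorem \ref{thm:selection-dichotomy-set}) together with the optimality of couplings supported on $c_{p}$-cyclically monotone sets, and the geodesic-uniqueness addendum uses Lemma \ref{lem:middle-cp-cyl-mon} and an intersection of full-measure sets over a countable dense set of times, exactly as in the paper. If anything, your write-up is more explicit than the paper's terse argument about the bookkeeping (boundedness of $T_{i}(K)$ so that the second marginal lies in $\mathcal{P}_{p}(M)$).
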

\begin{rem*}
The lemma applies in particular to the $c_{p}$-superdifferential
$\partial^{c_{p}}\varphi$ of $c_{p}$-concave functions $\varphi$.
\end{rem*}
\begin{proof}
The first part follows from the Selection Dichotomy for Sets (Theorem
\ref{thm:selection-dichotomy-set}) and the fact that the support
$\Gamma=\supp\pi$ of a $p$-optimal coupling $\pi$ is $c_{p}$-cyclically
monotone. Indeed, the second possibility of the Selection Dichotomy
applied to $\Gamma$ and $\mu=(p_{1})_{*}\pi$ would imply that there
is a compact set $K\subset p_{1}(\Gamma)$ and two maps $T_{1},T_{2}:K\to M$
with $T_{1}(x)\ne T_{2}(x)$ for $x\in K$ and $\{T_{1}(x),T_{2}(x)\}\subset\Gamma(x)$
for $x\in p_{1}(\Gamma)$ and that for $\mu_{0}=\frac{1}{\mu(K)}\mu\big|_{K}$
the following coupling
\[
\frac{1}{2}\left((\operatorname{id}\times T_{1})\mu_{0}+(\operatorname{id}\times T_{1})\mu_{0}\right)
\]
is $p$-optimal and not induced by a transport map. Therefore, either
of the condition implies that $\mu_{0}$ cannot be absolutely continuous
with respect $\m$.

To prove the last statement suppose $(M,\sfd,\m)$ has $\GTB_{p}$
and observe that by the previous lemma whenever $\gamma$ is a geodesic
connecting $x$ and $y\in\Gamma(x)$ then $\gamma_{t}\in\Gamma_{0,t}(x)$. 

Let $D_{t}=\{x\in M\,|\,\Gamma_{0,t}(x)\ne\varnothing\}$ and note
that $D_{t}\subset D_{t'}$ whenever $0\le t'\le t\le1$. Let $(t_{n})_{n\in\mathbb{N}}$
be dense in $(0,1]$ with $t_{1}=1$ and choose a measurable set $\Omega_{n}\subset D_{1}$
of full $\m$-measure in $D_{1}$ such that $\Gamma_{0,t_{n}}(x)$
is single-valued for all $x\in\Omega_{n}$. Then $\Omega=\cap_{n\in\mathbb{N}}\Omega_{n}$
also has full $\m$-measure in $D_{1}$. Let $\gamma$ and $\eta$
be two geodesics connecting $x\in\Omega$ and $y\in\Gamma_{0,1}(x)$.
If $\gamma$ and $\eta$ were distinct then there is an open interval
$I\subset(0,1)$ such that $\gamma_{s}\ne\eta_{s}$ for all $s\in I$.
In particular, there is an $n>0$ such that $t_{n}\in I$. Hence $\gamma_{t_{n}}\ne\eta_{t_{n}}$
and $\Gamma_{0,t_{n}}(x)$ is not single-valued. However, this is
a contradiction as $x\in\Omega\subset\Omega_{n}$ implies that $\Gamma_{0,t_{n}}(x)$
is single-valued.
\end{proof}
\begin{lem}
\label{lem:from-mid-to-superdiff}Let $\varphi$ be a $c_{p}$-concave
function and $(x_{0},x_{1}),(y_{0},y_{1})\in\partial^{c_{p}}\varphi$
be such that for some $t_{0}\in(0,1)$ it holds that $x_{t_{0}}=y_{t_{0}}$,
where $x_{t}$ and $y_{t}$ are $t$-midpoints of $(x_{0},x_{1})$
and $(y_{0},y_{1})$ respectively. Then $(x_{0},y_{1}),(y_{0},x_{1})\in\partial^{c_{p}}\varphi$. 
\end{lem}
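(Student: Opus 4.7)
The plan is to exploit $c_{p}$-cyclical monotonicity of $\partial^{c_{p}}\varphi$ together with the concatenation of geodesics at the common midpoint, and use strict convexity of $r\mapsto r^{p}$ to force equality throughout a chain of inequalities.

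First I would unpack the defining inequality of the superdifferential. Recall that $(x,y)\in\partial^{c_{p}}\varphi$ is equivalent to $\varphi(x)+\varphi^{c}(y)=d^{p}(x,y)$, whereas the estimate $\varphi(x)+\varphi^{c}(y)\le d^{p}(x,y)$ holds for every pair $(x,y)$. Applying this to the \emph{swapped} pairs $(x_{0},y_{1})$ and $(y_{0},x_{1})$ and adding, while using the equality cases for $(x_{0},x_{1}),(y_{0},y_{1})\in\partial^{c_{p}}\varphi$, gives the a priori lower bound
\[
d^{p}(x_{0},x_{1})+d^{p}(y_{0},y_{1})\le d^{p}(x_{0},y_{1})+d^{p}(y_{0},x_{1}).
\]

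Second, I would obtain the matching upper bound from the midpoint hypothesis. Let $\gamma,\eta\in\Geo_{[0,1]}(M,d)$ be geodesics realising the pairs $(x_{0},x_{1})$ and $(y_{0},y_{1})$, with $\gamma_{t_{0}}=\eta_{t_{0}}$. Concatenating $\restr_{0,t_{0}}\gamma$ with $\restr_{t_{0},1}\eta$ yields an admissible curve from $x_{0}$ to $y_{1}$, and symmetrically one from $y_{0}$ to $x_{1}$. Writing $a=d(x_{0},x_{1})$, $b=d(y_{0},y_{1})$, the triangle inequality gives
\[
d(x_{0},y_{1})\le t_{0}a+(1-t_{0})b,\qquad d(y_{0},x_{1})\le(1-t_{0})a+t_{0}b.
\]
Now I would apply convexity of $r\mapsto r^{p}$ to both, and sum, obtaining
\[
d^{p}(x_{0},y_{1})+d^{p}(y_{0},x_{1})\le a^{p}+b^{p}=d^{p}(x_{0},x_{1})+d^{p}(y_{0},y_{1}),
\]
since $t_{0}$ and $1-t_{0}$ swap roles between the two convex combinations so the weights sum to $1$ on both $a^{p}$ and $b^{p}$.

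Combining the two chains forces equality throughout. Going back to the sum of the two a priori inequalities $\varphi(x_{0})+\varphi^{c}(y_{1})\le d^{p}(x_{0},y_{1})$ and $\varphi(y_{0})+\varphi^{c}(x_{1})\le d^{p}(y_{0},x_{1})$, equality of the sums combined with both being individually upper bounds forces equality in each, which is precisely the statement that $(x_{0},y_{1})$ and $(y_{0},x_{1})$ lie in $\partial^{c_{p}}\varphi$. There is essentially no obstacle here beyond the linear bookkeeping; no further structure on $(M,d)$ is used beyond being geodesic, and neither non-branching nor the measure $\m$ plays any role. The argument is in the spirit of Lemma \ref{lem:cyc-mon-intersect-implies-same-length}, which already exploits this same convexity-plus-triangle-inequality mechanism.
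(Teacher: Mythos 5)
Your proof is correct, but it is a genuinely different argument from the one in the paper. The paper proves the lemma by rescaling the potential: it introduces $\varphi_{t}=t^{p-1}\varphi$, uses the fact (cited from \cite{Kell2015}) that $(\varphi_{t},\varphi_{t}^{c_{p}})$ is a dual solution for the pair $(\mu_{0},\mu_{t})$ with $\mu_{s}=\frac{1}{2}(\delta_{x_{s}}+\delta_{y_{s}})$, evaluates the resulting identities at $t=t_{0}$ where the midpoints coincide to deduce $\varphi(x_{0})=\varphi(y_{0})$, and then verifies the defining equality for the swapped pairs using the equal-length conclusion of Lemma \ref{lem:cyc-mon-intersect-implies-same-length}. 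You instead sandwich the quantity $d^{p}(x_{0},y_{1})+d^{p}(y_{0},x_{1})$: from below by $d^{p}(x_{0},x_{1})+d^{p}(y_{0},y_{1})$ via the inequality $\varphi+\varphi^{c_{p}}\le d^{p}$ summed over the swapped pairs together with equality on $\partial^{c_{p}}\varphi$, and from above by the same quantity via concatenation at the common midpoint, the triangle inequality, and convexity of $r\mapsto r^{p}$ (the weights $t_{0}$ and $1-t_{0}$ indeed recombine to give exactly $a^{p}+b^{p}$). Equality of the sum then forces equality in each of the two individual superdifferential inequalities, which is the claim. Your route is more elementary and self-contained — it does not need the rescaled potentials or the external fact about $\varphi_{t}$ — and it reproves the relevant case of Lemma \ref{lem:cyc-mon-intersect-implies-same-length} along the way; the paper's route yields the additional identity $\varphi(x_{0})=\varphi(y_{0})$, which however is not used elsewhere. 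Both arguments use only that $(M,d)$ is geodesic; neither needs $\m$ nor any non-branching hypothesis.
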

\begin{proof}
Choose geodesics $s\mapsto x_{s}$ and $s\mapsto y_{s}$ between $x_{0}$
and $x_{1}$ and resp. $y_{0}$ and $y_{1}$ and define 
\[
\mu_{s}=\frac{1}{2}\left(\delta_{x_{s}}+\delta_{y_{s}}\right).
\]
Note that $(\varphi,\varphi^{c_{p}})$ is a dual solution for the
measures $\mu_{0}$ and $\mu_{1}$. 

We write $\varphi_{t}=t^{p-1}\varphi$ and note that the function
$\varphi_{t}$ is $c$-concave and $(\varphi_{t},\varphi_{t}^{c_{p}})$
a dual solution for the measure $\mu_{0}$ and $\mu_{t}$ (\cite[2.9 and Remark after 2.1]{Kell2015}).
Denote the $c_{p}$-duals of $\varphi$ and $\varphi_{t}$ by $\psi$
and $\psi_{t}$ respectively. 

Since $\partial^{c_{p}}\varphi$ is $c_{p}$-cyclically monotone,
Lemma \ref{lem:cyc-mon-intersect-implies-same-length} shows that
\[
d(x_{0},x_{1})=d(y_{0},y_{1})=d(x_{0},y_{1})=\sfd(y_{0},x_{1}).
\]
Furthermore, $(x_{0},x_{t}),(y_{0},y_{t})\in\partial^{c_{p}}\varphi_{t}$
by the choice of geodesics $s\mapsto x_{s}$ and $s\mapsto y_{s}$.
Hence 
\[
\varphi_{t}(x_{0})+\psi_{t}(x_{t})=\sfd^{p}(x_{0},x_{t})=\sfd^{p}(y_{0},y_{t})=\varphi_{t}(y_{0})+\psi_{t}(y_{t}).
\]
For $t=1$ we obtain 
\[
\varphi(x_{0})+\psi(x_{1})=\varphi(y_{0})+\psi(y_{1}),
\]
and because $x_{t_{0}}=y_{t_{0}}$ for some $t_{0}\in(0,1)$, we also
have 
\[
\varphi_{t_{0}}(x_{0})+\psi_{t_{0}}(x_{t})=\varphi_{t_{0}}(y_{0})+\psi_{t_{0}}(x_{t})
\]
implying 
\[
\varphi(x_{0})=t_{0}^{1-p}\varphi_{t_{0}}(x_{0})=t_{0}^{1-p}\varphi_{t_{0}}(y_{0})=\varphi(y_{0}).
\]
Therefore,
\[
\varphi(x_{0})+\psi(y_{1})=\sfd^{p}(x_{0},x_{1})=\sfd^{p}(x_{0},y_{1})
\]
which shows $(x_{0},y_{1})\in\partial^{c_{p}}\varphi$. Similarly,
it holds $(y_{0},x_{1})\in\partial^{c_{p}}\varphi$. 
\end{proof}
The following is a direct application of the last lemma. 
\begin{prop}
\label{prop:GTB-ENB}Assume $(M,\sfd,\m)$ has $\GTB_{p}$ and let
$\mu_{0},\mu_{1}\in\mathcal{P}^{p}(M)$ with $\mu_{0}\ll\m$. Then
there is a unique $p$-optimal dynamical plan $\sigma\in\mathrm{OptGeo}_{p}(\mu_{0},\mu_{1})$
and a measurable set $\mathsf{L}$ with $\sigma(\mathsf{L})=1$ which
is non-branching to the right. In particular, a metric measure space
with $\GTB_{p}$ is $p$-essentially non-branching. 
\end{prop}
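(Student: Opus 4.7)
The plan is to extract everything from the characterization of $\GTB_p$ provided by Lemma \ref{lem:GTB-superdiff}. I would start by fixing an arbitrary $p$-optimal dynamical coupling $\sigma \in \OptGeo_p(\mu_0, \mu_1)$, setting $\pi = (e_0, e_1)_* \sigma$ and $\Gamma = \supp \pi$; by $p$-optimality $\Gamma$ is closed and $c_p$-cyclically monotone. Lemma \ref{lem:GTB-superdiff} then supplies an $\m$-null set $N \subset M$ outside of which (i) $\Gamma(x)$ contains at most one point and (ii) whenever $\Gamma(x) \neq \varnothing$ there is a unique geodesic from $x$ to $\Gamma(x)$. Since $\mu_0 \ll \m$ we have $\mu_0(N) = 0$, which is the bridge that converts the ``$\m$-a.e.'' statements of Lemma \ref{lem:GTB-superdiff} into the ``$\sigma$-a.e.'' statements I need.

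Uniqueness of $\pi$ would follow by observing that for any other $\pi' \in \Opt_p(\mu_0, \mu_1)$ the set $\Gamma \cup \Gamma' = \supp\bigl(\tfrac{1}{2}(\pi + \pi')\bigr)$ is still $c_p$-cyclically monotone; Lemma \ref{lem:GTB-superdiff} applied to this union forces $\Gamma(x) = \Gamma'(x) = \{T(x)\}$ for $\mu_0$-a.e.\ $x$, hence $\pi = \pi' = (\mathrm{id} \times T)_* \mu_0$. Uniqueness of $\sigma$ is then automatic: for $\mu_0$-a.e.\ $x$ the geodesic $\gamma_{x, T(x)}$ from $x$ to $T(x)$ is unique by (ii), so any dynamical lift of $\pi$ must be $\int \delta_{\gamma_{x, T(x)}}\,d\mu_0(x)$.

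For the non-branching assertion I would take
\[
\mathsf{L} := \{\gamma \in \Geo_{[0,1]}(M, \sfd)\,:\,(\gamma_0, \gamma_1) \in \Gamma \text{ and } \gamma_0 \notin N\},
\]
so that $\sigma(\mathsf{L}) = \mu_0(M \setminus N) = 1$. To verify non-branching to the right, let $\gamma, \eta \in \mathsf{L}$ with $\restr_{0, t_0}\gamma = \restr_{0, t_0}\eta$ for some $t_0 \in (0, 1]$ (the case $t_0 = 1$ being trivial); then $x_0 := \gamma_0 = \eta_0 \notin N$ and $\gamma_{t_0} = \eta_{t_0}$. Both $\gamma_1, \eta_1$ lie in $\Gamma(x_0)$; single-valuedness at $x_0 \notin N$ gives $\gamma_1 = \eta_1$, and uniqueness of the geodesic from $x_0$ to $\Gamma(x_0)$ then yields $\gamma \equiv \eta$. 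For the ``in particular'' clause, when $\mu_1 \ll \m$ also holds I would run the identical argument on the reversed coupling $\sigma^{-1} \in \OptGeo_p(\mu_1, \mu_0)$ to produce a set non-branching to the right for $\sigma^{-1}$, equivalently non-branching to the left for $\sigma$; its intersection with $\mathsf{L}$ still carries full $\sigma$-mass and is non-branching in the usual sense.

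The main obstacle is modest and essentially bookkeeping: Lemma \ref{lem:GTB-superdiff} only delivers $\m$-nullness of bad points, and it is crucially $\mu_0 \ll \m$ that converts this into nullness at the level of $\sigma$. Lemma \ref{lem:from-mid-to-superdiff} is not strictly necessary along this route---it would be indispensable for handling interior crossings $\gamma_{t_0} = \eta_{t_0}$ with $\gamma_0 \ne \eta_0$---but it gives an equally short alternative derivation by forcing both $\gamma_1$ and $\eta_1$ into $\partial^{c_p}\varphi(\gamma_0)$ for a Kantorovich potential $\varphi$, after which single-valuedness of the superdifferential closes the argument in the same way.
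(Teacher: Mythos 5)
Your argument is correct, and it reaches the statement by a leaner route than the paper, though both are built on Lemma \ref{lem:GTB-superdiff}. The paper fixes a Kantorovich potential $\varphi$, defines the lift $\sigma=\mathcal{S}_*\mu_0$ through the transport map, and verifies a \emph{stronger} property of its set $\mathsf{L}=\mathcal{S}(A)$: two geodesics of equal length that merely cross at an interior time (possibly with \emph{different} starting points) must agree from the crossing time on; this is exactly where Lemma \ref{lem:from-mid-to-superdiff} enters, via the identity $(\gamma_0,\eta_1),(\eta_0,\gamma_1)\in\partial^{c_p}\varphi$ and a concatenation argument. You instead observe that the literal definition of non-branching to the right already forces $\gamma_0=\eta_0$, so single-valuedness of $\Gamma(x_0)=\supp\pi(x_0)$ together with uniqueness of the geodesic from $x_0$ to $\Gamma(x_0)$ (the two conclusions of Lemma \ref{lem:GTB-superdiff}) closes the argument directly, with no potential and no Lemma \ref{lem:from-mid-to-superdiff}; likewise your uniqueness of the plan via the $c_p$-cyclically monotone support of $\tfrac12(\pi+\pi')$ replaces the paper's appeal to $\supp\pi\subset\partial^{c_p}\varphi$, and uniqueness of the lift follows as in the paper from $\pi$-a.e.\ uniqueness of connecting geodesics via disintegration over $(e_0,e_1)$. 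Your treatment of the ``in particular'' clause (running the argument on $\sigma^{-1}$ when $\mu_1\ll\m$ and intersecting) matches what the paper leaves implicit. What you give up is the stronger ``interior crossing'' property of $\mathsf{L}$, which the paper's version provides essentially for free and which is in the spirit of Theorem \ref{thm:ess-nb-summary}(ii); what you gain is a more elementary proof that avoids existence of a dual solution altogether. The only loose ends are the measurability bookkeeping you already flag: the exceptional set $N$ should be replaced by a Borel $\m$-null superset so that $\mathsf{L}=(e_0,e_1)^{-1}(\Gamma)\cap e_0^{-1}(M\setminus N)$ is Borel, which is harmless since $\mu_0(N)=0$ is all that is used.
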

\begin{proof}
Let $\mu_{0},\mu_{1},$ and $\pi$ be as above and $T$ be a $p$-optimal
transport map between $\mu_{0}$ and $\mu_{1}$. Assume $\mathcal{T}:M\times M\to\Geo(M,d)$
is a measurable selection such that $\mathcal{T}(x,y)_{0}=x$ and
$\mathcal{T}(x,y)_{0}=y$ and define a measurable map $\mathcal{S}:M\to\Geo(M,d)$
by 
\[
\mathcal{S}(x)=\mathcal{T}(x,T(x)).
\]
Note if $\varphi$ is a dual solution then $\supp\pi\subset\partial^{c_{p}}\varphi$. 

By Lemma \ref{lem:GTB-superdiff} there a Borel set $A$ of full $\mu_{0}$-measure
such that 
\[
\left(A\times M\right)\cap\partial^{c_{p}}\varphi=\left(A\times M\right)\cap\operatorname{graph}T
\]
and for all $x\in A$ the geodesic $\mathcal{S}(x)$ is the unique
geodesic connecting $x$ and $T(x)$. This implies immediately that
the dynamical coupling $\sigma=\mathcal{S}_{*}\mu_{0}$ is the unique
$p$-optimal dynamical coupling between $\mu_{0}$ and $\mu_{1}$. 

It suffices to show that $\mathsf{L}=\mathcal{S}(A)$ is non-branching
to the right. For this let $\gamma,\eta\in\mathsf{L}$ be two geodesics
with $\ell(\gamma)=\ell(\eta)$ with $\gamma_{t}=\eta_{t}$ for some
$t\in(0,1)$. Lemma \ref{lem:from-mid-to-superdiff} implies that
$(\gamma_{0},\eta_{1})$ and $(\eta_{0},\gamma_{1})$ are both in
$\partial^{c_{p}}\varphi$. However, since $\gamma_{0},\eta_{0}\in A$
this means $\gamma_{1}=\eta_{1}=T(\gamma_{0})$. If we define now
\[
\tilde{\gamma}_{s}=\begin{cases}
\gamma_{s} & s\in[0,t]\\
\eta_{s} & s\in[t,1]
\end{cases}
\]
then $\tilde{\gamma}_{s}$ is also a geodesic connecting $\gamma_{0}$
and $\gamma_{1}$. The choice of $A$ yields $\tilde{\gamma}\equiv\gamma$.
Thus $\gamma_{s}=\eta_{s}$ for $s\in[t,1]$ implying that $\mathsf{L}$
is non-branching to the right.
\end{proof}
\begin{rem*}
The conclusion in the first part of Proposition \ref{prop:GTB-ENB}
above is stronger than the ordinary $p$-essentially non-branching
property as it takes into account arbitrary final measures rather
than just absolutely continuous ones. 
\end{rem*}
\begin{cor}
Assume $(M,d,\m)$ has good transport behavior $\GTB_{p}$ and is
strongly non-degenerate $\sND_{p}$ (see Definition \ref{def:str-non-deg}
below). Then for any $\mu_{0},\mu_{1}\in\mathcal{P}_{p}(M)$ with
$\mu_{0}\ll\m$ there is a unique $p$-optimal dynamical coupling
$\sigma$ between $\mu_{0}$ and $\mu_{1}$ and this coupling is concentrated
on a set of non-branching geodesics. Furthermore, $(e_{t})_{*}\sigma\ll\m$
for all $t\in(0,1)$. In particular, $\m$ has the strong interpolation
property $\sIP_{p}$.
\end{cor}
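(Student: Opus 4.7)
The plan is to bootstrap Proposition~\ref{prop:GTB-ENB}, which already takes care of the uniqueness of the $p$-optimal dynamical coupling $\sigma$ and of its concentration on a set $\mathsf{L}\subset\Geo_{[0,1]}(M,d)$ that is non-branching to the right. What remains is (i) the absolute continuity $(e_t)_{*}\sigma\ll\m$ for $t\in(0,1)$, (ii) upgrading non-branching-to-the-right to full non-branching, and (iii) the $\sIP_p$ conclusion. Of these, (iii) is immediate from the definition once (i) is established, and (ii) is a short time-reversal argument once (i) is available, so the real work is in (i).

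For (i) I would argue by contradiction. Suppose that for some $t\in(0,1)$ there is a Borel $N\subset M$ with $\m(N)=0$ but $(e_{t})_{*}\sigma(N)>0$. Set $\mathsf{L}'=\{\gamma\in\mathsf{L}\,|\,\gamma_{t}\in N\}$, which is Borel with $\sigma(\mathsf{L}')>0$, and form the normalized restriction $\sigma'$. By the restriction property (Lemma~\ref{lem:restriction-property}) $\sigma'$ is still $p$-optimal, and its initial marginal $\mu_{0}'$ is dominated by a multiple of $\mu_{0}$ and hence absolutely continuous with respect to $\m$; in particular the Borel set $A$ of initial points of $\mathsf{L}'$ has $\m(A)>0$. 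Because $\mathsf{L}$ is non-branching to the right, the map $\gamma\mapsto\gamma_{0}$ is essentially injective on $\mathsf{L}'$, and its time-$t$ image lies in $N$. I would then feed the $c_{p}$-cyclically monotone set $\Gamma=\supp(e_{0},e_{1})_{*}\sigma'$ (together with its time-$t$ slice, see Lemma~\ref{lem:middle-cp-cyl-mon}) into $\sND_p$ to obtain a lower bound $\m(\Gamma_{t})\ge f(t)\m(\Gamma_{0})\ge f(t)\m(A)>0$. Since by non-branching-to-the-right $\Gamma_{t}$ coincides modulo $\m$-null sets with the image of $A$ in $N$, this contradicts $\m(N)=0$.

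The main obstacle is precisely this last step: the qualitative non-degenericity of Definition~\ref{def:qual-non-deg}, with its fixed endpoint $x$, is not enough, and the variable-endpoint upgrade provided by $\sND_p$ (Definition~\ref{def:str-non-deg}) is exactly what is needed here; one must also be careful that the identification between the slice $\Gamma_{t}$ and the ``actual'' $t$-midpoints realized by $\sigma'$ holds up to $\m$-null sets, which is where non-branching-to-the-right plays a second, quieter, role. Once (i) is established, for claim (ii) I would fix $t\in(0,1)$ and consider the time-reversal of the restricted coupling between $\mu_{0}$ and $\mu_{t}$: its initial marginal $\mu_{t}$ is now absolutely continuous, so a second application of Proposition~\ref{prop:GTB-ENB} yields non-branching-to-the-right for the reversal, equivalently non-branching-to-the-left for $\sigma$ on $[0,t]$. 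Intersecting the resulting full-$\sigma$-measure sets over a sequence $t_{n}\uparrow 1$ produces a single $\mathsf{L}''\subset\mathsf{L}$ of full $\sigma$-measure on which $\sigma$ is non-branching on $[0,1)$, and by continuity on all of $[0,1]$. Claim (iii) is then the conjunction of uniqueness of $\sigma$ and (i), and so requires no further work.
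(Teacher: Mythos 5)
Your division of labor is sound, and parts (ii) and (iii) are fine: uniqueness and right-non-branching come from Proposition \ref{prop:GTB-ENB}, the time-reversal argument works once the interpolations are known to be absolutely continuous (so that Proposition \ref{prop:GTB-ENB} can be re-applied with $\mu_{t}\ll\m$ as initial measure), and $\sIP_{p}$ is then a restatement. Note, however, that the paper gets (i) with far less effort: $\sND_{p}$ together with Lemma \ref{lem:non-deg-implies-non-deg-concentration} makes every $p$-optimal coupling with absolutely continuous first marginal strongly consistent, so the GKS-Construction (Theorem \ref{thm:abs-cts-interpolation}, Corollary \ref{cor:abs-cts-interpolation-under-non-deg}) produces \emph{some} $p$-optimal dynamical coupling with $(e_{t})_{*}\sigma\ll\m$, and the uniqueness already secured by Proposition \ref{prop:GTB-ENB} forces that coupling to be $\sigma$ itself. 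Your contradiction argument deliberately bypasses this machinery, which is a genuinely different route, but as written its central step has a gap.

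The gap is in the sentence ``by non-branching-to-the-right $\Gamma_{t}$ coincides modulo $\m$-null sets with the image of $A$ in $N$.'' The set $\Gamma_{t}=e_{t}((e_{0},e_{1})^{-1}(\Gamma))$ collects the $t$-midpoints of \emph{all} geodesics whose endpoint pairs lie in $\Gamma=\supp(e_{0},e_{1})_{*}\sigma'$: this includes geodesics from $x$ to a second target $y'\in\Gamma(x)$, alternative geodesics from $x$ to $T(x)$ not charged by $\sigma'$, and every geodesic emanating from the exceptional $\m$-null set of initial points. None of these is controlled by the right-non-branching of $\mathsf{L}$, which only constrains pairs of geodesics \emph{inside} $\mathsf{L}$; and the null set of bad initial points could a priori carry all of the $\m$-mass of $\Gamma_{t}$ --- this ``unseen midpoints'' phenomenon is precisely what the maximality part of the GKS-Construction exists to rule out. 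The correct tool is Lemma \ref{lem:GTB-superdiff}: under $\GTB_{p}$ there is a Borel set $A$ with $\mu_{0}'(A)=1$ (hence $\m(A)>0$) on which $\Gamma(x)$ is a singleton and the connecting geodesic is unique, hence equal to the one charged by $\sigma'$ and therefore in $\mathsf{L}'$. You must replace $\Gamma$ by the still $c_{p}$-cyclically monotone Borel set $\Gamma\cap(A\times M)$ \emph{before} invoking $\sND_{p}$; only for this trimmed set is the $t$-slice contained in $N$. Finally, $\sND_{p}$ yields only $\m(\Gamma_{t})>0$, not the quantitative bound $f(t)\m(\Gamma_{0})$ you wrote (that belongs to qualitative non-degenericity), but positivity is all you need. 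With these repairs your argument goes through.
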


\subsection*{Example of essentially non-branching spaces with bad geometric behavior}

In this section we construct a measure on the tripod that is essentially
non-branching and for any two absolutely continuous measures there
is a unique transport map. However, the obvious branching in the tripod
shows that there is no measure that makes the tripod into a space
with good transport behavior. 
\begin{defn}
A metric measure space $(M,d,\m)$ has the \emph{weak good transport
behavior} $\GTB_{w,p}$ for $p\in(1,\infty)$, if for all $\mu_{0},\mu_{1}\in\mathcal{P}_{p}(M)$
with $\mu_{0},\mu_{1}\ll\m$ any optimal transport plan between $\mu_{0}$
and $\mu_{1}$ is induced by a map. 

Let $(\mathsf{T},d)$ be the tripod, i.e. $\mathsf{T}$ is obtained
by gluing together three intervals $I_{i}=[0_{i},1_{i}]$, $i=1,2,3$
at $\mathbf{0}=0_{1}=0_{2}=0_{3}$ and $d$ is the corresponding length
metric. Denote by $T_{i}$ the natural inclusions $[0,1]\to I_{i}\subset\mathsf{T}$.
\end{defn}
\begin{example*}
There is continuum of measures $\m$ of full support on $\mathsf{T}$
such that $(\mathsf{T},d,\m)$ is $p$-essentially non-branching and
has the weak good transport behavior $\GTB_{w,p}$ for all $p\in(1,\infty)$.
\end{example*}
\begin{proof}
[Sketch of the construction]Let $\nu_{0},\nu_{1},\nu_{2}$ three
non-atomic probability measures on $[0,1]$ with full support and
$\Omega_{i}$ three disjoint sets such that $\mu_{i}(\Omega_{j})=\delta_{ij}$.
Define a measure $\m$ on $\mathsf{T}$ by 
\[
\m\big|_{I_{i}}=(T_{i})_{*}\nu_{i}
\]
and a set $\Omega=\cup T_{i}(\Omega_{i}).$ Note that $\m(M\backslash\Omega)=0.$ 

If $x,y\in\Omega$ satisfy $d(\mathbf{0},x)=d(\mathbf{0},y)$ then
$x,y\in\Omega_{i}$ for exactly one $i=1,2,3$ and $x=y$. Thus since
branching can only happen at $\mathbf{0}$, any two geodesics with
endpoints in a $c_{p}$-cyclically monotone set $\Gamma\subset\Omega\times\Omega$
which intersect at a point $t\in(0,1)$ must be equal. In particular,
any such $\Gamma$ is already non-branching. Note that whenever $\mu_{0},\mu_{1}\ll\m$
and $\pi$ is a $p$-optimal coupling then 
\[
\pi(\Omega\times\Omega)=1
\]
so that $\pi$ is concentrated on the non-branching set $\supp\pi\cap(\Omega\times\Omega)$.

To obtain transport maps it is sufficient to assume $\mu_{0}\ll\m\big|_{I_{i}}$.
In that case let $S_{i}:\mathsf{T}\to[-1,1]$ be the map that collapses
$I_{j}$ and $I_{k}$ for $i\ne k,j$ where we assume $I_{i}$ corresponds
to $[-1,0]$. Note also that $S_{i}$ restricted to $\Omega$ is invertible
hence $(S_{i})_{*}\m$ is a non-atomic measure. This can be used to
show that the (unique) $p$-optimal transport map between $(S_{i})_{*}\mu_{0}$
and $(S_{i})_{*}\mu_{1}$ can be pulled back to a $p$-optimal transport
map. 

This construction works more general for all cost function $h(d(\cdot,\cdot))$
with $h$ strictly convex and increasing. 
\end{proof}

\section{Existence of absolutely continuous interpolations for non-degenerate
measures}

In this section we prove the existence of absolutely continuous interpolation
measures if the initial measure is absolutely continuous and the background
measure satisfies certain non-degenericity conditions. In order to
avoid proving very similar results for final measures supported on
finite sets and then on general sets, we generalize the construction
to optimal couplings concentrated on so called non-degenerate sets.

\subsection*{Non-degenerate measures and sets}

The following condition was introduced in \cite{CM2016MeasRigid}
and is based on stronger variant called \emph{qualitative non-degenericty}
(see blow) introduced earlier in \cite{CH2015NBTrans}. 
\begin{defn}
[non-degenerate measure]A metric measure space $(M,d,\m)$ is called
\emph{non-degenerate} if for all Borel sets $A$ with $\m(A)>0$ it
holds $\m(A_{t,x})>0$ for $t\in(0,1)$. 
\end{defn}
Our main goal is to prove existence of absolutely continuous interpolations.
We formalize the general a priori existence by the following condition.
\begin{defn}
[interpolation property]A metric measure space $(M,d,\m)$ is said
to have the \emph{interpolation property} $\IP_{p}$ for some $p\in(1,\infty)$
if for all $\mu_{0},\mu_{1}\in\mathcal{P}_{p}(M)$ with $\mu_{0}\ll\m$,
all $p$-optimal couplings $\pi\in\Opt_{p}(\mu_{0},\mu_{1})$ and
all $t\in(0,1)$ there is a $p$-optimal dynamical coupling $\sigma$
between $\mu_{0}$ and $\mu_{1}$ with $(e_{t})_{*}\sigma\ll\m$. 

It has the \emph{strong interpolation property} $\sIP_{p}$ for all
$p$-optimal dynamical coupling $\sigma$ between $\mu_{0}$ and $\mu_{1}$
it holds $(e_{t})_{*}\sigma\ll\m$.
\end{defn}
In order to show that the interpolation property $\IP_{p}$ holds
we will study the supports of optimal couplings and need a non-degenericity
condition of sets $\Gamma\subset M\times M$. 

The following notation will be used: Given a set $A$ and $s\in[0,1]$
define the set $\Gamma^{A,s}$ by 
\[
((e_{0},e_{1})e_{s}^{-1}(A))\cap\Gamma,
\]
i.e. we throw out all endpoints which cannot be reached via geodesics
having an $s$-midpoint in $A$. As above $\Gamma_{t}^{A,s}$ equals
$e_{t}(\Gamma^{A,s})$ whenever $t\in[0,1]$. Observe that if $A$
is analytic then $\Gamma_{t}^{A,s}$ is analytic for all $s,t\in[0,1]$.
Also in case $s=t=0$ this simplifies to $\Gamma_{0}^{A,0}=\Gamma_{0}\cap A$.
\begin{defn}
[non-degenerate set]An Borel  set $\Gamma\subset M\times M$ is \emph{non-degenerate}
(with respect to $\m$) if for all Borel sets $A$ with $\m(\Gamma_{0}\cap A)>0$
it holds $\m(\Gamma_{t}^{A,0})>0$ whenever $t\in(0,1)$.
\end{defn}
It is easy to see that $B\times\{x\}$ is non-degenerate for all $x\in M$
and all Borel sets $B\subset M$ whenever $\m$ is non-degenerate. 
\begin{defn}
[strong non-degenerate measure]\label{def:str-non-deg}A metric measure
space $(M,d,\m)$ is \emph{strongly non-degenerate $\sND_{p}$ }for
some $p\in(1,\infty)$ if every $c_{p}$-cyclically monotone Borel
set $\Gamma$ is non-degenerate. 
\end{defn}
\begin{rem*}
By abuse of notation we say $\m$ is (strongly) non-degenerate or
has the (strong) interpolation property if $(M,d,\m)$ is (resp. has)
the corresponding property.
\end{rem*}
It is easy to see that any strongly non-degenerate measure $\m$ is
also non-degenerate. Furthermore, a measure with strong interpolation
property $\sIP_{p}$ is necessarily strongly non-degenerate $\sND_{p}$.
The converse is true if the $p$-optimal dynamical coupling $\sigma$
is unique. In general this is wrong as can be seen by the metric measure
space $(\mathbb{R}^{n},\|\cdot-\cdot\|_{\infty},\lambda^{n})$ which
has many non-absolutely continuous interpolations from the Lebesgue
measures restricted to the unit ball to the delta measure at the origin,
see also Remark after the proof of Lemma \ref{lem:p-ess-nb-implies-p-str-non-deg}. 

However, the interpolation property is sufficient to show that the
space is strong non-degenerate. Via the existence of absolutely continuous
interpolations in the next section one can show that both properties
are actually equivalent.
\begin{lem}
Assume $(M,d,\m)$ is a metric measure space having the interpolation
property $\IP_{p}$. Then $(M,d,\m)$ is strongly non-degenerate $\sND_{p}$.
\end{lem}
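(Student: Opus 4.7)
The plan is: given a $c_p$-cyclically monotone Borel set $\Gamma\subset M\times M$ and a Borel set $A$ with $\m(\Gamma_0\cap A)>0$, construct marginals $\mu_0,\mu_1\in\mathcal{P}_p(M)$ with $\mu_0\ll\m$ admitting a $p$-optimal coupling concentrated on $\Gamma^{A,0}=\Gamma\cap(A\times M)$, then invoke $\IP_p$ to obtain an absolutely continuous interpolation whose mass at time $t$ sits inside $\Gamma_t^{A,0}$, which forces $\m(\Gamma_t^{A,0})>0$.

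For the construction, the Measurable Selection Theorem applied to the Borel set $\Gamma$ and the projection $p_1$ yields an $\m$-measurable selection $T\colon \Gamma_0\to M$ with $(x,T(x))\in\Gamma$ for every $x\in\Gamma_0$. Using local finiteness of $\m$, inner regularity and Lusin's theorem, one restricts to a bounded Borel subset $A_0\subset\Gamma_0\cap A$ of positive $\m$-measure on which $T$ is Borel and with bounded image $T(A_0)$. Set $\mu_0:=\tfrac{1}{\m(A_0)}\m\big|_{A_0}$ and $\mu_1:=T_*\mu_0$; both lie in $\mathcal{P}_p(M)$ by bounded support, and $\mu_0\ll\m$. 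The coupling $\pi:=(\operatorname{id}\times T)_*\mu_0$ is concentrated on $\Gamma^{A,0}$, has finite $p$-cost, and its support is $c_p$-cyclically monotone, so $\pi\in\Opt_p(\mu_0,\mu_1)$ by the standard sufficient condition for $p$-optimality.

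Now fix $t\in(0,1)$ and apply $\IP_p$ to the data $(\mu_0,\mu_1,\pi,t)$; the natural reading of the definition — in which $\pi$ is bound in the quantifier precisely because the $p$-optimal dynamical coupling $\sigma$ produced lifts it, i.e.\ $(e_0,e_1)_*\sigma=\pi$ — then yields a $\sigma$ concentrated on $(e_0,e_1)^{-1}(\Gamma^{A,0})$ with $(e_t)_*\sigma\ll\m$. Consequently $(e_t)_*\sigma$ is supported on $\Gamma_t^{A,0}$, and since $(e_t)_*\sigma$ is a probability measure absolutely continuous with respect to $\m$, we conclude $\m(\Gamma_t^{A,0})>0$. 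As $\Gamma$, $A$ and $t$ were arbitrary, $\m$ is $\sND_p$.

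The main obstacle is the reading of $\IP_p$: the argument relies on the dynamical coupling $\sigma$ being a lift of the specific $\pi$ we constructed, which is the reading naturally suggested by the way $\pi$ is quantified in the definition. If instead $\sigma$ were only required to be some $p$-optimal dynamical coupling between $\mu_0$ and $\mu_1$, one would further need to rule out that some such $\sigma$ is concentrated on geodesics with endpoints outside $\Gamma$; this can be arranged by choosing $\mu_1$ more carefully (for instance replacing $T_*\mu_0$ by a suitable discrete measure with atoms in well-separated small pieces of $T(A_0)$), so that the common Kantorovich potential's $c_p$-superdifferential meets $\supp\mu_0\times\supp\mu_1$ only inside $\Gamma$, recovering the same containment on the time-$t$ marginal.
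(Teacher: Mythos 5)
Your argument is essentially the paper's own proof: take a measurable selection $T$ of $\Gamma\cap(A\times M)$, set $\mu_0=\frac{1}{\m(A)}\m|_A$ and $\pi=(\operatorname{id}\times T)_*\mu_0$, invoke $\IP_p$, and read off $\m(\Gamma_t^{A,0})>0$ from $(e_t)_*\sigma(\Gamma_t^{A,0})=1$ together with $(e_t)_*\sigma\ll\m$. The paper uses the same reading of $\IP_p$ that you identify as the natural one (namely that $\sigma$ lifts the given $\pi$), and your extra care about bounded supports to guarantee finite $p$-th moments is a harmless refinement the paper glosses over.
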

\begin{proof}
Let $A$ be a Borel set  and $\Gamma$ be a $c_{p}$-cyclically monotone
Borel set with $\m(p_{1}(\Gamma)\cap A)>0$. Without loss of generality
$A\subset p_{1}(\Gamma)=\Gamma_{0}$. Let $\mu_{0}=\frac{1}{\m(A)}\m\big|_{A}$
and choose a measurable selection $T$ of $\Gamma\cap(A\times M)$.
Then $\pi=(\operatorname{id}\times T)_{*}\mu_{0}$ is a $p$-optimal
coupling. Let $\sigma$ be given by the interpolation property. Then
$(e_{t})_{*}\sigma(\Gamma_{t}^{A,0})=1$ and $(e_{t})_{*}\sigma\ll\m$
implying $\m(\Gamma_{t}^{A,0})>0$. Because $A$ and $\Gamma$ are
arbitrary we conclude that $(M,d,\m)$ ist strongly non-degenerate
$\sND_{p}$.
\end{proof}

\subsection*{The GKS-Construction}

In this section we construct an absolutely continuous interpolation
$\mu_{t}$ given a $p$-optimal coupling $\pi$ which is concentrated
(in a consistent way) on a non-degenerate set $\Gamma$ and its first
marginal $(p_{1})_{*}\pi$ is absolutely continuous. Furthermore,
we find a Borel set $\tilde{\Gamma}\subset\Gamma$ of full $\pi$-measure,
such that $\mu_{t}$ ``sees'' all points in $\tilde{\Gamma}_{t}$
of positive $\m$-measure. The last property turns out to be crucial
in order to apply the idea of Cavalletti\textendash Huesmann \cite{CH2015NBTrans}
in the setting of essentially non-branching spaces, see proof of Lemma
\ref{lem:no-overlap-non-deg} and Theorem \ref{thm:MCPimpliesGTB}. 

The proof of Theorem \ref{thm:abs-cts-interpolation} below is based
on the following generalized form of the Lebesgue decomposition which
can be found in \cite[Section 9.4]{Rudin2008}. One part of the result
was proven by Glicksberg and the other by König and Sievers owing
the name \emph{GKS-Decomposition}, see \cite[9.4.1]{Rudin2008}.
\begin{lem}
[GKS-Decomposition {\cite[9.4.4]{Rudin2008}}] Let $(M,d)$ be a locally
compact complete separable metric space and $\mathcal{B}\subset\mathcal{P}(M)$
be a weakly compact and linearly convex subset of probability measures.
Then every non-negative finite measure $\tilde{\m}$ has a unique
decomposition
\[
\tilde{\m}=\tilde{\m}_{a}+\tilde{\m}_{s}
\]
such that $\tilde{\m}_{a}\ll\mu$ for some $\mu\in\mathcal{B}$ and
there is a Borel set $F$ which is a countable union of closed subsets
such that $\m_{s}$ is concentrated on $F$ and, in addition, $F$
is $\mathcal{B}$-null, i.e. it holds $\tilde{\m}_{s}(M\backslash F)=0$
and $\nu(F)=0$ for all $\nu\in\mathcal{B}$.
\end{lem}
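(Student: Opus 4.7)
The plan is to reduce the statement to a classical Lebesgue decomposition with respect to a single optimally chosen reference measure $\mu_{\infty}\in\mathcal{B}$, and then use the extremality of $\mu_{\infty}$ to promote the $\mu_{\infty}$-singular part to a part that is singular with respect to every $\nu\in\mathcal{B}$. For each $\mu\in\mathcal{B}$ write the classical Lebesgue decomposition $\tilde{\m}=\tilde{\m}^{\mu}_{a}+\tilde{\m}^{\mu}_{s}$ with $\tilde{\m}^{\mu}_{a}\ll\mu$, and set $\alpha=\sup_{\mu\in\mathcal{B}}\tilde{\m}^{\mu}_{a}(M)\le\tilde{\m}(M)$. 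Observe the key monotonicity: if $\mu_{1},\mu_{2}\in\mathcal{B}$ and $\lambda\in(0,1)$, then $\tilde{\m}^{\mu_{i}}_{a}\ll\lambda\mu_{1}+(1-\lambda)\mu_{2}$, so the supremum is not decreased by convex combinations, and in particular $\tilde{\m}^{\lambda\mu_{1}+(1-\lambda)\mu_{2}}_{a}(M)\ge\max(\tilde{\m}^{\mu_{1}}_{a}(M),\tilde{\m}^{\mu_{2}}_{a}(M))$.

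Next, pick $\mu_{n}\in\mathcal{B}$ with $\tilde{\m}^{\mu_{n}}_{a}(M)\uparrow\alpha$ and define $\mu_{\infty}=\sum_{n\ge1}2^{-n}\mu_{n}$. The renormalized finite partial sums lie in $\mathcal{B}$ by linear convexity, and their weak limit $\mu_{\infty}$ lies in $\mathcal{B}$ by weak closedness, which follows from weak compactness. Since $\mu_{n}\le 2^{n}\mu_{\infty}$, one has $\tilde{\m}^{\mu_{n}}_{a}\ll\mu_{\infty}$; as $\tilde{\m}^{\mu_{\infty}}_{a}$ is the maximal sub-measure of $\tilde{\m}$ that is absolutely continuous with respect to $\mu_{\infty}$, this forces $\tilde{\m}^{\mu_{n}}_{a}\le\tilde{\m}^{\mu_{\infty}}_{a}$, and letting $n\to\infty$ gives $\tilde{\m}^{\mu_{\infty}}_{a}(M)=\alpha$. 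Set $\tilde{\m}_{a}:=\tilde{\m}^{\mu_{\infty}}_{a}$ and $\tilde{\m}_{s}:=\tilde{\m}^{\mu_{\infty}}_{s}$. To see that $\tilde{\m}_{s}$ is singular with respect to every $\nu\in\mathcal{B}$, suppose its $\nu$-absolutely-continuous part $\eta\le\tilde{\m}_{s}$ were non-trivial for some $\nu\in\mathcal{B}$; then $\tilde{\m}_{a}+\eta\ll\tfrac{1}{2}(\mu_{\infty}+\nu)\in\mathcal{B}$ would give a sub-measure of $\tilde{\m}$ whose total mass exceeds $\alpha$, contradicting maximality. Uniqueness of the decomposition follows from the same trick: any rival pair $(\tilde{\m}_{a}',\tilde{\m}_{s}')$ produces the reference measure $\tfrac{1}{2}(\mu_{\infty}+\mu')\in\mathcal{B}$ with respect to which both decompositions satisfy the properties of the classical Lebesgue decomposition, so they coincide.

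The genuinely delicate step, and the main obstacle, is the construction of a \emph{single} $F_{\sigma}$-set $F$ that both carries $\tilde{\m}_{s}$ and is $\mathcal{B}$-null, rather than just one $\nu$-null carrier per $\nu\in\mathcal{B}$. Here the local compactness of $(M,d)$ is used in an essential way: write $M=\bigcup_{k}K_{k}$ as a countable union of compact sets (possible since locally compact separable metric spaces are $\sigma$-compact). For each $k$, inner regularity approximates the $\mu_{\infty}$-null carrier of $\tilde{\m}_{s}$ from inside $K_{k}$ by closed subsets. The content of the Glicksberg and K\"onig\textendash Sievers parts of the argument is to upgrade this approximation so that the closed subsets chosen are not merely $\mu_{\infty}$-null but uniformly small in $\sup_{\nu\in\mathcal{B}}\nu(\cdot)$; this uses the uniform tightness of $\mathcal{B}$ implied by weak compactness, together with a careful diagonal extraction over $k$. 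Taking the countable union of the resulting closed sets yields an $F_{\sigma}$-set $F$ that is $\nu$-null for every $\nu\in\mathcal{B}$ while still carrying $\tilde{\m}_{s}$, completing the proof.
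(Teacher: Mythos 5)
Your reduction to a single well-chosen reference measure is fine as far as it goes: the exhaustion argument (take $\mu_n\in\mathcal{B}$ with $\tilde{\m}^{\mu_n}_{a}(M)\uparrow\alpha$, pass to $\mu_{\infty}=\sum 2^{-n}\mu_n\in\mathcal{B}$ using linear convexity and weak closedness, and use maximality of $\alpha$ to conclude $\tilde{\m}_{s}\perp\nu$ for every $\nu\in\mathcal{B}$) is correct, and so is the uniqueness argument via the auxiliary reference measure $\tfrac12(\mu_{\infty}+\mu')$. Bear in mind, though, that the paper does not prove this lemma at all: it quotes Rudin [9.4.4] and only remarks on the passage from compact Hausdorff to locally compact spaces via the one-point compactification. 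So you are attempting a self-contained proof, and then the last step cannot be waved through.

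That last step is exactly where the genuine gap lies. What you have proved is only that $\tilde{\m}_{s}$ is singular with respect to each $\nu\in\mathcal{B}$ separately; the lemma asserts the much stronger fact that $\tilde{\m}_{s}$ sits on a \emph{single} $F_{\sigma}$ set $F$ with $\nu(F)=0$ for \emph{all} $\nu\in\mathcal{B}$, and this is the actual content of the Glicksberg/K\"onig\textendash Seever theorem. The mechanism you sketch for the upgrade (inner regularity plus uniform tightness of $\mathcal{B}$ plus a diagonal extraction) cannot work as stated, because it nowhere uses convexity, and without convexity the upgrade is false: take $M=[0,1]$, $\mathcal{B}=\{\delta_x\,|\,x\in[0,1]\}$ (weakly compact and tight, but not convex) and $\tilde{\m}$ the Lebesgue measure; then $\tilde{\m}\perp\delta_x$ for every $x$, yet the only set which is $\delta_x$-null for all $x$ is empty, so no common carrier exists. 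A correct completion must re-invoke compactness \emph{and} convexity of $\mathcal{B}$: the key claim is that for every $\epsilon>0$ there is one compact set $E$ with $\tilde{\m}_{s}(M\backslash E)<\epsilon$ and $\sup_{\nu\in\mathcal{B}}\nu(E)<\epsilon$, which is proved by a K\"onig\textendash Seever type argument combining regularity, convex combinations of the offending measures and upper semicontinuity of $\nu\mapsto\nu(E)$ on the compact set $\mathcal{B}$ (this is Rudin's 9.4.2\textendash 9.4.3). Once that claim is available, $F=\bigcup_{n}\bigcap_{m\ge n}E_{2^{-m}}$ is a countable union of compact sets with $\nu(F)=0$ for all $\nu\in\mathcal{B}$ and $\tilde{\m}_{s}(M\backslash F)=0$. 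Without this step your decomposition is strictly weaker than the statement, and in particular it would not suffice for the way the lemma is used in the GKS-Construction (Theorem \ref{thm:abs-cts-interpolation}), where the contradiction is produced by a newly constructed element of $\mathcal{B}$ charging the common null set $F$.
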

\begin{rem*}
(1) Linearly convex of $\mathcal{B}$ means that whenever $\mu,\nu\in\mathcal{B}$
then also $(1-\lambda)\mu+\lambda\nu\in\mathcal{B}$ for all $\lambda\in[0,1]$. 

(2) The lemma is usually stated for compact Hausdorff spaces. However,
one can embed $M$ into the one-point-compactification $M^{*}=\{*\}\cup M$
such that $\mathcal{B}$ is still compact in $\mathcal{P}(M^{*})$.
Note that $M^{*}$ is a compact Hausdorff space. Since each of the
involved measures gives zero measure to the set $\{*\}$, we see that
the lemma also holds for general locally compact Hausdorff spaces.
In particular, it holds for proper metric spaces.

(3) Recall that $\m$ is a locally bounded measure if $(M,d,\m)$
is a proper metric measure space. In that case there is a continuous
function $\varphi:[0,\infty)\to(0,1]$ such that $\tilde{\m}=\varphi(d(x_{0},\cdot))\m$
is a finite measure. Then the unique decomposition of $\m$ with respect
to $\mathcal{B}$ is given by $\m=(\varphi(d(x_{0},\cdot))^{-1}\tilde{\m}_{a}+(\varphi(d(x_{0},\cdot))^{-1}\tilde{\m}_{s}$. 
\end{rem*}
Before stating the main theorem of this section we need the following
technical lemmas. 
\begin{lem}
\label{lem:dyn-cpl-pass-through-A}If $A$ is an analytic set and
$\pi$ is a coupling concentrated on $\Gamma^{A,t}$ then there is
a dynamical coupling $\sigma$ concentrated on $e_{t}^{-1}(A)\cap\hat{\Gamma}$.
In particular, $(e_{t})_{*}\sigma(A)=1$.
\end{lem}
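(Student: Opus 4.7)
The plan is to realize $\sigma$ as the push-forward of $\pi$ under a suitable measurable selection. For each pair $(x_0,x_1)\in\Gamma^{A,t}$, by the very definition of $\Gamma^{A,t}$ there exists at least one geodesic $\gamma\in\Geo_{[0,1]}(M,d)$ with $\gamma_0=x_0$, $\gamma_1=x_1$ and $\gamma_t\in A$; my task is to choose one such geodesic in a measurable way.

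First I would consider the multivalued map $F:\Gamma^{A,t}\to 2^{\Geo_{[0,1]}(M,d)}$ given by
\[
F(x_0,x_1)=\bigl\{\gamma\in\Geo_{[0,1]}(M,d):\,e_0(\gamma)=x_0,\,e_1(\gamma)=x_1,\,e_t(\gamma)\in A\bigr\},
\]
whose graph can be written as
\[
\operatorname{graph}F=\bigl\{(x_0,x_1,\gamma):(x_0,x_1)=(e_0,e_1)(\gamma)\bigr\}\cap\bigl(\Gamma^{A,t}\times e_t^{-1}(A)\bigr).
\]
Since $A$ is analytic and $e_t$ is continuous, $e_t^{-1}(A)$ is analytic in $\Geo_{[0,1]}(M,d)$; likewise $\Gamma$ is Borel (in particular analytic), and $(e_0,e_1)$ is continuous, so $\Gamma^{A,t}$ is analytic as the image of an analytic set under a continuous map. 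Consequently $\operatorname{graph}F$ is analytic.

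Next I would invoke the Jankov--von Neumann selection theorem: a multifunction with analytic graph admits a universally measurable selection, i.e.\ a map $\mathsf{S}:\Gamma^{A,t}\to\Geo_{[0,1]}(M,d)$ with $\mathsf{S}(x_0,x_1)\in F(x_0,x_1)$ for every $(x_0,x_1)\in\Gamma^{A,t}$, and measurable with respect to the $\sigma$-algebra generated by the analytic sets. In particular $\mathsf{S}$ is $\pi$-measurable, so defining
\[
\sigma:=\mathsf{S}_{*}\pi
\]
yields a Borel probability measure on $\Geo_{[0,1]}(M,d)$ (after redefining $\mathsf{S}$ on a $\pi$-null set if necessary to make it Borel, using Lusin's theorem as in the proof of Theorem \ref{thm:selection-dichotomy-set}).

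Finally I would verify the two claims. By construction $(e_0,e_1)\circ\mathsf{S}=\operatorname{id}$ on $\Gamma^{A,t}$, so $(e_0,e_1)_{*}\sigma=\pi$; since $\pi$ is concentrated on $\Gamma\supset\Gamma^{A,t}$, $\sigma$ is concentrated on $\hat\Gamma$. Moreover $e_t\circ\mathsf{S}$ takes values in $A$ on $\Gamma^{A,t}$, hence $\sigma$ is concentrated on $e_t^{-1}(A)\cap\hat\Gamma$ and $(e_t)_{*}\sigma(A)=1$. The only genuinely subtle point is the measurable selection for an analytic (rather than Borel) target set $A$, which is precisely what forces the use of Jankov--von Neumann instead of the more elementary Kuratowski--Ryll-Nardzewski theorem; everything else is bookkeeping.
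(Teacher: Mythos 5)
Your proposal is correct and follows essentially the same route as the paper: form the analytic graph of the multifunction $(x_{0},x_{1})\mapsto\{\gamma\in e_{t}^{-1}(A)\cap\hat{\Gamma}\,:\,(e_{0},e_{1})(\gamma)=(x_{0},x_{1})\}$, apply the Jankov--von Neumann selection theorem, and push $\pi$ forward under the resulting selection. You correctly identify the one subtle point (analyticity of $A$ forcing von Neumann rather than a Borel selection theorem), which is exactly the point the paper's proof hinges on.
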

\begin{proof}
Since $A$ is analytic, the set
\begin{align*}
\Lambda & =\{(\gamma_{0},\gamma_{1},\gamma)\in\Gamma\times\Geo_{[0,1]}(M,d)\,|\,\gamma\in e_{t}^{-1}(A)\cap\hat{\Gamma}\}\\
 & =(e_{0},e_{1},\operatorname{id})\left(e_{t}^{-1}(A)\cap(e_{0},e_{1})^{-1}(\Gamma)\right)
\end{align*}
is also analytic. Thus by von Neumann's Measurable Selection Theorem
there is a selection $S:\Gamma\to\Geo_{[0,1]}(M,d)$ such that $(x,y,S(x,y))\in\Lambda$
for all $(x,y)\in\Gamma$. In particular, $S(x,y)_{t}\in A$ . To
conclude just observe that $\sigma=S_{*}\pi$ is concentrated on $e_{t}^{-1}(A)\cap\hat{\Gamma}$.
\end{proof}
\begin{lem}
\label{lem:cpt-t-midpoints}Assume $(M,d)$ is a proper geodesic space.
Then for all measures $\mu_{0},\mu_{1}\in\mathcal{P}_{p}(M)$ and
every $p$-optimal coupling $\pi$ the following set of $t$-midpoints
\[
\mathcal{B}=\{(e_{t})_{*}\sigma\,|\,\sigma\in\OptGeo(\mu_{0},\mu_{1}),(e_{0},e_{1})_{*}\sigma=\pi\}
\]
is linearly convex, compact in $\mathcal{P}_{p}(M)$ and weakly compact
in $\mathcal{P}(M)$.
\end{lem}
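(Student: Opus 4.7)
The plan is to lift the problem to the level of dynamical couplings by working with
\[
\tilde{\mathcal{B}}=\{\sigma\in\OptGeo_p(\mu_0,\mu_1)\,:\,(e_0,e_1)_*\sigma=\pi\}\subset\mathcal{P}(\Geo_{[0,1]}(M,d)),
\]
so that $\mathcal{B}=(e_t)_*\tilde{\mathcal{B}}$. Since $\sigma\mapsto(e_t)_*\sigma$ is linear and continuous with respect to weak convergence, essentially all work can be done on $\tilde{\mathcal{B}}$ and then pushed forward. Linear convexity comes for free: if $\sigma_1,\sigma_2\in\tilde{\mathcal{B}}$ and $\lambda\in[0,1]$, the convex combination $\lambda\sigma_1+(1-\lambda)\sigma_2$ still has endpoint coupling $\pi$, and the same holds after pushing forward by $e_t$.

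For weak compactness of $\tilde{\mathcal{B}}$ I would use Prohorov's theorem. Tightness follows from Arzel\`a-Ascoli together with properness of $M$: given $\varepsilon>0$, pick compacts $K_0,K_1\subset M$ with $\mu_i(K_i)>1-\varepsilon/2$ and set $K=\{\gamma\in\Geo_{[0,1]}(M,d)\,:\,\gamma_0\in K_0,\,\gamma_1\in K_1\}$. Any $\gamma\in K$ is Lipschitz with constant $\le\mathrm{diam}(K_0\cup K_1)<\infty$ and takes values in a bounded set, so $K$ is compact in the uniform topology; moreover, for $\sigma\in\tilde{\mathcal{B}}$ one has $\sigma(K)\ge1-\mu_0(M\setminus K_0)-\mu_1(M\setminus K_1)>1-\varepsilon$. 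Closedness is immediate: if $\sigma_n\to\sigma$ weakly with $\sigma_n\in\tilde{\mathcal{B}}$, then $(e_0,e_1)_*\sigma=\lim_n(e_0,e_1)_*\sigma_n=\pi$, so $\sigma\in\tilde{\mathcal{B}}$. This yields weak compactness of $\tilde{\mathcal{B}}$ in $\mathcal{P}(\Geo_{[0,1]}(M,d))$, and continuity of $(e_t)_*$ gives weak compactness of $\mathcal{B}$ in $\mathcal{P}(M)$.

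Upgrading to $W_p$-compactness in $\mathcal{P}_p(M)$ is the delicate point, because weak convergence of $(e_t)_*\sigma_n$ must be promoted to convergence of $p$-th moments. The key observation is the pointwise bound
\[
d(\gamma_t,x_0)^p\le 2^{p-1}\bigl(d(\gamma_0,x_0)^p+d(\gamma_0,\gamma_1)^p\bigr)=:G(\gamma),
\]
whose right-hand side depends only on the endpoints of $\gamma$. Since $(e_0,e_1)_*\sigma=\pi$ is the same for every $\sigma\in\tilde{\mathcal{B}}$, the integral $\int G\,d\sigma$ is a single finite constant (equal to $2^{p-1}\int d(x,x_0)^p\,d\mu_0(x)+2^{p-1}W_p(\mu_0,\mu_1)^p$) that does not depend on $\sigma$. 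Given $\sigma_n\to\sigma$ weakly in $\tilde{\mathcal{B}}$, apply Fatou to the nonnegative continuous functions $\gamma\mapsto d(\gamma_t,x_0)^p$ and to $\gamma\mapsto G(\gamma)-d(\gamma_t,x_0)^p$; the constancy of $\int G\,d\sigma_n$ along the sequence turns the pair of Fatou inequalities into matching $\liminf/\limsup$ bounds, forcing
\[
\int d(\gamma_t,x_0)^p\,d\sigma_n\longrightarrow\int d(\gamma_t,x_0)^p\,d\sigma.
\]
This is precisely convergence of $p$-th moments of $(e_t)_*\sigma_n$ to those of $(e_t)_*\sigma$, which together with weak convergence yields $W_p((e_t)_*\sigma_n,(e_t)_*\sigma)\to0$.

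The main obstacle is exactly this last upgrade: weak compactness does not by itself imply $W_p$-compactness, and one must exploit the rigidity that the full endpoint coupling $\pi$, and not merely its marginals $\mu_0,\mu_1$, is held fixed throughout $\tilde{\mathcal{B}}$ — this is what makes the dominating function $G$ have the same $\sigma$-integral for every $\sigma\in\tilde{\mathcal{B}}$ and closes the Fatou argument.
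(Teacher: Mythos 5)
Your proof is correct, and it takes a genuinely different route from the paper's. The paper argues directly at the level of the interpolation measures: linear convexity is checked by verifying that the convex combination $\mu_t^{\lambda}=(e_t)_*\bigl((1-\lambda)\sigma^0+\lambda\sigma^1\bigr)$ satisfies the midpoint inequalities $W_p(\mu_0,\mu_t^{\lambda})+W_p(\mu_t^{\lambda},\mu_1)\le W_p(\mu_0,\mu_1)$; weak precompactness is asserted from properness and boundedness of $\mathcal{B}$; and for a weakly convergent sequence the lower semicontinuity of $W_p$ together with the identity $W_p(\mu_0,\mu_t^n)=tW_p(\mu_0,\mu_1)$ is used to identify the limit as a $t$-midpoint and to get convergence of $p$-th moments. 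You instead lift everything to the set $\tilde{\mathcal{B}}$ of dynamical couplings with endpoint coupling $\pi$, prove its weak compactness by Prohorov plus Arzel\`a--Ascoli, and then upgrade to $W_p$-convergence via the Fatou sandwich with the dominating function $G$ whose $\sigma$-integral is the same for all $\sigma\in\tilde{\mathcal{B}}$ because the full coupling $\pi$ (not just the marginals) is fixed. Your approach buys something the paper's argument leaves implicit: it exhibits the weak limit as $(e_t)_*\sigma$ for a limit dynamical coupling $\sigma$ with $(e_0,e_1)_*\sigma=\pi$, i.e.\ it proves membership of the limit in $\mathcal{B}$ rather than merely in the larger set of $t$-midpoints of $\mu_0$ and $\mu_1$; it also adapts immediately to the variant in the remark where finitely many time marginals are fixed. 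The paper's route is shorter but relies on the specific scaling identity for $W_p(\mu_0,\mu_t^n)$ and is terser about tightness and about closedness of $\mathcal{B}$ itself. The only (inessential) point you do not address is non-emptiness of $\mathcal{B}$, which the paper notes follows from $(M,d)$ being geodesic and which is needed when the lemma is applied, though it is not part of the three properties stated.
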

\begin{rem*}
Similar arguments also show that for finitely many $\{s_{1},\ldots,s_{n}\}\subset[0,1]$
and a measure $\boldsymbol{\mathcal{\pi}}\in\mathcal{P}(M^{n})$ the
set
\[
\mathcal{C}=\{(e_{t})_{*}\sigma\,|\,\sigma\in\OptGeo(\mu_{0},\mu_{1}),(e_{s_{1}},\ldots e_{s_{n}})_{*}\sigma=\boldsymbol{\pi}\}
\]
 is linearly convex, compact in $\mathcal{P}_{p}(M)$ and weakly compact
in $\mathcal{P}(M)$.
\end{rem*}
Note that the result shows that the GKS-Decomposition can be applied
to the set $\mathcal{B}$. Its proof is given at the end of this section. 

In order to make the main theorem more readable we introduce the following
condition. It won't be used anywhere else but here.
\begin{defn}
A coupling $\pi\in\mathcal{P}(M\times M)$ is \emph{strongly consistent}
if for all $\tilde{\pi}\ll\pi$ and every measurable set $\Gamma'$
with $\tilde{\pi}(\Gamma')=1$ there is a non-degenerate, $c_{p}$-cyclically
monotone Borel set $\Gamma\subset\Gamma'$ with $\tilde{\pi}(\Gamma)=1$.
\end{defn}
Note that whenever $\pi$ is strongly consistent then any coupling
$\pi'$ with $\pi'\ll\pi$ is strongly consistent as well.
\begin{thm}
[GKS-Construction]\label{thm:abs-cts-interpolation}Let $(M,d,\m)$
be a proper metric measure space. Assume $\pi$ is a strongly consistent,
$p$-optimal coupling between $\mu_{0}\ll\m$ and $\mu_{1}$. Then
for every $t\in(0,1)$ there is a $p$-optimal dynamical coupling
$\sigma$ such that $(e_{0},e_{1})_{*}\sigma=\pi$ and 
\begin{align*}
\mu_{t}=(e_{t})_{*}\sigma & \ll\m.
\end{align*}
Furthermore, $\mu_{t}$ is maximal in the following sense: Let $\Gamma$
be a Borel set of full $\pi$-measure and 
\[
\m\big|_{\Gamma_{t}}=g\mu_{t}+\m\big|_{F}
\]
be the the Lebesgue decomposition of $\m\big|_{\Gamma_{t}}$ with
respect to $\mu_{t}$ where $F\subset\Gamma_{t}$ is a Borel with
$\mu_{t}(F)=0$. Then $\pi$ is concentrated on a Borel set $\tilde{\Gamma}\subset\Gamma\backslash\Gamma^{F,t}$
and it holds 
\[
\m\big|_{\tilde{\Gamma}_{t}}\ll\mu_{t}.
\]
\end{thm}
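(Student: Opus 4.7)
The strategy is to apply the GKS-Decomposition to the weakly compact, linearly convex set
$$\mathcal{B}:=\{(e_t)_*\sigma : \sigma\in\OptGeo,\ (e_0,e_1)_*\sigma=\pi\}$$
furnished by Lemma~\ref{lem:cpt-t-midpoints}. Properness supplies a continuous $\varphi:[0,\infty)\to(0,1]$ so that $\tilde{\m}:=\varphi(d(x_0,\cdot))\m$ is a finite measure, and the GKS-Decomposition produces $\tilde{\m}=\tilde{\m}_a+\tilde{\m}_s$ with $\tilde{\m}_a\ll\mu^*$ for some $\mu^*\in\mathcal{B}$ and $\tilde{\m}_s$ concentrated on a Borel set $F$ satisfying $\nu(F)=0$ for every $\nu\in\mathcal{B}$. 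Fix once and for all a non-degenerate $c_p$-cyclically monotone Borel set $\Gamma$ with $\pi(\Gamma)=1$, obtained from strong consistency.

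The first step is to show $\pi(\Gamma^{F,t})=0$. If not, since $F$ is Borel hence analytic, Lemma~\ref{lem:dyn-cpl-pass-through-A} applied to the normalized restriction $\pi|_{\Gamma^{F,t}}$ produces a dynamical coupling $\tilde\sigma$ with $(e_t)_*\tilde\sigma$ concentrated on $F$. Gluing $\pi(\Gamma^{F,t})\cdot\tilde\sigma$ to an arbitrary lift of $\pi|_{\Gamma\setminus\Gamma^{F,t}}$ yields $\hat\sigma\in\OptGeo$ with $(e_0,e_1)_*\hat\sigma=\pi$ and $(e_t)_*\hat\sigma(F)\ge\pi(\Gamma^{F,t})>0$, contradicting the $\mathcal{B}$-nullity of $F$. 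Inner-approximating furnishes a Borel $\tilde\Gamma\subset\Gamma\setminus\Gamma^{F,t}$ with $\pi(\tilde\Gamma)=1$; pairs in $\tilde\Gamma$ admit no geodesic through $F$, so $\tilde\Gamma\subset\Gamma^{M\setminus F,t}$, and a second invocation of Lemma~\ref{lem:dyn-cpl-pass-through-A} with $A=M\setminus F$ produces a $p$-optimal dynamical lift $\sigma$ of $\pi$ with $\mu_t:=(e_t)_*\sigma$ concentrated on $M\setminus F$.

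To upgrade to $\mu_t\ll\m$, suppose $\mu_t$ has singular part supported on a Borel $N\subset M\setminus F$ with $\m(N)=0$ but $\mu_t(N)>0$. Let $\pi_N$ denote the subcoupling induced by $\sigma|_{e_t^{-1}(N)}$. Strong consistency yields a non-degenerate $c_p$-cyclically monotone Borel $\Gamma_N\subset\Gamma^{N,t}$ of full $\pi_N$-measure; the dichotomy of pairs in $\Gamma_N$ according to whether \emph{every} geodesic through them passes through $N$ at time $t$ shows that the ``all geodesics in $N$'' part must be $\pi_N$-null, since otherwise strong consistency would extract a non-degenerate piece whose entire $t$-midpoint set lies in the $\m$-null set $N$, violating the non-degeneracy property applied to its $\m$-absolutely continuous $e_0$-marginal. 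Therefore $\pi_N$ concentrates on $\Gamma^{M\setminus(F\cup N),t}$, and Lemma~\ref{lem:dyn-cpl-pass-through-A} reroutes outside $F\cup N$, giving a new lift with strictly larger $\m$-absolutely continuous mass. An iterative rerouting together with weak compactness of $\mathcal{B}$ and the GKS-maximality of the decomposition yields the desired $\sigma$ realizing $\mu_t\ll\m$.

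For the maximality statement, given any Borel $\Gamma$ of full $\pi$-measure with Lebesgue decomposition $\m|_{\Gamma_t}=g\mu_t+\m|_F$ where $\mu_t(F)=0$, the argument of the second paragraph, now with this $F$ in place of the GKS one (after verifying that this $F$ is $\mathcal{B}$-null, which follows from the maximality of $\mu_t$ within $\mathcal{B}$ constructed above), shows $\pi(\Gamma^{F,t})=0$; inner-approximation produces Borel $\tilde\Gamma\subset\Gamma\setminus\Gamma^{F,t}$ of full $\pi$-measure. Since pairs in $\tilde\Gamma$ admit no geodesic through $F$ at time $t$, we have $\tilde\Gamma_t\cap F=\emptyset$, and the Lebesgue decomposition restricts to $\m|_{\tilde\Gamma_t}=g\mu_t|_{\tilde\Gamma_t}\ll\mu_t$. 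The principal obstacle is the iteration-and-limit argument in the third paragraph: the functional $\nu\mapsto\nu^{ac}(M)$ is not weakly continuous in general, so one must combine the rerouting construction with the GKS-maximality of the decomposition to conclude that no singular part survives the limiting procedure within the subclass $\{\nu\in\mathcal{B}:\nu\ll\m\}$.
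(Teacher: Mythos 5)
Your first paragraph and the first half of your second paragraph coincide with the paper's Step 1: apply the GKS-Decomposition relative to $\mathcal{B}$, show $\pi(\Gamma^{F,t})=0$ by the gluing contradiction, and extract $\tilde{\Gamma}\subset\Gamma\backslash\Gamma^{F,t}$. The genuine gap is in your third paragraph. Rerouting the singular sub-coupling so that its $t$-midpoints avoid $F\cup N$ does \emph{not} give ``strictly larger $\m$-absolutely continuous mass'': the rerouted marginal is merely concentrated on $M\backslash(F\cup N)$ and may well be carried by a fresh $\m$-null set $N'$, so a single rerouting step can leave the absolutely continuous mass unchanged, and the iteration you sketch never gets off the ground. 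You concede at the end that the limit argument is the ``principal obstacle,'' but gesturing at weak compactness and ``GKS-maximality'' does not close it, since the absolutely continuous part is not continuous under weak limits. You also lose the one piece of information that the GKS-Decomposition actually provides: for the GKS measure $\mu_t$ one has $\m\big|_{\Gamma_t}=g\mu_t+\m\big|_{F}$, hence $\m\big|_{A_t}\ll\mu_t$ on $A_t=(\Gamma\backslash\Gamma^{F,t})_t$; discarding $\mu_t$ in favour of an arbitrary lift concentrated off $F$ throws this away, and with it both the non-triviality of the absolutely continuous part and the maximality statement.

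The paper's route around both problems is different in substance. Step 1 does \emph{not} claim $\mu_t\ll\m$; it only shows $\rho_t\not\equiv0$ in the decomposition $\mu_t=\rho_t\m+\mu_t^{s}$, and this follows because $\rho_t>0$ $\m$-a.e.\ on $A_t$ (from $\m\big|_{A_t}\ll\mu_t$) while non-degenericity of the strongly consistent set $\tilde{\Gamma}$ forces $\m(\tilde{\Gamma}_t)>0$. Step 2 then removes the singular part by Zorn's Lemma on the family $\mathbb{K}$ of realizable absolutely continuous densities: if $\mu\in\mathcal{B}_{\rho}$ has $\mu^{s}\ne0$, one splits $\pi=(1-\lambda)\pi_a+\lambda\pi_s$ according to the Lebesgue decomposition at time $t$, notes that $\pi_s\ll\pi$ is again strongly consistent, and re-runs the \emph{entire} Step 1 on $\pi_s$ to produce a replacement interpolation of the singular piece with non-trivial density $\tilde{\rho}\ne0$; this yields $\rho+\lambda\tilde{\rho}\succ\rho$. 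Chains in $(\mathbb{K},\succ)$ are bounded because $\rho_i\m\le\mu_j$ for $i\le j$ passes to the weak limit, giving $\rho\m\le\mu$ — this monotone one-sided bound is exactly the semicontinuity that your proposal is missing. A maximal element must satisfy $\int\rho\,d\m=1$, i.e.\ be fully absolutely continuous, and the maximality clause of the theorem is then inherited from $\m\big|_{\tilde{\Gamma}_t}\ll\rho_t\m\ll\mu_t^{*}\ll\m\big|_{\tilde{\Gamma}_t}$ rather than from a fresh $\mathcal{B}$-nullity claim for the $F$ of an arbitrary Lebesgue decomposition, which (as stated in your last paragraph) does not obviously hold.
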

\begin{rem*}
Absolute continuity and maximality imply $\m\big|_{\tilde{\Gamma}_{t}}\ll\mu_{t}\ll\m\big|_{\tilde{\Gamma}_{t}}$
as $\pi$ is concentrated on $\tilde{\Gamma}$.
\end{rem*}
\begin{cor}
\label{cor:abs-cts-interpolation}Suppose $\mu_{t}\ll\m$ and $\tilde{\Gamma}$
are constructed from $\pi$ as above. If there is a strongly consistent
coupling $\pi_{t,1}$ of $\mu_{t}$ and $\mu_{1}$ then for each $s\in(t,1)$
there is a $p$-optimal dynamical coupling $\hat{\sigma}$ such that
$(e_{0},e_{1})_{*}\hat{\sigma}=\pi$, $(e_{t})_{*}\hat{\sigma}=\mu_{t}$
and $(e_{s})_{*}\hat{\sigma}\ll\m$.
\end{cor}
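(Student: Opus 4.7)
The plan is to invoke Theorem~\ref{thm:abs-cts-interpolation} a second time, now on the subinterval $[t,1]$, and splice the resulting dynamical coupling to $\sigma$ along the common marginal $\mu_{t}$. First I set $s' := (s-t)/(1-t) \in (0,1)$, so that the $s$-slice of a geodesic on $[0,1]$ corresponds after rescaling to the $s'$-slice of its restriction to $[t,1]$. Since $\mu_{t}\ll\m$ by the theorem and $\pi_{t,1}$ is strongly consistent by hypothesis, the GKS-Construction applied to $\pi_{t,1}$ at time $s'$ yields a $p$-optimal dynamical coupling $\sigma' \in \mathcal{P}(\Geo_{[0,1]}(M,d))$ with $(e_{0},e_{1})_{*}\sigma' = \pi_{t,1}$ and $(e_{s'})_{*}\sigma' \ll \m$.

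To splice, I take $\pi_{t,1}$ to be the natural candidate $(e_{t},e_{1})_{*}\sigma$, so that the corollary's hypothesis reads: this particular coupling is strongly consistent. Disintegrating $\sigma$ over $(e_{0},e_{t},e_{1})$ and $\sigma'$ over $(e_{0},e_{1})$ gives
\[
\sigma = \int \sigma_{x_{0},x_{t},x_{1}}\,d\pi_{0,t,1}(x_{0},x_{t},x_{1}), \qquad \sigma' = \int \sigma'_{x_{t},x_{1}}\,d\pi_{t,1}(x_{t},x_{1})
\]
with $\pi_{0,t,1} := (e_{0},e_{t},e_{1})_{*}\sigma$. Since $\sigma$ is concentrated on geodesics, $\pi_{0,t,1}$-almost surely $x_{t}$ is the $t$-midpoint of $(x_{0},x_{1})$, so $d(x_{0},x_{1}) = d(x_{0},x_{t}) + d(x_{t},x_{1})$ and the speeds on the two halves match after the obvious rescaling. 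I then define the splicing
\[
\mathsf{C}(\gamma,\eta)(r) = \begin{cases} \gamma(r) & r \in [0,t] \\ \eta\bigl(\tfrac{r-t}{1-t}\bigr) & r \in [t,1] \end{cases}
\]
on pairs with $\gamma_{t} = \eta_{0}$; the matching of total length and speeds guarantees $\mathsf{C}(\gamma,\eta) \in \Geo_{[0,1]}(M,d)$ on $\supp(\sigma_{x_{0},x_{t},x_{1}} \otimes \sigma'_{x_{t},x_{1}})$.

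Setting
\[
\hat{\sigma} := \int \mathsf{C}_{*}\bigl(\sigma_{x_{0},x_{t},x_{1}} \otimes \sigma'_{x_{t},x_{1}}\bigr)\,d\pi_{0,t,1}(x_{0},x_{t},x_{1}),
\]
the marginal identities $(e_{0},e_{1})_{*}\hat{\sigma} = \pi$ and $(e_{t})_{*}\hat{\sigma} = \mu_{t}$ are immediate from the construction, and since $e_{s} \circ \mathsf{C}(\gamma,\eta) = \eta(s')$ for $s \in (t,1)$ one obtains $(e_{s})_{*}\hat{\sigma} = (e_{s'})_{*}\sigma' \ll \m$. Optimality of $\hat{\sigma}$ follows from $(e_{0},e_{1})_{*}\hat{\sigma} = \pi$ being $p$-optimal together with $\hat{\sigma}$ being supported on geodesics. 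The main obstacle I anticipate is the compatibility of the two disintegrations: the $(x_{t},x_{1})$-marginal of $\pi_{0,t,1}$ must coincide with $\pi_{t,1}$ for the formula for $\hat{\sigma}$ to be well-defined, which is precisely what forces the interpretation $\pi_{t,1} = (e_{t},e_{1})_{*}\sigma$; measurability of the disintegrations via the Disintegration Theorem and of $\mathsf{C}$ on $\{\gamma_{t}=\eta_{0}\}$ by continuity is then routine.
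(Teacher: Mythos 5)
Your argument is correct, and it is worth noting that the paper itself states this corollary without proof, so there is nothing to compare line by line; the intended route is only hinted at (the remark after Lemma \ref{lem:cpt-t-midpoints} about the convex compact set $\mathcal{C}$ of interpolations with finitely many prescribed marginals suggests rerunning the GKS argument at time $s$ subject to the constraint $(e_{0},e_{t},e_{1})_{*}\hat{\sigma}=\pi_{0,t,1}$). Your splicing proof is a clean alternative that avoids redoing the GKS machinery under constraints: you apply Theorem \ref{thm:abs-cts-interpolation} once more on $[t,1]$ (legitimate, since $\mu_{t}\ll\m$, $(e_{t},e_{1})_{*}\sigma$ is $p$-optimal as a restriction of the Wasserstein geodesic, and strong consistency is the hypothesis), and then concatenate through the common $t$-marginal; the verification that the concatenations are geodesics (equal speeds $d(x_{0},x_{t})/t=d(x_{t},x_{1})/(1-t)=d(x_{0},x_{1})$ since $x_{t}$ is $\pi_{0,t,1}$-a.s.\ a $t$-midpoint), that the $(x_{t},x_{1})$-marginal of $\pi_{0,t,1}$ is exactly $(e_{t},e_{1})_{*}\sigma$ so the mixed disintegration formula is well-posed, and that optimality of $\hat{\sigma}$ follows from being concentrated on geodesics with endpoint marginal $\pi$, are all sound. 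The one point you rightly flag is the reading of the hypothesis: for the conclusion $(e_{0},e_{1})_{*}\hat{\sigma}=\pi$ one needs the strongly consistent coupling to be compatible with $\sigma$, i.e.\ $\pi_{t,1}=(e_{t},e_{1})_{*}\sigma$; this is not a defect of your proof but a necessary disambiguation of the statement, and it matches how the corollary is actually invoked in Corollary \ref{cor:abs-cts-interpolation-under-non-deg}, where \emph{every} $p$-optimal coupling between $\mu_{t}\ll\m$ and $\mu_{1}$ is shown to be strongly consistent, so in particular the induced one. A constrained-$\mathcal{C}$ proof would buy the same conclusion directly inside the GKS framework (and would be the natural route if one also wanted maximality at time $s$), but for the stated marginal properties your restriction-and-concatenation argument is simpler and complete.
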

\begin{proof}
[Proof of the theorem]Let $\mathcal{B}$ be defined as in Lemma \ref{lem:cpt-t-midpoints}
above. We split the proof into two steps.

\textbf{\noun{Step $\mathbf{1}$}}: \texttt{There is a $\mu_{t}\in\mathcal{B}$
which is maximal in the sense of the theorem and $\rho_{t}\ne0$ where
$\mu_{t}=\rho_{t}\m+\mu_{t}^{s}$ is the Lebesgue decomposition of
$\mu_{t}$ with respect to $\m$.}

Let $\Gamma\subset\supp\pi$ be a non-degenerate, $c_{p}$-cyclically
monotone Borel set with $\pi(\Gamma)=1$. Since $\mathcal{B}$ is
weakly compact and linearly convex we can apply the GKS-Decomposition
to $\m\big|_{\Gamma_{t}}$ and obtain a measure $\mu_{t}\in\mathcal{B}$
such that 
\[
\m\big|_{\Gamma_{t}}=g\mu_{t}+\m_{s}
\]
and there is a Borel set $F\subset\Gamma_{t}$ such that $\tilde{\mu}_{t}(F)=0$
for all $\tilde{\mu}_{t}\in\mathcal{B}$ and $\m_{s}(M\backslash F)=0$.
Thus $\m_{s}=\m\big|_{F}$ and $g(x)>0$ for $\mu_{t}$-almost all
$x\in M$. 

We claim that $\pi$ is concentrated on $\Gamma\backslash\Gamma^{F,t}$.
Assume, by contradiction, that 
\[
\lambda=\pi(\Gamma^{F,t})>0
\]
Then for $f=\chi_{\Gamma^{F,t}}$ the coupling$\sigma_{f}$ is a $p$-optimal
dynamical coupling between $(e_{0})_{*}\sigma$ and $(e_{1})_{*}\sigma$
and $\pi_{F}=(e_{0},e_{1})_{*}\sigma$ is concentrated on $\Gamma^{F,t}$.
Note that $\pi=\lambda\pi_{F}+(1-\lambda)\check{\pi}$ where $\check{\pi}=(e_{0},e_{1})_{*}\sigma_{1-f}$
for some $p$-optimal dynamical coupling $\sigma$ with $(e_{0},e_{1})_{*}\sigma=\pi$
and $(e_{t})_{*}\sigma=\mu_{t}$.

By Lemma \ref{lem:dyn-cpl-pass-through-A} there is a $p$-optimal
dynamical coupling $\tilde{\sigma}_{f}$ induced by $\pi_{F}$ which
concentrated on $\hat{\Gamma}^{F,t}$ such that $(e_{t})_{*}\sigma(F)=1$.
However, by Lemma \ref{lem:restriction-property} the dynamical coupling
\[
\tilde{\sigma}=\lambda\tilde{\sigma}_{f}+(1-\lambda)\sigma_{1-f}
\]
is also $p$-optimal with 
\[
(e_{0},e_{1})_{*}\tilde{\sigma}=\lambda\pi_{F}+(1-\lambda)\tilde{\pi}=\pi.
\]
Thus $\tilde{\mu}_{t}=(e_{t})_{*}\tilde{\sigma}\in\mathcal{B}$ with
\[
\tilde{\mu}_{t}(F)\ge\lambda\mu_{t}^{f}(F)=\lambda>0
\]
contradicting the properties of GKS-Decomposition. Hence $\pi(\Gamma\backslash\Gamma^{F,t})=1$. 

Let 
\begin{align*}
\mu_{t} & =\rho_{t}\m+\mu_{t}^{s}
\end{align*}
be the Lebesgue decomposition of $\mu_{t}$ with respect to $\m$
with $\mu_{t}^{s}\perp\m$. Note that $\rho_{t}(x)>0$ for $\m$-almost
all $x\in A_{t}=(\Gamma\backslash\Gamma^{F,t})_{t}$. By assumption
$\pi$ is concentrated on a non-degenerate $c_{p}$-cyclically monotone
Borel set $\tilde{\Gamma}\subset\Gamma\backslash\Gamma^{F,t}$. Since
$\mu_{0}(\tilde{\Gamma}_{0})=1$ and $\mu_{0}\ll\m$ it holds $\m(\tilde{\Gamma}_{0})>0$
and thus $\m(\tilde{\Gamma}_{t})>0$ by non-degenericity of $\tilde{\Gamma}$
implying $\rho_{t}\ne0$, i.e. the absolutely continuous part of $\mu_{t}$
is non-trivial. Finally observe that $\rho_{t}(x)>0$ for $\m$-almost
all $x\in A_{t}$ shows that $\m(A_{t}\backslash\tilde{\Gamma}_{t})=0$
hence $\m\big|_{\tilde{\Gamma}_{t}}\ll\mu_{t}$ yields maximality
of $\mu_{t}$.

\textbf{\noun{Step $\mathbf{2}$}}: \texttt{Given $\mu_{t}=\rho_{t}\m+\mu_{t}^{s}$
and $\tilde{\Gamma}$ as in }\texttt{\noun{Step 1}}\texttt{, there
is a $\mu_{t}^{*}\in\mathcal{B}$ with $\mu_{t}^{*}=\rho_{t}^{*}\m$
and $\rho_{t}\le\rho_{t}^{*}$, and $\mu_{t}^{*}$ is maximal in the
sense of the theorem.}

We define a partial ordering on subsets of $\mathcal{B}$ and show
that maximal elements exist and are absolutely continuous: For $\rho\in L_{\ge0}^{1}(\m)$
with $\int\rho d\m\in[0,1]$ set 
\[
\mathcal{B}_{\rho}=\{\mu\in\mathcal{B}\,|\,\mu=\rho\m+\mu^{s}\}
\]
where $\mu=\rho\m+\mu^{s}$ is the Lebesgue decomposition of $\mu$
with respect to $\m$. Note that we identify two $L^{1}(\m)$-functions
which agree $\m$-almost everywhere.

Let 
\[
\mathbb{K}=\{\rho\in L_{\ge0}^{1}(\m)\,|\,\int\rho d\m\in[0,1],\mathcal{B}_{\rho}\ne\varnothing\}
\]
and write 
\[
\rho'\succ\rho\;:\Longleftrightarrow\rho'\ge\rho,\rho'\ne\rho.
\]
 This is a partial ordering of $\mathbb{K}$. Also note that 
\[
\bigcup_{\rho\in\mathbb{K}}\mathcal{B}_{\rho}
\]
is a partition of $\mathcal{B}$. Hence the partial order $\succ$
induces one on this partition.

Assume $\mu\in\mathcal{B}_{\rho}$ is not absolutely continuous with
respect to $\m$. Then the decomposition $\mu=\rho\m+\mu^{s}$ induces
a decomposition of $\pi$ as follows
\[
\pi=(1-\lambda)\pi_{a}+\lambda\pi_{s}
\]
where $\pi_{a}$ and $\pi_{s}$ are uniquely defined measures in $\mathcal{P}(M\times M)$
with 
\[
(e_{t})_{*}(1-\lambda)\pi_{a}=\rho\m
\]
and 
\[
(e_{t})_{*}\lambda\pi_{s}=\mu^{s}\ne0.
\]
Let 
\[
\tilde{\mu}_{i}=(e_{i})_{*}\pi_{s}\quad i=0,1.
\]
Since $\pi_{s}\ll\pi$, we see that $\pi_{s}$ is a strongly consistent
$p$-optimal coupling between $\tilde{\mu}_{0}\ll\m$ and $\tilde{\mu}_{1}$.
Thus \noun{Step $1$} above is applicable to $(\tilde{\mu}_{0},\tilde{\mu}_{1})$
and there is a $t$-midpoint $\tilde{\mu}_{t}$
\[
\tilde{\mu}_{t}=\tilde{\rho}\m+\tilde{\mu}^{s}
\]
with $\tilde{\rho}\ne0$ such that 
\[
\mu_{t}^{'}=(\rho+\lambda\tilde{\rho})\m+\lambda\tilde{\mu}^{s}
\]
is still in $\mathcal{B}$. Hence $\rho+\lambda\tilde{\rho}\in\mathbb{K}$
and $\rho+\lambda\tilde{\rho}\succ\rho$. In particular, any $\rho\in\mathbb{K}$
with $\int\rho d\m\ne1$ is not maximal with respect the partial order
$\succ$. Also note that any element $\rho\in\mathbb{K}$ satisfying
$\int\rho d\m=1$ is automatically maximal. To finish the proof it
suffices to show that there are maximal elements above any $\rho\in\mathbb{K}$.

For this we want to apply Zorn's Lemma: Let $\{\rho_{i}\}_{i\in I}$
be a totally ordered chain where $I$ is a totally ordered index set.
Then choose $\mu_{i}\in\mathcal{B}_{\rho_{i}}$ and observe by compactness
of $\mathcal{B}$ there is a subnet $I'\subset I$ such that $\lim_{i\in I'}\mu_{i}=\mu\in\mathcal{B}$.
Then the net $(\rho_{i})_{i\in I'}$ is an increasing family of non-negative
$L^{1}(\m)$-function so that by monotone convergence there is an
$\rho\in L^{1}(\m)$ such that 
\[
\rho=\lim_{i\in I'}\rho_{i}
\]
and $\int\rho d\m\in[0,1]$. Since 
\[
\rho_{i}\m\le\mu_{j}
\]
whenever $i\le j$, it holds $\rho_{i}\m\le\mu$ and thus $\rho\m\le\mu$
where $\mu\le\nu$ means $\mu(A)\le\nu(A)$ for all Borel sets $A$.
In particular, the Lebesgue decomposition of $\mu$ is given by 
\[
\mu=\hat{\rho}\m+\mu^{s}
\]
for some $\hat{\rho}\in\mathbb{K}$ with $\hat{\rho}\succeq\rho\succeq\rho_{i}$,
i.e. the chain $\{\rho_{i}\}_{i\in I}$ has a maximal element in $\mathbb{K}$.
Therefore, Zorn's Lemma applied to $(\mathbb{K},\succeq)$ gives the
existence of at least one maximal element $\rho_{t}^{*}\in\mathbb{K}$
with $\rho_{t}^{*}\ge\rho_{t}$. Choosing $\mu_{t}^{*}=\rho_{t}^{*}\m$
gives a measure satisfying the statements of the theorem. 

Finally, the properties of $\mu_{t}$ and $\tilde{\Gamma}$ imply
$\m\big|_{\tilde{\Gamma}_{t}}\ll\rho_{t}\m\ll\mu_{t}^{*}\ll\m\big|_{\tilde{\Gamma}_{t}}$.
Thus for any other Borel set $\Gamma$ with $\pi(\Gamma)=1$ there
is a Borel set $\Gamma^{'}\subset\Gamma\cap\tilde{\Gamma}$ with $\pi(\Gamma^{'})=1$.
But then $\m\big|_{\tilde{\Gamma}_{t}}\ll\mu_{t}^{*}\ll\m\big|_{\Gamma_{t}^{'}}$
implying $\m(\tilde{\Gamma}_{t}\backslash\Gamma_{t}^{'})=0$. This
yields immediately $\m\big|_{\bar{\Gamma}_{t}}\ll\mu_{t}^{*}$ and
thus maximality of $\mu_{t}^{*}$.
\end{proof}
In order to apply the theorem we need to prove that we find $p$-optimal
couplings satisfying the assumptions of the theorem. 
\begin{lem}
\label{lem:finite-non-deg-implies-non-deg-concentration}Assume $(M,d,\m)$
is non-degenerate. Then any $c_{p}$-cyclically monotone set $\Gamma$
such that $\Gamma_{1}=p_{2}(\Gamma)$ is finite is non-degenerate.
In particular, if $\pi$ is a $p$-optimal coupling with $(p_{1})_{*}\pi\ll\m$
and $(p_{2})_{*}\pi=\sum_{i=1}^{n}a_{i}\delta_{x_{i}}$ then any measurable
set $\Gamma$ of full $\pi$-measures contains a non-degenerate, $c_{p}$-cyclically
monotone Borel set $\tilde{\Gamma}\subset\supp\pi\cap\Gamma$ of full
$\pi$-measure.
\end{lem}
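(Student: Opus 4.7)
The plan is to reduce the first claim to the non-degeneracy of $\m$ by partitioning $\Gamma$ according to its finite second projection, and then to derive the ``in particular'' statement by choosing a suitable Borel representative of $\Gamma$.

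For the main statement, write $\Gamma_{1}=p_{2}(\Gamma)=\{x_{1},\dots,x_{n}\}$ and decompose
\[
\Gamma=\bigcup_{i=1}^{n}\Gamma^{i},\qquad\Gamma^{i}:=\Gamma\cap(M\times\{x_{i}\}).
\]
Each $\Gamma^{i}$ is Borel and equals $(\Gamma^{i})_{0}\times\{x_{i}\}$, where $(\Gamma^{i})_{0}=p_{1}(\Gamma^{i})$ is analytic. Given a Borel set $A\subset M$ with $\m(\Gamma_{0}\cap A)>0$, the finite decomposition $\Gamma_{0}\cap A=\bigcup_{i}((\Gamma^{i})_{0}\cap A)$ forces at least one term to have positive $\m$-measure; using inner regularity (which extends to analytic sets) I pick a Borel subset $B_{i}\subset(\Gamma^{i})_{0}\cap A$ with $\m(B_{i})>0$. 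Non-degeneracy of $\m$ then yields $\m((B_{i})_{t,x_{i}})>0$ for every $t\in(0,1)$. I next observe the inclusion $(B_{i})_{t,x_{i}}\subset\Gamma_{t}^{A,0}$: any geodesic $\gamma$ with $\gamma_{0}\in B_{i}\subset A$ and $\gamma_{1}=x_{i}$ has $(\gamma_{0},\gamma_{1})\in\Gamma^{i}\subset\Gamma$, so $\gamma_{t}\in\Gamma_{t}^{A,0}$. Therefore $\m(\Gamma_{t}^{A,0})>0$, proving $\Gamma$ is non-degenerate.

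For the ``in particular'' statement, let $\pi$ be a $p$-optimal coupling with $(p_{1})_{*}\pi\ll\m$ and $(p_{2})_{*}\pi=\sum_{i=1}^{n}a_{i}\delta_{x_{i}}$, and let $\Gamma$ be measurable with $\pi(\Gamma)=1$. By completeness of $\pi$ I pick a Borel subset $\Gamma'\subset\Gamma$ with $\pi(\Gamma')=1$ and set $\tilde{\Gamma}:=\supp\pi\cap\Gamma'$. Then $\tilde{\Gamma}$ is Borel, contained in $\supp\pi\cap\Gamma$, of full $\pi$-measure, and $c_{p}$-cyclically monotone since $\supp\pi$ is so by $p$-optimality of $\pi$. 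Moreover $p_{2}(\tilde{\Gamma})\subset\{x_{1},\dots,x_{n}\}$ is finite, so the first part applies and yields non-degeneracy of $\tilde{\Gamma}$.

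The only real technicality is measurability of the projections $(\Gamma^{i})_{0}$, which are only analytic and not a priori Borel; this is handled by passing to a Borel subset of positive $\m$-measure, which is sufficient because the hypothesis on $\m$ is stated for Borel sets $A$. No additional ingredients beyond the definition of non-degeneracy of $\m$ and the definition of $\Gamma_{t}^{A,0}$ are needed.
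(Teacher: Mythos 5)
Your proof is correct and follows essentially the same route as the paper's: decompose $\Gamma$ over its finitely many target points $x_{i}$, apply non-degeneracy of $\m$ to the piece of $A$ of positive measure, and for the second claim pass to a Borel subset of $\supp\pi\cap\Gamma$ of full $\pi$-measure whose second projection is finite. Your care in intersecting with $A$ before invoking non-degeneracy (working with $(B_{i})_{t,x_{i}}$ for $B_{i}\subset(\Gamma^{i})_{0}\cap A$) is the right reading of the paper's slightly terse displayed inequality, and your measurability workaround, while not strictly needed since $p_{1}$ restricted to $M\times\{x_{i}\}$ is injective and hence Borel-preserving, is harmless.
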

\begin{proof}
Observe 
\[
\Gamma=\bigcup_{i=1}^{n}B^{i}\times\{x_{i}\}
\]
for measurable subsets $B^{i}\subset M$. Since $\Gamma_{1}=\m(\cup_{i=1}^{n}B^{i})$
the condition $\m(\Gamma_{0}^{A})>0$ implies there is at least one
$i\in\{1,\ldots,n\}$ with $\m(B^{i}\cap A)>0$. But then
\[
\m(\Gamma_{t}^{A})\ge\m(B_{t,x_{i}}^{i})>0
\]
showing that $\Gamma$ is non-degenerate.

For the last statement note that if $\pi(\Gamma')=1$ then $\pi(\Gamma'\cap\supp\pi)=1$
so that there is a Borel set $\tilde{\Gamma}\subset\Gamma'\cap\supp\pi$
with $\pi(\tilde{\Gamma})=1$. Since 
\[
p_{2}(\tilde{\Gamma})\subset p_{2}(\supp\pi)=\{x_{i}\}_{i=1}^{n}
\]
 is finite, the result follows.
\end{proof}
The same argument also holds more general if the background measure
is strongly non-degenerate. 
\begin{lem}
\label{lem:non-deg-implies-non-deg-concentration}If $(M,d,\m)$ is
strongly non-degenerate $\sND_{p}$ then for any $p$-optimal coupling
$\pi$ with $(p_{1})_{*}\pi\ll\m$ the following holds: Whenever $\pi(\Gamma')=1$
for a $\pi$-measurable set $\Gamma'$ then there is a $c_{p}$-cyclically
monotone, non-degenerate Borel subset $\Gamma\subset\Gamma'$ of full
$\pi$-measure.
\end{lem}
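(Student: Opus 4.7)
The plan is to reduce immediately to the support of $\pi$ and invoke $\sND_p$ directly; unlike Lemma \ref{lem:finite-non-deg-implies-non-deg-concentration}, no product decomposition of $\Gamma$ into ``fibers over discrete points'' is needed because strong non-degeneracy is by definition tailored to handle arbitrary $c_p$-cyclically monotone Borel sets.

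First, I would pass from the $\pi$-measurable set $\Gamma'$ to an honest Borel subset of full $\pi$-measure: by the standard completion argument applied to the Borel probability measure $\pi$, there exists a Borel set $\Gamma_1\subset\Gamma'$ with $\pi(\Gamma_1)=\pi(\Gamma')=1$. Next, recall that the support $\supp\pi$ of a $p$-optimal coupling is closed (hence Borel) and $c_p$-cyclically monotone. Setting
\[
\Gamma\ :=\ \Gamma_1\cap\supp\pi,
\]
we obtain a Borel set with $\Gamma\subset\Gamma'$ and $\pi(\Gamma)=1$. Since any subset of a $c_p$-cyclically monotone set is again $c_p$-cyclically monotone, $\Gamma$ is $c_p$-cyclically monotone.

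Finally, invoke the strong non-degeneracy hypothesis: by Definition \ref{def:str-non-deg}, every $c_p$-cyclically monotone Borel set is non-degenerate with respect to $\m$, so $\Gamma$ is non-degenerate. This gives the desired conclusion. There is no real obstacle here; the work has already been done in isolating the notion $\sND_p$ and showing it is the right hypothesis to drop the finiteness assumption on the target in Lemma \ref{lem:finite-non-deg-implies-non-deg-concentration}. The only minor point worth flagging is the distinction between $\pi$-measurability and Borel measurability in the hypothesis on $\Gamma'$, which is handled by the completion step above.
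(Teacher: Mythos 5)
Your proposal is correct and follows essentially the same route as the paper: pass to a Borel subset of $\Gamma'$ of full $\pi$-measure, intersect with the closed $c_p$-cyclically monotone set $\supp\pi$, and apply Definition \ref{def:str-non-deg} to conclude non-degeneracy. The only difference is the order of the intersection and the Borel-extraction steps, which is immaterial.
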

\begin{proof}
Just note that $\pi$ is concentrated on $\Gamma'\cap\supp\pi$ which
is measurable, non-degenerate and $c_{p}$-cyclically monotone. Hence
there is a Borel subset $\Gamma\subset\Gamma'\cap\supp\pi$ with $\pi(\Gamma)=1$.
By strong non-degenericity $\Gamma$ is non-degenerate.
\end{proof}
The two lemmas allow us to apply Theorem \ref{thm:abs-cts-interpolation}
and Corollary \ref{cor:abs-cts-interpolation}.
\begin{cor}
\label{cor:abs-cts-interpolation-under-non-deg}Let $(M,d,\m)$ be
a proper metric measure space and $\mu_{0},\mu_{1}\in\mathcal{P}_{p}(M)$
with $\mu_{0}\ll\m$. Assume either $(M,d,\m)$ is non-degenerate
and $\mu_{1}=\sum_{i=1}^{n}\lambda_{i}\delta_{x_{i}}$ or that $(M,d,\m)$
is strongly non-degenerate $\sND_{p}$. Then for any $0<t<s<1$ there
is a $p$-optimal dynamical coupling $\sigma$ between $\mu_{0}$
and $\mu_{1}$ with $(e_{t})_{*}\sigma,(e_{s})_{*}\sigma\ll\m$ and
$(e_{t})_{*}\sigma$ is maximal in the sense of Theorem \ref{thm:abs-cts-interpolation}.
In particular, $(M,d,\m)$ is strongly non-degenerate $\sND_{p}$
if and only if it has the interpolation property $\IP_{p}$.
\end{cor}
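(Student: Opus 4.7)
The plan is to feed the construction machinery of this section into its own corollary, so that the two preceding lemmas supply strong consistency at both the initial and the intermediate stage. First I would fix a $p$-optimal coupling $\pi \in \Opt_p(\mu_0,\mu_1)$ and verify that it is strongly consistent. In case (i), any $\tilde\pi \ll \pi$ still has second marginal supported in the finite set $\{x_1,\ldots,x_n\}$, so Lemma \ref{lem:finite-non-deg-implies-non-deg-concentration} supplies a non-degenerate, $c_p$-cyclically monotone Borel subset of any given full-$\tilde\pi$-measure set. In case (ii), strong non-degenericity of $\m$ alone gives the same conclusion via Lemma \ref{lem:non-deg-implies-non-deg-concentration}.

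Next I would apply Theorem \ref{thm:abs-cts-interpolation} to $\pi$ at time $t$ to produce a $p$-optimal dynamical coupling $\sigma_1$ with $(e_0,e_1)_*\sigma_1 = \pi$, an interpolation $\mu_t := (e_t)_*\sigma_1 \ll \m$ which is maximal in the stated sense, and a distinguished Borel set $\tilde\Gamma$ of full $\pi$-measure. I would then form $\pi_{t,1} := (e_t,e_1)_*\sigma_1$, which is a $p$-optimal coupling of $\mu_t$ and $\mu_1$ whose first marginal is absolutely continuous. The two strong-consistency lemmas apply again to $\pi_{t,1}$: in case (i) the second marginal is still $\mu_1$, hence finitely supported; in case (ii) the hypothesis $\sND_p$ is a property of $\m$ alone, hence unaffected by the passage $(\mu_0,\mu_1)\mapsto (\mu_t,\mu_1)$. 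Feeding $(\mu_t,\tilde\Gamma,\pi_{t,1})$ into Corollary \ref{cor:abs-cts-interpolation} then produces the desired $\hat\sigma$ with $(e_0,e_1)_*\hat\sigma = \pi$, $(e_t)_*\hat\sigma = \mu_t$, and $(e_s)_*\hat\sigma \ll \m$, while the maximality of $(e_t)_*\hat\sigma = \mu_t$ has already been secured in the previous step.

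For the ``in particular'' equivalence, the implication $\IP_p \Rightarrow \sND_p$ is precisely the lemma stated immediately before the GKS-Construction subsection. The converse is exactly case (ii) of what has just been proved, since the conclusion there \textemdash{} absolute continuity of $(e_t)_*\sigma$ for a $p$-optimal dynamical coupling $\sigma$ lifting an arbitrary $p$-optimal coupling with $\mu_0 \ll \m$ \textemdash{} is the definition of $\IP_p$.

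The main obstacle is essentially bookkeeping: confirming that the intermediate coupling $\pi_{t,1}$ still satisfies the hypotheses of the strong-consistency lemmas. This hinges on $\mu_t \ll \m$, which is exactly the output of the GKS-Construction in the first step, together with the structural observation above that the condition on the target side (finite support in (i), $\sND_p$ in (ii)) is preserved. Once this is in place, the corollary is simply a two-step concatenation of Theorem \ref{thm:abs-cts-interpolation} with Corollary \ref{cor:abs-cts-interpolation}.
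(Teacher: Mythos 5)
Your proposal is correct and follows essentially the same route as the paper: verify strong consistency of the initial coupling via Lemma \ref{lem:finite-non-deg-implies-non-deg-concentration} (finite target) or Lemma \ref{lem:non-deg-implies-non-deg-concentration} ($\sND_{p}$), apply Theorem \ref{thm:abs-cts-interpolation} at time $t$, check strong consistency of the induced coupling $\pi_{t,1}$ by the same two lemmas, and conclude with Corollary \ref{cor:abs-cts-interpolation}; the equivalence with $\IP_{p}$ follows exactly as you state. You merely spell out the bookkeeping that the paper's very terse proof leaves implicit.
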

\begin{proof}
For strongly non-degenerate measures the result follows immediately
from the previous lemma. 

For the case of $\m$ being non-degenerate and $\mu_{1}=\sum_{i=1}^{n}\lambda_{i}\delta_{x_{i}}$
just observe that for any $p$-optimal coupling $\pi_{t,1}$ between
$\mu_{t}\ll\m$ and $\mu_{1}$ is strongly consistent. Indeed, if
$\tilde{\pi}\ll\pi_{t,1}$ then the set $p_{2}(\supp\tilde{\pi}\cap\Gamma')$
is finite hence contains a $c_{p}$-cylically monontone Borel set
$\Gamma$ with $\tilde{\pi}(\Gamma)=1$. 
\end{proof}
\begin{proof}
[Proof of Lemma \ref{lem:cpt-t-midpoints}]Since $(M,d)$ is geodesic
$\mathcal{B}$ is non-empty and closed in $\mathcal{P}_{p}(M).$ Furthermore,
properness of $M$ together with $\mathcal{B}$ being bounded implies
that $\mathcal{B}$ is weakly precompact. If $\mu_{t}^{0}$ and $\mu_{t}^{1}$
are measures in $\mathcal{B}$ then there are two $p$-optimal dynamical
couplings $\sigma^{0}$ and $\sigma^{1}$ such that $(e_{t})_{*}\sigma^{i}=\mu_{t}^{i}$
and $(e_{0},e_{1})\sigma^{i}$, $i=0,1$. 

It holds
\begin{align*}
\mu_{t}^{\lambda} & =(e_{t})_{*}\sigma^{\lambda}
\end{align*}
for 
\begin{align*}
\mu_{t}^{\lambda} & =(1-\lambda)\mu_{t}^{0}+\lambda\mu_{t}^{1}\\
\sigma^{\lambda} & =(1-\lambda)\sigma^{0}+\lambda\sigma^{1}
\end{align*}
so that 
\begin{align*}
W_{p}^{p}(\mu_{t}^{\lambda},\mu_{1}) & \le\int d(\gamma_{t},\gamma_{1})^{p}d\sigma^{\lambda}(\gamma)\\
 & =(1-\lambda)\int t^{p}d(\gamma_{0},\gamma_{1})^{p}d\sigma^{0}(\gamma)+\lambda\int t^{p}d(\gamma_{0},\gamma_{1})^{p}d\sigma^{1}(\gamma)\\
 & =t^{p}W_{p}^{p}(\mu_{0},\mu_{1})
\end{align*}
and similarly $W_{p}^{p}(\mu_{0},\mu_{t}^{\lambda})\le(1-t)^{p}W_{p}^{p}(\mu_{0},\mu_{1})$.
Thus $W_{p}(\mu_{0},\mu_{t}^{\lambda})+W_{p}(\mu_{t}^{\lambda},\mu_{1})\le W_{p}(\mu_{0},\mu_{1})$
which implies that $\mu_{t}^{\lambda}$ is a $t$-midpoint. Since
$(e_{0},e_{1})_{*}\sigma^{\lambda}=\pi$ we have $\mu_{t}^{\lambda}\in\mathcal{B}$
implying linear convexity of $\mathcal{B}$.

Now let $(\mu_{t}^{n})_{n\in\mathbb{N}}$ be a sequence in $\mathcal{B}$.
By weak compactness we can assuming after picking a subsequence and
relabeling that $(\mu_{t}^{n})_{n\in\mathbb{N}}$ converges weakly
to some $\mu\in\mathcal{P}(M)$. Since 
\begin{align*}
W_{p}(\mu_{0},\mu) & \le\liminf_{n\to\infty}W_{p}(\mu_{0},\mu_{t}^{n})=tW_{p}(\mu_{0},\mu_{1})\\
W_{p}(\mu,\mu_{1}) & \le\liminf_{n\to\infty}W_{p}(\mu_{t}^{n},\mu_{1})=tW_{p}(\mu_{0},\mu_{1})
\end{align*}
and 
\[
W_{p}(\mu_{0},\mu_{1})\le W_{p}(\mu_{0},\mu)+W_{p}(\mu,\mu_{1})\le W_{p}(\mu_{0},\mu_{1})
\]
we see that $\mu$ is a $t$-midpoint as well. Hence $W_{p}(\mu_{0},\mu_{t}^{n})\to W_{p}(\mu_{0},\mu)$
so that the sequence $(\mu_{t}^{n})_{n\in\mathbb{N}}$ also converges
in the $p$-th moment. This shows that $\mu_{t}^{n}\to\mu$ in $\mathcal{P}_{p}(M)$
(see \cite[Definition 6.8]{Villani2008OptTrans}). Thus any sequence
in $\mathcal{B}$ has a subsequence converging in $\mathcal{P}_{p}(M)$.
In particular, $\mathcal{B}$ is compact in $\mathcal{P}_{p}(M)$.
\end{proof}

\section{Existence of transport maps}

In this section we want to prove the existence of transport maps using
a combined approach of \cite{CH2015NBTrans} and \cite{CM2016TransMapsMCP}. 

\subsection*{Qualitatively non-degenerate measures}

Non-degenericity and the GKS-\-Con\-struc\-tion in the previous
section imply that there are absolutely continuous interpolations
between $\mu_{0}\ll\m$ and $\mu_{1}=\sum\lambda_{i}\delta_{x_{i}}$.
However, the non-degenericity condition is too weak to use approximation
arguments for general $\mu_{1}$. For this we need the following uniform
variant which was introduced by Cavalletti\textendash Huesmann \cite{CH2015NBTrans}
and represents a weak form of the measure contraction condition $\MCP(K,N)$,
see e.g. \cite{Sturm2006a,CM2016TransMapsMCP} and references therein.
\begin{defn}
\label{def:qual-non-deg}The measure $\m$ is said to be \emph{qualitatively
non-degenerate} if for all $R>0$ and $x_{0}\in M$ there is a function
$f_{R,x_{0}}:(0,1)\to(0,\infty)$ with 
\[
\limsup_{t\to0}f_{R,x_{0}}(t)>\frac{1}{2}
\]
 such that for every measurable $A\subset B_{R}(x_{0})$ and all $x\in B_{R}(x_{0})$
and $t\in(0,1)$ it holds 
\[
\m(A_{t,x})\ge f_{R,x_{0}}(t)\m(A).
\]
\end{defn}
\begin{cor}
Any qualitatively non-degenerate measure is non-degenerate.
\end{cor}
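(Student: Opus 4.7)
The statement amounts to unpacking the definitions: qualitative non-degeneracy gives a quantitative lower bound $\m(A_{t,x}) \ge f_{R,x_0}(t)\m(A)$ with $f_{R,x_0}$ strictly positive on $(0,1)$, so as soon as we can apply the bound with $\m(A)>0$, we get the non-degeneracy conclusion $\m(A_{t,x})>0$ for free. The only subtlety is that qualitative non-degeneracy is formulated for sets and points lying inside a fixed ball $B_R(x_0)$, whereas non-degeneracy is a statement about arbitrary Borel sets and arbitrary points.

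To handle this, fix a Borel set $A$ with $\m(A)>0$, a point $x\in M$, and $t\in(0,1)$. Consider the increasing sequence of balls $B_n(x)$ centred at $x$; their union exhausts $M$, so by continuity of $\m$ from below there exists $R>0$ (take $R=n$ large enough) such that the truncated set $A' := A\cap B_R(x)$ still has $\m(A')>0$. Then $A'\subset B_R(x)$ and $x\in B_R(x)$, so the definition of qualitative non-degeneracy (applied with $x_0=x$ and radius $R$) yields
\[
\m(A'_{t,x}) \;\ge\; f_{R,x}(t)\,\m(A').
\]
Since by definition $f_{R,x}$ takes values in $(0,\infty)$, the right-hand side is strictly positive. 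Because every geodesic witnessing membership in $A'_{t,x}$ also witnesses membership in $A_{t,x}$, we have $A'_{t,x}\subset A_{t,x}$, and therefore $\m(A_{t,x}) \ge \m(A'_{t,x}) > 0$, which is the definition of non-degeneracy.

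There is no real obstacle here: the argument is a one-line deduction once the definitions are aligned, and the only thing to double-check is that the ambient space admits exhaustion by balls, which is immediate since $M$ is a metric space and every point is at finite distance from $x$. Note that the stronger part of the hypothesis, namely $\limsup_{t\to 0} f_{R,x_0}(t) > \tfrac12$, is not needed for this corollary; only positivity of $f_{R,x_0}$ on $(0,1)$ is used.
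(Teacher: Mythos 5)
Your proof is correct and is exactly the intended argument: the paper states this corollary without proof precisely because, after truncating $A$ to a large ball around $x$ so that the qualitative bound applies, positivity of $f_{R,x}(t)$ immediately gives $\m(A_{t,x})\ge\m(A'_{t,x})>0$. The localization step you spell out (choosing $R$ by continuity of $\m$ from below, which is legitimate even if $\m(A)=\infty$) is the only point requiring any care, and you handle it properly.
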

The following proposition shows that qualitatvely non-degenerate spaces
are proper and make it possible to use GKS-Construction of the previous
section.
\begin{prop}
\label{prop:qual-non-deg-implies-doubling}A qualitatively non-degenerate
measure $\m$ is locally doubling, i.e. for each $R>0$ and $x_{0}\in M$
there is a constant $C_{R,x_{0}}>0$ such that 
\[
\m(B_{2r}(x))\le C_{R,x_{0}}\cdot\m(B_{r}(x))
\]
whenever $B_{2r}(x)\subset B_{R}(x_{0})$. In particular, $(M,d)$
is a proper metric space.
\end{prop}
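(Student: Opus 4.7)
The plan is to iterate the qualitative non-degeneracy inequality at a single small scale $t_0$, contracting the ball $B_{2r}(x)$ towards its own center until it fits inside $B_r(x)$. First, using the hypothesis $\limsup_{t\to 0}f_{R,x_{0}}(t)>\tfrac{1}{2}$, I fix any $t_{0}\in(0,1)$ with $f_{R,x_{0}}(t_{0})>0$ (in fact, a sequence of such $t_{0}$ with values exceeding $\tfrac{1}{2}$ is available). Throughout I abbreviate $f:=f_{R,x_0}$.

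Second, given $B_{2r}(x)\subset B_{R}(x_{0})$ (so in particular $x\in B_{R}(x_{0})$), I define iteratively $A^{(0)}:=B_{2r}(x)$ and $A^{(k+1)}:=(A^{(k)})_{t_{0},x}$. A direct check shows that if $\gamma_{0}\in A^{(k)}$ and $\gamma_{1}=x$ then $d(\gamma_{t_{0}},x)=(1-t_{0})\,d(\gamma_{0},x)$, so inductively
\[
A^{(k)}\subset B_{(1-t_{0})^{k}\cdot 2r}(x)\subset B_{2r}(x)\subset B_{R}(x_{0}).
\]
Hence the QND hypothesis applies at each step with the set $A^{(k)}\subset B_{R}(x_{0})$ and target point $x\in B_{R}(x_{0})$, yielding $\m(A^{(k+1)})\ge f(t_{0})\,\m(A^{(k)})$, and therefore
\[
\m(A^{(k)})\ge f(t_{0})^{k}\,\m(B_{2r}(x)).
\]

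Third, I choose the smallest $k_{0}\in\mathbb{N}$ such that $(1-t_{0})^{k_{0}}\cdot 2\le 1$; then $A^{(k_{0})}\subset B_{r}(x)$, so
\[
\m(B_{r}(x))\ge\m(A^{(k_{0})})\ge f(t_{0})^{k_{0}}\,\m(B_{2r}(x)),
\]
which gives the doubling bound with $C_{R,x_{0}}:=f(t_{0})^{-k_{0}}$, an explicit constant depending only on $R$ and $x_{0}$ (through the choice of $t_0$ and $k_0$).

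Finally, to deduce properness, I would invoke the standard consequence that a complete metric space equipped with a locally doubling Radon measure is proper: the doubling estimate implies that any ball $B_{R}(x_{0})$ can be covered by boundedly many balls of radius $\epsilon$ for every $\epsilon>0$, so bounded sets are totally bounded; completeness then makes closed bounded sets compact. The main technical subtlety is simply that $A_{t,x}$ (and hence each $A^{(k)}$) is in general only an analytic set rather than Borel, but this is harmless since QND is formulated for such sets via the completion of $\m$, and the inclusion $A^{(k)}\subset B_{2r}(x)\subset B_{R}(x_{0})$ is preserved under each iteration so that QND may be applied.
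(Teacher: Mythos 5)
Your proof is correct and takes essentially the same route as the paper: contract $B_{2r}(x)$ toward its center $x$ via qualitative non-degeneracy and observe that the contracted set lies inside $B_{r}(x)$. The paper does this in a single step with $t=\tfrac12$, using $(B_{2r}(x))_{\frac12,x}\subset B_{r}(x)$ and the fact that $f_{R,x_{0}}(\tfrac12)>0$ by definition, so your iteration at a small $t_{0}$ (and the appeal to $\limsup_{t\to0}f_{R,x_{0}}(t)>\tfrac12$) is unnecessary but harmless.
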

\begin{proof}
Just note that $B_{r}(x)\subset(B_{2r}(x))_{\frac{1}{2},x}$ for all
$x\in M$ and $r>0$. Thus qualitative non-degenericity implies for
$B_{2r}(x)\subset B_{R}(x_{0})$ 
\[
\m(B_{2r}(x))\le\frac{1}{f_{R,x_{0}}(\frac{1}{2})}\m(B_{r}).
\]
Finally, properness follows from $\m$ being locally doubling (see
e.g. \cite{Heinonen2001}).
\end{proof}
\begin{lem}
\label{lem:no-overlap-non-deg}Assume $(M,d,\m)$ is $p$-essentially
non-branching for some $p\in(1,\infty)$ and $\m$ is qualitatively
non-degenerate. If for a Borel set $A$, the set $A\times\{x,y\}$
is $c_{p'}$-cyclically monotone for $x\ne y\in M$ and $p'\in(1,\infty)$
then $\m(A)=0$.
\end{lem}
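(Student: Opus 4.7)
The proof is by contradiction: assume $\m(A)>0$ and exhibit two absolutely continuous interpolations whose supports are simultaneously forced to overlap (by qualitative non-degeneracy) and to be mutually singular (by essentially non-branching). By inner regularity of $\m$ we may replace $A$ by a compact subset of positive measure contained, together with $x$ and $y$, in a ball $B_R(x_0)$ on which the non-degeneracy function $f=f_{R,x_0}$ applies uniformly. Set $\mu:=\m(A)^{-1}\m|_A$ and form the couplings $\pi_x:=\mu\otimes\delta_x$, $\pi_y:=\mu\otimes\delta_y$, and $\pi:=\tfrac12(\pi_x+\pi_y)$. The one-point-target sets $A\times\{x\}$ and $A\times\{y\}$ are trivially $c_q$-cyclically monotone for every $q\in(1,\infty)$, so $\pi_x$ and $\pi_y$ are $q$-optimal for every $q$, while $\pi$ is $p'$-optimal by hypothesis.

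By Proposition \ref{prop:qual-non-deg-implies-doubling}, $(M,d)$ is proper and $\m$ is non-degenerate, so Corollary \ref{cor:abs-cts-interpolation-under-non-deg} supplies dynamical liftings $\sigma_x,\sigma_y$ of $\pi_x,\pi_y$ with midpoints $\mu_t^x:=(e_t)_*\sigma_x$ and $\mu_t^y:=(e_t)_*\sigma_y$ absolutely continuous for every $t\in(0,1)$, each maximal in the sense of Theorem \ref{thm:abs-cts-interpolation}. Maximality provides Borel subsets $\tilde A_x,\tilde A_y\subset A$ of full $\mu$-measure for which $\mu_t^x\sim\m|_{(\tilde A_x)_{t,x}}$ and $\mu_t^y\sim\m|_{(\tilde A_y)_{t,y}}$, and qualitative non-degeneracy then yields $\m((\tilde A_x)_{t,x}),\m((\tilde A_y)_{t,y})\ge f(t)\m(A)$.

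Next, consider $\sigma:=\tfrac12(\sigma_x+\sigma_y)$ and its time-restriction $\bar\sigma:=(\restr_{s,1-s})_*\sigma$ for $s>0$ small, which has absolutely continuous endpoints. The endpoint-coupling support of $\bar\sigma$ splits into disjoint Borel pieces $\Gamma^{(x)},\Gamma^{(y)}$ associated to the $\sigma_x$- and $\sigma_y$-halves, because the time-$(1-s)$ evaluation of $\sigma_x$-geodesics lies in $\bar B_{sD_x}(x)$ while that of $\sigma_y$-geodesics lies in $\bar B_{sD_y}(y)$ (with $D_x:=\sup_{a\in A}d(a,x)$ and $D_y$ analogously), and these two balls are disjoint once $s(D_x+D_y)<d(x,y)$. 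Applying Theorem \ref{thm:ess-nb-summary}(iii) to $\bar\sigma$ then yields $\mu_t^x\perp\mu_t^y$ for every $t\in(s,1-s)$.

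Pick a sequence $t_n\to 0$ with $f(t_n)\to\alpha>1/2$, available by $\limsup_{t\to 0}f(t)>1/2$. Since every geodesic in $\sigma_x$ or $\sigma_y$ satisfies $d(\gamma_{t_n},\gamma_0)\le t_n\max(D_x,D_y)=:r_n$, the sets $(\tilde A_x)_{t_n,x}$ and $(\tilde A_y)_{t_n,y}$ both lie in the closed $r_n$-neighborhood $A_{r_n}$ of $A$. Compactness of $A$ and finiteness of $\m$ on bounded sets give $\bigcap_{r>0}A_r=A$ and $\m(A_{r_n})\to\m(A)$, whereas the lower bounds of the second paragraph give $\m((\tilde A_x)_{t_n,x})+\m((\tilde A_y)_{t_n,y})\ge 2f(t_n)\m(A)\to 2\alpha\m(A)>\m(A)$. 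Hence $\m\bigl((\tilde A_x)_{t_n,x}\cap(\tilde A_y)_{t_n,y}\bigr)>0$ for $n$ large, directly contradicting mutual singularity (which, combined with the equivalences above, requires this intersection to be $\m$-null). The main obstacle is the application of Theorem \ref{thm:ess-nb-summary}(iii) when $p\neq p'$: the combined coupling $\bar\sigma$ is only $p'$-optimal, so the argument must leverage that $\sigma_x,\sigma_y$ are $q$-optimal for every $q$ individually (the singleton targets being universally cyclically monotone) and apply the $p$-essentially non-branching hypothesis to each half separately, producing the disjoint-target singularity by hand.
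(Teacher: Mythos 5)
There is a genuine gap at the decisive step, and you flag it yourself without resolving it: the mutual singularity $\mu_t^x\perp\mu_t^y$. Theorem \ref{thm:ess-nb-summary}(iii) applies only to a single \emph{$p$-optimal} dynamical coupling (for the exponent $p$ in the essentially non-branching hypothesis) with absolutely continuous endpoints, and its conclusion concerns two disjoint pieces of that \emph{one} coupling. Your combined plan $\tfrac12(\sigma_x+\sigma_y)$ has endpoint support $A\times\{x,y\}$, which is only assumed $c_{p'}$-cyclically monotone; for $p\ne p'$ it need not be $c_p$-cyclically monotone, so $\bar\sigma$ need not be $p$-optimal and the theorem is not applicable. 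Applying the $p$-essentially non-branching hypothesis ``to each half separately,'' as you propose, yields only that each of $\sigma_x$, $\sigma_y$ is individually concentrated on a non-branching set; it says nothing about whether geodesics from the two different families can share a $t$-midpoint, which is exactly what singularity of $\mu_t^x$ and $\mu_t^y$ requires. The ``by hand'' production of the disjoint-target singularity is therefore not carried out and cannot be obtained from the halves in isolation.

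The missing idea is the decomposition $A=A^{eq}\cup A^{ne}$ with $A^{eq}=\{z\in A\,|\,d(z,x)=d(z,y)\}$. On $A^{ne}$ no interpolation machinery is needed: if a geodesic from $a$ to $x$ and one from $a'$ to $y$ met at an interior time, $c_{p'}$-cyclic monotonicity together with strict convexity of $r\mapsto r^{p'}$ (Lemma \ref{lem:cyc-mon-intersect-implies-same-length}) would force the triangle inequalities to be equalities and hence $d(a,x)=d(a,y)$, contradicting $a\in A^{ne}$; so $A_{t,x}\cap A_{t,y}=\varnothing$ outright and the measure count $2f(t)\m(A)\le\m(A_\delta)$ already gives $\m(A^{ne})=0$. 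On $A^{eq}$ all four relevant distances coincide, so $A^{eq}\times\{x,y\}$ is $c_q$-cyclically monotone for \emph{every} $q$, in particular for $p$; only there is the combined coupling $p$-optimal and only there may Theorem \ref{thm:ess-nb-summary}(iii) be invoked (after restricting to a time window with absolutely continuous, disjointly supported endpoints, as you correctly do). A secondary, fixable point: rather than a sequence $t_n\to0$ (which would require absolute continuity and maximality at infinitely many times for one coupling, beyond what Corollary \ref{cor:abs-cts-interpolation-under-non-deg} provides), fix a single $t$ with $f(t)\ge\tfrac12+\epsilon$ using the $\limsup$ hypothesis.
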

\begin{rem*}
One may replace the qualitative non-degenericity by the following
pointwise variant
\[
\liminf_{t\to0}\m(A_{t,x})>\frac{1}{2}\m(A).
\]
This condition is, however, too weak to do approximations of general
$c_{p}$-cyclically monotone sets as in Lemma \ref{lem:p-ess-nb-implies-p-str-non-deg}. 
\end{rem*}
\begin{proof}
By inner regularity we can assume $A$ is compact and $\{x,y\}\cup A\subset B_{R}(x_{0})$
for some $R>0$. By compactness of $A$ we find $t$ close to $0$
and $s$ close to $1$ such that $f_{R,x_{0}}(t)\ge\frac{1}{2}+\epsilon$,
\begin{align*}
\m(A_{\delta}) & \le(1+\epsilon)\m(A)\\
A_{t,x}\cup A_{t,y} & \subset A_{\delta}
\end{align*}
 and 
\[
(A_{s,x})_{\epsilon}\cap(A_{s,y})_{\epsilon}=\varnothing
\]
for some $\epsilon,\delta>0$. 

Decompose $A$ into two Borel sets 
\[
A^{eq}=\{z\in A\thinspace|\thinspace d(z,x)=d(z,y)\}
\]
and 
\[
A^{ne}=A\backslash A^{eq}=\{z\in A\thinspace|\thinspace d(z,x)\ne d(z,y)\}.
\]
It suffices to show that the claim is true for the cases $A=A^{eq}$
and $A=A^{ne}$. 

First assume $A=A^{ne}$ and observe that by $c_{p'}$-cyclic monotonicity
and the fact that $d(z,x)\ne d(z,y)$ for all $z\in A$ it holds 
\[
A_{t,y}\cap A_{t,x}=\varnothing\quad\text{for all }t\in(0,1).
\]
Hence 
\begin{align*}
(1+\epsilon)\m(A) & >\m(A_{\delta})\\
 & \ge\m(A_{t,x}\cup A_{t,y})\\
 & =\m(A_{t,x})+\m(A_{t,y})\\
 & \ge2f_{R,x_{0}}(t)\m(A)=(1+2\epsilon)\m(A)
\end{align*}
which implies that $\m(A)=0$.

For the case $A=A^{eq}$ assume by contradiction $\m(A)>0$. Set

\[
\mu_{0}=\frac{1}{\m(A)}\m\big|_{A}
\]
and observe that $A=A^{eq}$ implies that $A\times\{x,y\}$ is $c_{p"}$-cyclically
monotone for all $p"\in[1,\infty)$. In particular, $A\times\{x,y\}$
is $c_{p}$-cyclically monotone.

Apply Corollary \ref{cor:abs-cts-interpolation-under-non-deg} to
$(\mu_{0},\delta_{x})$ and $(\mu_{0},\delta_{y})$ to get two dynamical
couplings $\sigma^{x}$ and $\sigma^{y}$ whose interpolations at
times $s$ and $t$ are absolutely continuous. The choice of $s$
shows that $\mu_{s}^{x}=(e_{s})_{*}\sigma^{x}$ and $\mu_{s}^{y}=(e_{s})_{*}\sigma^{y}$
have disjoint support. Furthermore, the measures $\mu_{t}^{x}=(e_{t})_{*}\sigma^{x}$
and $\mu_{t}^{y}=(e_{t})_{*}\sigma^{y}$ are maximal with respect
to $\tilde{A}\times\{x\}$ and resp. $\tilde{A}\times\{y\}$ for some
$\tilde{A}\subset A$ with $\m(A\backslash\tilde{A})=0$. Since the
set $\tilde{A}\times\{x,y\}$ is still $c_{p}$-cyclically monotone,
the dynamical coupling $\frac{1}{2}(\sigma^{x}+\sigma^{y})$ is $p$-optimal
between $\mu_{0}$ and $\frac{1}{2}(\delta_{x}+\delta_{y})$. Because
$\mu_{s}=\frac{1}{2}(\mu_{s}^{x}+\mu_{s}^{y})$ is a decomposition
into mutually singular measures, Theorem \ref{thm:ess-nb-summary}
shows 
\[
\mu_{t}^{x}\perp\mu_{t}^{y}.
\]
By maximality of $\mu_{t}^{x}$ and $\mu_{t}^{y}$ it holds $\m\big|_{\tilde{A}_{t,x}}\ll\mu_{t}^{x}$
and $\m\big|_{\tilde{A}_{t,y}}\ll\mu_{t}^{y}$ so that $\m(\tilde{A}_{t,x}\cap\tilde{A}_{t,y})=0$.
In particular, since $\m$ is qualitatively non-degenerate 
\begin{align*}
\m(\tilde{A}_{t,x}\cup\tilde{A}_{t,y}) & =\m(\tilde{A}_{t,x})+\m(\tilde{A}_{t,y})\\
 & \ge2f(t)\m(A)\ge\left(1+2\epsilon\right)\m(A).
\end{align*}
Combining those facts we obtain 
\begin{align*}
(1+\epsilon)\m(A) & >\m(A_{\delta})\\
 & \ge\m(\tilde{A}_{t,x}\cup\tilde{A}_{t,y})\\
 & \ge(1+2\epsilon)\m(A)
\end{align*}
which is a contradiction. This shows that $\m(A)=0$.
\end{proof}
\begin{cor}
Assume $(M,d,\m)$ is $p$-essentially non-branching and $\m$ qualitatively
non-degenerate. If for some $p'\in(1,\infty)$ the set $\Gamma$ is
a $c_{p'}$-cyclically monotone set in $B_{R}(x_{0})\times B_{R}(x_{0})$
and $\Gamma_{1}$ is finite then 
\[
\m(\Gamma_{t})\ge f_{R,x_{0}}(t)\m(\Gamma_{0}).
\]
\end{cor}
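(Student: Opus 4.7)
The idea is to decompose $\Gamma$ along its finitely many right-endpoints and apply Lemma~\ref{lem:no-overlap-non-deg} twice: once at time $0$ to see that the pieces of $\Gamma_0$ are essentially disjoint, and once at time $t$ to see that their $t$-midpoint images are also essentially disjoint. Qualitative non-degeneracy applied to each piece then yields the claimed bound by summation.

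Write $\Gamma_1=\{y_1,\dots,y_n\}$ and set $A_i:=\{x\in M\,|\,(x,y_i)\in\Gamma\}$, so that $\Gamma=\bigcup_{i=1}^{n}A_i\times\{y_i\}$ and $\Gamma_0=\bigcup_{i=1}^{n}A_i$. For $i\ne j$, the product $(A_i\cap A_j)\times\{y_i,y_j\}$ sits inside $\Gamma$ and is therefore $c_{p'}$-cyclically monotone; Lemma~\ref{lem:no-overlap-non-deg}, applied in a ball containing $\{y_i,y_j\}$ and $A_i\cap A_j$, forces $\m(A_i\cap A_j)=0$. Hence $\m(\Gamma_0)=\sum_{i=1}^{n}\m(A_i)$.

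The key step is that the very same trick applies at time $t$. By Lemma~\ref{lem:middle-cp-cyl-mon} the set $\widetilde{\Gamma}:=\Gamma_{t,1}=(e_t,e_1)(\hat{\Gamma})$ is $c_{p'}$-cyclically monotone, and its fiber over $y_i$ is exactly $(A_i)_{t,y_i}$. Consequently
\[
\bigl((A_i)_{t,y_i}\cap(A_j)_{t,y_j}\bigr)\times\{y_i,y_j\}\subset\widetilde{\Gamma}
\]
is $c_{p'}$-cyclically monotone, and a second application of Lemma~\ref{lem:no-overlap-non-deg} (with a slightly enlarged ball, e.g.\ $B_{3R}(x_0)$, which contains all intermediate midpoints by the triangle inequality) gives $\m\bigl((A_i)_{t,y_i}\cap(A_j)_{t,y_j}\bigr)=0$ for $i\ne j$.

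Combining this essential disjointness at time $t$ with the qualitative non-degeneracy bound $\m\bigl((A_i)_{t,y_i}\bigr)\ge f_{R,x_0}(t)\,\m(A_i)$ for each $i$, and using $\Gamma_t=\bigcup_{i=1}^{n}(A_i)_{t,y_i}$, one concludes
\[
\m(\Gamma_t)=\sum_{i=1}^{n}\m\bigl((A_i)_{t,y_i}\bigr)\ge f_{R,x_0}(t)\sum_{i=1}^{n}\m(A_i)=f_{R,x_0}(t)\,\m(\Gamma_0).
\]
The main obstacle is recognising that Lemma~\ref{lem:no-overlap-non-deg} can be re-used on the pushforward set $\Gamma_{t,1}$; once this is seen, the finiteness of $\Gamma_1$ reduces the corollary to disjointification plus the single-endpoint qualitative estimate, avoiding any delicate comparison between the exponents $p$ and $p'$.
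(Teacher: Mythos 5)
Your proof is correct and follows essentially the same route as the paper: decompose $\Gamma$ into fibers over the finitely many endpoints, use Lemma \ref{lem:no-overlap-non-deg} to get essential disjointness of the pieces at time $0$ and again at time $t$, and sum the qualitative non-degeneracy estimates. Your explicit use of Lemma \ref{lem:middle-cp-cyl-mon} to transport cyclic monotonicity to $\Gamma_{t,1}$ before the second application of Lemma \ref{lem:no-overlap-non-deg} cleanly fills in the step the paper only indicates with ``Similarly''.
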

\begin{proof}
Let $\{x_{i}\}_{i=1}^{n}=\Gamma_{1}$ with $x_{i}\ne x_{j}$ whenever
$i\ne j$ and set $\Gamma^{i}=\Gamma\cap(M\times\{x_{i}\})$. Then
the previous theorem shows 
\[
\m(\Gamma_{0}^{i}\cap\Gamma_{0}^{j})=0
\]
for $i\ne j$. Thus there are disjoint sets $A^{i}\subset\Gamma^{i}$,
$i=1,\ldots,n$, such that 
\[
\m(\Gamma_{0}\backslash\cup A^{i})=0.
\]
Similarly one may replace $A^{i}$ by a possibly smaller set which
has full $\m$-measure in $A^{i}$ satisfying the condition above
and, in addition, it holds
\[
\m(A_{t,x_{i}}^{i}\cap A_{t,x_{j}}^{j})=0
\]
for $i\ne j$. Setting 
\[
\Gamma'=\bigcup_{i=1}^{n}\{x_{i}\}\times A^{i}
\]
we conclude 
\begin{align*}
\m(\Gamma_{t}) & \ge\m(\Gamma_{t}^{'})=\sum_{i=1}^{n}\m(A_{t,x_{i}}^{i})\\
 & \ge\sum_{i=1}^{n}f_{R,x_{0}}(t)\m(A^{i})=f_{R}(t)\m(\Gamma_{0}).
\end{align*}
\end{proof}
A similar argument also shows the following. As the result is not
used below we leave the proof to the interested reader.
\begin{cor}
\label{cor:maps-between-abs-to-discrete}Assume $(M,d)$ is $p$-essentially
non-branching and $\m$ is qualitatively non-degenerate. Then for
any $p'\in(1,\infty)$ and any $p'$-optimal coupling $\pi$ with
$(p_{1})_{*}\pi\ll\m$ and $(p_{2})_{*}\pi=\sum a_{i}\delta_{x_{i}}$
is induced by a transport map.
\end{cor}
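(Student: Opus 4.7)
The plan is to argue by contradiction, reducing to the non-overlap statement of Lemma \ref{lem:no-overlap-non-deg} via the Selection Dichotomy of Sets (Theorem \ref{thm:selection-dichotomy-set}), in direct parallel with the proof of the preceding corollary. Heuristically, if $\pi$ were not induced by a map, two distinct atoms $x_i, x_j$ of $(p_2)_*\pi$ would have overlapping pre-images of positive $\m$-measure, producing a set $A$ of positive $\m$-measure with $A \times \{x_i, x_j\}$ being $c_{p'}$-cyclically monotone, which Lemma \ref{lem:no-overlap-non-deg} forbids.

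Concretely, I would set $\Gamma = \supp \pi$ and $\mu_0 = (p_1)_*\pi$. Then $\Gamma$ is closed (hence Borel), $c_{p'}$-cyclically monotone, and contained in $M \times \{x_1, \ldots, x_n\}$. Assuming for contradiction that $\pi$ is not induced by a map, the first alternative of Theorem \ref{thm:selection-dichotomy-set} applied to $\Gamma$ and $\mu_0$ fails, so its second alternative supplies a compact $K \subset \supp \mu_0$ with $\mu_0(K) > 0$ together with two $\mu_0$-measurable selections $T_1, T_2$ of $\Gamma$, continuous on $K$, satisfying $T_1(K) \cap T_2(K) = \varnothing$. Both $T_\ell$ take values in the finite set $\{x_1, \ldots, x_n\}$, so continuity on the compact $K$ decomposes $K$ into the compact pieces $T_\ell^{-1}(x_k) \cap K$; intersecting we obtain a compact $K' \subset K$ of positive $\mu_0$-measure on which $T_1 \equiv x_i$ and $T_2 \equiv x_j$ for some indices $i \ne j$.

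Then $K' \times \{x_i, x_j\} \subset \Gamma$ is $c_{p'}$-cyclically monotone, and $\m(K') > 0$ since $\mu_0 \ll \m$. Applying Lemma \ref{lem:no-overlap-non-deg} with $A = K'$, $x = x_i$, $y = x_j$ yields $\m(K') = 0$, a contradiction. Hence $\pi$ is induced by a transport map.

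The main technical content has already been compressed into Lemma \ref{lem:no-overlap-non-deg}, which is the substantive obstacle; the remaining difficulty is purely a matter of measurability bookkeeping. The only delicate point is ensuring that the extraction of the constant piece $K'$ is legitimate, namely that after partitioning $K$ via the finitely many values taken by $T_1, T_2$, at least one intersection $T_1^{-1}(x_i) \cap T_2^{-1}(x_j) \cap K$ with $i \ne j$ carries positive $\mu_0$-mass — this is forced by $T_1(K) \cap T_2(K) = \varnothing$, which rules out $i = j$ on each piece. No additional use of the measure-contraction or essentially non-branching hypotheses is needed beyond their role inside Lemma \ref{lem:no-overlap-non-deg}.
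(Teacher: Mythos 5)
Your proof is correct and follows essentially the route the paper intends (it leaves the proof to the reader as ``a similar argument''): the whole content is Lemma \ref{lem:no-overlap-non-deg} applied to a set of positive $\m$-measure transported to two distinct atoms, and your use of the Selection Dichotomy (Theorem \ref{thm:selection-dichotomy-set}) plus the finite-values pigeonhole is exactly the bookkeeping the paper itself uses in Lemma \ref{lem:GTB-superdiff} and Theorem \ref{thm:MCPimpliesGTB}. The one caveat is that your pigeonhole relies on the atom set being finite, so that $\supp\pi\subset M\times\{x_{1},\ldots,x_{n}\}$ and $T_{1},T_{2}$ take finitely many values on $K$; if the sum $\sum a_{i}\delta_{x_{i}}$ is read as countable, work instead with the Borel set $\Gamma=\supp\pi\cap\left(M\times\{x_{i}\}_{i}\right)$, apply Lemma \ref{lem:no-overlap-non-deg} to each pair $(x_{i},x_{j})$ with $A=\{x\,|\,(x,x_{i}),(x,x_{j})\in\Gamma\}$, and conclude by countable additivity that $\Gamma(x)$ is a singleton for $\mu_{0}$-almost every $x$.
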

Via an approximation argument of Cavalletti\textendash Huesmann \cite[Proposition 4.3]{CH2015NBTrans}
qualitative non-degenericity implies strong non-degneraticity.
\begin{lem}
\label{lem:p-ess-nb-implies-p-str-non-deg}Assume $(M,d,\m)$ is $p$-essentially
non-branching and $\m$ is qualitatively non-degenerate. Then for
any $c_{p'}$-cyclically monotone Borel set $\Gamma$ in $B_{R}(x_{0})\times B_{R}(x_{0})$
it holds 
\[
\m(\Gamma_{t})\ge f_{R,x_{0}}(t)\m(\Gamma_{0}).
\]
In particular, $(M,d,\m)$ is strongly non-degenerate $\sND_{p'}$
for all $p'\in(1,\infty)$.
\end{lem}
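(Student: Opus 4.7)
The plan is to approximate the general $c_{p'}$-cyclically monotone set $\Gamma$ by sets with finite second marginal, apply the preceding corollary, and then pass to the limit in $n$, in the spirit of Cavalletti--Huesmann. By inner regularity I may assume $\Gamma \subseteq B_R(x_0) \times B_R(x_0)$ is compact with $\m(\Gamma_0) > 0$. Fix a Borel selection $T \colon \Gamma_0 \to \Gamma_1$ of $\Gamma$; since $\mathrm{graph}(T)_t \subseteq \Gamma_t$ while $\mathrm{graph}(T)_0 = \Gamma_0$, it suffices to prove the bound with $\mathrm{graph}(T)$ in place of $\Gamma$. Set $\mu_0 = \m(\Gamma_0)^{-1} \m\big|_{\Gamma_0}$ and $\mu_1 = T_* \mu_0$.

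For each $n$, pick a countable Borel partition $(A_i^n)_i$ of $\overline{B_R(x_0)}$ with $\mathrm{diam}(A_i^n) \leq 1/n$ and a point $y_i^n \in T(\Gamma_0) \cap A_i^n$ whenever this intersection is non-empty; write $B_i^n = T^{-1}(A_i^n)$ and $\mu_1^n := \sum_i \frac{\m(B_i^n)}{\m(\Gamma_0)} \delta_{y_i^n}$. The obvious coupling between $\mu_1$ and $\mu_1^n$ displaces mass by at most $1/n$, so $W_{p'}(\mu_1, \mu_1^n) \to 0$. Choose $\pi^n \in \Opt_{p'}(\mu_0, \mu_1^n)$; its support $\Gamma^n$ is $c_{p'}$-cyclically monotone with $\Gamma^n_1 \subseteq \{y_i^n\}_i$ finite, and $\m(\Gamma^n_0) \geq \m(\Gamma_0)$ because $\mu_0 \sim \m\big|_{\Gamma_0}$ and $\mu_0(\Gamma^n_0) = 1$. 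The preceding corollary (the finite-target version) then yields the uniform bound $\m(\Gamma^n_t) \geq f_{R, x_0}(t) \m(\Gamma_0)$.

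The main obstacle is transferring this inequality from $\Gamma^n_t$ back to $\Gamma_t$, because the optimal plan $\pi^n$ need not be any discretization of $(\mathrm{id} \times T)_* \mu_0$. Here I invoke Corollary \ref{cor:abs-cts-interpolation-under-non-deg} to produce a $p'$-optimal dynamical lift $\sigma^n \in \OptGeo_{p'}(\mu_0, \mu_1^n)$ with $\mu_t^n := (e_t)_* \sigma^n \ll \m$, maximal in the sense of Theorem \ref{thm:abs-cts-interpolation}, so in particular $\m\big|_{\tilde\Gamma^n_t} \ll \mu_t^n$ for some $\tilde\Gamma^n \subseteq \Gamma^n$ of full $\pi^n$-measure. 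By properness (Proposition \ref{prop:qual-non-deg-implies-doubling}) and the fact that every geodesic in $\supp \sigma^n$ takes values in the fixed compact $\overline{B_{2R}(x_0)}$, Arzel\`a--Ascoli together with Prokhorov gives a subsequence $\sigma^n \rightharpoonup \bar\sigma \in \OptGeo_{p'}(\mu_0, \mu_1)$. Using Lemma \ref{lem:cyc-mon-intersect-implies-same-length}, $p$-essential non-branching, and iterating Lemma \ref{lem:no-overlap-non-deg} over pairs of target points in $\mathrm{graph}(T)$, one checks that the support of $\bar\sigma$ is concentrated on curves $\gamma$ with $\gamma_1 = T(\gamma_0)$ for $\mu_0$-a.e.\ $\gamma_0$; the weak limit of $\mu_t^n$ is thus supported in $\mathrm{graph}(T)_t \subseteq \Gamma_t$. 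Applying weak lower semicontinuity on the open $1/n$-neighborhoods of $\Gamma_t$ and using compactness of $\Gamma_t$ (hence $\overline{\Gamma_t} = \Gamma_t$) gives $\m(\Gamma_t) \geq f_{R, x_0}(t) \m(\Gamma_0)$.

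For the $\sND_{p'}$ conclusion, let $\Gamma$ be any Borel $c_{p'}$-cyclically monotone set and $A$ a Borel set with $\m(A \cap \Gamma_0) > 0$. Choose $R$ and $x_0$ such that $\m(A \cap \Gamma_0 \cap B_R(x_0)) > 0$, and apply the inequality just proved to $\Gamma \cap ((A \cap B_R(x_0)) \times B_R(x_0))$, which inherits $c_{p'}$-cyclic monotonicity from $\Gamma$. This forces $\m(\Gamma^{A,0}_t) > 0$, so $\Gamma$ is non-degenerate; since $\Gamma$ was arbitrary, $(M,d,\m)$ is strongly non-degenerate $\sND_{p'}$.
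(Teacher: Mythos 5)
Your overall strategy (discretize the target, apply the finite-target corollary, pass to the limit) is the right one, and your reduction to $\operatorname{graph}(T)$ and your treatment of the $\sND_{p'}$ conclusion are fine. But the limit passage contains a genuine gap. The inequality you obtain from the corollary, $\m(\Gamma^{n}_{t})\ge f_{R,x_{0}}(t)\m(\Gamma_{0})$, is a lower bound on the $\m$-measure of the \emph{set} of $t$-midpoints of $\supp\pi^{n}$, and $\supp\pi^{n}$ has no reason to lie near $\Gamma$: you chose $\pi^{n}$ as an arbitrary $p'$-optimal plan to the discretized target, so its pairing can differ completely from $T$. Weak convergence $\sigma^{n}\rightharpoonup\bar\sigma$ only controls where the \emph{mass} of $\mu_{t}^{n}$ concentrates; it does not prevent a portion of $\Gamma^{n}_{t}$ of large $\m$-measure but vanishing $\mu_{t}^{n}$-measure from sitting far outside $\Gamma_{t}$. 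The maximality relation $\m\big|_{\tilde\Gamma^{n}_{t}}\ll\mu_{t}^{n}$ is qualitative and gives no uniform-in-$n$ control of the Radon--Nikodym derivatives, so $\mu_{t}^{n}(M\backslash(\Gamma_{t})_{\epsilon})\to0$ does not yield $\m(\Gamma^{n}_{t}\backslash(\Gamma_{t})_{\epsilon})\to0$. (A uniform density bound would close this, but that is Proposition \ref{prop:qND-ess-nb-implies-bdd-dens}, whose proof relies on the present lemma, so invoking it here would be circular.) Your intermediate claim that $\bar\sigma$ is concentrated on $\{\gamma_{1}=T(\gamma_{0})\}$ is similarly problematic: it essentially asserts uniqueness of the optimal plan between $\mu_{0}$ and $T_{*}\mu_{0}$, which is what Theorem \ref{thm:MCPimpliesGTB} proves \emph{using} this lemma.

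The paper avoids the transfer problem entirely by discretizing through the Kantorovich potential rather than through an optimal plan: with $y_{i}$ dense in $\Gamma_{1}$ one sets $E_{i}^{(n)}=\{x\in\Gamma_{0}\,|\,d(x,y_{i})^{p}-\varphi^{c_{p}}(y_{i})\le d(x,y_{j})^{p}-\varphi^{c_{p}}(y_{j}),\ j\le n\}$ and $\Gamma^{(n)}=\bigcup_{i\le n}E_{i}^{(n)}\times\{y_{i}\}$. This $\Gamma^{(n)}$ is automatically $c_{p'}$-cyclically monotone with $\Gamma_{0}^{(n)}=\Gamma_{0}$ and finite second projection, and compactness of $\Gamma$ forces the purely set-theoretic inclusion $\Gamma_{t}^{(n)}\subset(\Gamma_{t})_{\epsilon}$ for $n$ large, so the bound transfers by $\m(\Gamma_{t})=\lim_{\epsilon\to0}\m((\Gamma_{t})_{\epsilon})\ge\limsup_{n}\m(\Gamma_{t}^{(n)})$. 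No weak convergence of measures or dynamical plans is needed. To repair your argument, replace your discretized couplings by this potential-based construction (or otherwise establish the set inclusion $\Gamma^{n}_{t}\subseteq(\Gamma_{t})_{\epsilon_{n}}$ with $\epsilon_{n}\to0$).
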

\begin{proof}
For compact $\Gamma$ the argument is a in \cite[Proposition 4.3]{CH2015NBTrans}.
For completeness, we present the argument: Let $(\Gamma^{(n)})_{n\in\mathbb{N}}$
be a sequence of $c_{p}$-cyclically monotone sets such that $\Gamma_{0}^{(n)}=\Gamma_{0}$
and $\Gamma_{1}^{(n)}\subset\Gamma_{1}$ is finite. More precisely,
choose a countable dense sequence $y_{n}\in\Gamma_{1}$ and define
\[
E_{i}^{(n)}=\{x\in\Gamma_{0}\,|\,d(x,y_{i})^{p}-\varphi^{c_{p}}(y_{i})\le d(x,y_{j})^{p}-\varphi^{c_{p}}(y_{j}),j=1,\ldots,n\}
\]
and 
\[
\Gamma^{(n)}=\bigcup_{i=1}^{n}E_{i}^{(n)}\times\{y_{i}\}.
\]
From the definition of $E_{i}^{(n)}$ it follows that $\Gamma^{(n)}$
is $c_{p'}$-cyclically monotone. Furthermore, compactness of $\Gamma$
shows that for all $\epsilon>0$ there is an $N_{\epsilon}$ such
that for all $n\ge N_{\epsilon}$ it holds 
\[
\Gamma_{t}^{(n)}\subset(\Gamma_{t})_{\epsilon}=\bigcup_{x\in\Gamma_{t}}B_{\epsilon}(x)
\]
This yields immediately the result for compact $\Gamma$ as follows
\begin{align*}
\m(\Gamma_{t}) & =\lim_{\epsilon\to0}\m((\Gamma_{t})_{\epsilon})\\
 & \ge\limsup_{n\to\infty}\m(\Gamma_{t}^{(n)})\\
 & \ge f_{R,x_{0}}(t)\m(\Gamma_{0}).
\end{align*}

For arbitrary $c_{p}$-monotone Borel sets $\Gamma$ in $B_{R}(x_{0})\times B_{R}(x_{0})$
we can use the Measurable Selection Theorem and Lusin's Theorem to
reduce the result to compact set. If $\m(\Gamma_{0})=0$ then there
is nothing to prove. So assume $\m(\Gamma_{0})>0$. Choose a measurable
selection $T$ of $\Gamma$. As $\m$ is locally bounded and $\Gamma_{0}\subset B_{R}(x_{0})$
we obtain by Lusin's Theorem a family of compact sets $K^{1}\subset K^{2}\subset\ldots\subset\Gamma_{0}$
with $\m(K^{i})\to\m(\Gamma_{0})$ such that $T$ restricted to $K^{i}$
is continuous. Define $\Gamma^{i}=\operatorname{graph}_{K^{i}}T\subset\Gamma$
and note that $\Gamma^{i}$ is compact with $\Gamma_{0}^{i}=K^{i}$
so that 
\begin{align*}
\m(\Gamma_{t}) & \ge\limsup_{i\to\infty}\m(\Gamma_{t}^{i})\\
 & \ge\limsup_{i\to\infty}f_{R,x_{0}}(t)\m(K^{i})=f_{R,x_{0}}(t)\m(\Gamma_{0}).
\end{align*}

It remains to show that $\m$ is strongly non-degenerate $\sND_{p}$.
First observe
\[
\bigcup_{R>0}\Gamma^{R}=\Gamma
\]
where $\Gamma^{R}=\Gamma\cap(B_{R}(x_{0})\times B_{R}(x_{0}))$. As
$\Gamma^{R}$ is bounded we have 
\[
\m(\Gamma_{t})\ge\m(\Gamma_{t}^{R})\ge f_{R,x_{0}}(T)\m(\Gamma_{0}^{R}).
\]
Assume now $\m(\Gamma_{0})>0$ then $\m(\Gamma_{0}^{R})\in(0,\infty)$
for all large $R>0$ so that 
\[
\m(\Gamma_{t})\ge\m(\Gamma_{t}^{R})>0.
\]

Non-degenericity of $\Gamma$ follows by observing that for all Borel
sets $A$ the set $\Gamma^{A,0}=\Gamma\cap(A\times M)$ is still a
$c_{p}$-cyclically monotone Borel set. Thus $\m(\Gamma_{0}^{A,0})>0$
implies $\m(\Gamma_{t}^{A,0})>0$.
\end{proof}
\begin{rem*}
The proof of the results above relies only on the qualitative non-degenericity
of $\m$ and that 
\[
\m(\{z\in M\thinspace|\thinspace d(z,x)=d(z,y)\})=0
\]
for all $x\ne y$. In particular, it holds for $(\mathbb{R}^{n},\|\cdot-\cdot\|_{\infty},\lambda^{n})$
which is highly branching. 
\end{rem*}
By combining the previous lemma and the idea of the proof of Lemma
\ref{lem:no-overlap-non-deg} we obtain the main theorem of this section.
\begin{thm}
\label{thm:MCPimpliesGTB}Assume $(M,d,\m)$ is $p$-essentially non-branching
and $\m$ is qualitatively non-degenerate. Then any $p$-optimal coupling
$\pi\in\mathcal{P}_{p}(M\times M)$ with $(p_{1})_{*}\pi\ll\m$ is
induced by a transport map. In particular, any such space has good
transport behavior $\GTB_{p}$. 
\end{thm}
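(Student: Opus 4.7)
The plan is to argue by contradiction, adapting the strategy of Lemma \ref{lem:no-overlap-non-deg} to a non-discrete target by replacing the two point masses $\delta_x,\delta_y$ with two portions of the given coupling supported on disjoint closed sets. Suppose $\pi$ is not induced by a transport map. The Selection Dichotomy for Measures (Theorem \ref{thm:selection-dichotomy-product-measures} together with its Remark) yields a compact $K$ with $\mu_0(K)>0$ (hence $\m(K)>0$, since $\mu_0\ll\m$), disjoint closed sets $A_1,A_2\subset M$ with $(A_1)_\epsilon\cap (A_2)_\epsilon=\varnothing$ for some $\epsilon>0$, and probability measures $\pi_1,\pi_2\ll\pi$ concentrated on $K\times A_1$ and $K\times A_2$ respectively, both with common first marginal $\mu_0^K:=\frac{1}{\mu_0(K)}\mu_0|_K$. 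Since $\pi_i\ll\pi$, each $\pi_i$ is a $p$-optimal coupling. Fix $R>0$ with $K\cup A_1\cup A_2\subset B_R(x_0)$.

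By Lemma \ref{lem:p-ess-nb-implies-p-str-non-deg}, $\m$ is strongly non-degenerate $\sND_p$, and by Proposition \ref{prop:qual-non-deg-implies-doubling}, $\m$ is locally doubling. Pick $\epsilon'>0$ and $t_0\in(0,1)$ small enough that $f_{R,x_0}(t_0)>\tfrac{1}{2}+\epsilon'$ and $\m(K_{2Rt_0})\le (1+\epsilon')\m(K)$, the latter holding for small $t_0$ by continuity of $\m$ from above on the compact set $K$. Corollary \ref{cor:abs-cts-interpolation-under-non-deg} combined with the maximality statement in Theorem \ref{thm:abs-cts-interpolation}, applied to each $\pi_i$, produces $p$-optimal dynamical couplings $\sigma^i$ lifting $\pi_i$ such that $\mu_{t_0}^i:=(e_{t_0})_*\sigma^i\ll\m$ and $\mu_s^i:=(e_s)_*\sigma^i\ll\m$ for any chosen $s\in(t_0,1)$, and Borel sets $\tilde\Gamma^i\subset K\times A_i$ of full $\pi_i$-measure with $\m\big|_{\tilde\Gamma_{t_0}^i}\ll\mu_{t_0}^i$. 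Now choose $s$ sufficiently close to $1$ so that $\tilde\Gamma_s^i\subset (A_i)_\epsilon$; this is possible since geodesics inside $B_R(x_0)$ satisfy $d(\gamma_s,\gamma_1)\le 2R(1-s)$.

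Form the combined dynamical coupling $\sigma:=\tfrac{1}{2}(\sigma^1+\sigma^2)$ and restrict it to $[0,s]$ via $\restr_{0,s}$. Both endpoints $\mu_0^K$ and $\tfrac{1}{2}(\mu_s^1+\mu_s^2)$ are absolutely continuous, so Theorem \ref{thm:ess-nb-summary}(iii) applies to the disjoint Borel sets $K\times (A_1)_\epsilon$ and $K\times (A_2)_\epsilon$ (which contain the supports of $(e_0,e_s)_*\sigma^i$ by the choice of $s$), yielding $\mu_{t_0}^1\perp\mu_{t_0}^2$. Maximality of $\mu_{t_0}^i$ then gives $\m(\tilde\Gamma_{t_0}^1\cap\tilde\Gamma_{t_0}^2)=0$. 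Since each $\tilde\Gamma^i$ is a bounded, $c_p$-cyclically monotone Borel subset of $\supp\pi$ with $\m(\tilde\Gamma_0^i)=\m(K)$, Lemma \ref{lem:p-ess-nb-implies-p-str-non-deg} gives $\m(\tilde\Gamma_{t_0}^i)\ge f_{R,x_0}(t_0)\m(K)$. Using $\tilde\Gamma_{t_0}^i\subset K_{2Rt_0}$, we conclude
\[
(1+2\epsilon')\m(K)\le \m(\tilde\Gamma_{t_0}^1)+\m(\tilde\Gamma_{t_0}^2) = \m(\tilde\Gamma_{t_0}^1\cup\tilde\Gamma_{t_0}^2)\le \m(K_{2Rt_0})\le (1+\epsilon')\m(K),
\]
contradicting $\m(K)>0$.

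The chief obstacle is the invocation of essential non-branching: the combined coupling $\sigma$ has non-absolutely-continuous second marginal, so Theorem \ref{thm:ess-nb-summary} does not apply directly on $[0,1]$. The key trick, already implicit in Lemma \ref{lem:no-overlap-non-deg}, is to restrict to $[0,s]$ where the GKS construction renders $(e_s)_*\sigma$ absolutely continuous, while the separation of $A_1,A_2$ ensures the two restricted parts still land in disjoint Borel subsets of $M\times M$ so that Theorem \ref{thm:ess-nb-summary}(iii) can be invoked.
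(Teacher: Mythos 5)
Your overall architecture is the same as the paper's: produce two competing sub-couplings with a common first marginal concentrated over a compact $K$, use the GKS construction with maximality at time $t_0$ and absolute continuity at a later time $s$, invoke Theorem \ref{thm:ess-nb-summary}(iii) on the restriction to $[0,s]$ to get $\mu_{t_0}^1\perp\mu_{t_0}^2$, and contradict qualitative non-degeneracy via $\m(K_\delta)\le(1+\epsilon')\m(K)$ versus $2f_{R,x_0}(t_0)\m(K)$. All of those steps are fine. However, there is one genuine gap: the claim that $\m(\tilde\Gamma_0^i)=\m(K)$, which is what feeds Lemma \ref{lem:p-ess-nb-implies-p-str-non-deg} to give $\m(\tilde\Gamma_{t_0}^i)\ge f_{R,x_0}(t_0)\m(K)$. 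Your $\pi_1,\pi_2$ come from the Selection Dichotomy for Measures and have first marginal $\frac{1}{\mu_0(K)}\mu_0\big|_K$, so $\tilde\Gamma^i_0$ only has full $\mu_0\big|_K$-measure in $K$. Since $\mu_0\ll\m$ says nothing about a lower bound on the density $f_0$ of $\mu_0$, the set $\{f_0=0\}$ can meet $K$ in a set of positive $\m$-measure (e.g.\ a density positive exactly on a dense open set of small measure), in which case $\m(\tilde\Gamma^i_0)$ can be far smaller than $\m(K)$ and the final chain $(1+2\epsilon')\m(K)\le\m(\tilde\Gamma_{t_0}^1)+\m(\tilde\Gamma_{t_0}^2)$ collapses.

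This is exactly the point where the paper takes a different device: it applies the Selection Dichotomy of Sets to $\Gamma^R=\supp\pi\cap(\bar B_R(x_0)\times\bar B_R(x_0))$ with respect to $\m$ itself, obtains continuous selections $T_1,T_2$ on a compact $K$ of positive $\m$-measure, and then builds the auxiliary couplings $(\operatorname{id}\times T_i)_*\mu_0$ with $\mu_0:=\frac{1}{\m(K)}\m\big|_K$, so that full-measure sets for these couplings automatically have full $\m$-measure in $K$ (optimality still holds because the supports sit inside the $c_p$-cyclically monotone set $\supp\pi$). Your version can be repaired without changing its structure: before invoking the GKS construction, shrink $K$ to a compact $K_*\subset K\cap\{f_0\ge\eta\}$ of positive $\mu_0$-measure (possible for small $\eta>0$ by inner regularity) and replace $\pi_1,\pi_2$ by their normalized restrictions over $K_*$, which are still $\ll\pi$, still concentrated on $K_*\times A_i$, and still share a common first marginal; then $\mu_0$-null subsets of $K_*$ are $\m$-null, so $\m(\tilde\Gamma_0^i)=\m(K_*)$ and the rest of your argument goes through verbatim with $K_*$ in place of $K$. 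You should also record that $\tilde\Gamma^i$ is taken inside $\supp\pi_i\subset\supp\pi$ (choose $\Gamma=(K_*\times A_i)\cap\supp\pi_i$ in the maximality statement) so that its $c_p$-cyclic monotonicity, which Lemma \ref{lem:p-ess-nb-implies-p-str-non-deg} needs, is justified.
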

Combined with Proposition \ref{prop:GTB-ENB} and the existence of
absolutely continuous interpolations (Corollary \ref{cor:abs-cts-interpolation-under-non-deg})
we get the following two corollaries. 
\begin{cor}
If $(M,d,\m)$ is $p$-essentially non-branching and $\m$ qualitatively
non-degenerate then between any two measure $\mu_{0},\mu_{1}\in\mathcal{P}_{p}(M)$
with $\mu_{0}\ll\m$ there is a unique $p$-optimal dynamical coupling
$\sigma$ and this coupling satisfies $(e_{t})_{*}\sigma\ll\m$ for
all $t\in[0,1)$. In particular, it has the strong interpolation property
$\sIP_{p}$.
\end{cor}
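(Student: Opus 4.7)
The plan is simply to assemble the three main results that have just been established, so the proof is essentially a short concatenation of quotations.

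First, I would invoke Theorem \ref{thm:MCPimpliesGTB} to conclude that under the standing hypotheses the space $(M,d,\m)$ has good transport behavior $\GTB_{p}$. Applying Proposition \ref{prop:GTB-ENB} to any pair $\mu_{0}\ll\m$ and $\mu_{1}\in\mathcal{P}_{p}(M)$ then produces a unique $p$-optimal dynamical coupling $\sigma\in\OptGeo_{p}(\mu_{0},\mu_{1})$. This immediately takes care of the uniqueness claim and, by the definition of $\sIP_{p}$, reduces the strong interpolation statement to proving $(e_{t})_{*}\sigma\ll\m$ for this particular $\sigma$ and all $t\in[0,1)$.

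Next, to upgrade uniqueness to absolute continuity of the interpolation, I would use Lemma \ref{lem:p-ess-nb-implies-p-str-non-deg}, which asserts that under the same hypotheses $\m$ is in fact strongly non-degenerate $\sND_{p}$. Corollary \ref{cor:abs-cts-interpolation-under-non-deg} then guarantees, for each fixed $t\in(0,1)$, the existence of \emph{some} $p$-optimal dynamical coupling $\sigma^{(t)}\in\OptGeo_{p}(\mu_{0},\mu_{1})$ with $(e_{t})_{*}\sigma^{(t)}\ll\m$. By the uniqueness obtained in the previous paragraph, $\sigma^{(t)}=\sigma$, hence $(e_{t})_{*}\sigma\ll\m$ for every $t\in(0,1)$. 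The boundary case $t=0$ is the hypothesis $(e_{0})_{*}\sigma=\mu_{0}\ll\m$ itself.

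Since $\sigma$ is the \emph{only} $p$-optimal dynamical coupling between $\mu_{0}$ and $\mu_{1}$, the statement ``every $p$-optimal dynamical coupling has absolutely continuous interpolations in $[0,1)$'' is just the statement already proved for $\sigma$; this is the strong interpolation property $\sIP_{p}$. There is no real obstacle here: all nontrivial content has been absorbed into Theorem \ref{thm:MCPimpliesGTB}, Proposition \ref{prop:GTB-ENB}, Lemma \ref{lem:p-ess-nb-implies-p-str-non-deg} and Corollary \ref{cor:abs-cts-interpolation-under-non-deg}. The only point deserving a line of commentary is that it is exactly the uniqueness coming from $\GTB_{p}$ which lets one promote the bare existence of one absolutely continuous interpolation (provided by the GKS-construction) to the assertion that every $p$-optimal dynamical coupling has this property.
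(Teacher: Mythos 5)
Your proposal is correct and is exactly the argument the paper intends: the corollary is stated as a direct combination of Theorem \ref{thm:MCPimpliesGTB}, Proposition \ref{prop:GTB-ENB} and Corollary \ref{cor:abs-cts-interpolation-under-non-deg} (via Lemma \ref{lem:p-ess-nb-implies-p-str-non-deg}), with uniqueness forcing the absolutely continuous interpolation produced by the GKS-construction to coincide with the unique coupling $\sigma$. The only implicit ingredient worth a word is that qualitative non-degeneracy makes $(M,d)$ proper (Proposition \ref{prop:qual-non-deg-implies-doubling}), which is needed to apply Corollary \ref{cor:abs-cts-interpolation-under-non-deg}.
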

\begin{cor}
Assume $\m$ is qualitatively non-degenerate. Then $\m$ is $p$-essentially
non-branching if any only if it has good transport behavior $\GTB_{p}$
.
\end{cor}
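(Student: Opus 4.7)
The plan is to observe that this corollary is a direct combination of two prior results in the paper, and to make explicit which direction uses which hypothesis. The equivalence splits into two implications, and both have already been handled with the full hypothesis load in earlier statements; the only task is to cite them correctly.

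For the forward implication, assume $(M,d,\m)$ has good transport behavior $\GTB_p$. Then I would simply invoke Proposition \ref{prop:GTB-ENB}, which asserts that any metric measure space with $\GTB_p$ is automatically $p$-essentially non-branching. It is worth noting (and I would note it in the proof) that this direction does not use qualitative non-degeneracy at all, so the equivalence is asymmetric in its hypotheses.

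For the reverse implication, assume $(M,d,\m)$ is $p$-essentially non-branching; by hypothesis $\m$ is qualitatively non-degenerate. This is precisely the hypothesis set of Theorem \ref{thm:MCPimpliesGTB}, whose conclusion is that every $p$-optimal coupling $\pi$ with $(p_1)_*\pi \ll \m$ is induced by a transport map, i.e.\ $(M,d,\m)$ has $\GTB_p$. So this direction is a direct citation as well.

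There is no real obstacle at this stage: the technical content lives in Proposition \ref{prop:GTB-ENB} (which in turn rested on Lemma \ref{lem:from-mid-to-superdiff} and the selection dichotomy) and in Theorem \ref{thm:MCPimpliesGTB} (which combined Lemma \ref{lem:p-ess-nb-implies-p-str-non-deg}, the GKS-construction of Theorem \ref{thm:abs-cts-interpolation}, and the no-overlap argument of Lemma \ref{lem:no-overlap-non-deg}). The corollary itself is one line once those pieces are in place, and I would present it as such, emphasizing the asymmetry that qualitative non-degeneracy is needed only to upgrade $p$-essentially non-branching back to $\GTB_p$.
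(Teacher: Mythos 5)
Your proposal is correct and matches the paper's intended argument exactly: the corollary is stated there as an immediate combination of Proposition \ref{prop:GTB-ENB} (for $\GTB_{p}\Rightarrow$ $p$-essentially non-branching, with no use of non-degenericity) and Theorem \ref{thm:MCPimpliesGTB} (for the converse under qualitative non-degenericity). Nothing further is needed.
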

\begin{proof}
[Proof of the Theorem]Note by Proposition \ref{prop:qual-non-deg-implies-doubling},
$(M,d)$ is proper so that we can apply the GKS-Construction of the
previous section. 

Let $\Gamma=\supp\pi$ and note that $\m(p_{1}(\Gamma))=\m(\supp((p_{1})_{*}\pi))>0$.
It suffices to show that $\Gamma(x)$ is single-valued for $\m$-almost
all $x\in M$. This holds, if for all $R>0$, $\Gamma^{R}(x)$ is
single-valued for $\m$-almost all $x\in M$ where $\Gamma^{R}=\Gamma\cap(\bar{B}_{R}(x_{0})\times\bar{B}_{r}(x_{0}))$.
Note that large $R>0$ it holds $\m(p_{1}(\Gamma^{R}))\in(0,\infty)$. 

Assume, by contradiction, that for some $R>0$ there is a Borel set
$A$ with $\m(A)>0$ and the set $\Gamma^{R}(x)$ is non-empty and
not single-valued for all $x\in A$. Then by the Selection Dichotomy
of Sets (Theorem \ref{thm:selection-dichotomy-set}), there is a compact
set $K\subset A$ of positive $\m$-measure, and two continuous maps
$T_{1},T_{2}:M\to M$ with $T_{1}(K)\cap T_{2}(K)=\varnothing$ and
\[
(x,T_{1}(x)),(x,T_{2}(x))\in\Gamma^{R}\subset\supp\pi\cap(\bar{B}_{R}(x_{0})\times\bar{B}_{R}(x_{0})).
\]
Restricting $K$ further, we can also assume $\supp(\m\big|_{K})=K\subset\bar{B}_{R}(x_{0})$. 

Define now $\mu_{0}=\frac{1}{\m(K)}\m\big|_{K}$, $\pi_{i}=(\operatorname{id}\times T_{i})_{*}\mu_{0}$
and $\mu_{1}^{i}=(p_{2})_{*}\pi_{i}$ for $i=1,2$. Let $\Gamma^{(i)}=\supp\pi_{i}$,
$i=1,2$, and note that $\Gamma_{0}^{(i)}=K$ and both $\Gamma^{(1)}$
and $\Gamma^{(2)}$ are compact and $c_{p}$-cyclically monotone. 

Choose $\delta>0$, $t$ close to $0$ and $s$ close to $1$ such
that $f_{R,x_{0}}(t)\ge\frac{1}{2}+\epsilon$, 
\begin{align*}
\m(K_{\delta}) & \le(1+\epsilon)\m(K)\\
\Gamma_{t}^{(1)}\cup\Gamma_{t}^{(2)} & \subset\tilde{A}_{\delta}
\end{align*}
 and 
\[
(\Gamma_{s}^{(1)})_{\epsilon}\cap(\Gamma_{s}^{(2)})_{\epsilon}=\varnothing
\]

Corollary \ref{cor:abs-cts-interpolation-under-non-deg} applied to
$(\mu_{0},\mu_{1}^{1})$ and $(\mu_{0},\mu_{1}^{2})$ gives two $p$-optimal
dynamical couplings $\sigma^{(1)}$ and $\sigma^{(2)}$ such that
$(e_{s})_{*}\sigma^{(1)}$ and $(e_{s})_{*}\sigma^{(2)}$ are absolutely
continuous with respect to $\m$ and have disjoint support. The choice
of $\Gamma^{(1)}$ and $\Gamma^{(2)}$ implies that $\frac{1}{2}(\pi_{1}+\pi_{2})$
is supported on $\Gamma^{(1)}\cup\Gamma^{(2)}\subset\supp\pi$. Hence
$(\restr_{0,s})_{*}\frac{1}{2}(\sigma^{(1)}+\sigma^{(2)})$ is a $p$-optimal
dynamical coupling between $\mu_{0}$ and $\frac{1}{2}(\mu_{s}^{1}+\mu_{s}^{2})$
so that Theorem \ref{thm:ess-nb-summary} shows 
\[
\mu_{t}^{(1)}=(e_{t})_{*}\sigma^{(1)}\bot(e_{t})_{*}\sigma^{(2)}=\mu_{t}^{(2)}.
\]
Maximality at time $t$ shows that for $i=1,2$ there are measurable
subsets $\tilde{\Gamma}^{(i)}\subset\Gamma^{(i)}$ with $\m(K\backslash\tilde{\Gamma}_{0}^{(i)})=0$
and $\m\big|_{\tilde{\Gamma}_{t}^{(i)}}\ll\mu_{t}^{(i)}\ll\m\big|_{\tilde{\Gamma}_{t}^{(i)}}$.
Since $\mu_{t}^{(1)}\bot\mu_{t}^{(2)}$ we must have $\m(\tilde{\Gamma}_{t}^{(1}\cap\tilde{\Gamma}_{t}^{(2)})=0$.
In combination with Lemma \ref{lem:p-ess-nb-implies-p-str-non-deg}
this yields
\[
\m(\tilde{\Gamma}_{t}^{(1)}\cup\tilde{\Gamma}_{t}^{(2)})=\m(\tilde{\Gamma}_{t}^{(1)})+\m(\tilde{\Gamma}_{t}^{(2)})\ge2f_{R}(t)\m(K).
\]
This, however, leads to the following contradiction
\begin{align*}
(1+\epsilon)\m(K) & >\m(K_{\delta})\\
 & \ge\m(\tilde{\Gamma}_{t}^{(1)}\cup\tilde{\Gamma}_{t}^{(2)})\\
 & =\m(\tilde{\Gamma}_{t}^{(1)})+\m(\tilde{\Gamma}_{t}^{(2)})\\
 & \ge2f_{R,x_{0}}(t)\m(K)=(1+2\epsilon)\m(K).
\end{align*}
Thus we have proved that the $\Gamma^{R}(x)$ at most single-valued
for $\m$-almost all $x\in M$ proving that $\pi$ is induced by a
transport map.
\end{proof}
The proof relies heavily on the $p$-essentially non-branching property
of dynamical couplings between absolutely continuous measures. In
contrast to the case of a discrete target measures we cannot show
that that general $p'$-optimal couplings with absolutely continuous
first marginals are induced by transport maps. Nevertheless, $p$-essentially
non-branching and the idea of Lemma \ref{lem:no-overlap-non-deg}
still exclude a too general behavior of the support of $p'$-optimal
couplings.
\begin{thm}
\label{thm:initial-double-implies-ae-equal-dist}Assume $(M,d,\m)$
is $p$-essentially non-branching for some $p\in(1,\infty)$, $\m$
is qualitatively non-degenerate and $p'\in(1,\infty)$. Then for any
$p'$-optimal $\pi\in\mathcal{P}(M\times M)$ with $(p_{1})_{*}\pi\ll\m$
and for $\mu_{0}$-almost every $x\in M$ it holds 
\[
d(x,y_{1})=d(x,y_{2})\quad\text{whenever }(x,y_{1}),(x,y_{2})\in\supp\pi.
\]
\end{thm}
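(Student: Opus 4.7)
My plan is to argue by contradiction and feed the quantitative Selection Dichotomy for Sets (Theorem~\ref{thm:selection-dichotomy-set}) into the GKS machinery that drives Lemma~\ref{lem:no-overlap-non-deg} and Theorem~\ref{thm:MCPimpliesGTB}. The preparatory observation is Lemma~\ref{lem:p-ess-nb-implies-p-str-non-deg}, which ensures $\m$ is strongly non-degenerate $\sND_{p'}$ and so makes the GKS construction (Corollary~\ref{cor:abs-cts-interpolation-under-non-deg}) available in cost $c_{p'}$. Everything reduces to showing that the $c_{p'}$-cyclic monotonicity of $\supp\pi$ together with the quantitative separation provided by the dichotomy force the $t$-midpoint sets of two carefully chosen subplans to be disjoint.

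Set $\mu_0:=(p_1)_*\pi$ and suppose the conclusion fails. The Borel set
\[
E:=\{x\in M:\exists\,(x,y_1),(x,y_2)\in\supp\pi\text{ with }d(x,y_1)\ne d(x,y_2)\}
\]
satisfies $\mu_0(E)>0$, and absolute continuity yields $\m(E)>0$. Since $\varphi_{\supp\pi}>0$ on $E$, the strong form of Theorem~\ref{thm:selection-dichotomy-set} applied to $\supp\pi\cap(E\times M)$ produces a compact $K\subset E$ with $\m(K)>0$, a $\delta>0$, and continuous selections $T_1,T_2$ of $\supp\pi$ on $K$ satisfying
\[
\sup_{x,x'\in K}d(x,T_1(x'))+\delta\le\inf_{x,x''\in K}d(x,T_2(x'')).
\]
After a further restriction I may assume $\supp(\m|_K)=K$ and $K\cup T_1(K)\cup T_2(K)\subset B_R(x_0)$ for some $R>0$. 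Set $\mu_K:=\m|_K/\m(K)$ and $\pi_i:=(\operatorname{id}\times T_i)_*\mu_K$; each $\pi_i$ is concentrated on $\supp\pi$ and therefore is $c_{p'}$-cyclically monotone. Corollary~\ref{cor:abs-cts-interpolation-under-non-deg} then furnishes, for each $t\in(0,1)$, a $p'$-optimal dynamical coupling $\sigma^{(i)}$ with $(e_0,e_1)_*\sigma^{(i)}=\pi_i$ and $\mu_t^{(i)}:=(e_t)_*\sigma^{(i)}\ll\m$, together with a Borel set $\tilde\Gamma^{(i)}\subset\supp\pi_i$ of full $\pi_i$-measure such that $\m|_{\tilde\Gamma^{(i)}_t}\ll\mu_t^{(i)}$, by the maximality clause of Theorem~\ref{thm:abs-cts-interpolation}.

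The heart of the argument, and the step I expect to be the main obstacle, is the claim that no point can be a $t$-midpoint of both a pair in $\supp\pi_1$ and a pair in $\supp\pi_2$. Indeed, if $z=\gamma_t=\eta_t$ for geodesics $\gamma$ from $x_1$ to $T_1(x_1)$ and $\eta$ from $x_2$ to $T_2(x_2)$ with $x_1,x_2\in K$, then the $c_{p'}$-cyclic monotonicity of $\supp\pi$ together with Lemma~\ref{lem:cyc-mon-intersect-implies-same-length} at exponent $p'$ forces $d(x_1,T_1(x_1))=d(x_2,T_2(x_2))=:L$, while concatenating $\gamma|_{[0,t]}$ with $\eta|_{[t,1]}$ gives
\[
d(x_1,T_2(x_2))\le d(x_1,z)+d(z,T_2(x_2))=tL+(1-t)L=L,
\]
in direct contradiction with the uniform separation $d(x_1,T_2(x_2))\ge L+\delta$. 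Hence $\tilde\Gamma^{(1)}_t\cap\tilde\Gamma^{(2)}_t=\varnothing$ for every $t\in(0,1)$, and since $\mu_t^{(i)}$ is concentrated on $\tilde\Gamma^{(i)}_t$ we also have $\mu_t^{(1)}\perp\mu_t^{(2)}$.

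To close the argument, Lemma~\ref{lem:p-ess-nb-implies-p-str-non-deg} applied to each $c_{p'}$-cyclically monotone set $\tilde\Gamma^{(i)}\subset B_R(x_0)\times B_R(x_0)$ --- whose projection $\tilde\Gamma^{(i)}_0$ has $\m$-full measure in $K$ --- yields $\m(\tilde\Gamma^{(i)}_t)\ge f_{R,x_0}(t)\,\m(K)$. I then fix $\epsilon>0$ and $t>0$ small enough that $f_{R,x_0}(t)\ge\frac{1}{2}+\epsilon$ and such that $\tilde\Gamma^{(i)}_t$ sits in a neighborhood $K_\eta$ of $K$ with $\m(K_\eta)\le(1+\epsilon)\m(K)$ (possible by compactness of $K$ and local finiteness of $\m$, since $t$-midpoints of $(x,T_i(x))$ lie within distance $t\cdot\operatorname{diam}(K\cup T_i(K))$ of $K$). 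The disjointness of the $t$-midpoint sets then produces
\[
(1+\epsilon)\m(K)\ge\m(K_\eta)\ge\m(\tilde\Gamma^{(1)}_t)+\m(\tilde\Gamma^{(2)}_t)\ge(1+2\epsilon)\m(K),
\]
contradicting $\m(K)>0$ and completing the proof.
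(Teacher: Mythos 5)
Your proof is correct and follows the paper's own argument: contradiction via the quantitative Selection Dichotomy, disjointness of the $t$-midpoint sets forced by $c_{p'}$-cyclic monotonicity (Lemma \ref{lem:cyc-mon-intersect-implies-same-length}) together with the uniform distance gap $\delta$, and the final counting contradiction from Lemma \ref{lem:p-ess-nb-implies-p-str-non-deg}. The detour through Corollary \ref{cor:abs-cts-interpolation-under-non-deg} and the maximality sets $\tilde\Gamma^{(i)}$ is superfluous here -- since the midpoint sets are disjoint as sets, not merely carrying mutually singular interpolations, you can run the final estimate directly on $\Gamma^{(i)}=\supp\pi_i$, which is what the paper does; as its closing remark notes, essential non-branching enters only through Lemma \ref{lem:p-ess-nb-implies-p-str-non-deg}.
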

\begin{cor}
The $c_{p'}$-superdifferential $\partial^{c_{p'}}\varphi$ of a $c_{p'}$-concave
function $\varphi$ satisfies for $\m$-almost every $x\in M$
\[
d(x,y_{1})=d(x,y_{2})\quad\text{for all }y_{1},y_{2}\in\partial^{c_{p'}}\varphi(x).
\]
\end{cor}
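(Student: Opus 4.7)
The plan is to reduce the corollary directly to Theorem \ref{thm:initial-double-implies-ae-equal-dist} by constructing, out of any hypothetical ``bad'' portion of the superdifferential, a $p'$-optimal coupling with absolutely continuous first marginal whose support forces a contradiction with the theorem. Let $\Gamma = \partial^{c_{p'}}\varphi$, which is $c_{p'}$-cyclically monotone by definition of the $c_{p'}$-superdifferential. The corollary is equivalent to the assertion that the function $\varphi_\Gamma$ of Theorem \ref{thm:selection-dichotomy-set} vanishes $\m$-almost everywhere on $p_1(\Gamma)$. Suppose toward contradiction that $\m(\{\varphi_\Gamma > 0\}) > 0$; since $\m$ is qualitatively non-degenerate and hence locally finite (Proposition \ref{prop:qual-non-deg-implies-doubling}), I pick a Borel subset $A \subset \{\varphi_\Gamma > 0\}$ with $0 < \m(A) < \infty$ and set $\mu_0 = \frac{1}{\m(A)} \m|_A \ll \m$.

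Next I apply the second alternative of the Selection Dichotomy (Theorem \ref{thm:selection-dichotomy-set}) to $\Gamma$ and $\mu_0$. Since $\varphi_\Gamma > 0$ holds $\mu_0$-almost everywhere, the strengthened form applies and yields a compact $K \subset A$ of positive $\m$-measure, $\mu_0$-measurable selections $T_1, T_2$ of $\Gamma$ continuous on $K$ with $T_1(K) \cap T_2(K) = \varnothing$, and some $\delta > 0$ such that
\[
\sup_{(x, y_1) \in K \times T_1(K)} d(x, y_1) + \delta \le \inf_{(x, y_2) \in K \times T_2(K)} d(x, y_2).
\]
In particular, $d(x, T_1(x)) + \delta \le d(x, T_2(x))$ for every $x \in K$.

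Finally, set $\tilde{\mu}_0 = \frac{1}{\m(K)} \m|_K$ and
\[
\pi = \tfrac{1}{2}\bigl((\operatorname{id} \times T_1)_* \tilde{\mu}_0 + (\operatorname{id} \times T_2)_* \tilde{\mu}_0\bigr).
\]
Continuity of $T_1, T_2$ on $K = \supp \tilde{\mu}_0$ shows that $\supp \pi$ equals the union of the two graphs of $T_1, T_2$ over $K$ and is therefore contained in $\Gamma$; hence $\supp \pi$ is $c_{p'}$-cyclically monotone, $\pi$ is a $p'$-optimal coupling, and $(p_1)_* \pi = \tilde{\mu}_0 \ll \m$. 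Theorem \ref{thm:initial-double-implies-ae-equal-dist} then guarantees that for $\tilde{\mu}_0$-almost every $x \in K$, all $y \in \supp \pi(x) = \{T_1(x), T_2(x)\}$ are equidistant from $x$, forcing $d(x, T_1(x)) = d(x, T_2(x))$ $\tilde{\mu}_0$-a.e. — a direct contradiction with the uniform distance gap. The heart of the proof is the Selection Dichotomy combined with Theorem \ref{thm:initial-double-implies-ae-equal-dist}; the only mild technical points requiring care are the Borel measurability of $\{\varphi_\Gamma > 0\}$ and the extraction of the Borel subset $A$ of finite positive $\m$-measure, both of which follow from the measurable structure of $\Gamma$ and local finiteness of $\m$.
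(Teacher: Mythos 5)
Your reduction is correct and is essentially the paper's own route: the corollary is meant as a direct consequence of Theorem \ref{thm:initial-double-implies-ae-equal-dist}, and your argument supplies the bridge by exactly the mechanism driving that theorem's proof, namely the Selection Dichotomy with the distance-gap selections applied to the closed $c_{p'}$-cyclically monotone set $\partial^{c_{p'}}\varphi$ and the resulting two-graph optimal coupling with absolutely continuous first marginal. The only cosmetic point is that $\supp\tilde{\mu}_{0}$ need not equal $K$; either restrict $K$ further so that $\supp(\m\big|_{K})=K$ (as the paper does elsewhere) or simply observe that the graphs of $T_{1},T_{2}$ over $\supp\tilde{\mu}_{0}$ already lie in $\supp\pi$, which is all the contradiction requires.
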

\begin{rem*}
The property $p$-essentially non-branching is used only to show that
$c_{p'}$-cyclically monotone sets are non-degenerate. As mentioned
above, this holds if we replace $p$-essentially non-branching by
the assumption 
\[
\m(\{z\in M\thinspace|\thinspace d(z,x)=d(z,y)\})=0
\]
for all $y\ne z$,
\end{rem*}
\begin{proof}
If the claim was false then $\pi$ is not induced by a transport map
and as above we get a compact set $K$ of positive $\m$-measure and
measurable selections $T_{1}$ and $T_{2}$ as above which, in addition,
satisfy 
\[
\sup_{(x,y_{1})\in K\times T_{1}(K)}d(x,y_{1})<\inf_{(x,y_{2})\in K\times T_{2}(K)}d(x,y_{2}).
\]
Let $\mu_{0}=\frac{1}{\m(K)}\m\big|_{K}$ and for $i=1,2$ define
$\pi_{i}=(\operatorname{id}\times T_{i})_{*}\tilde{\mu}_{0}$ and
$\Gamma^{(i)}=\supp\pi_{i}$. Again $\Gamma^{(i)}$ is $c_{p}$-cyclically
monotone, but satisfies, in addition, the following
\[
\Gamma_{t}^{(1)}\cap\Gamma_{t}^{(1)}=\varnothing\quad\text{for all }t\in(0,1)
\]
Choosing $\epsilon$, $\delta$ and $t$ as in the previous proof,
we arrive at the following contradiction
\begin{align*}
(1+\epsilon)\m(K) & >\m(K_{\delta})\\
 & \ge\m(\Gamma_{t}^{(1)}\cup\Gamma_{t}^{(2)})\\
 & =\m(\Gamma_{t}^{(1)})+\m(\Gamma_{t}^{(2)})\\
 & \ge2f_{R,x_{0}}(t)\m(K)=(1+2\epsilon)\m(K).
\end{align*}
\end{proof}

\subsection*{Density bounds of qualitatively non-degenerate measures}

In \cite{CM2016TransMapsMCP} Cavalletti\textendash Mondino showed
that the measure contraction property $\MCP(K,N)$ implies the existence
of an absolutely continuous interpolations with controlled $L^{\infty}$-bounds
on their density. This was then used to prove the general existence
of transport maps.
\begin{defn}
The measure $\m$ has \emph{bounded density property} if for all $R>0$
and $x_{0}\in M$ there is a function $g_{R,x_{0}}:(0,1)\to(0,\infty]$
with 
\[
\limsup_{t\to0}g_{R,x_{0}}(t)<2
\]
such that for some $p\in(1,\infty)$ and for every $\mu_{0}=f_{0}\m\in\mathcal{P}_{p}(M)$
with $\|f_{0}\|_{\infty}<\infty$, $\supp\mu_{0}\subset B_{R}(x_{0})$
and $x\in B_{R}(x_{0})$ there is a geodesic $t\mapsto\mu_{t}=f_{t}\m$
between $\mu_{0}$ and $\delta_{x}$ in $\mathcal{P}_{p}(M)$ such
that 
\[
\|f_{t}\|_{\infty}\le g_{R,x_{0}}(t)\|f_{0}\|_{\infty}.
\]
\end{defn}
\begin{rem*}
It is easy to see that the definition does not depend on $p\in(1,\infty)$.
\end{rem*}
First observe that the bounded density property is stronger than qualitative
non-degenericity.
\begin{lem}
\label{lem:bdd-dens-imply-qual-non-deg}Every measure $\m$ with bounded
density property is qualitatively non-degenerate. 
\end{lem}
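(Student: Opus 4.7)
The plan is to test the bounded density property against the normalized indicator of the given set $A$ and extract the desired non-degeneracy inequality from the $L^\infty$-bound on the interpolating density.

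More concretely, fix $R>0$, $x_0\in M$, a measurable $A\subset B_R(x_0)$ with $0<\m(A)<\infty$ (the case $\m(A)=0$ is trivial and $\m(A)<\infty$ is automatic since $\m$ is locally finite), and $x\in B_R(x_0)$. I would set $\mu_0=\frac{1}{\m(A)}\m|_A$, which has bounded density $f_0=\frac{1}{\m(A)}\chi_A$ supported in $B_R(x_0)$, and apply the bounded density property to the pair $(\mu_0,\delta_x)$. This yields, for each $t\in(0,1)$, a geodesic $s\mapsto\mu_s=f_s\m$ in $\mathcal{P}_p(M)$ between $\mu_0$ and $\delta_x$ with
\[
\|f_t\|_\infty\le g_{R,x_0}(t)\|f_0\|_\infty=\frac{g_{R,x_0}(t)}{\m(A)}.
\]

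The next step is to observe that such a Wasserstein geodesic lifts to a dynamical coupling $\sigma\in\mathcal{P}(\Geo_{[0,1]}(M,d))$ with $(e_0)_*\sigma=\mu_0$ and $(e_1)_*\sigma=\delta_x$; since $(e_0,e_1)_*\sigma$ is concentrated on $A\times\{x\}$, the measure $\mu_t=(e_t)_*\sigma$ is concentrated on $A_{t,x}=\{\gamma_t:\gamma_0\in A,\gamma_1=x\}$. Combining this with the density bound gives
\[
1=\mu_t(A_{t,x})=\int_{A_{t,x}}f_t\,d\m\le\|f_t\|_\infty\,\m(A_{t,x})\le\frac{g_{R,x_0}(t)}{\m(A)}\m(A_{t,x}),
\]
and therefore $\m(A_{t,x})\ge \m(A)/g_{R,x_0}(t)$. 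Hence one may take $f_{R,x_0}(t):=1/g_{R,x_0}(t)$.

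It remains to check the $\limsup$ condition: since $\limsup_{t\to 0}g_{R,x_0}(t)<2$ and $\liminf\le\limsup$, we have $\liminf_{t\to 0}g_{R,x_0}(t)<2$, hence
\[
\limsup_{t\to 0}f_{R,x_0}(t)=\frac{1}{\liminf_{t\to 0}g_{R,x_0}(t)}>\frac{1}{2},
\]
with the convention $1/0=\infty$ in case $\liminf g_{R,x_0}=0$. There is no serious obstacle here; the only point that needs care is justifying that $\mu_t$ really is concentrated on $A_{t,x}$, which follows from the standard fact that every Wasserstein geodesic in $\mathcal{P}_p(M)$ is induced by a dynamical coupling supported on $\Geo_{[0,1]}(M,d)$ with the prescribed endpoint marginals (recorded in the Preliminaries); measurability issues for $A_{t,x}$ are handled by passing to a Borel set of the same $\m$-measure and using that the image under $e_t$ of a Borel set of geodesics is analytic, hence $\m$-measurable.
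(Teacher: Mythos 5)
Your proposal is correct and follows essentially the same route as the paper: test the bounded density property against $\mu_{0}=\frac{1}{\m(A)}\m\big|_{A}$ and $\delta_{x}$, use that $\mu_{t}$ is concentrated on $A_{t,x}$ together with the $L^{\infty}$-bound to get $1\le g_{R,x_{0}}(t)\,\m(A_{t,x})/\m(A)$, and set $f_{R,x_{0}}=g_{R,x_{0}}^{-1}$. Your extra remarks on lifting the geodesic to a dynamical coupling and on the $\limsup$/$\liminf$ bookkeeping are fine and merely make explicit what the paper leaves implicit.
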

\begin{proof}
Let $\mu_{0}=\frac{1}{\m(A_{0})}\m\big|_{A_{0}}$ and note that 
\[
\supp\mu_{t}\subset A_{t,x}
\]
and 
\[
\|f_{0}\|_{\infty}=\frac{1}{\m(A_{0})}
\]
we obtain 
\[
1=\int_{A_{t,x}}f_{t}\m\le g_{R,x_{0}}(t)\frac{1}{\m(A_{0})}\m(A_{t,x}).
\]
Choosing $f_{R,x_{0}}=g_{R,x_{0}}^{-1}$ we obtain the result. 
\end{proof}
The bounded density property was proven to hold for spaces with curvature
dimension condition $\CD(K,N)$ by Rajala \cite[Theorem 4.2]{Rajala2012a}
and later for the measure contraction property $\MCP(K,N)$ by Cavalletti\textendash Mondino
\cite[Theorem 3.1]{CM2016TransMapsMCP}. Assuming $\m$ is $p$-essentially
non-branching, the following result implies that the bounded density
property is equivalent to qualitative non-degenericity.
\begin{prop}
\label{prop:qND-ess-nb-implies-bdd-dens}Assume $(M,d,\m)$ be $p$-essentially
non-branching and $\m$ is qualitatively non-degenerate and $\mu_{0},\mu_{1}\in\mathcal{P}_{p}(M)$
with $\mu_{0}=f_{0}\m$ and $\supp\mu_{0},\supp\mu_{1}\subset B_{R}(x_{0})$.
Then for the unique $p$-optimal dynamical coupling $\sigma\in\OptGeo_{p}(\mu_{0},\mu_{1})$
it holds 
\[
f_{t}(\gamma_{t})\le\frac{1}{f_{R,x_{0}}(t)}f_{0}(\gamma_{0})\quad\text{for \ensuremath{\sigma}-almost all }\gamma\in\Geo(M,d)
\]
where $(e_{t})_{*}\sigma=f_{t}\m$. In particular, it holds
\[
\|f_{t}\|_{\infty}\le\frac{1}{f_{R,x_{0}}(t)}\|f_{0}\|_{\infty}
\]
so that $\m$ has the bounded density property. 
\end{prop}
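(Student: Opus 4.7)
The plan is to realise $\sigma$ as the pushforward of $\mu_0$ under a single Borel geodesic selection and then to derive the pointwise density bound from the set-level estimate in Lemma \ref{lem:p-ess-nb-implies-p-str-non-deg}, organised by level sets of $f_0$.

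By Theorem \ref{thm:MCPimpliesGTB} and the corollary following it, $\sigma$ is the unique $p$-optimal dynamical coupling and $\mu_t=f_t\m$ for every $t\in[0,1)$. Applying Lemma \ref{lem:GTB-superdiff} to the $c_p$-cyclically monotone set $\supp(e_0,e_1)_*\sigma$ shows that for $\mu_0$-a.e.\ $x_0$ there is a unique target point $T(x_0)$ and a unique geodesic $\gamma^{x_0}$ joining $x_0$ to $T(x_0)$. After picking a Borel version I would therefore write $\sigma=\mathcal{S}_*\mu_0$ with $\mathcal{S}(x_0):=\gamma^{x_0}$ and $\mu_t=(F_t)_*\mu_0$ for the Borel map $F_t(x_0):=\mathcal{S}(x_0)_t$. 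By Theorem \ref{thm:ess-nb-summary}(ii), the set of geodesics in $\supp\sigma$ on which $e_t$ is injective has full $\sigma$-measure, so $F_t$ is injective on a $\mu_0$-full subset of $\supp\mu_0$.

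The heart of the argument is the Jacobian-type estimate
\[
\m(F_t(A))\ge f_{R,x_0}(t)\,\m(A)\quad\text{for every Borel }A\subset\supp\mu_0.
\]
To prove it I would apply Lemma \ref{lem:p-ess-nb-implies-p-str-non-deg} to the Borel $c_p$-cyclically monotone set $\Gamma_A:=\{(x_0,T(x_0)):x_0\in A\}\subset B_R(x_0)\times B_R(x_0)$, which is contained in the support of the $p$-optimal coupling $(e_0,e_1)_*\sigma$. After discarding the $\m$-null exceptional set from Lemma \ref{lem:GTB-superdiff} on which the geodesic fails to be unique, every geodesic in $(e_0,e_1)^{-1}(\Gamma_A)$ is of the form $\gamma^{x_0}$ for some $x_0\in A$, so $(\Gamma_A)_t=F_t(A)$ and the displayed inequality follows.

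To conclude, I would truncate $\mu_0$ by level sets: for each rational $c>0$ set $A_c:=\{f_0\le c\}\cap\supp\mu_0$. For any Borel $E\subset F_t(A_c)$, essential injectivity of $F_t$ forces $F_t^{-1}(E)\subset A_c$ up to a $\mu_0$-null set, so
\[
\mu_t(E)=\mu_0(F_t^{-1}(E))\le c\,\m\bigl(F_t^{-1}(E)\cap A_c\bigr)\le\frac{c}{f_{R,x_0}(t)}\,\m(E),
\]
where the last inequality applies the geometric estimate above to $B:=F_t^{-1}(E)\cap A_c$ together with $F_t(B)\subset E$. Hence $f_t\le c/f_{R,x_0}(t)$ holds $\m$-a.e.\ on $F_t(A_c)$, and intersecting over a rational sequence $c\downarrow f_0(x_0)$ for $\mu_0$-a.e.\ $x_0$ produces the pointwise bound $f_t(F_t(x_0))\le f_0(x_0)/f_{R,x_0}(t)$, equivalently $f_t(\gamma_t)\le f_0(\gamma_0)/f_{R,x_0}(t)$ for $\sigma$-a.e.\ $\gamma$; the $L^\infty$-bound and the bounded density property are immediate. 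The hardest part will be the null-set bookkeeping: one must fix a single $\mu_0$-full subset on which $\mathcal{S}$ and $T$ are simultaneously defined, $F_t$ is injective, the identification $(\Gamma_A)_t=F_t(A)$ holds for every relevant $A$, and the pointwise conclusion survives intersecting countably many exceptional sets indexed by $c$.
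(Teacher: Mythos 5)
Your overall strategy is sound and genuinely different in organization from the paper's: the paper first proves the bound when $f_{0}$ is constant, by a contradiction argument comparing the essential infimum of the restricted interpolation density with the measure lower bound $\m(e_{t}(\mathsf{L}))\ge f_{R,x_{0}}(t)\m(e_{0}(\mathsf{L}))$, and then reduces the general case by reweighting $\mu_{0}$ with $1/f_{0}$ along the transport maps; you instead work directly with the geodesic selection $\mathcal{S}$ and the maps $F_{t}$, prove the Jacobian-type estimate $\m(F_{t}(A))\ge f_{R,x_{0}}(t)\m(A)$ from Lemma \ref{lem:p-ess-nb-implies-p-str-non-deg}, and convert it into the pointwise bound via the sublevel sets $\{f_{0}\le c\}$. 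Both arguments rest on the same two pillars (Theorem \ref{thm:MCPimpliesGTB} and its corollary for uniqueness, transport maps and absolute continuity of $\mu_{t}$, and Lemma \ref{lem:p-ess-nb-implies-p-str-non-deg} for the set-level estimate), and your truncation step, including the passage from ``$f_{t}\le c/f_{R,x_{0}}(t)$ $\m$-a.e.\ on $F_{t}(A_{c})$'' to the $\sigma$-a.e.\ pointwise bound, is correct, since $\mu_{t}\ll\m$ lets the exceptional $\m$-null sets pull back to $\mu_{0}$-null sets; the measurability of $F_{t}(A_{c})$ and of the graph $\Gamma_{A}$ is only analytic-set/Lusin bookkeeping, as you acknowledge.

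The one step that does not stand as written is the essential injectivity of $F_{t}$: you cite Theorem \ref{thm:ess-nb-summary}(ii), but that theorem assumes $(e_{0})_{*}\sigma,(e_{1})_{*}\sigma\ll\m$, and here $\mu_{1}$ is an arbitrary element of $\mathcal{P}_{p}(M)$ (delta measures being the typical case), so its hypothesis fails. Without injectivity, the inclusion $F_{t}^{-1}(E)\subset A_{c}$ modulo $\mu_{0}$-null sets --- exactly the point where mass from $\{f_{0}>c\}$ could leak into $E$ --- is unjustified. The fact you need is nevertheless available from results you already invoke: since $\mu_{t}\ll\m$ for $t<1$, the $p$-optimal coupling $(e_{t},e_{0})_{*}\sigma$ between $\mu_{t}$ and $\mu_{0}$ is induced by a map $T_{t,0}$ by $\GTB_{p}$ (Theorem \ref{thm:MCPimpliesGTB}), whence $T_{t,0}(F_{t}(x))=x$ for $\mu_{0}$-a.e.\ $x$, which is precisely the essential injectivity of $F_{t}$; alternatively, apply Theorem \ref{thm:ess-nb-summary}(ii) to $(\restr_{0,t'})_{*}\sigma$ for some $t<t'<1$, whose endpoint marginals $\mu_{0},\mu_{t'}$ are both absolutely continuous, and propagate to time $1$ using the right non-branching set from Proposition \ref{prop:GTB-ENB}. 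With this replacement, and after fixing one Borel $\mu_{0}$-full set $\Omega$ on which $T_{t,0}\circ F_{t}=\operatorname{id}$ holds and replacing $A_{c}$ by $A_{c}\cap\Omega$ throughout, your proof is complete.
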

\begin{cor}
In a $p$-essentially non-branching metric measure space $(M,d,\m)$
the following are equivalent:
\begin{itemize}
\item The measure $\m$ is qualitatively non-degenerate.
\item The measure $\m$ has the bounded density property.
\end{itemize}
\end{cor}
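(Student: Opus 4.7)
The plan is to reduce the density bound to a Jacobian-type contraction estimate for the intermediate transport map. By Theorem~\ref{thm:MCPimpliesGTB} and the subsequent corollaries, $(M,d,\m)$ has $\GTB_p$ and the strong interpolation property $\sIP_p$, so $\sigma$ is the unique $p$-optimal dynamical coupling, $\mu_t = f_t\m$ for every $t \in [0,1)$, and the couplings $(e_0,e_t)_*\sigma$ and $(e_t,e_0)_*\sigma$ are induced by Borel transport maps $T_{0,t}$ and $T_{t,0}$ respectively (the latter via $\GTB_p$ applied to $\mu_t \ll \m$). The uniqueness of optimal couplings forces $T_{t,0}\circ T_{0,t}=\operatorname{id}$ $\mu_0$-a.e., so $T_{0,t}$ is essentially injective on $\{f_0>0\}$. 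By Lemma~\ref{lem:middle-cp-cyl-mon}, the relation $\Gamma_{0,t}$ is $c_p$-cyclically monotone whenever $\Gamma$ is, and Lemma~\ref{lem:GTB-superdiff} then yields $\Gamma_{0,t}(z)=\{T_{0,t}(z)\}$ for $\m$-a.e.\ $z$.

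The key contraction estimate I would establish is
\[
\m(T_{0,t}(E))\ \ge\ f_{R,x_0}(t)\,\m(E)\qquad\text{for every Borel }E\subseteq\{f_0>0\}.
\]
To prove it, restrict $E$ to the $\m$-full-measure subset $E^{\star}$ on which both $\supp\pi\cap(\{z\}\times M)=\{(z,T_{0,1}(z))\}$ and $\Gamma_{0,t}(z)=\{T_{0,t}(z)\}$, and consider the graph $\Gamma:=\{(z,T_{0,1}(z)):z\in E^{\star}\}\subseteq B_R(x_0)\times B_R(x_0)$. As a subset of $\supp\pi$, $\Gamma$ is $c_p$-cyclically monotone, so Lemma~\ref{lem:p-ess-nb-implies-p-str-non-deg} gives $\m(\Gamma_t)\ge f_{R,x_0}(t)\,\m(\Gamma_0)$. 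The single-valuedness of $\Gamma_{0,t}$ on $E^{\star}$ then forces $\Gamma_t=T_{0,t}(E^{\star})\subseteq T_{0,t}(E)$ while $\Gamma_0=E^{\star}$ has the same $\m$-measure as $E$, yielding the estimate.

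The remainder is a change of variables. Setting $\nu:=(T_{0,t})_*(\m|_{\{f_0>0\}})$ and applying the contraction estimate to $E=T_{0,t}^{-1}(A)\cap\{f_0>0\}$ for an arbitrary Borel set $A$ gives $\nu(A)\le\m(A)/f_{R,x_0}(t)$, so $d\nu/d\m\le 1/f_{R,x_0}(t)$. Essential injectivity of $T_{0,t}$ on $\{f_0>0\}$ together with $\mu_t=(T_{0,t})_*(f_0\,\m|_{\{f_0>0\}})$ then yields $\mu_t=(f_0\circ T_{t,0})\,\nu$, so
\[
f_t\ =\ (f_0\circ T_{t,0})\cdot\frac{d\nu}{d\m}\ \le\ \frac{f_0\circ T_{t,0}}{f_{R,x_0}(t)}\qquad\m\text{-a.e.}
\]
Evaluating at $y=\gamma_t$ for $\sigma$-a.e.\ $\gamma$, where $T_{t,0}(\gamma_t)=\gamma_0$, gives the pointwise bound, and the $L^\infty$-estimate is immediate.

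The main technical obstacle I anticipate is the bookkeeping required to identify the geometric object $\Gamma_t$ (the set of \emph{all} $t$-midpoints of pairs in $\Gamma$, which is what Lemma~\ref{lem:p-ess-nb-implies-p-str-non-deg} controls) with the Borel image $T_{0,t}(E^{\star})$: the reduction from $E$ to the single-geodesic subset $E^{\star}$ must be performed \emph{before} invoking Lemma~\ref{lem:p-ess-nb-implies-p-str-non-deg}, and one must ensure that the essentially injective change-of-variables step is not spoiled by $\m$-null behavior of the transport maps outside $\{f_0>0\}$.
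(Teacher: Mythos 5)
Your argument covers only one implication. The corollary asserts an equivalence, and your proposal proves (a pointwise strengthening of) ``qualitatively non-degenerate $\Rightarrow$ bounded density property'' but never addresses the converse. In the paper that converse is Lemma~\ref{lem:bdd-dens-imply-qual-non-deg}, and the corollary is obtained by simply combining that lemma with Proposition~\ref{prop:qND-ess-nb-implies-bdd-dens}. The missing direction is easy and does not use essential non-branching: for a Borel set $A\subset B_R(x_0)$ take $\mu_0=\frac{1}{\m(A)}\m\big|_A$, so $\|f_0\|_\infty=\frac{1}{\m(A)}$, pick the geodesic to $\delta_x$ provided by the bounded density property and note $\supp\mu_t\subset A_{t,x}$, whence
\[
1=\int_{A_{t,x}}f_t\,d\m\le g_{R,x_0}(t)\,\frac{\m(A_{t,x})}{\m(A)},
\]
so $\m(A_{t,x})\ge g_{R,x_0}(t)^{-1}\m(A)$ with $\limsup_{t\to0}g_{R,x_0}(t)^{-1}>\frac12$. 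As written, though, your proof is incomplete with respect to the stated equivalence.

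For the direction you do prove, the argument is correct and genuinely different from the paper's. You establish an explicit measure-contraction (``inverse Jacobian'') estimate $\m(T_{0,t}(E))\ge f_{R,x_0}(t)\,\m(E)$ for the intermediate transport map, using that $\Gamma_{0,t}$ is $c_p$-cyclically monotone (Lemma~\ref{lem:middle-cp-cyl-mon}) and $\m$-a.e.\ single-valued (Lemma~\ref{lem:GTB-superdiff}, available since Theorem~\ref{thm:MCPimpliesGTB} gives $\GTB_p$), together with Lemma~\ref{lem:p-ess-nb-implies-p-str-non-deg}; the density bound then follows by push-forward and the essential invertibility $T_{t,0}\circ T_{0,t}=\operatorname{id}$, which is legitimate since on $\{f_0>0\}$ the measures $\m$ and $\mu_0$ are mutually absolutely continuous. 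The paper instead proves the same pointwise inequality $f_t(\gamma_t)\le f_{R,x_0}(t)^{-1}f_0(\gamma_0)$ in Proposition~\ref{prop:qND-ess-nb-implies-bdd-dens} by first treating constant initial density via a contradiction/essential-infimum argument (restricting $\sigma$ to a set of geodesics where the bound would fail and comparing $\m(e_t(\mathsf{L}))$ with $f_{R,x_0}(t)\m(e_0(\mathsf{L}))$), and then reducing general $f_0$ to the constant-density case by re-weighting $\sigma$ with $1/f_0(\gamma_0)$. Your route buys a clean change-of-variables picture and makes the role of the intermediate map explicit, at the cost of the measurability/single-valuedness bookkeeping you correctly flag; note also that for the corollary itself the target is $\delta_x$, so in your contraction step $T_{0,1}\equiv x$ and raw qualitative non-degeneracy of $\m$ would suffice in place of the full strength of Lemma~\ref{lem:p-ess-nb-implies-p-str-non-deg}.
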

\begin{proof}
[Proof of the proposition]We first assume $f_{0}\equiv\frac{1}{\m(A_{0})}$.
If the claim was wrong then there is a compact set $\mathsf{L}\subset\Geo(M,d)$
with $\sigma(\mathsf{L})>0$ such that 
\[
\frac{1}{f_{R,x_{0}}(t)}\m(A_{0})\le(1-\epsilon)f_{t}(\gamma_{t})\qquad\text{for all }\gamma\in\mathsf{L}.
\]
In particular, by restricting $\sigma$ to $\mathsf{L}$ we see that
$\tilde{A}_{0}=e_{0}(\mathsf{L})\subset A_{0}$ it holds and 
\[
\frac{1}{f_{R,x_{0}}(t)}\m(\tilde{A}_{0})\le(1-\epsilon)\tilde{f}_{t}(\gamma_{t})\quad\text{for \ensuremath{\sigma}-almost all \ensuremath{\gamma\in\mathsf{L}}}
\]
where $\tilde{f}_{t}\m=(e_{t})_{*}\sigma_{\mathsf{L}}$. The qualitative
non-degenericity yields
\[
\m(e_{t}(\mathsf{L}))\ge f_{R,x_{0}}(t)\m(\tilde{A}_{0}).
\]
Note that we always have 
\[
\operatorname{ess\,inf}_{\m|e_{t}(\mathsf{L})}\tilde{f}_{t}\le\frac{1}{\m(e_{t}(\mathsf{L}))}.
\]
This, however, leads to the following contradiction 
\[
\operatorname{ess\,inf}_{\m|e_{t}(\mathsf{L})}f_{t}\le\frac{1}{f_{R,x_{0}}(t)}\m(\tilde{A}_{0})\le(1-\epsilon)\tilde{f}_{t}(\gamma_{t})\quad\text{for \ensuremath{\sigma}-almost all \ensuremath{\gamma\in\mathsf{L}}}.
\]

For general $\mu_{0}$, observe that 
\[
t\mapsto\tilde{\mu}_{t}=\frac{1}{\m(\{f_{0}>0\})}\int_{\{f_{0}>0\}}\frac{1}{f_{0}(x)}\delta_{T_{t}(x)}d\mu_{0}(x)
\]
is a geodesic in $\mathcal{P}_{p}(M)$ such that $\tilde{\mu}_{0}$
has constant density, i.e. $\tilde{\mu}_{0}=\frac{1}{\m(\{f_{0}>0\})}\m\big|_{\{f_{0}>0\}}$.
Furthermore, $\tilde{f}_{t}$ satisfies 
\[
\tilde{f}_{t}(\gamma_{t})=\frac{f_{t}(\gamma_{t})}{f_{0}(\gamma_{0})}\tilde{f}_{0}(\gamma_{0})
\]
so that 
\[
\frac{f_{t}(\gamma_{t})}{f_{0}(\gamma_{0})}\tilde{f}_{0}(\gamma_{0})=\tilde{f}_{t}(\gamma_{t})\le\frac{1}{f_{R,x_{0}}(t)}\tilde{f}_{0}(\gamma_{0})
\]
which proves the claim.
\end{proof}
Recall that the $\MCP(0,N)$-condition holds if for all $\mu_{0}=\rho_{0}\m\in\mathcal{P}_{2}(M)$
and all $x\in M$ there is geodesic $t\mapsto\mu_{t}=\rho_{t}\m+\mu_{t}^{s}$
between $\mu_{0}$ and $\delta_{x}$ such that 
\[
\int\rho_{t}^{1-\frac{1}{N}}\ge(1-t)\int\rho_{0}^{1-\frac{1}{N}}d\m.
\]
Cavalletti\textendash Mondino showed that $\MCP(0,N)$-spaces have
the bounded density property with $g_{R,x_{0}}(t)=(1-t)^{-N}$, see
\cite[Theorem 3.1]{CM2016TransMapsMCP}. Thus we obtain the following
equivalent characterization of essentially non-branching $\MCP(0,N)$-spaces.
\begin{cor}
A $p$-essentially non-branching metric measure space satisfies the
measure contraction property $\MCP(0,N)$ if and only if it is qualitatively
non-degenerate with $f_{R,x_{0}}(t)=(1-t)^{N}$, i.e. $\m(A_{t,x})\ge(1-t)^{N}\m(A)$
for all $x\in M$ and all Borel set $A\subset M$ of finite $\m$-measure.
\end{cor}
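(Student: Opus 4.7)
The plan is to derive both implications from the defining inequality of $\MCP(0,N)$ and the pointwise density bound already supplied by Proposition~\ref{prop:qND-ess-nb-implies-bdd-dens} whenever qualitative non-degenericity holds with $f_{R,x_{0}}(t)=(1-t)^{N}$.

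For the ``only if'' direction, given a Borel set $A$ with $\m(A)\in(0,\infty)$ and $x\in M$, I would test the $\MCP(0,N)$ inequality on $\mu_{0}=\m(A)^{-1}\m|_{A}$ and $\mu_{1}=\delta_{x}$. For the resulting geodesic $\mu_{t}=\rho_{t}\m+\mu_{t}^{s}$ one has $\supp\mu_{t}\subset A_{t,x}$ and $\int\rho_{t}\,d\m\le 1$, so Hölder's inequality with exponents $\tfrac{N}{N-1}$ and $N$ yields
\[
\int\rho_{t}^{1-1/N}\,d\m\ \le\ \left(\int\rho_{t}\,d\m\right)^{(N-1)/N}\m(\{\rho_{t}>0\})^{1/N}\ \le\ \m(A_{t,x})^{1/N}.
\]
Combined with the $\MCP$-lower bound $\int\rho_{t}^{1-1/N}\,d\m\ge(1-t)\,\m(A)^{1/N}$ this immediately gives $\m(A_{t,x})\ge(1-t)^{N}\m(A)$, i.e.\ qualitative non-degenericity with $f(t)=(1-t)^{N}$ (note that the bound is independent of $R$ and~$x_{0}$).

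For the ``if'' direction, suppose $\m$ is qualitatively non-degenerate with $f_{R,x_{0}}(t)=(1-t)^{N}$. By Proposition~\ref{prop:qual-non-deg-implies-doubling} the space $(M,d)$ is proper. Let $\mu_{0}=\rho_{0}\m\in\mathcal{P}_{2}(M)$ and $x\in M$; assume first that $\mu_{0}$ has compact support. The preceding corollary on the strong interpolation property together with Proposition~\ref{prop:qND-ess-nb-implies-bdd-dens} supplies a unique $2$-optimal dynamical coupling $\sigma\in\OptGeo_{2}(\mu_{0},\delta_{x})$ with $(e_{t})_{*}\sigma=\rho_{t}\m$ for $t\in[0,1)$ and with the pointwise bound $\rho_{t}(\gamma_{t})\le(1-t)^{-N}\rho_{0}(\gamma_{0})$ for $\sigma$-a.e.\ $\gamma$. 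Changing variables against $(e_{t})_{*}\sigma=\rho_{t}\m$ and inserting this bound gives
\begin{align*}
\int\rho_{t}^{1-1/N}\,d\m & =\int\rho_{t}(\gamma_{t})^{-1/N}\,d\sigma(\gamma)\\
 & \ge(1-t)\int\rho_{0}(\gamma_{0})^{-1/N}\,d\sigma(\gamma)\\
 & =(1-t)\int\rho_{0}^{1-1/N}\,d\m,
\end{align*}
which is exactly the $\MCP(0,N)$ inequality.

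The only step that requires some care is the passage from compactly supported $\mu_{0}$ to a general $\mu_{0}\in\mathcal{P}_{2}(M)$: one truncates $\mu_{0}^{R}:=\mu_{0}|_{B_{R}(x_{0})}/\mu_{0}(B_{R}(x_{0}))$, applies the inequality above, and takes $R\to\infty$ using properness of $M$, stability of $2$-optimal geodesic couplings under weak convergence, and lower semi-continuity of $\rho\mapsto\int\rho^{1-1/N}\,d\m$. Once the density bound of Proposition~\ref{prop:qND-ess-nb-implies-bdd-dens} is available, both directions are essentially one-line computations; the real content of the statement is that on $p$-essentially non-branching spaces the weak volume comparison $\m(A_{t,x})\ge(1-t)^{N}\m(A)$ is already strong enough to upgrade to the full $\MCP(0,N)$ condition.
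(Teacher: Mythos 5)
Your proof is correct, and it is worth comparing with the route the paper takes, since the paper derives this corollary by citation rather than by direct computation. For the direction $\MCP(0,N)\Rightarrow$ qualitative non-degenericity with $f_{R,x_{0}}(t)=(1-t)^{N}$, the paper invokes Cavalletti--Mondino's Theorem~3.1 (the bounded density property with $g_{R,x_{0}}(t)=(1-t)^{-N}$, whose proof requires the nontrivial construction of good geodesics) and then applies Lemma \ref{lem:bdd-dens-imply-qual-non-deg}; you instead test the $\MCP$ inequality on $\mu_{0}=\m(A)^{-1}\m|_{A}$, $\mu_{1}=\delta_{x}$ and use H\"older with exponents $\tfrac{N}{N-1}$ and $N$, which is more elementary, self-contained, and (correctly) does not use essential non-branching at all; it also handles a possible singular part of $\mu_{t}$ for free, since only $\int\rho_{t}\,d\m\le1$ and $\{\rho_{t}>0\}\subset A_{t,x}$ (mod $\m$-null) enter. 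What the paper's route buys in exchange is the stronger pointwise density bound along the way, but for this corollary that is not needed in the forward direction. Your converse direction coincides with the paper's intended argument via Proposition \ref{prop:qND-ess-nb-implies-bdd-dens} and the change of variables $\int\rho_{t}^{1-1/N}d\m=\int\rho_{t}(\gamma_{t})^{-1/N}d\sigma$. Two small points of care: in the exhaustion step the functional you need to be well behaved under the limit is the R\'enyi entropy $-\int\rho^{1-1/N}d\m$, which is \emph{lower} semicontinuous, i.e.\ $\int\rho^{1-1/N}d\m$ is \emph{upper} semicontinuous along the weakly convergent midpoints (your wording inverts this, though the tool you need is exactly the standard one); alternatively, uniqueness of the dynamical coupling (Corollary \ref{cor:uniqueness-of-restrictions} together with the strong interpolation property) lets you realize the truncated geodesics as restrictions of a single one and avoid the compactness argument entirely. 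Finally, passing from bounded $A$ to arbitrary Borel $A$ of finite measure in the forward direction is by inner exhaustion $A\cap B_{R}$, using that the constant $(1-t)^{N}$ does not depend on $R$ or $x_{0}$, as you observe.
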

There are similar versions for the general measure contraction property
$\MCP(K,N)$, $K\in\mathbb{R}$ and $N\in[1,\infty)$. This actually
shows that one can regard the measure contraction property as a directional
version of Bishop\textendash Gromov volume comparison condition which
for $K=0$ says that $\m(B_{r}(x))\ge(1-t)^{N}\m(B_{(1-t)r}(x))$.

\begin{rem*}
[Removing essentially non-branching I] Using a construction of good
geodesics as in Rajala \cite{Rajala2012a} and Cavalletti\textendash Mondino
\cite{CM2016TransMapsMCP} combined with the GKS-Construction (Theorem
\ref{thm:abs-cts-interpolation}) it might be possible to obtain the
equivalence without assuming that measure is essentially non-branching.We
leave the details to a future work.
\end{rem*}
Along the lines of \cite{Rajala2012} we also obtain local versions
of the Poincaré inequality with constant 
\[
C_{R,x_{0}}=\sup_{t\in(0,1)}\min\{\frac{1}{f_{R,x_{0}}(t)},\frac{1}{f_{R,x_{0}}(1-t)}\},
\]
i.e. for all Lipschitz functions $f:M\to\mathbb{R}$ and $B_{r}(x)\subset B_{R}(x_{0})$
it holds
\[
\int_{B_{r}(x)}|f-\bar{f}_{B_{r}(x)}|d\m\le4rC_{R,x_{0}}\int_{B_{2r}(x)}\operatorname{Lip}fd\m
\]
where 
\[
\bar{f}_{A}=\frac{1}{\m(A)}\int_{A}fd\m
\]
and
\[
\operatorname{Lip}f(x)=\limsup_{y\to x}\frac{|f(y)-f(x)|}{d(x,y)}.
\]

\begin{rem*}
[Removing essentially non-branching II]Similar to Lemma \ref{lem:p-ess-nb-implies-p-str-non-deg}
it is possible to show that under the condition
\[
\m(\{z\in M\thinspace|\thinspace d(z,x)=d(z,y)\})=0
\]
for all $x\ne y$ the bounded density property holds between every
$\mu_{0},\mu_{1}\in\mathcal{P}_{p}(M)$ whenever $\supp\mu_{0},\supp\mu_{1}\subset B_{R}(x_{0})$
and the function $g_{R,x_{0}}$ is upper semi-continuous in $(0,1)$.
In particular, if the density bounds are sufficiently nice then a
local doubling condition and local Poincaré inequality holds. 

We quickly sketch the argment: Note first that Lemma \ref{lem:no-overlap-non-deg}
holds for those spaces so that one obtains for $\mu_{1}=\sum_{i=1}^{n}\lambda_{i}\delta_{x_{i}}$
a geodesic in $\mathcal{P}_{p}(M)$ between $\mu_{0}$ and $\mu_{1}$
which has uniform density bounds only depending on the density of
$\mu_{0}$. Now let $\mu_{1}^{n}\rightharpoonup\mu_{1}$. At a fixed
time $t\in(0,1)$ there is a $\mu_{t}^{n}=\rho_{t}^{n}\m$ with $\int\rho_{t}^{n}\m=1$
and $\|\rho_{t}^{n}\|_{\infty}\le g_{R,x_{0}}(t)\|\rho_{0}\|_{\infty}$
implying that $(\rho_{t}^{n})_{n\in\mathbb{N}}$ is precompact in
$L^{1}(\m)$. Hence up to extracting a subsequence $\rho_{t}^{n}\to\rho_{t}$
in $L^{1}(\m)$, $\|\rho_{t}\|_{\infty}\le C_{t}\|\rho_{0}\|_{\infty}$
and $\mu_{t}=\rho_{t}\m$ being a $t$-midpoint of $\mu_{0}$ and
$\mu_{1}$. The same argument then gives a geodesic $t\mapsto\mu_{t}$
which is absolutely continuous with uniform density bounds at all
points $t\in\mathbb{Q}\cap(0,1)$. By upper semi-continuity of $g_{R,x_{0}}$
and the again same argument this time applied to $\mu_{t_{n}}\rightharpoonup\mu_{t}$
with $t_{n}\in\mathbb{Q}\cap(0,1)$ and $t_{n}\to t\in(0,1)$ shows
that $\mu_{t}$ is absolute continuous with uniform density bound,
compare also with \cite[Proof of Theorem 4.1]{CM2016TransMapsMCP}.
\end{rem*}

\subsection*{Generalizations to $N=\infty$}

As it turns out the idea of the proof of existence of transport maps
can be easily generalized to a more general situation. Compare the
results of this section with \cite[Theorem 3.3(ii)]{Gigli2012} where
non-branching spaces were treated. Recall that the $\CD_{p}(K,\infty)$-condition
(see \cite{LottVillani2009,Sturm2006,Kell2015}) requires that for
$\mu_{0},\mu_{1}\in\mathcal{P}_{p}(M)$ with $\mu_{i}\ll\m$ there
is a geodesic $t\mapsto\mu_{t}\ll\m$ such that 
\[
\int f_{t}\log f_{t}d\m\le(1-t)\int f_{0}\log f_{0}d\m+t\int f_{1}\log f_{1}d\m-Kt(1-t)W_{p}(\mu_{0},\mu_{1})^{2}
\]
where $f_{t}$ is the density of $\mu_{t}$. 

If we choose $\mu_{0}=\frac{1}{\m(A_{0})}\m\big|_{A_{0}}$ and apply
Jensen's inequality on the left-hand side, then it holds
\[
\log\m(\Gamma_{t})\ge(1-t)\log\m(A_{0})-t\int f_{1}\log f_{1}d\m+Kt(1-t)W_{2}(\mu_{0},\mu_{1})^{2}
\]
where $\Gamma_{t}=\operatorname{supp}\mu_{t}$. Thus 
\[
\lim_{t\to0}\m(\Gamma_{t})=\m(A_{0})
\]
whenever $\int f_{1}\log f_{1}d\m<\infty$ and $A_{0}$ is compact.
\begin{rem*}
We can replace the $\CD_{p}(K,\infty)$-condition by the $\CD_{p}^{*}(K,N)$-condition
with $N<0$ as defined by Ohta in \cite{Ohta2016}. Indeed, following
the proof of \cite[Theorem 4.8]{Ohta2016} gives a stronger variant
of the Brunn\textendash Minkowski inequality (replace $A_{t}$ by
$\Gamma_{t}$) which for $K=0$ and $r=-\frac{1}{N}>0$ says 
\[
\m(\Gamma_{t})^{-r}\le(1-t)\m(A_{0})^{-r}-t\int f_{1}^{1+r}d\m.
\]
implying again $\lim_{t\to0}\m(\Gamma_{t})=\m(A_{0})$.
\end{rem*}
\begin{lem}
Let $A$ be a bounded Borel set and $\mu_{1}\in\mathcal{P}_{p}(M)$
with $\mu_{1}\ll\m$. Assume $(M,d,\m)$ is $p$-essentially non-branching
and satisfies the $\CD_{p}(K,\infty)$-condition. If the geodesic
connecting $\mu_{0}=\frac{1}{\m(A)}\m\big|_{A}$ and $\mu_{1}$ is
unique then the (unique) $p$-optimal coupling $\pi$ of $\mu_{0}$
and $\mu_{1}$ is induced by a transport map.
\end{lem}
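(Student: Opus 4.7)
Assume for contradiction that $\pi$ is not induced by a transport map. The strategy is to extract from $\pi$ two sub-couplings whose dynamical lifts produce mutually singular interpolations (via $p$-essential non-branching) each occupying essentially the full measure of $K$ (via the CD--Jensen observation stated just before the lemma), and then derive a contradiction from a support-neighborhood bound.

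By the Selection Dichotomy for Measures (Theorem~\ref{thm:selection-dichotomy-product-measures}) applied to $\pi$, there exist a compact set $K \subset A$ with $\m(K) > 0$, two disjoint closed bounded sets $A_1, A_2 \subset M$, and sub-couplings $\pi_1, \pi_2 \ll \pi$ with common first marginal $\tilde\mu_0 = \m|_K/\m(K)$ and second marginals $\nu_i := (p_2)_*\pi_i$ concentrated on $A_i$. Since $\pi_i \ll \pi$ and $\mu_1 \ll \m$ we have $\nu_i \ll \m$, and after restricting to $\{f_1 \le M\}$ with an appropriate refinement of $K$ (the main technical bookkeeping) we may assume $H(\nu_i|\m) < \infty$ while keeping $\tilde\mu_0$ uniform on a compact subset of $A$. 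Uniqueness of the geodesic between $\mu_0$ and $\mu_1$ (and hence of the $p$-optimal dynamical coupling) propagates through Corollary~\ref{cor:uniqueness-of-restrictions} to give a unique $p$-optimal dynamical coupling $\sigma_i$ with $(e_0,e_1)_*\sigma_i = \pi_i$; write $\mu_t^i := (e_t)_*\sigma_i$. By uniqueness, the geodesic supplied by the $\CD_p(K,\infty)$-condition between $\tilde\mu_0$ and $\nu_i$ must coincide with $t \mapsto \mu_t^i$, so that
\[
H(\mu_t^i|\m) \le (1-t)(-\log \m(K)) + t\,H(\nu_i|\m) - Kt(1-t) W_p(\tilde\mu_0,\nu_i)^2 = -\log \m(K) + O(t).
\]

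The averaged coupling $\hat\sigma := \tfrac12(\sigma_1 + \sigma_2)$ is supported in $(e_0,e_1)^{-1}(\supp \pi)$, which is $c_p$-cyclically monotone, so $\hat\sigma$ is $p$-optimal between the absolutely continuous measures $\tilde\mu_0$ and $\tfrac12(\nu_1+\nu_2)$; Theorem~\ref{thm:ess-nb-summary}(iii) applied to the disjoint sets $K\times A_1$ and $K\times A_2$ then yields $\mu_t^1 \perp \mu_t^2$ for every $t \in (0,1)$. Picking disjoint Borel sets $B_1, B_2$ with $\mu_t^i(B_i)=1$, and noting that $\supp \mu_t^i$ lies in the $tR$-enlargement $K_{tR}$ with $R = \sup\{d(x,y) : x \in K,\, y \in A_1\cup A_2\}$, we may take $B_i \subset K_{tR}$. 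Jensen's inequality on $B_i$ gives $H(\mu_t^i|\m) \ge -\log \m(B_i)$, and combining with the CD bound yields $\m(B_i) \ge \m(K)\, e^{-O(t)}$. Summing and using $B_1 \cap B_2 = \varnothing$,
\[
2\m(K)\, e^{-O(t)} \le \m(B_1)+\m(B_2) = \m(B_1\cup B_2) \le \m(K_{tR}),
\]
and letting $t\to 0$ with $\m(K_{tR})\to \m(K)$ (compactness of $K$ plus local finiteness of $\m$) forces $2\m(K) \le \m(K)$, contradicting $\m(K) > 0$. The main obstacle is the entropy-reduction step in the first paragraph; once $H(\nu_i|\m)<\infty$ is secured with $\tilde\mu_0$ still uniform, the CD--Jensen--non-branching interplay closes the argument cleanly.
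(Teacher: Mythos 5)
Your proposal is correct and follows essentially the same route as the paper's proof: Selection Dichotomy for Measures to split $\pi$ into two sub-couplings with a common uniform first marginal on a compact $K$ and disjoint targets, truncation of the target densities to secure finite entropy, Corollary \ref{cor:uniqueness-of-restrictions} to make the $\CD_p(K,\infty)$ interpolation inequality apply to the unique restricted geodesics, Jensen to convert the entropy bound into $\m$-measure lower bounds on the $t$-slices, essential non-branching to make those slices disjoint, and the shrinking-neighborhood argument to reach $2\m(K)\le\m(K)$. The only cosmetic difference is that you phrase the disjointness via Theorem \ref{thm:ess-nb-summary}(iii) and mutual singularity of the interpolation measures, whereas the paper works with the supports $\Gamma_t^{(i)}$ directly; the content is identical.
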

\begin{rem*}
Strictly speaking the assumptions imply that the strong $\CD_{p}(K,\infty)$-condition
holds between $\mu_{0}$ and $\mu_{1}$ thus the argument of the proof
of \cite[Corollary 1.4]{RS2014NonbranchStrongCD} can be used. For
completeness we present the arguments based on the ideas above.
\end{rem*}
\begin{proof}
Assume by contradiction that the claim is false for $\mu_{0}=\frac{1}{\m(A)}\m\big|_{A}$
and $\mu_{1}\ll\m$. Then by the Selection Dichotomy (Theorem \ref{thm:selection-dichotomy-product-measures})
there are a compact set $K\subset A$ and two disjoint bounded closed
set $A_{1}$ and $A_{2}$ such that $\pi(K\times A_{1})>0$ and 
\begin{align*}
\mu_{0} & =(p_{1})_{*}\pi_{i}=\frac{1}{\m(K)}\m\big|_{K}.\\
\mu_{1}^{i} & =(p_{2})_{*}\pi_{i}\ll\mu_{1}\ll m
\end{align*}
where $\pi_{i}=\frac{1}{\pi(K\times A_{i})}\pi\big|_{K\times A_{i}}$
and $i=1,2$. Denote the density of $\mu_{1}^{i}$ by $f_{1}^{i}$
and note that for large $n\in\mathbb{N}$
\[
\mu_{1}^{1}(\{f_{1}^{1}\le n\}),\mu_{1}^{2}(\{f_{1}^{2}\le n\})>0.
\]
Thus we may restrict $K$ further (and obtain new $A_{i}$, $\pi_{i}$,
and $\mu_{1}^{i}$) and assume that for $i=1,2$, $f_{1}^{i}(y)$
is bounded by $n$ for $\mu_{1}^{i}$-almost all $y\in M$. Since
each $A_{1}^{i}$, $i=1,2$, is bounded we obtain 
\[
\int\tilde{f}_{1}^{i}\log\tilde{f}_{1}^{i}d\m\in\mathbb{R}.
\]
Corollary \ref{cor:uniqueness-of-restrictions} implies that the $p$-optimal
dynamical coupling between $\mu_{0}$ and $\mu_{1}^{i}$ is still
unique so that the interpolation inequality implies 
\[
\log\m(\Gamma_{t}^{(i)})\ge(1-t)\log\m(K)-t\int f_{1}^{i}\log f_{1}^{i}d\m+KW_{p}(\mu_{0},\mu_{1}^{i})^{2}
\]
where $\Gamma^{(i)}=\supp\pi_{i}$ is the the support of the unique
$p$-optimal dynamical coupling of $\mu_{0}$ and $\mu_{0}^{i}$.
In particular, it holds 
\[
\lim_{t\to0}\m(\Gamma_{t}^{(i)})=\m(K)\quad\text{for }i=1,2.
\]

Also note that $\Gamma^{(1)}\cup\Gamma^{(2)}$ is $c_{p}$-cyclically
monotone, so that $(M,d,\m)$ being $p$-essential non-branching shows
for sequence $t_{n}\to0$ we may replace $\Gamma^{(1)}$ and $\Gamma^{(2)}$
by smaller sets $\tilde{\Gamma}^{(1)}\subset\Gamma^{(1)}$ and $\tilde{\Gamma}^{(2)}\subset\Gamma^{(2)}$
such that $\pi_{1}(\tilde{\Gamma}^{(1)})=\pi_{2}(\tilde{\Gamma}^{(2)})=1$
and $\m(\Gamma_{t_{n}}^{(1)}\cap\Gamma_{t_{n}}^{(2)})=0$ for all
large $n\in\mathbb{N}$. Note that still $\lim_{t\to0}\m(\tilde{\Gamma}_{t}^{(i)})=\m(K)$.
But then
\begin{align*}
\m(K) & \ge\lim_{\delta\to0}\m(K_{\delta})\\
 & \ge\lim_{t\to0}\m(\tilde{\Gamma}_{t}^{(1)}\cup\tilde{\Gamma}_{t}^{(2)})\\
 & =\lim_{t\to0}\left[\m(\tilde{\Gamma}_{t}^{(1)})+\m(\tilde{\Gamma}_{t}^{(2)})\right]\\
 & =2\m(K)
\end{align*}
which is a contradiction.
\end{proof}
\begin{thm}
Assume $(M,d,\m)$ is $p$-essentially non-branching and satisfies
the $\CD_{p}(K,\infty)$-condition. If $\mu_{0},\mu_{1}\in\mathcal{P}_{p}(M)$
such that there is a $p$-optimal dynamical coupling $\sigma\in\OptGeo_{p}(\mu_{0},\mu_{1})$
with $\mu_{0},(e_{t_{0}})_{*}\sigma\ll\m$ for some $t_{0}\in M$
then the $p$-optimal coupling $(e_{0},e_{1})_{*}\sigma$ is induced
by a transport map $T$.
\end{thm}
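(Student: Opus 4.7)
The plan is to mimic the proof of the preceding lemma, using the absolutely continuous intermediate measure $\mu_{t_0} := (e_{t_0})_*\sigma$ in place of the assumptions that $\mu_1 \ll \m$ and that the connecting geodesic is unique. The $\CD_p(K,\infty)$-entropy inequality will be applied not to the full transport but only to the restriction of $\sigma$ on the interval $[0,t_0]$, where both endpoints are absolutely continuous.

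Suppose for contradiction that $\pi := (e_0,e_1)_*\sigma$ is not induced by a transport map. By the Selection Dichotomy for Measures (Theorem \ref{thm:selection-dichotomy-product-measures}) there are a compact set $K\subset M$ and disjoint bounded closed sets $A_1,A_2\subset M$ with $\pi(K\times A_i)>0$, $i=1,2$. After further restriction we may assume $\mu_0|_K$ is a multiple of $\m|_K$ with bounded density, and, via Lemma \ref{lem:restriction-property}, form the restricted $p$-optimal dynamical couplings $\sigma_i := \sigma_{\chi_{K\times A_i}}$. Each $\sigma_i$ has marginals $\mu_0^i := (e_0)_*\sigma_i$ and $\mu_1^i := (e_1)_*\sigma_i$; since $\sigma_i \ll \sigma$, the intermediate measure $\mu_{t_0}^i := (e_{t_0})_*\sigma_i$ is absolutely continuous with bounded density (after one more restriction of $K$).

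Consider the $[0,t_0]$-restriction $\sigma_i^0 := (\restr_{0,t_0})_*\sigma_i$, a $p$-optimal dynamical coupling between the absolutely continuous measures $\mu_0^i$ and $\mu_{t_0}^i$. By $p$-essential non-branching together with the Rajala--Sturm style argument invoked in the remark preceding the preceding lemma, the $\CD_p(K,\infty)$-inequality upgrades to the strong $\CD_p(K,\infty)$-inequality along $\sigma_i^0$. Applying Jensen's inequality to the uniform density of $\mu_0^i$ in this entropy bound gives
\[
\log \m\bigl(\Gamma_s^{(i)}\bigr) \;\ge\; (1-s)\log\m(K) \;-\; s\int f_{t_0}^i \log f_{t_0}^i\,d\m \;+\; Ks(1-s)\,W_p(\mu_0^i,\mu_{t_0}^i)^2,
\]
where $\Gamma^{(i)} := \supp \sigma_i^0$ and $f_{t_0}^i$ is the (bounded) density of $\mu_{t_0}^i$; in particular $\lim_{s\to 0}\m(\Gamma_s^{(i)}) = \m(K)$.

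To close the argument, apply Theorem \ref{thm:ess-nb-summary}(iii) to $\sigma^0 := (\restr_{0,t_0})_*\sigma$ --- both of whose marginals are absolutely continuous --- with the $(e_0,e_{t_0})$-projections of the disjoint geodesic-sets $\{\gamma : (\gamma_0,\gamma_1)\in K\times A_i\}$; non-branching of $\sigma^0$ forces these $(e_0,e_{t_0})$-images to be $(e_0,e_{t_0})_*\sigma$-essentially disjoint, whence the corresponding $s$-midpoints of $\sigma_1$ and $\sigma_2$ are mutually singular and $\m(\Gamma_s^{(1)} \cap \Gamma_s^{(2)}) = 0$ for every $s \in (0,1)$. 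Choosing $\epsilon,\delta>0$ and $s$ close enough to $0$ so that $\m(K_\delta) \le (1+\epsilon)\m(K)$ and $\Gamma_s^{(1)} \cup \Gamma_s^{(2)} \subset K_\delta$, one reaches the contradiction
\[
(1+\epsilon)\m(K) \;\ge\; \m(K_\delta) \;\ge\; \m(\Gamma_s^{(1)}) + \m(\Gamma_s^{(2)}) \;\longrightarrow\; 2\,\m(K).
\]
The main obstacle will be the upgrade from $\CD_p$ to strong $\CD_p$ along the specific coupling $\sigma_i^0$ in our $p$-essentially non-branching setting, together with the check that the disjointness of $A_1,A_2$ at time $1$ descends (via non-branching on $[0,t_0]$) to essential disjointness of the relevant $(e_0,e_{t_0})$-subsets used in applying Theorem \ref{thm:ess-nb-summary}(iii).
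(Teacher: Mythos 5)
Your overall contradiction scheme is the right family of ideas, but two of its load-bearing steps do not go through, and they are precisely the points the paper's actual proof is designed to avoid. First, the application of the $\CD_{p}(K,\infty)$-inequality along $\sigma_{i}^{0}$: the weak $\CD_{p}(K,\infty)$-condition only produces \emph{some} geodesic between $\mu_{0}^{i}$ and $\mu_{t_{0}}^{i}$ along which the entropy inequality holds, and nothing forces its interpolants to be concentrated on $\Gamma_{s}^{(i)}=e_{s}\bigl((\restr_{0,t_{0}})^{-1}\text{-support of }\sigma_{i}\bigr)$, which is what your bound $\m(\Gamma_{s}^{(i)})\to\m(K)$ requires. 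The paper gets exactly this from \emph{uniqueness} of the connecting geodesic (see the remark before the preceding lemma), and uniqueness is secured there by the reduction you skip: by Corollary \ref{cor:ess-nb-intermediate-transport} (both endpoints absolutely continuous) plus Corollary \ref{cor:uniqueness-of-restrictions} for the localized/renormalized couplings. In your setting neither tool applies: $\mu_{t_{0}}^{i}$ is the $t_{0}$-point of a geodesic whose right endpoint $\mu_{1}^{i}$ is \emph{not} absolutely continuous, so Corollary \ref{cor:ess-nb-intermediate-transport} gives no uniqueness for the pair $(\mu_{0}^{i},\mu_{t_{0}}^{i})$; and $\sigma_{i}^{0}$ arises from $\sigma^{0}=(\restr_{0,t_{0}})_{*}\sigma$ by conditioning on where the geodesic goes \emph{after} time $t_{0}$, which is not a restriction by a function of the endpoints of $\sigma^{0}$, so Corollary \ref{cor:uniqueness-of-restrictions} cannot be invoked either. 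The equivalence ``essentially non-branching $+$ weak $\CD_{p}$ $\Rightarrow$ strong $\CD_{p}$'' that you appeal to is proved in the paper only under the strong interpolation property $\sIP_{p}$, which is not available here.

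Second, the disjointness step is circular. Essential non-branching applied to $\sigma^{0}$ (legitimate, since $\mu_{0},\mu_{t_{0}}\ll\m$) constrains the geodesics only as curves on $[0,t_{0}]$; it says nothing about their extensions to $[t_{0},1]$. Two geodesics in the support of $\sigma$ may coincide on all of $[0,t_{0}]$ and then split into $A_{1}$ and $A_{2}$, so the $(e_{0},e_{t_{0}})$-projections of $\{\gamma:(\gamma_{0},\gamma_{1})\in K\times A_{i}\}$ need not be disjoint, not even up to $(e_{0},e_{t_{0}})_{*}\sigma$-measure zero; ruling this out is essentially equivalent to the existence of the transport map from $\mu_{t_{0}}$ to $\mu_{1}$, i.e.\ to (a piece of) the statement being proved. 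You also cannot apply Theorem \ref{thm:ess-nb-summary}(iii) at the level of the original endpoints, since $\mu_{1}^{i}$ is not absolutely continuous. The paper's proof proceeds differently: it first splits at $t_{0}$, using Corollary \ref{cor:ess-nb-intermediate-transport} to obtain the transport map $T_{t_{0},1}$ on $[t_{0},1]$ and the uniqueness of the geodesic from $\mu_{0}$ to $\mu_{t_{0}}$, thereby reducing to the case of two absolutely continuous measures joined by a unique geodesic; then it localizes ($f_{0}\ge\frac1n$, bounded support) and renormalizes the initial density via Corollary \ref{cor:uniqueness-of-restrictions} so as to land exactly in the hypotheses of the preceding lemma, where the Selection Dichotomy, the entropy bound along the (unique) geodesic, and Theorem \ref{thm:ess-nb-summary}(iii) can all be used legitimately. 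Without that reduction, both the entropy bound along your specific coupling and the mutual singularity of the $s$-interpolants are unsupported.
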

\begin{proof}
We reduce the general case to the lemma above. Let $t\mapsto\mu_{t}$
be a geodesic between $\mu_{0}$ and $\mu_{1}$. By Corollary \ref{cor:ess-nb-intermediate-transport}
we see that for fixed $t\in(0,1)$, $s\mapsto\mu_{st}$ is the unique
geodesic connecting $\mu_{0}$ and $\mu_{t}$ and there is a $p$-optimal
transport map $T_{t,1}$ between $\mu_{t}$ and $\mu_{1}$. Hence,
it suffices to show that there is a transport map from $\mu_{0}$
to $\mu_{t}$. Since $s\mapsto\mu_{st}$ is unique between its endpoint
it suffices to show the claim for $\mu_{0}$ and $\mu_{1}$ connected
by a unique geodesic $t\mapsto\mu_{t}$. 

Let $\sigma$ be the unique $p$-optimal dynamical coupling induced
by $t\mapsto\mu_{t}$ and $\pi=(e_{0},e_{1})_{*}\pi$. Denote the
densities of $\mu_{0}$ and $\mu_{1}$ with respect to $\m$ by $f_{0}$
and $f_{1}$ respectively. Since 
\[
\pi(\bigcup_{n\in\mathbb{N}}C_{n})=1
\]
where $C_{n}=(\{f_{0}\ge\frac{1}{n}\}\cap B_{n}(x_{0}))\times M$
for a fixed $x_{0}\in M$, it suffices to show that the claim holds
for $\pi$ restricted to $C_{n}$. Note by Corollary \ref{cor:uniqueness-of-restrictions}
the geodesic connecting the marginals of $\frac{1}{\pi(C_{n})}\pi\big|_{C_{n}}$
is still unique. 

Thus we can assume $\mu_{0}=f_{0}\m$ has bounded support with density
$f_{0}$ bounded below by an $\epsilon>0$ on a set $A$ of full $\mu_{0}$-measure
and $\sigma$ is the unique $p$-optimal dynamical coupling between
$\mu_{0}$ and $\mu_{1}$. Now for 
\[
f(x,y)=\chi_{A}(x)\frac{1}{f_{0}(x)},
\]
Corollary \ref{cor:uniqueness-of-restrictions} shows that $\sigma_{f}$
is still unique between $\mu_{0}^{f}=\frac{1}{\m(A)}\m\big|_{A}$
and $\mu_{1}^{f}$. It is easy to see that $(e_{0},e_{1})_{*}\sigma_{f}$
is induced by a transport map if and only if $(e_{0},e_{1})_{*}\sigma$
is induced by a transport map. We conclude by noticing that $A=\supp\mu_{0}^{f}$
and $\mu_{1}^{f}$ satisfy the conditions of the previous lemma and
hence $(e_{0},e_{1})_{*}\sigma_{f}$ is induced by a $p$-optimal
transport map.
\end{proof}
Since by Rajala\textendash Sturm \cite{RS2014NonbranchStrongCD} strong
$\CD_{p}(K,\infty)$-spaces are $p$-essentially non-branching, we
recover their result on the existence of transport maps \cite[Corollary 1.4]{RS2014NonbranchStrongCD}.
\begin{cor}
If $(M,d\m)$ satisfies the strong $\CD_{p}(K,\infty)$-condition
between every $\mu_{0},\mu_{1}\in\mathcal{P}_{p}(M)$ with $\mu_{0},\mu_{1}\ll\m$
there is a $p$-optimal dynamical coupling $\sigma$ and $(e_{0},e_{1})_{*}\sigma$
is induced by a transport map.
\end{cor}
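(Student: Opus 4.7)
The plan is to reduce the corollary directly to the preceding theorem, which asserts that any $p$-essentially non-branching metric measure space satisfying the $\CD_p(K,\infty)$-condition admits $p$-optimal transport maps as soon as the initial measure and some interpolation $(e_{t_0})_*\sigma$ are absolutely continuous with respect to $\m$. Thus I only need to verify two things: first, that the strong $\CD_p(K,\infty)$-condition implies $p$-essential non-branching, and second, that between two absolutely continuous measures in $\mathcal{P}_p(M)$ the strong $\CD_p$-condition itself produces a $p$-optimal dynamical coupling $\sigma$ whose entire interpolation $(e_t)_*\sigma$ is absolutely continuous.

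First I would invoke the theorem of Rajala--Sturm \cite{RS2014NonbranchStrongCD} (already cited in the excerpt, in the form that strong $\CD(K,\infty)$-spaces are essentially non-branching); the same proof, which only uses the strict convexity of the entropy along \emph{every} optimal interpolation together with the usual mid-point overlap argument, gives $p$-essential non-branching for strong $\CD_p(K,\infty)$-spaces. Since the proof method was already reproduced in spirit earlier in the paper (compare with Lemma \ref{lem:cyc-mon-intersect-implies-same-length} and the preceding discussion), I would just state this as the starting observation.

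Second, by the very definition of the (strong) $\CD_p(K,\infty)$-condition recalled in the section, for any $\mu_0,\mu_1\in\mathcal{P}_p(M)$ with $\mu_0,\mu_1\ll\m$ there exists a $p$-optimal dynamical coupling $\sigma$ along which the entropy inequality holds; in particular $(e_t)_*\sigma\ll\m$ for \emph{every} $t\in[0,1]$, since otherwise the entropy would be $+\infty$ at that $t$ and the inequality would fail. Therefore the hypothesis ``$\mu_0,(e_{t_0})_*\sigma\ll\m$ for some $t_0$'' of the preceding theorem is satisfied (trivially, one may take any $t_0\in(0,1)$, or even $t_0=0$).

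Finally I would apply the preceding theorem to this $\sigma$, concluding that $(e_0,e_1)_*\sigma$ is induced by a transport map $T$. The main (and essentially only) subtle point is the first step: one must be careful that the strong $\CD_p$-condition genuinely delivers $p$-essential non-branching, because the standard Rajala--Sturm argument is formulated for $p=2$; but as noted in the paper this reformulation is straightforward since the proof is based on the strict convexity of $r\mapsto r^p$ (used in Lemma \ref{lem:cyc-mon-intersect-implies-same-length}) and the fact that the strong $\CD_p$-condition forces the entropy bound along \emph{every} $p$-optimal dynamical coupling, so branching on a set of positive measure would produce a competitor violating strict convexity of the entropy. Once this is in place, the conclusion is immediate from the theorem just proved.
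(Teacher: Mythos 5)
Your argument is correct and follows essentially the same route as the paper, which disposes of this corollary in one line: strong $\CD_{p}(K,\infty)$ implies $p$-essential non-branching by Rajala--Sturm, and then the preceding theorem applies. The only point worth making explicit is that your entropy argument for $(e_{t})_{*}\sigma\ll\m$ is only literally valid when $\int f_{0}\log f_{0}\,d\m$ and $\int f_{1}\log f_{1}\,d\m$ are finite; for general absolutely continuous endpoints one should first restrict to countably many pieces with bounded densities and bounded supports (exactly as in the proof of the preceding theorem), apply the strong $\CD_{p}(K,\infty)$-inequality to each restricted geodesic, and sum to get absolute continuity of $(e_{t_{0}})_{*}\sigma$.
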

If, instead, we know that between an absolutely continuous initial
measure and an arbitrary measure every interpolation is absolutely
continuousthen we can show general existence of transport maps.
\begin{cor}
Assume $(M,d\m)$ has the strong interpolation property $\sIP_{p}$.
Then $(M,d,\m)$ is $p$-essentially non-branching and satisfies the
(weak) $\CD_{p}(K,\infty)$-condition if and only if it satisfies
the strong $\CD_{p}(K,\infty)$-condition. Furthermore, if either
of the cases hold then $(M,d,\m)$ has good transport behavior $\GTB_{p}$
as well.
\end{cor}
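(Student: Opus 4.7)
The plan is to prove the equivalence and deduce $\GTB_p$. Throughout, the standing assumption $\sIP_p$ makes absolute continuity of interpolation measures automatic, so the only nontrivial content is the entropy convexity inequality along every Wasserstein geodesic connecting absolutely continuous measures.

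For the implication strong $\CD_p(K,\infty) \Rightarrow$ ($p$-essentially non-branching $+$ weak $\CD_p(K,\infty)$) the weak condition is immediate. The $p$-essentially non-branching property follows from the Rajala--Sturm argument \cite{RS2014NonbranchStrongCD}: if a $p$-optimal dynamical coupling $\sigma$ between absolutely continuous measures decomposed into two disjoint branching pieces with identical $t$-midpoints, then by Lemma \ref{lem:cyc-mon-intersect-implies-same-length} one could build an alternative $p$-optimal dynamical coupling producing the same interpolation at time $t$, and strict convexity of the entropy along this superposition would force strict inequality in the entropy bound along a $W_p$-geodesic, contradicting the equality case enforced by the strong condition.

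For the reverse implication assume (a). Let $\mu_0 \ll \m$, $\mu_1 \in \mathcal{P}_p(M)$ and take any $\sigma \in \OptGeo_p(\mu_0,\mu_1)$. The assumption $\sIP_p$ yields $(e_{1/2})_*\sigma \ll \m$, placing us directly in the setting of the final theorem of the preceding subsection on $p$-essentially non-branching $\CD_p(K,\infty)$-spaces; that theorem ensures $(e_0,e_1)_*\sigma$ is induced by a transport map. Hence (a) implies $\GTB_p$ in the sense of Definition \ref{def:GTB}, and since strong $\CD_p(K,\infty)$ implies (a), the $\GTB_p$-conclusion of the theorem holds under either hypothesis. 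Invoking Proposition \ref{prop:GTB-ENB}, we further obtain that $\sigma$ is the \emph{unique} $p$-optimal dynamical coupling between $\mu_0$ and $\mu_1$. Restricting now to the case $\mu_0, \mu_1 \ll \m$, this uniqueness combined with weak $\CD_p(K,\infty)$ forces the entropy convexity inequality to hold along the unique Wasserstein geodesic; together with $\sIP_p$, this is precisely the strong $\CD_p(K,\infty)$-condition.

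The main technical subtlety is ensuring that uniqueness at the level of the $p$-optimal dynamical coupling, as supplied by $\GTB_p$ via Proposition \ref{prop:GTB-ENB}, translates into the statement ``every $W_p$-geodesic between $\mu_0,\mu_1\ll\m$ satisfies the entropy inequality'' --- a priori two distinct dynamical couplings might project to the same interpolation $t\mapsto (e_t)_*\sigma$ or produce different interpolations. However, Proposition \ref{prop:GTB-ENB} gives uniqueness of the dynamical coupling itself (not merely of its interpolation), which is strictly stronger and directly yields uniqueness of $t\mapsto (e_t)_*\sigma$, so no further argument is required.
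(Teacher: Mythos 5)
Your proposal is correct and follows essentially the same route the paper intends for this (unproved) corollary: strong $\CD_{p}(K,\infty)\Rightarrow$ essentially non-branching via \cite{RS2014NonbranchStrongCD}, then $\sIP_{p}$ plus the preceding theorem to get $\GTB_{p}$, Proposition \ref{prop:GTB-ENB} for uniqueness of the dynamical coupling (hence of the geodesic), and uniqueness plus the weak condition to upgrade to the strong condition. Your sketch of the Rajala--Sturm step is loose, but since the paper itself only cites that result rather than reproving it, this is not a gap.
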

In the more general setting there could be more than one geodesic
connecting two absolutely continuous measure. However, it is possible
to show that the absolutely continuous part of a geodesic connecting
measures with finite entropy is just a restricting of a unique absolutely
geodesic given by the interpolation inequality. Hence this geodesic
is unique.
\begin{cor}
Assume $(M,d,\m)$ is $p$-essentially non-branching and satisfies
the $\CD_{p}(K,\infty)$-condition. Then for all $\mu_{0},\mu_{1}\in\mathcal{P}_{p}(M)$
with $\mu_{0},\mu_{1}\ll\m$ and $\int f_{0}\log f_{0}d\m,\int f_{1}\log f_{1}d\m<\infty$
there is a unique $p$-optimal dynamical coupling $\sigma\in\OptGeo_{p}(\mu_{0},\mu_{1})$
along which the $\CD_{p}(K,\infty)$-interpolation inequality holds
and for this dynamical coupling the $p$-optimal coupling $(e_{0},e_{1})_{*}\sigma$
is induced by a transport map. 

Furthermore, for any other $p$-optimal dynamical coupling $\tilde{\sigma}\in\OptGeo_{p}(\mu_{0},\mu_{1})$
such that for some $t\in(0,1)$ the $t$-midpoint $\mu_{t}=\rho_{t}\m+\m$
with $\rho_{t}\not\equiv0$, there are a function $f:M\to[0,\infty)$
which is positive $\m$-almost everywhere on $\{\rho_{t}>0\}$ and
a Borel set $A_{t}\subset M$ such that 
\[
\sigma_{f}=\tilde{\sigma}_{\tilde{f}}
\]
where $\tilde{f}(\gamma)=\chi_{A_{t}}(\gamma_{t})$, i.e. the absolutely
continuous part of $\tilde{\sigma}$ is obtained via a restriction
of $\sigma$.
\end{cor}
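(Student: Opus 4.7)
The plan is three-stage: establish existence and the transport map property of a CD-good coupling $\sigma$, prove the ``furthermore'' restriction statement, and then deduce uniqueness from it.

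For existence, I would invoke the $\CD_p(K,\infty)$-inequality applied to the pair $(\mu_0,\mu_1)$. Since both entropies are finite by hypothesis, the right-hand side of the interpolation inequality is finite, so there is a $W_p$-geodesic $t\mapsto\mu_t=f_t\m$ along which the entropy inequality holds. Lifting this geodesic yields a dynamical coupling $\sigma\in\OptGeo_p(\mu_0,\mu_1)$ with $(e_t)_*\sigma\ll\m$ for every $t\in(0,1)$. The preceding theorem (existence of a transport map whenever some intermediate measure is absolutely continuous) then shows that $(e_0,e_1)_*\sigma$ is induced by a transport map.

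For the ``furthermore'' part, fix $\tilde\sigma$ with $(e_t)_*\tilde\sigma=\rho_t\m+\tilde\mu_t^s$ and $\rho_t\not\equiv 0$. Let $A_t=\{\rho_t>0\}$, chosen disjoint from $\supp\tilde\mu_t^s$, and $\tilde f(\gamma)=\chi_{A_t}(\gamma_t)$. By Lemma \ref{lem:restriction-property} the restriction $\tilde\sigma_{\tilde f}$ is $p$-optimal, and its $t$-midpoint equals $\lambda^{-1}\rho_t\m\ll\m$. To compare $\tilde\sigma_{\tilde f}$ with $\sigma$, I form $\eta=\tfrac12(\sigma+\tilde\sigma_{\tilde f})$; since both summands are supported in $\partial^{c_p}\varphi$ for a common dual potential $\varphi$, $\eta$ is a $p$-optimal dynamical coupling between its (absolutely continuous) marginals. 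Theorem \ref{thm:ess-nb-summary} applied to $\eta$ at time $t$ provides, first, a full-measure non-branching set $\mathsf L\subset\supp\eta$ such that any two geodesics in $\mathsf L$ sharing a $t$-midpoint coincide, and second, a Borel selection $\mathsf T_t$ via which both $\sigma$ and $\tilde\sigma_{\tilde f}$ are recovered from their respective $t$-midpoints.

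Provided $(e_t)_*\tilde\sigma_{\tilde f}\ll\mu_t$, this common selection lets me set $h=d(e_t)_*\tilde\sigma_{\tilde f}/d\mu_t$ and $f(\gamma)=h(\gamma_t)$, so that $\sigma_f=\tilde\sigma_{\tilde f}$; uniqueness of the CD-good coupling is then immediate, for a second CD-good $\sigma'$ has a fully absolutely continuous $t$-midpoint, the ``furthermore'' produces $f$ with $\sigma_f=\sigma'$, and matching total masses forces $f\equiv 1$, whence $\sigma=\sigma'$. The principal obstacle I anticipate is verifying $(e_t)_*\tilde\sigma_{\tilde f}\ll\mu_t$, i.e.\ ruling out that the CD-good coupling $\sigma$ misses a set of positive $\m$-measure of $t$-midpoints visited by $\tilde\sigma$; I expect this to follow from the maximality clause of Theorem \ref{thm:abs-cts-interpolation} applied to $\eta$, combined with the entropy control along $\mu_t$ supplied by the $\CD_p(K,\infty)$-inequality.
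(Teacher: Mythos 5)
Your first stage and the construction of $\eta=\tfrac12(\sigma+\tilde{\sigma}_{\tilde{f}})$ run parallel to the paper's argument, but the step you flag yourself is a genuine gap, and the repair you propose is not available here. The maximality clause of Theorem \ref{thm:abs-cts-interpolation} requires a \emph{proper} space and a \emph{strongly consistent} coupling, i.e.\ non-degenericity of the supporting $c_{p}$-cyclically monotone sets; these are supplied by qualitative non-degenericity in the earlier sections but are exactly what the $\CD_{p}(K,\infty)$ setting lacks (avoiding them is the point of this subsection). Moreover, the common selection $\mathsf{T}_{t}$ you extract from $\eta$ via Theorem \ref{thm:ess-nb-summary} points the wrong way: it reconstructs geodesics from their $t$-midpoints, so it only lets you push measures from time $t$ to time $0$, and the known domination $\tilde{\mu}_{0}^{\tilde{f}}\ll\mu_{0}$ is not reflected backwards through a push-forward. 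Hence $(e_{t})_{*}\tilde{\sigma}_{\tilde{f}}\ll\mu_{t}$ does not follow from what you have. (The closing mass-matching argument for uniqueness is also too quick: $\sigma_{f}=\sigma'$ with both sides probability measures does not force $f$ to be constant; one should instead apply the preceding theorem to $\tfrac12(\sigma+\sigma')$, whose endpoints and $t$-midpoint are absolutely continuous, to see that both couplings are induced by the same transport map and the same geodesic selection.)

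The paper closes the gap by going \emph{forward} in time. Since $\tilde{\sigma}_{\tilde{f}}$ and $\sigma$ both have absolutely continuous $t$-midpoints, the theorem just proven yields transport maps $\tilde{T}$ and $T$ inducing $(e_{0},e_{t})_{*}\tilde{\sigma}_{\tilde{f}}$ and $(e_{0},e_{t})_{*}\sigma$. Applying the same theorem, together with uniqueness, to the coupling between $\tfrac12(\mu_{0}+\tilde{\mu}_{0}^{\tilde{f}})$ and $\tfrac12(\mu_{t}+\tilde{\mu}_{t}^{a})$ --- all four measures absolutely continuous with respect to $\m$ --- forces $T=\tilde{T}$ on $A_{0}=\tilde{T}^{-1}(A_{t})$, i.e.\ $\tilde{\mu}_{0}^{\tilde{f}}$-almost everywhere. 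Since $\tilde{\mu}_{0}^{\tilde{f}}\ll\mu_{0}$, this gives
\[
\tilde{\mu}_{t}^{a}=\tilde{T}_{*}\tilde{\mu}_{0}^{\tilde{f}}=T_{*}\tilde{\mu}_{0}^{\tilde{f}}\ll T_{*}\mu_{0}=\mu_{t},
\]
which is precisely the absolute continuity you were missing; with it, the remainder of your argument goes through.
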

\begin{proof}
Let $\sigma$ be a $p$-optimal dynamical coupling along which the
$\CD_{p}(K,\infty)$-inter\-polation inequality holds and $\tilde{\sigma}$
be another $p$-optimal dynamical coupling such that at time $t\in(0,1)$
the interpolation $\tilde{\mu}_{t}=(e_{t})_{*}\tilde{\sigma}$ has
density with respect to $\m$, i.e.
\[
\tilde{\mu}_{t}:=\tilde{\mu}_{t}^{a}+\tilde{\mu}_{t}^{s}
\]
with $\tilde{\mu}_{t}^{s}\bot\m$, $\tilde{\mu}_{t}^{a}\ll\m$ and
$\tilde{\mu}_{t}^{a}(M)>0$ . Let $A_{t}$ be a Borel set such that
$\tilde{\mu}_{t}^{a}(M\backslash A_{t})=0$ and $\tilde{\mu}_{t}^{s}(A_{t})=0$. 

Set $\tilde{f}(\gamma)=\chi_{A_{t}}(\gamma_{t})$, then by the theorem
above the $p$-optimal coupling $(e_{0},e_{t_{0}})_{*}\tilde{\sigma}_{\tilde{f}}$
between $\tilde{\mu}_{0}^{\tilde{f}}$ and $\tilde{\mu}_{0}^{\tilde{f}}$
which is induced by a transport map $\tilde{T}$. Similarly, $(e_{0},e_{t})_{*}\sigma$
is induced by a transport map $T$. 

We claim that $T=\tilde{T}$ on $A_{0}=\tilde{T}^{-1}(A_{t})$. Indeed,
this would imply the result because $\tilde{\mu}_{0}^{\tilde{f}}\le\mu_{0}\big|_{A_{0}}$.

The claim follows by observing that between $\hat{\mu}_{0}=\frac{1}{2}(\mu_{0}+\tilde{\mu}_{0}^{\tilde{f}})$
and $\hat{\mu}_{0}=\frac{1}{2}(\mu_{t}+\tilde{\mu}_{t}^{a})$ there
is a unique $p$-optimal coupling which is induced by a transport
map.
\end{proof}
\begin{rem*}
The corollary allows us to do localization so that we can show the
following: between any two measure $\mu_{0}=f_{0}\m$ and $\mu_{1}=f_{1}\m$
with finite entropy there is an interpolation $\mu_{t}=f_{t}d\m$
such that 
\[
\log f_{t}(\gamma_{t})\le(1-t)\log f_{0}(\gamma_{0})+t\log f_{1}(\gamma_{1})-Kd(\gamma_{0},\gamma_{1})^{2}.
\]
This can be used to obtain density bounds of $f_{t}$ and thus a weak
Poincaré inequality.
\end{rem*}

\section{\label{sec:Measure-rigidity}Measure rigidity}

In this section we prove the measure rigidity theorem, i.e. we will
show that two qualitatively non-degenerate measures on a $p$-essentially
non-branching space are mutually absolutely continuous.

For convenience of the reader we recall the main properties of $p$-essentially
non-branching, qualitatively non-degenerate measures $\m$.
\begin{thm*}
Let $(M,d,\m)$ be $p$-essentially non-branching and $\m$ be qualitatively
non-degenerate. Then for every $\mu_{0},\mu_{1}\in\mathcal{P}_{p}(M)$
with $\mu_{0}\ll\m$ there is a unique $p$-optimal dynamical coupling
$\sigma\in\OptGeo_{p}(\mu_{0},\mu_{1})$ and this dynamical coupling
satisfies $(e_{t})_{*}\sigma\ll\m$ for $t\in[0,1)$. 
\end{thm*}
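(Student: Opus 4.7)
The plan is to assemble the theorem directly from the main results already established in the paper, since this statement is essentially a packaged summary of the equivalences proved in Section~4. No new argument is needed; the work lies in invoking the correct chain of implications.

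First I would apply Theorem \ref{thm:MCPimpliesGTB} to convert the two hypotheses ($p$-essentially non-branching together with qualitative non-degeneracy of $\m$) into the statement that $(M,d,\m)$ has good transport behavior $\GTB_p$. Since $\mu_0 \ll \m$, Proposition \ref{prop:GTB-ENB} then applies and produces a \emph{unique} $p$-optimal dynamical coupling $\sigma \in \OptGeo_p(\mu_0,\mu_1)$; moreover the proposition provides a measurable set $\mathsf{L}$, non-branching to the right, on which $\sigma$ is concentrated. This handles the uniqueness half of the conclusion.

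For absolute continuity of the interpolations, I would invoke Lemma \ref{lem:p-ess-nb-implies-p-str-non-deg}, which under the present hypotheses yields that $\m$ is strongly non-degenerate $\sND_{p}$ (indeed, $\sND_{p'}$ for every $p' \in (1,\infty)$). By Corollary \ref{cor:abs-cts-interpolation-under-non-deg}, for each $t \in (0,1)$ there exists some $p$-optimal dynamical coupling $\tilde\sigma$ between $\mu_0$ and $\mu_1$ with $(e_t)_*\tilde\sigma \ll \m$. But by the uniqueness established in the previous step $\tilde\sigma = \sigma$, so $(e_t)_*\sigma \ll \m$ for all $t \in (0,1)$; the case $t=0$ is the hypothesis $\mu_0 \ll \m$, and together this gives $(e_t)_*\sigma \ll \m$ for all $t \in [0,1)$ as claimed.

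The only subtle point I would double-check is the compatibility of the two ``existence'' results invoked: Corollary \ref{cor:abs-cts-interpolation-under-non-deg} a priori only asserts the existence of \emph{some} dynamical coupling whose interpolation is absolutely continuous, not that every such coupling has this property; however, uniqueness from $\GTB_p$ closes this gap immediately. The genuine obstacles — the GKS construction, the qualitative non-degeneracy/essentially non-branching interaction that rules out two disjoint branches via the $2f_{R,x_0}(t) > 1$ inequality in the proofs of Lemma \ref{lem:no-overlap-non-deg} and Theorem \ref{thm:MCPimpliesGTB}, and the propagation from discrete targets to general targets in Lemma \ref{lem:p-ess-nb-implies-p-str-non-deg} — are all already done in the preceding sections, so the proof at this stage is pure bookkeeping.
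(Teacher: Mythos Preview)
Your proposal is correct and follows essentially the same route as the paper: the statement is the first Corollary after Theorem~\ref{thm:MCPimpliesGTB}, and the paper proves it by exactly the combination you describe---Theorem~\ref{thm:MCPimpliesGTB} for $\GTB_p$, Proposition~\ref{prop:GTB-ENB} for uniqueness of the dynamical coupling, and Corollary~\ref{cor:abs-cts-interpolation-under-non-deg} (via Lemma~\ref{lem:p-ess-nb-implies-p-str-non-deg} for $\sND_p$) for the existence of an absolutely continuous interpolation, with uniqueness identifying that interpolation as the one coming from $\sigma$. Your observation that uniqueness is what closes the gap between ``some coupling has absolutely continuous interpolation'' and ``the coupling $\sigma$ does'' is precisely the point.
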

Define 
\[
R_{t}(x):=e_{t}\left((e_{0},e_{t})^{-1}(\{x\}\times M)\right)
\]
to be the set of $t$-midpoints of geodesics starting at $x$. This
set is analytic and hence measurable. Furthermore, for $0<t\le s\le1$
it holds
\[
R_{t}(x)\subset R_{s}(x).
\]
Thus the following set
\[
R_{(0,t)}(x):=\bigcup_{0<s<t}R_{s}(x)=\bigcup_{s<t,s\in\mathbb{Q}}R_{s}(x)
\]
is also analytic and measurable. As an abbreviation we also write
$R_{(0,t]}(x)=R_{t}(x)$. Finally define the set of strict $t$-midpoints
by 
\[
R_{t}^{*}(x):=R_{t}(x)\backslash R_{(0,t)}(x).
\]

\begin{lem}
Assume $\m$ is non-degenerate. Then $\m(R_{t}^{*}(x))=0$.
\end{lem}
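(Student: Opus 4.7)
The plan is to argue by contradiction: assume $\m(A) > 0$ for $A := R_t^*(x)$, and derive a contradiction by combining a concatenation construction with non-degeneracy and a Fubini argument applied to the level sets of the function $T(y) := \inf\{s > 0 : y \in R_s(x)\}$.

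First I would establish a concatenation lemma. For $y \in A$, fix a geodesic $\gamma : [0,1] \to M$ with $\gamma_0 = x$ and $\gamma_t = y$, and write $z_y := \gamma_1$, so $d(x,z_y) = d(x,y)/t$ and $d(y,z_y) = (1-t)d(x,y)/t$. For any geodesic $\xi$ from $y$ to $x$ and any $s \in (0,1)$, the triangle equality $d(x,z_y) = d(x,y) + d(y,z_y)$ implies that the concatenation of the reverse of $\xi$ (from $x$ to $y$) with $\gamma|_{[t,1]}$ (from $y$ to $z_y$) is, after reparametrization to $[0,1]$, a geodesic $\zeta$ from $x$ to $z_y$ passing through $y$ at parameter $t$ and through $\xi_s$ at parameter $(1-s)t < t$. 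Hence $\xi_s \in R_{(1-s)t}(x) \subset R_{(0,t)}(x)$, so $A_{s,x} \subset R_{(0,t)}(x)$ and $A_{s,x} \cap A = \varnothing$ for every $s \in (0,1)$.

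Next, invoke non-degeneracy to get $\m(A_{s,x}) > 0$ for every $s \in (0,1)$. The function $T$ is Borel because the analytic sets $R_s(x)$ are non-decreasing in $s$, and $R_{t'}^*(x) = T^{-1}(\{t'\})$ up to an $\m$-null set. Applied to any bounded region $K$ of finite $\m$-measure, Fubini yields
\[
\int_0^1 \m\bigl(R_{t'}^*(x) \cap K\bigr)\,dt' = \int_K \lambda\bigl(\{T(y)\}\bigr)\,d\m(y) = 0,
\]
so $\m(R_{t'}^*(x)) = 0$ for Lebesgue-almost every $t'$. To promote this to every $t$, I would combine a measurable selection of geodesics $y \mapsto \xi^y$ from $A$ to $x$ with the concatenation: if the concatenation bound $T(\xi^y_s) \leq (1-s)t$ is actually an equality for $\m|_A$-a.e.\ $y$, then for each $s \in (0,1)$ one has $\m(T^{-1}(\{(1-s)t\})) \geq \m(A_{s,x}) > 0$. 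As $s$ ranges over $(0,1)$ the value $(1-s)t$ sweeps out the whole interval $(0,t)$, so $T_*\m$ would have uncountably many atoms, which is impossible for any Borel measure on $\mathbb{R}$; this is the desired contradiction.

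The main obstacle is upgrading $T(\xi^y_s) \leq (1-s)t$ to equality for almost every $y$. A strict inequality $T(\xi^y_s) < (1-s)t$ would require $\xi^y_s$ to lie on a geodesic from $x$ strictly longer than $\zeta$, whose back-concatenation through $y$ (via $\xi$ and then $\gamma|_{[t,1]}$) produces another geodesic from $x$ through $y$; a direct computation shows this new geodesic still passes through $y$ at parameter exactly $t$, so one does \emph{not} immediately contradict $y \in R_t^*(x)$. I therefore expect to need a measurable selection argument together with a more delicate analysis -- perhaps iterating the concatenation step or using a disintegration of $\m|_A \otimes \lambda_{(0,1)}$ under the map $(y,s) \mapsto \xi^y_s$ -- to show that the "strict inequality" set of $y$ is $\m$-null for $\lambda$-a.e.\ $s$, which suffices to produce the uncountable family of atoms of $T_*\m$.
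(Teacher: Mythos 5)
Your overall strategy coincides with the paper's: both arguments first show that $\m(R_{t'}^{*}(x)\cap B_R(x))=0$ for all $t'$ outside a small exceptional set (you via Fubini on the level sets of $T$, the paper via the at most countably many jumps of the monotone function $t'\mapsto\m(R_{(0,t']}(x)\cap B_R(x))$ -- equivalent in strength for this purpose), and both then push the putative bad set $A=R_t^{*}(x)$ towards $x$ and invoke non-degeneracy. Your concatenation computation, giving $A_{s,x}\subset R_{(1-s)t}(x)$ and hence $A_{s,x}\cap A=\varnothing$, is correct.

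However, the proposal is not a complete proof, and you say so yourself: everything hinges on upgrading $T(\xi^{y}_{s})\le(1-s)t$ to equality, i.e.\ on showing $A_{s,x}\subset R^{*}_{(1-s)t}(x)$ at least up to an $\m$-null set, and you explicitly leave this step open (``I therefore expect to need \ldots a more delicate analysis''). Without it, non-degeneracy only yields that $A_{s,x}$ has positive measure and avoids $A$, which is no contradiction; the uncountable family of atoms of $T_{*}\m$ never materializes. You are also right that the difficulty is genuine and not a routine selection issue: the pointwise inclusion into the strict set fails in branching spaces. For instance, on a tripod with vertex $w$, with $d(x,w)=d(w,y)=d(y,z)=1$ along one chain and a third leg from $w$ of length $10$ ending at $v$, one has $y\in R^{*}_{2/3}(x)$, yet the intermediate point $w$ (which is $\xi_{1/2}$ for the geodesic $\xi$ from $y$ to $x$) lies in $R_{1/11}(x)$ via the geodesic from $x$ to $v$, hence $w\notin R^{*}_{1/3}(x)$: a geodesic through the intermediate point can be prolonged in a direction that never passes through $y$, exactly the phenomenon you describe. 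For comparison, the paper's proof disposes of this point with the single assertion ``It always holds $(R_{t}^{*}(x)\cap B_{R}(x))_{s,x}\subset R_{st}^{*}(x)\cap B_{R}(x)$'', which as a pointwise statement is false by the example above; so the paper does not supply the missing argument either. The verdict on your proposal nonetheless stands: the decisive step -- showing that the set of $y\in A$ for which strictness fails is $\m$-negligible (presumably using non-degenericity a second time, or a different choice of the pushed points) -- is absent, and until it is supplied the proof is incomplete.
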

\begin{proof}
First note that 
\[
f(t)=\m(R_{(0,t]}(x)\cap B_{R}(x))
\]
is a non-decreasing function and finite for fixed $R>0$ so that for
some set $\Omega\subset[0,1]$ whose complement is at most countable
and it holds
\[
\m(R_{t}^{*}(x)\cap B_{R}(x))=\lim_{\epsilon\to0}f(t+\epsilon)-f(t)=0\quad\text{for all }t\in\Omega.
\]
Assume by contradiction $\Omega\ne(0,1]$. Then there is a $t\notin\Omega$
with $\m(R_{t}^{*}(x)\cap B_{R}(x))>0$. It always holds 
\[
(R_{t}^{*}(x)\cap B_{R}(x))_{s,x}\subset R_{st}^{*}(x)\cap B_{R}(x)\quad\text{for all }s\in(0,1).
\]
Also note that every $t\in(0,1]\backslash\Omega$ there is an $s\in(0,1)$
with $st\in\Omega$. In combination with the non-degenercity of $\m$
this leads to the following contradiction
\[
0=\m(R_{st}^{*}(x)\cap B_{R}(x))\ge m(R_{t}^{*}(x)\cap B_{R}(x))_{s,x}>0.
\]
\end{proof}
\begin{prop}
Assume $\m$ is a non-degenerate measure. For every measurable $A\subset M$
of finite measure and every $\epsilon>0$ there is a compact subset
$K\subset A$ with $\m(A\backslash K)<\epsilon$ and $t\in(0,1)$
such that 
\[
K\subset R_{t}(x).
\]
Furthermore, there is a $\m$-measurable map $T:K\to M$ such that
\[
d(x,T(y))=\frac{d(x,y)}{t}=d(x,y)+d(y,T(y)).
\]
In particular, there is a dynamical coupling $\sigma$ which is $p$-optimal
for all $p\in[1,\infty)$ such that $(e_{0})_{*}\sigma=\delta_{x}$,
$(e_{t})_{*}\sigma=\frac{1}{\m(K)}\m\big|_{K}$ and $(e_{1})_{*}\sigma=T_{*}((e_{t})_{*}\sigma)$.
\end{prop}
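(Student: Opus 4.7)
The plan is to reduce $A$ to its portion reachable from $x$ by a geodesic of parameter bounded strictly below $1$, then apply inner regularity and a measurable selection to extract $K$ and $T$. The basic structural observation is $R_s(x) \subset R_t(x)$ for $0 < s \le t \le 1$: if $y = \gamma_s$ with $\gamma \in \Geo_{[0,1]}(M,d)$, $\gamma_0 = x$, then $\eta(r) := \gamma(rs/t)$ again lies in $\Geo_{[0,1]}(M,d)$ (since $s/t \in [0,1]$) and satisfies $\eta_0 = x$, $\eta_t = y$. Because $(M,d)$ is geodesic, $R_1(x) = M$, and hence the preceding lemma applied with $t = 1$ gives $\m(M \setminus R_{(0,1)}(x)) = \m(R_1^*(x)) = 0$.

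Set $A_t := A \cap R_t(x)$; this is $\m$-measurable, monotone nondecreasing in $t$, and satisfies $\bigcup_{t < 1} A_t = A \cap R_{(0,1)}(x)$, a subset of full $\m$-measure in $A$. By continuity from below, some $t \in (0,1)$ achieves $\m(A \setminus A_t) < \epsilon/2$. Since $\m$ is Radon on the Polish space $(M,d)$, inner regularity applies to the $\m$-measurable set $A_t$ of finite measure, so we find a compact $K \subset A_t$ with $\m(A_t \setminus K) < \epsilon/2$. Then $K$ is compact, $K \subset R_t(x)$, and $\m(A \setminus K) < \epsilon$.

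For the transport map, I would apply the Measurable Selection Theorem to the closed-valued multifunction $y \mapsto \{\gamma \in \Geo_{[0,1]}(M,d) : \gamma_0 = x,\ \gamma_t = y\}$ on $R_t(x)$, producing a measurable lift $\mathsf{T}\colon R_t(x) \to \Geo_{[0,1]}(M,d)$, and set $T(y) := e_1(\mathsf{T}(y))$. The geodesic $\mathsf{T}(y)$ has length $d(x,y)/t$, so $d(x,T(y)) = d(x,y)/t$ and $d(y,T(y)) = (1-t)d(x,y)/t$, which gives the identity $d(x,T(y)) = d(x,y) + d(y,T(y))$. Finally, define $\sigma := \mathsf{T}_*\bigl(\tfrac{1}{\m(K)}\m|_K\bigr)$; the three prescribed marginals follow directly from the definition, and because $(e_0,e_1)_*\sigma$ has $\delta_x$ as its first marginal it is the only coupling between its endpoints, hence $p$-optimal for every $p \in [1,\infty)$.

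No step is a serious obstacle. The only delicate points are the monotonicity argument above, the invocation of inner regularity on the analytic (not necessarily Borel) set $A_t$, and ensuring the measurable selection can be chosen so that $e_0 \circ \mathsf{T} \equiv x$ and $e_t \circ \mathsf{T} = \operatorname{id}$; each of these is standard.
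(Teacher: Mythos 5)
Your argument is correct and follows essentially the same route as the paper: use $\m(R_1^*(x))=0$ and the monotonicity of $t\mapsto R_t(x)$ to find $t$ with $\m(A\setminus(A\cap R_t(x)))$ small, apply inner regularity to extract $K$, and use a measurable selection of geodesics through $x$ and $y\in K$ to define $T$ and $\sigma$. Your justification of $p$-optimality (uniqueness of couplings with a delta marginal) is a harmless variant of the paper's observation that $\frac{1}{\m(K)}\m|_K$ is a $t$-midpoint of $\delta_x$ and $T_*\mu_t$.
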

\begin{proof}
Just note that because $\m(R_{1}^{*}(x))=0$ and $t\mapsto R_{t}(x)$
is monotone we have
\[
\m(A)=\m(A\cap R_{1}(x))=\m(A\cap\cup_{t<1}R_{t}(x))=\lim_{t\to1}\m(A\cap R_{t}(x)).
\]
Now for all $\epsilon>0$ there is a $t\in(0,1)$ close to $1$ and
a compact set $K\subset A\cap R_{t}(x)$ such that $\m(A\backslash K)<\epsilon$.
Since $K\subset R_{t}(x)$ there is a measurable map $T:K\to M$ such
that 
\[
(e_{0},e_{t},e_{1})^{-1}(x,y,T(y))\ne\varnothing\qquad\text{for all }y\in K
\]
proving the first part of the claim. For the second just note that
$\mu_{t}=\frac{1}{\m(K)}\m\big|_{K}$ is a $t$-midpoint of $\delta_{x}$
and $T_{*}\mu_{t}$. %
\end{proof}
\begin{thm}
\label{thm:measure-rigidity-qual-non-deg}Assume $\m_{i}$, $i=1,2$,
are both $p$-essentially non-branching and qualitatively non-degenerate
measures on $(M,d)$. Then $\m_{1}$ and $\m_{2}$ are mutually absolutely
continuous. 
\end{thm}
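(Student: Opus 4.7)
The plan is to argue by contradiction. Suppose $\m_1 \not\ll \m_2$; by inner regularity of $\m_1$ and local finiteness I may extract a bounded Borel set $A \subset B_R(x_0)$ with $\m_1(A) > 0$ and $\m_2(A) = 0$. Fix also a bounded Borel $B \subset B_R(x_0)$ with $0 < \m_2(B) < \infty$, and set
\[
\mu_0 := \m_1(A)^{-1} \m_1|_A, \qquad \mu_1 := \m_2(B)^{-1} \m_2|_B.
\]
The theorem recalled at the start of Section~\ref{sec:Measure-rigidity}, applied to $\m_1$, produces a unique $\sigma \in \OptGeo_p(\mu_0, \mu_1)$ satisfying $\mu_t := (e_t)_*\sigma \ll \m_1$ for $t \in [0,1)$; the same theorem applied to $\m_2$ on the reversed coupling (whose initial measure is $\mu_1 \ll \m_2$) gives $\mu_t \ll \m_2$ for $t \in (0,1]$. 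Hence for every $t \in (0,1)$, $\mu_t$ is absolutely continuous with respect to both reference measures.

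The quantitative step uses Proposition~\ref{prop:qND-ess-nb-implies-bdd-dens} for $\m_1$: writing $f_t := d\mu_t/d\m_1$, we have $\|f_t\|_\infty \le (f_{R,x_0}(t)\,\m_1(A))^{-1}$, and $\limsup_{t \to 0} f_{R,x_0}(t) > \tfrac{1}{2}$ furnishes a sequence $t_n \downarrow 0$ with $\|f_{t_n}\|_\infty \le M_1 := 2\,\m_1(A)^{-1}$, so $\mu_{t_n} \le M_1 \m_1$ as measures. To convert this $\m_1$-bound into an $\m_2$-small bound near $A$ I invoke the Lebesgue decomposition $\m_1 = \m_1^a + \m_1^s$ of $\m_1$ with respect to $\m_2$, where $\m_1^s$ is carried by some Borel $S$ with $\m_2(S) = 0$. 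Since $\mu_{t_n} \ll \m_2$, one has $\mu_{t_n}(S) = 0$, and therefore for every Borel $E$
\[
\mu_{t_n}(E) = \mu_{t_n}(E \setminus S) \le M_1 \m_1(E \setminus S) = M_1 \m_1^a(E \setminus S) \le M_1 \m_1^a(E).
\]
Thus $\mu_{t_n} \le M_1 \m_1^a$ with $\m_1^a \ll \m_2$.

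Since $\m_1^a \ll \m_2$ and $\m_2(A) = 0$, outer regularity of the Radon measure $\m_1^a$ yields an open $U \supset A$ with $\m_1^a(U) < (2M_1)^{-1}$, so $\mu_{t_n}(U) \le \tfrac{1}{2}$ for all $n$. On the other hand $\mu_{t_n} \to \mu_0$ in $W_p$ and hence weakly, and $\mu_0(U) \ge \mu_0(A) = 1$, so the Portmanteau theorem for open sets forces $\liminf_n \mu_{t_n}(U) \ge 1$, a contradiction; hence $\m_1 \ll \m_2$, and the reverse inclusion is symmetric. The main obstacle is that the two strong interpolation properties alone give only $\mu_t \ll \m_1, \m_2$ for interior $t$, which is just enough to rule out mutual singularity but not mutual absolute continuity. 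The bridge between the two conclusions is the uniform $\m_1$-density bound supplied by qualitative non-degeneracy, combined with the observation that the $\m_2$-absolute continuity of $\mu_{t_n}$ annihilates the $\m_1$-singular part with respect to $\m_2$, so that $\mu_{t_n}(U)$ is controlled by the outer-regularly small quantity $\m_1^a(U)$.
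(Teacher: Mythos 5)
Your argument is correct, but it takes a different route from the paper's own proof. The paper first observes (as you do) that two mutually singular measures with these properties cannot exist, because the unique interpolant between $\mu_{0}\ll\m_{1}$ and $\mu_{1}\ll\m_{2}$ is absolutely continuous with respect to both reference measures; it then upgrades this to full mutual absolute continuity by a maximal decomposition $\m_{1}=\m+\m_{1}^{s}$, $\m_{2}=f\m+\m_{2}^{s}$ and a soft inheritance argument: using the proposition that any set of finite measure is, up to an $\epsilon$-remainder, contained in $R_{t}(x)$, together with $\sIP_{p}$ for $\m_{2}$, it shows $\m_{2}(A_{t,x})=0$ whenever $\m_{2}(A)=0$, so that $\m_{1}^{s}$ is itself qualitatively non-degenerate and $p$-essentially non-branching, and the first step applied to the genuinely singular pair $(\m_{1}^{s},\m_{2})$ gives the contradiction. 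You instead argue directly: you quantify via the bounded density property (Proposition \ref{prop:qND-ess-nb-implies-bdd-dens}), kill the $\m_{2}$-singular part of $\m_{1}$ using $\mu_{t_{n}}\ll\m_{2}$ and the Lebesgue decomposition, and then contradict weak convergence $\mu_{t_{n}}\rightharpoonup\mu_{0}$ via outer regularity and Portmanteau. This is much closer in spirit to the paper's second rigidity proof (Theorem \ref{thm:measure-rigidity-ess-nb-CD} and the remark following it), except that you bypass both the common-part decomposition of the reference measures and the Self-Intersection Lemma, whose role is played in your argument by the Portmanteau/outer-regularity step. The paper's route for this theorem is ``softer'' (it never invokes density bounds, only set-level non-degenericity and the $R_{t}^{*}(x)$ lemma), while yours is shorter and more quantitative, at the price of leaning on Proposition \ref{prop:qND-ess-nb-implies-bdd-dens}; the small regularity facts you use (that $\m_{1}^{a}\le\m_{1}$ is Radon, hence outer regular on bounded sets, and that the time-reversed coupling is identified with the unique coupling produced by the theorem applied to $\m_{2}$) are standard and correctly invoked.
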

\begin{proof}
Assume first there are two mutually singular measure $\m_{1}$ and
$\m_{2}$ satisfying the assumptions. Then we immediately arrive at
a contradiction: For measures $\mu_{0}\ll\m_{1}$ and $\mu_{1}\ll\m_{2}$
the interpolations must be absolutely continuous with respect to both
$\m_{1}$ and $\m_{2}$ which is not possible. 

For general measures $\m_{1}$ and $\m_{2}$ there is a maximal decomposition
\begin{align*}
\m_{1} & =\m+\m_{1}^{s}\\
\m_{2} & =f\m+\m_{2}^{s}
\end{align*}
for non-trivial mutually singular measures $\m$, $\m_{1}^{s}$ and
$\m_{2}^{s}$ and a non-negative $L_{\operatorname{loc}}^{1}(\m)$-function
$f$ which is positive $\m$-almost everywhere. 

Assume by contradiction $\m_{1}^{s}\not\equiv0$ and let $A$ be a
bounded set with $\m_{1}^{s}(A)>0$ and $\m_{2}(A)=0$. We claim that
$\m_{2}(A_{t,x})=0$ for all $x$ and $t\in(0,1)$. Indeed, if this
was not the case then for some $x\in M$ and $t\in(0,1)$ there is
a compact $K\subset A$ and $\sigma$ as in the previous proposition
such that $\m_{2}(K_{s,x})>0$ and $\m_{1}^{s}(K_{s,x})=0$ for $s\in(0,1)$.
In that case it holds $(e_{t})_{*}\sigma=\frac{1}{\m_{1}^{s}(K)}\m_{1}^{s}\big|_{K}$,
$(e_{st})_{*}\sigma\bot\m_{1}^{s}$ and $(e_{s,t})_{*}\sigma\ll\m_{2}$. 

However, by the strong interpolation property $\sIP_{p}$ between
the measures $(e_{st})_{*}\sigma$ and $(e_{1})_{*}\sigma$ this would
imply that $(e_{t})_{*}\sigma\ll\m_{2}$ which is a contradiction
as $\m_{2}$ and $\m_{1}^{s}$ are mutually singular.

This shows that 
\[
\m_{1}^{s}(A_{t,x})=\m_{1}(A_{t,x})\ge f(t)\m_{1}(A)=f(t)\m_{1}^{s}(A).
\]
Because $A$ is arbitrary, we see that $\m_{1}^{s}$ is qualitatively
non-degenerate and $p$-essentially non-branching. 

We arrive at a contradiction by observing that $\m_{1}^{s}$ and $\m_{2}$
are mutually singular.
\end{proof}
The following technical lemma can be extracted from the work of Cavalletti\textendash Huesmann
\cite{CH2014SelfInter}. For convenience of the reader we include
its short proof.
\begin{lem}
[Self-Intersection Lemma] Assume $t\mapsto\mu_{t}=f_{t}\m$ is a
geodesic in $\mathcal{P}_{p}(M)$ such that $\mu_{0}=\frac{1}{\m(K)}\m\big|_{K}$
for some compact set $K$, $\supp\mu_{1}$ having bounded support
and it holds $C:=\sup_{t\in[0,\delta]}\|f_{t}\|_{\infty}<\infty$
for $\delta\in(0,1)$ then there is a $t_{0}\in(0,\delta)$ such that
for all $t\in[0,t_{0})$ it holds $\mu_{t}(K)>0$. In particular,
$\mu_{t}$ and $\mu_{0}$ cannot be mutually singular.
\end{lem}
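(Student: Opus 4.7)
The plan is to decompose the mass of $\mu_t$ on $M$ into three pieces---mass inside $K$, mass in a thin tube $K_\epsilon \setminus K$ around $K$, and mass in the far region $M \setminus K_\epsilon$---and then show that for $\epsilon$ small and then $t$ small both the second and third pieces are strictly less than $\tfrac{1}{2}$, forcing $\mu_t(K) > 0$. Here $K_\epsilon := \{x \in M : d(x,K) < \epsilon\}$. The first piece will be controlled by the hypothesis $\|f_t\|_\infty \le C$ for $t \in [0,\delta]$, and the third by the Wasserstein geodesic identity $W_p(\mu_0,\mu_t) = t\, W_p(\mu_0,\mu_1)$.

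For the far piece, let $\pi_t \in \Opt_p(\mu_0,\mu_t)$. Since $\mu_0$ is concentrated on $K$, we have $\pi_t(K \times M) = 1$, and for every $(x,y) \in K \times (M \setminus K_\epsilon)$ it holds $d(x,y) \ge d(y,K) \ge \epsilon$. By Markov's inequality,
\[
\mu_t(M \setminus K_\epsilon) \;\le\; \frac{1}{\epsilon^p}\int d(x,y)^p \, d\pi_t(x,y) \;=\; \frac{W_p(\mu_0,\mu_t)^p}{\epsilon^p} \;=\; \frac{t^p\, W_p(\mu_0,\mu_1)^p}{\epsilon^p},
\]
where $W_p(\mu_0,\mu_1) < \infty$ because both $K$ and $\supp \mu_1$ are bounded. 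For the tube piece, the density bound gives $\mu_t(K_\epsilon \setminus K) \le C\, \m(K_\epsilon \setminus K)$ for all $t \in [0,\delta]$. Since $K$ is compact and hence closed, $K = \bigcap_{\epsilon > 0} K_\epsilon$; combined with $\m(K_1) < \infty$ (local finiteness of $\m$ on the bounded set $K_1$), monotone convergence yields $\m(K_\epsilon \setminus K) \to 0$ as $\epsilon \to 0^+$.

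Putting the two estimates together, first fix $\epsilon_0 > 0$ so small that $C\, \m(K_{\epsilon_0} \setminus K) < \tfrac{1}{4}$, and then pick $t_0 \in (0,\delta)$ so small that $t_0^p \, W_p(\mu_0,\mu_1)^p / \epsilon_0^p < \tfrac{1}{4}$. For every $t \in [0,t_0)$,
\[
\mu_t(K) \;\ge\; 1 - \mu_t(M \setminus K_{\epsilon_0}) - \mu_t(K_{\epsilon_0} \setminus K) \;\ge\; 1 - \tfrac{1}{4} - \tfrac{1}{4} = \tfrac{1}{2} > 0.
\]
The ``in particular'' statement then follows immediately: if $\mu_0$ and $\mu_t$ were mutually singular there would exist a Borel set $A$ with $\mu_0(A) = 1$ and $\mu_t(A) = 0$. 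Since $\mu_0 = \m(K)^{-1}\m\big|_K$, the first condition forces $\m(K \setminus A) = 0$, and since $\mu_t \ll \m$ this yields $\mu_t(K \setminus A) = 0$, whence $\mu_t(K \cap A) = \mu_t(K) > 0$, contradicting $\mu_t(A) = 0$.

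The proof is essentially a quantitative balancing argument, and no essentially non-branching or non-degeneracy assumption enters. The only delicate point is the order of the two limits---one must fix $\epsilon$ first using the density bound (which only controls the thin tube $K_\epsilon \setminus K$ and not $K_\epsilon$ itself, since $\m(K_\epsilon)$ need not be small), and only then choose $t$ small relative to $\epsilon$ using the Wasserstein estimate; reversing the order would leave the tube contribution uncontrolled.
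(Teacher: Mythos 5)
Your proof is correct. It rests on the same two ingredients as the paper's argument: the uniform density bound $\|f_{t}\|_{\infty}\le C$ on $[0,\delta]$, which forces any unit of $\mu_{t}$-mass to occupy at least $1/C$ of $\m$-measure, and the fact that this mass must sit in an $O(t)$-neighbourhood of $K$, whose $\m$-measure collapses to $\m(K)$. The paper runs this as a proof by contradiction: assuming $\mu_{t_{n}}\perp\mu_{0}$ along a sequence $t_{n}\to0$, it produces disjoint sets $A_{0}\subset K$ and $A_{n}\subset\supp\mu_{t_{n}}\subset K_{\epsilon}$ with $\m(A_{0})=\m(K)$ and $\m(A_{n})\ge1/C$, contradicting $\m(K_{\epsilon})\to\m(K)$. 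You instead give a direct quantitative bound $\mu_{t}(K)\ge\tfrac{1}{2}$, and you control the mass far from $K$ by Markov's inequality applied to an optimal coupling of $(\mu_{0},\mu_{t})$ rather than by the support containment $\supp\mu_{t}\subset K_{tD}$. The Markov route has the small advantage of not using boundedness of $\supp\mu_{1}$ at all (finiteness of the $p$-th moment suffices), and it makes the order of quantifiers --- fix $\epsilon$ first via the density bound, then shrink $t$ --- completely explicit, which is indeed the only delicate point. Both arguments are equally elementary; neither uses non-branching or non-degeneracy, and your handling of the ``in particular'' clause via $\m(K\setminus A)=0$ and $\mu_{t}\ll\m$ is also fine.
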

\begin{proof}
Assume this is not the case then there is a sequence $t_{n}\to0$
such that $\mu_{t_{n}}\bot\mu_{0}$. In particular, there are Borel
sets $A_{0}\subset K$ and $A_{n}\subset\supp\mu_{t_{n}}$ with $A_{n}\cap A_{0}=\varnothing$
and $\mu_{t_{n}}(A_{n})=\mu_{0}(A_{0})=1$. Note that this shows $\m(A_{0})=\m(K)$.

Since the support of $\mu_{0}$ and $\mu_{t}$ are bounded for all
$\epsilon>0$ there is a $t_{\epsilon}\in(0,\delta)$ such that all
$t_{n}\le t_{\epsilon}$ 
\[
A_{n}\subset\supp\mu_{t_{n}}\subset K_{\epsilon}.
\]
Also note that 
\[
\m(A_{n})\ge\frac{1}{C}\mu_{t_{n}}(A_{n})=\frac{1}{C}
\]
But then 
\begin{align*}
\m(K) & =\lim_{\epsilon\to0}\m(K_{\epsilon})\\
 & \ge\limsup_{n\to\infty}\m(A_{0}\cup A_{n})\\
 & \ge\m(K)+\limsup_{n\to\infty}\m(A_{n})\ge\m(K)+\frac{1}{C}
\end{align*}
which is a contradiction.
\end{proof}
\begin{thm}
\label{thm:measure-rigidity-ess-nb-CD}Assume $(M,d,\m_{1})$ and
$(M,d,\m_{2})$ have the strong interpolation property $\sIP_{p}$
and satisfy the $\CD_{p}(K,\infty)$-condition. Then $\m_{1}$ and
$\m_{2}$ are mutually absolutely continuous. 
\end{thm}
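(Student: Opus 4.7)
The plan is to argue by contradiction, mirroring the structure of Theorem \ref{thm:measure-rigidity-qual-non-deg}. Suppose $\m_1$ and $\m_2$ are not mutually absolutely continuous. Lebesgue-decompose $\m_1 = \m + \m_1^s$ and $\m_2 = f\m + \m_2^s$ with $\m, \m_1^s, \m_2^s$ pairwise mutually singular and, without loss of generality, $\m_1^s \ne 0$. Pick a bounded Borel set $A$ with $\m_1^s(A) > 0$ and $\m_2(A) = 0$, and set $\mu_0 = \m_1(A)^{-1}\m_1|_A$, so that $\mu_0 \ll \m_1$ has bounded $\m_1$-density and $\mu_0 \perp \m$.

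If $\m_1 \perp \m_2$ (that is, $\m = 0$), pick any bounded $\mu_1 \ll \m_2$ in $\mathcal{P}_p(M)$ and let $\sigma$ be a $p$-optimal dynamical coupling between $\mu_0$ and $\mu_1$. The strong interpolation property $\sIP_p$ for $\m_1$ yields $(e_t)_*\sigma \ll \m_1$ for $t \in [0,1)$, while $\sIP_p$ for $\m_2$ applied to the reversed dynamical coupling (which is again $p$-optimal by symmetry of the $p$-cost) yields $(e_t)_*\sigma \ll \m_2$ for $t \in (0,1]$. For $t \in (0,1)$ both absolute continuities hold, so mutual singularity $\m_1 \perp \m_2$ forces $(e_t)_*\sigma$ to be identically zero, contradicting that it is a probability measure.

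If $\m \ne 0$, pick $\mu_1 = \m(B)^{-1}\m|_B$ for a bounded $B$ with $\m(B) > 0$. Then $\mu_1 \ll \m$, hence $\mu_1 \ll \m_1$ and $\mu_1 \ll \m_2$ with bounded $\m_1$-density, so $\mathrm{Ent}_{\m_1}(\mu_1) < \infty$. Apply the $\CD_p(K,\infty)$-condition for $\m_1$ to obtain a $p$-optimal dynamical coupling $\sigma$ along which
\[
\mathrm{Ent}_{\m_1}(\mu_t) \le (1-t)\mathrm{Ent}_{\m_1}(\mu_0) + t\mathrm{Ent}_{\m_1}(\mu_1) - Kt(1-t)W_p(\mu_0,\mu_1)^2,
\]
where $\mu_t = (e_t)_*\sigma$. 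Invoking $\sIP_p$ for $\m_1$ and, via reversal, $\sIP_p$ for $\m_2$ along this same $\sigma$ gives $\mu_t \ll \m_1$ and $\mu_t \ll \m_2$ on $(0,1)$, and the pairwise singularity of $\m, \m_1^s, \m_2^s$ implies $\mu_t \ll \m$ and $\mathrm{Ent}_{\m_1}(\mu_t) = \mathrm{Ent}_\m(\mu_t)$. Thus $\limsup_{t\to 0}\mathrm{Ent}_\m(\mu_t) < \infty$. Combined with $W_p(\mu_t,\mu_0) \le tW_p(\mu_0,\mu_1) \to 0$ and weak lower semi-continuity of $\mathrm{Ent}_\m$ on $\mathcal{P}_p(M)$, one concludes $\mathrm{Ent}_\m(\mu_0) \le \liminf_{t\to 0}\mathrm{Ent}_\m(\mu_t) < \infty$. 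But $\mu_0 \perp \m$ and $\mu_0 \ne 0$ force $\mathrm{Ent}_\m(\mu_0) = +\infty$, the desired contradiction.

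The main obstacle is the identification $\mathrm{Ent}_{\m_1}(\mu_t) = \mathrm{Ent}_\m(\mu_t)$ in the subcase $\m \ne 0$, which hinges on $\mu_t$ giving zero mass to any carrier of $\m_1^s$ — a consequence of $\mu_t \ll \m_2$ together with $\m_1^s \perp \m_2$. A second subtlety is that $\sIP_p$ must be applied at the \emph{specific} $p$-optimal dynamical coupling produced by the $\CD_p(K,\infty)$-inequality; this is legitimate because $\sIP_p$ is phrased as a statement about every $p$-optimal dynamical coupling whose appropriate endpoint is absolutely continuous, which covers both ends of $\sigma$ by symmetry.
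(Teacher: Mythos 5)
Your proof is correct, but it closes the argument by a genuinely different mechanism than the paper. Both proofs share the same skeleton: the decomposition $\m_{1}=\m+\m_{1}^{s}$, $\m_{2}=f\m+\m_{2}^{s}$, the choice of $\mu_{0}$ carried by $\m_{1}^{s}$ on a bounded set $A$ and of $\mu_{1}$ carried by the common part $\m$, and the double use of $\sIP_{p}$ (for $\m_{2}$ after reversing the coupling) to obtain $\mu_{t}\ll\m_{1}$ and $\mu_{t}\ll\m_{2}$, hence $\mu_{t}\ll\m$, for $t\in(0,1)$. The paper then extracts from the $\CD_{p}(K,\infty)$-condition a uniform $L^{\infty}$-bound on the densities $f_{t}$ and concludes with the Self-Intersection Lemma: bounded densities near $t=0$ force $\mu_{t}(A)>0$, contradicting $\mu_{t}\ll\m_{2}$ and $\m_{2}(A)=0$. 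You instead use the entropy inequality itself: $\operatorname{Ent}_{\m_{1}}(\mu_{t})=\operatorname{Ent}_{\m}(\mu_{t})$ stays bounded as $t\to0$, and lower semicontinuity of the entropy along $\mu_{t}\rightharpoonup\mu_{0}$ forces $\mu_{0}\ll\m$, contradicting $\mu_{0}\perp\m$. Your route avoids two ingredients the paper needs, namely the bounded-density consequence of $\CD_{p}(K,\infty)$ (which is not immediate from the weak entropy inequality) and the Self-Intersection Lemma; moreover, by applying $\sIP_{p}$ directly to the specific coupling produced by the $\CD_{p}(K,\infty)$-condition you sidestep the uniqueness of the geodesic that the paper invokes to ensure its density bound and its $\m_{2}$-absolute continuity refer to the same interpolation. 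One point you should state explicitly: since $\m$ is only assumed finite on bounded sets, weak lower semicontinuity of $\operatorname{Ent}_{\m}$ on all of $\mathcal{P}_{p}(M)$ is not automatic; in your situation it holds because every $\mu_{t}$ is concentrated on a fixed bounded set (geodesics from $A$ to $B$ stay in a bounded neighbourhood of $A$), so one may replace $\m$ by its finite restriction to that set, for which lower semicontinuity of the relative entropy under weak convergence is standard. In exchange, the paper's approach produces the quantitative $L^{\infty}$ density bound, which is of independent interest.
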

\begin{rem*}
The proof below also works in the setting of $p$-essentially non-branching,
qualitatively non-degenerate measures. Indeed, by Proposition \ref{prop:qND-ess-nb-implies-bdd-dens}
the density $f_{t}$ of the $t$-interpolation $\mu_{t}$ is uniformly
bounded by the density of $f_{0}$ if $t$ is close to $1$, compare
this also to \cite[Theorem 4.2]{Rajala2012a} and \cite[Theorem 4.1]{CM2016TransMapsMCP}. 

However, the strong interpolation property $\sIP_{p}$, which follows
from qualitative non-degenericity, is essential in order to combine
the singular part of one of the measures with the bounded density
property. It is unclear whether without this property there could
be more than one $\CD_{p}(K,\infty)$-measure.
\end{rem*}
\begin{proof}
As above two such measures $\m_{1}$ and $\m_{2}$ cannot be mutually
singular and must be of the form 
\begin{align*}
\m_{1} & =\m+\m_{1}^{s}\\
\m_{2} & =f\m+\m_{2}^{s}
\end{align*}
for non-trivial mutually singular measures $\m$, $\m_{1}^{s}$ and
$\m_{2}^{s}$ and a non-negative $L_{\operatorname{loc}}^{1}(\m)$-function
$f$ which is positive $\m$-almost everywhere.

Assume by contradiction $\m_{1}^{s}\not\equiv0$ and let $A$ and
$B$ be compact sets with $\m_{1}^{s}(A)>0$, $\m_{2}(A)=0$, $\m_{1}^{s}(B)=0$,
$\m_{2}^{s}(B)=0$ and $\m(B)>0$. Let $\mu_{0}=\frac{1}{\m_{1}^{s}(A)}\m_{1}^{s}\big|_{A}$
and $\mu_{1}=\frac{1}{\m(B)}\m\big|_{B}$. Then there is a unique
geodesic $t\mapsto\mu_{t}$ connecting $\mu_{0}$ and $\mu_{1}$ which
is absolutely continuous with respect to $\m_{1}$. 

Also note by the strong interpolation property for $\m_{2}$, $\mu_{1}\ll\m_{2}$
implies $\mu_{t}\ll\m_{2}$ for $t\in(0,1)$. In particular, $\mu_{t}(A)=0$
for all $t\in(0,1]$. Now the $\CD_{p}(K,\infty)$-condition implies
\begin{align*}
\|f_{t}\|_{\infty} & \le C(K,\operatorname{diam}A,\operatorname{diam}B)\cdot\max\{\|f_{0}\|_{\infty},\|f_{1}\|_{\infty}\}.\\
 & =C(K,\operatorname{diam}A,\operatorname{diam}B)\cdot\max\{\frac{1}{\m_{1}^{s}(A)},\frac{1}{\m(B)}\}
\end{align*}
where $\mu_{t}=f_{t}\m$.

We arrive at a contradiction by observing that $\mu_{t}(A)>0$ by
the Self-Intersection-Lemma above.
\end{proof}

\appendix

\section{Proof of Theorem \ref{thm:ess-nb-summary} and Corollary \ref{cor:ess-nb-intermediate-transport}}

Before we prove the theorem we need the following technical lemmas. 

\begin{lem}
\label{lem:nb-implies-transport-to-delta}Let $\sigma$ be a $p$-optimal
dynamical coupling $\mu_{0}$ and $\mu_{1}$ such that $(e_{t})_{*}\sigma=\delta_{x_{t}}$
for some $t\in(0,1)$ and $x_{t}\in M$ then $\mu_{0}\otimes\mu_{1}$
is a $p$-optimal coupling and $d(\cdot,\cdot)$ is constant on $\supp\mu_{0}\times\supp\mu_{1}$.
In particular, if $\mu_{0}\otimes\mu_{1}$ is not a delta measure
then there is a $p$-optimal dynamical coupling $\tilde{\sigma}$
such that $\tilde{\sigma}(\mathsf{L})<1$ for all measurable non-branching
$\mathsf{L}\subset\Geo_{[0,1]}(M,d)$ of non-branching geodesics.
\end{lem}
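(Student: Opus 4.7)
The plan is to exploit the rigidity imposed by the fact that all geodesics in $\supp\sigma$ pass through the common point $x_{t}$ at time $t$. Since $(e_{t})_{*}\sigma=\delta_{x_{t}}$, $\sigma$-almost every $\gamma$ satisfies $\gamma_{t}=x_{t}$, which forces $a_{\gamma}:=d(\gamma_{0},x_{t})=t\,d(\gamma_{0},\gamma_{1})$ and $b_{\gamma}:=d(x_{t},\gamma_{1})=(1-t)\,d(\gamma_{0},\gamma_{1})$, so in particular the rigid relation $b_{\gamma}=\tfrac{1-t}{t}\,a_{\gamma}$ holds.

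The heart of the argument applies $c_{p}$-cyclical monotonicity of $\supp\pi$, where $\pi:=(e_{0},e_{1})_{*}\sigma$. For any two pairs $(x_{1},y_{1}),(x_{2},y_{2})\in\supp\pi$ with $a_{i}:=d(x_{i},x_{t})$, $b_{i}:=d(x_{t},y_{i})$, combining the identities $d(x_{i},y_{i})=a_{i}+b_{i}$ with the triangle bounds $d(x_{i},y_{j})\le a_{i}+b_{j}$ yields
\[
(a_{1}+b_{1})^{p}+(a_{2}+b_{2})^{p}\le(a_{1}+b_{2})^{p}+(a_{2}+b_{1})^{p}.
\]
Rewriting this as $\phi(a_{1})\le\phi(a_{2})$ with $\phi(a):=(a+b_{1})^{p}-(a+b_{2})^{p}$, strict convexity of $u\mapsto u^{p}$ for $p>1$ makes $\phi$ strictly monotone with the sign of $b_{1}-b_{2}$, so $(a_{1}-a_{2})(b_{1}-b_{2})\le0$; substituting $b_{i}=\tfrac{1-t}{t}\,a_{i}$ collapses this to $(a_{1}-a_{2})^{2}\le0$, forcing $a_{1}=a_{2}$. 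Consequently $d(\cdot,x_{t})$ is a constant $a_{0}$ on $\supp\mu_{0}$ and $d(x_{t},\cdot)$ is a constant $b_{0}=\tfrac{1-t}{t}\,a_{0}$ on $\supp\mu_{1}$.

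Setting $L:=a_{0}+b_{0}$, the triangle inequality gives $d(x,y)\le L$ on $\supp\mu_{0}\times\supp\mu_{1}$, with equality on $\supp\pi$, so $\pi$ has $p$-cost $L^{p}$. The product coupling $\mu_{0}\otimes\mu_{1}$ has cost at most $L^{p}$ by the same triangle bound, while $p$-optimality of $\pi$ lower-bounds every cost by $L^{p}$; combining, $\int d^{p}\,d(\mu_{0}\otimes\mu_{1})=L^{p}$, forcing $d\equiv L$ $(\mu_{0}\otimes\mu_{1})$-almost everywhere, and hence by continuity of $d$ on all of $\supp\mu_{0}\times\supp\mu_{1}$. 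Every coupling of $\mu_{0},\mu_{1}$ therefore has cost $L^{p}$, so $\mu_{0}\otimes\mu_{1}$ is itself $p$-optimal, proving the first two assertions.

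For the ``in particular'' clause, assume $\mu_{0}\otimes\mu_{1}$ is not a Dirac, so without loss of generality $\mu_{1}$ is not a Dirac. By the Measurable Selection Theorem choose $\alpha(x)$ a geodesic from $x$ to $x_{t}$ and $\beta(y)$ a geodesic from $x_{t}$ to $y$, and form the concatenation $\gamma_{x,y}$, a geodesic of length $L$ with $\gamma_{x,y}(t)=x_{t}$. Let $\tilde{\sigma}$ be the push-forward of $\mu_{0}\otimes\mu_{1}$ under $(x,y)\mapsto\gamma_{x,y}$; since $(e_{0},e_{1})_{*}\tilde{\sigma}=\mu_{0}\otimes\mu_{1}$ is $p$-optimal by the previous step, $\tilde{\sigma}$ is $p$-optimal as well. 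If a measurable non-branching $\mathsf{L}$ satisfied $\tilde{\sigma}(\mathsf{L})=1$, Fubini applied to $A:=\{(x,y):\gamma_{x,y}\in\mathsf{L}\}$ would produce, for $\mu_{0}$-almost every $x$, distinct $y_{1},y_{2}\in\supp\mu_{1}$ with $\gamma_{x,y_{i}}\in\mathsf{L}$; but then $\restr_{0,t}\gamma_{x,y_{1}}=\alpha(x)=\restr_{0,t}\gamma_{x,y_{2}}$ while $\gamma_{x,y_{1}}\ne\gamma_{x,y_{2}}$, contradicting non-branching to the right. The main obstacle is the cyclical-monotonicity step: the rigidity $a_{1}=a_{2}$ depends essentially on strict convexity for $p>1$ together with the first/second-half relation $b_{\gamma}=\tfrac{1-t}{t}\,a_{\gamma}$; without the latter the constraint $(a_{1}-a_{2})(b_{1}-b_{2})\le0$ alone would not suffice.
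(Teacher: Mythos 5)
Your proof is correct and follows essentially the same route as the paper: rigidity of distances forced by all geodesics passing through $x_{t}$ (hence optimality of $\mu_{0}\otimes\mu_{1}$ and constancy of $d$ on $\supp\mu_{0}\times\supp\mu_{1}$), followed by the concatenation-through-$x_{t}$ coupling whose fibers over $e_{0}$ exhibit branching to the right. The only difference is cosmetic: where the paper invokes Lemma \ref{lem:cyc-mon-intersect-implies-same-length} for the equal-length step, you reprove it in this special case via the explicit strict-convexity computation with $\phi(a)=(a+b_{1})^{p}-(a+b_{2})^{p}$ and the relation $b=\tfrac{1-t}{t}a$.
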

\begin{proof}
First note that the trivial coupling $\mu_{0}\otimes\mu_{1}$ is a
$p$-optimal coupling of $\mu_{0}$ and $\mu_{1}$ for all $p\in[1,\infty)$.
Indeed, the assumptions imply that for each $\gamma,\eta\in\supp\sigma$
it holds $\gamma_{t}=\eta_{t}=x_{t}$ and hence $\ell(\gamma)=\ell(\eta)$.
But then $\ell(\restr_{0,t}\gamma)=W_{p}(\mu_{0},\delta_{x_{t}})$
and $\ell(\restr_{t,1}\gamma)=W_{p}(\delta_{x_{t}},\mu_{1})$ and
thus for all $x_{0}\in\supp\mu_{0}$ and $x_{1}\in\supp\mu_{1}$
\begin{align*}
d(x_{0},x_{1}) & =\frac{d(x_{0},x_{t})}{t}=\frac{W_{p}(\mu_{0},\delta_{x_{t}})}{t}=W_{p}(\mu_{0},\mu_{1}).\\
 & =\frac{d(x_{t},x_{1})}{1-t}=\frac{W_{p}(\delta_{x_{t}},x_{1})}{1-t}=W_{p}(\mu_{0},\mu_{1})
\end{align*}
implying that $\supp\mu_{0}\times\supp\mu_{1}$ is $c_{p}$-cyclically
monotone.

Let $T_{0,t}:M\to\Geo_{[0,1]}(M,d)$ be a measurable map such that
$T_{0,t}(x_{0})$ is a geodesic between $x_{0}$ and $x_{t}$. Then
$\sigma_{0,t}:=(T_{0,t})_{*}\mu_{0}$ is a $p$-optimal dynamical
coupling between $\mu_{0}$ and $\delta_{x_{t}}$. Similarly, let
$T_{t,1}:M\to\Geo_{[0,1]}(M,d)$ be a Borel map such that $T_{t,1}(x_{1})$
is a geodesic between $x_{t}$ and $x_{1}$. Note that for each $x_{0}\in\supp\mu_{0}$
and $x_{1}\in\supp\mu_{1}$ 
\[
\gamma_{s}^{x_{0},x_{1}}=\begin{cases}
T_{0,t}(x_{0})\left(\frac{s}{t}\right) & s\in[0,t]\\
T_{t,1}(x_{1})\left(\frac{s-t}{1-t}\right) & s\in[t,1]
\end{cases}
\]
is a geodesic between $x_{0}$ and $x_{1}$. Since $T_{0,t}$ and
$T_{t,1}$ are Borel maps, so is $T_{0,1}^{x_{t}}:(x_{0},x_{1})\mapsto\gamma^{x_{0},x_{1}}$.
In particular, $\tilde{\sigma}=(T_{0,1}^{x_{t}})_{*}\mu_{0}\otimes\mu_{1}$
is a $p$-optimal dynamical coupling of $\mu_{0}$ and $\mu_{1}$. 

If $\mu_{0}\otimes\mu_{1}$ is not a delta measure then either $\mu_{0}$
or $\mu_{1}$ (or both) is not a delta measure. Assume $\mu_{1}$
is not a delta measure then for each set $\mathsf{L}$ with $\tilde{\sigma}(\Gamma)=1$
and for $\mu_{0}$-almost all $x_{0}\in e_{0}(\Gamma)$ there are
at least two distinct geodesics $\gamma,\eta\in\Gamma$ with 
\[
\restr_{0,t}\gamma=\restr_{0,t}\eta=T_{0,t}(x_{0}).
\]
In particular, $\mu_{0}\otimes\mu_{1}$ is not concentrated on a non-branching
set.
\end{proof}
\begin{rem*}
Assume $x\mapsto(\mu_{0}^{x},\mu_{1}^{x})$ is a measurable map such
that $\delta_{x}$ is the $t$-midpoint of $\mu_{0}^{x}$ and $\mu_{1}^{x}$.
Then $x\mapsto(T_{0,1}^{x})_{*}(\mu_{0}^{x}\otimes\mu_{1}^{x})$ is
also measurable. 
\end{rem*}
\begin{lem}
\label{lem:nb-end-implies-nb-mid}Let $\mu_{0}$ and $\mu_{1}$ be
probability measures such that any $p$-optimal dynamical coupling
between $\mu_{0}$ and $\mu_{1}$ is concentrated on a set of non-branching
geodesics. Then for any $t$-midpoint $\mu_{t}$ of $\mu_{0}$ and
$\mu_{1}$, any $p$-optimal dynamical coupling between $\mu$ and
$\mu_{t}$ is concentrated on a set of non-branching geodesics.
\end{lem}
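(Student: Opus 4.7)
The plan is to extend $\hat{\sigma}_{0,t}$ to a $p$-optimal dynamical coupling $\tilde{\sigma}$ between $\mu_{0}$ and $\mu_{1}$, apply the hypothesis to $\tilde{\sigma}$, and then transfer the non-branching property back down to $\hat{\sigma}_{0,t}$. Since $\mu_{t}$ is a $t$-midpoint, fix some $\sigma\in\OptGeo_{p}(\mu_{0},\mu_{1})$ with $(e_{t})_{*}\sigma=\mu_{t}$, let $\sigma_{t,1}=(\restr_{t,1})_{*}\sigma$, and disintegrate $\hat{\sigma}_{0,t}=\int\hat{\sigma}_{0,t}^{x}\,d\mu_{t}(x)$ over $e_{1}$ together with $\sigma_{t,1}=\int\sigma_{t,1}^{x}\,d\mu_{t}(x)$ over $e_{0}$. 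I would then set
\[
\tilde{\sigma}=\int\Phi_{*}\bigl(\hat{\sigma}_{0,t}^{x}\otimes\sigma_{t,1}^{x}\bigr)\,d\mu_{t}(x),
\]
where $\Phi$ is the concatenation-and-rescaling map from the proof of Lemma~\ref{lem:nb-implies-transport-to-delta}: $\Phi(\gamma,\tau)$ follows $\gamma$ on $[0,t]$ and $\tau$ on $[t,1]$ after the natural rescaling of each piece. By construction $(\restr_{0,t})_{*}\tilde{\sigma}=\hat{\sigma}_{0,t}$.

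Next I would verify that $\tilde{\sigma}$ is a $p$-optimal dynamical coupling between $\mu_{0}$ and $\mu_{1}$. The marginals at times $0$, $t$ and $1$ are correct by construction. Writing $\hat{\pi}_{0,1}=(e_{0},e_{1})_{*}\tilde{\sigma}$ and $\hat{\pi}_{0,t,1}=(e_{0},e_{t},e_{1})_{*}\tilde{\sigma}$, the triangle inequality followed by Minkowski's inequality, together with the midpoint identity $W_{p}(\mu_{0},\mu_{t})+W_{p}(\mu_{t},\mu_{1})=W_{p}(\mu_{0},\mu_{1})$, yield
\[
W_{p}(\mu_{0},\mu_{1})^{p}\le\int d(x_{0},x_{1})^{p}\,d\hat{\pi}_{0,1}\le\int\bigl(d(x_{0},x)+d(x,x_{1})\bigr)^{p}\,d\hat{\pi}_{0,t,1}\le W_{p}(\mu_{0},\mu_{1})^{p},
\]
so every inequality is in fact an equality. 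Triangle equality gives $d(x_{0},x_{1})=d(x_{0},x)+d(x,x_{1})$ for $\hat{\pi}_{0,t,1}$-a.e.\ triple, while the strict Minkowski equality for $p>1$ forces the proportionality $d(x,x_{1})=\tfrac{1-t}{t}\,d(x_{0},x)$. Hence $x$ is a $t$-midpoint of a geodesic from $x_{0}$ to $x_{1}$ $\hat{\pi}_{0,t,1}$-almost surely, and a direct computation then shows that the speeds of the two pieces of $\Phi(\gamma,\tau)$ match, so $\Phi(\gamma,\tau)$ is indeed a geodesic $\tilde{\sigma}$-a.e.\ and $\tilde{\sigma}\in\OptGeo_{p}(\mu_{0},\mu_{1})$.

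By hypothesis, $\tilde{\sigma}$ is concentrated on a non-branching set $\mathsf{L}$. I would define
\[
\mathsf{L}^{(0,t)}=\bigl\{\gamma\in\Geo_{[0,1]}(M,d)\,:\,\Phi(\gamma,\tau)\in\mathsf{L}\text{ for }\sigma_{t,1}^{\gamma_{1}}\text{-a.e. }\tau\bigr\},
\]
and check by Fubini that $\hat{\sigma}_{0,t}(\mathsf{L}^{(0,t)})=1$. It then suffices to show that $\mathsf{L}^{(0,t)}$ is non-branching. Right-non-branching transfers immediately: an agreement of two elements of $\mathsf{L}^{(0,t)}$ on $[0,s]$ translates into an agreement of their concatenations on $[0,st]$, and the right-non-branching of $\mathsf{L}$ then forces the full concatenations, hence their $[0,t]$-pieces, to coincide. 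For left-non-branching, two elements of $\mathsf{L}^{(0,t)}$ agreeing on $[s,1]$ must share the same endpoint $x\in\supp\mu_{t}$; choosing a single $\tau$ lying in the intersection of the two full-measure sets appearing in the definition of $\mathsf{L}^{(0,t)}$ produces two concatenations in $\mathsf{L}$ that agree on $[st,1]$, and left-non-branching of $\mathsf{L}$ forces equality of the $[0,t]$ pieces.

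The main obstacle will be precisely this last step: the naive restriction $\restr_{0,t}(\mathsf{L})$ need not be left-non-branching on its own, since a restricted geodesic can in principle be continued in many incompatible ways past time $t$. The key idea of the argument is therefore to exploit the product structure of $\tilde{\sigma}$ to arrange a common continuation $\tau$, thereby embedding any potential left-branching of $\hat{\sigma}_{0,t}$ as a genuine left-branching inside $\mathsf{L}$, in contradiction with the hypothesis.
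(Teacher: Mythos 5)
Your proposal is correct, and its overall route is the one the paper takes: extend the given coupling $\hat{\sigma}_{0,t}\in\OptGeo_{p}(\mu_{0},\mu_{t})$ to some $\tilde{\sigma}\in\OptGeo_{p}(\mu_{0},\mu_{1})$ with $(\restr_{0,t})_{*}\tilde{\sigma}=\hat{\sigma}_{0,t}$, invoke the hypothesis on $\tilde{\sigma}$, and push the non-branching set down to time $t$. The paper treats both steps in one line each: the extension is declared ``easy to see'', and the transfer is justified by the claim that $\restr_{0,t}(\mathsf{L})$ is non-branching whenever $\mathsf{L}$ is. Your gluing construction (disintegration over the common $e_{1}$-, resp.\ $e_{0}$-marginal $\mu_{t}$, concatenation, and the triangle-plus-Minkowski equality chain using $W_{p}(\mu_{0},\mu_{t})+W_{p}(\mu_{t},\mu_{1})=W_{p}(\mu_{0},\mu_{1})$ to get a.e.\ collinearity and matching speeds) is a careful and correct implementation of the first step. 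Where you genuinely depart from the paper is the second step, and your diagnosis is accurate: the bare image $\restr_{0,t}(\mathsf{L})$ inherits non-branching to the right (an agreement on $[0,s]$ of restrictions is an agreement on $[0,st]$ of the full geodesics), but \emph{not} automatically to the left, since two geodesics in $\mathsf{L}$ may coincide only on an interior interval $[a,t]$ and separate at both ends without violating either one-sided condition for $\mathsf{L}$ itself. Your fiber-wise set $\mathsf{L}^{(0,t)}$, consisting of initial pieces that admit $\sigma_{t,1}^{\gamma_{1}}$-a.e.\ continuation inside $\mathsf{L}$, repairs exactly this: two left-branching initial pieces share the endpoint $\gamma_{1}=\eta_{1}=x$, so the two full-measure sets of continuations live on the \emph{same} measure $\sigma_{t,1}^{x}$ and intersect, and a common continuation converts the left-branching into a left-branching inside $\mathsf{L}$, a contradiction; the right-branching case works even with different continuations, as you note. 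The only detail worth adding is measurability of $\mathsf{L}^{(0,t)}$ (it is defined through the measurable disintegration $x\mapsto\sigma_{t,1}^{x}$ and the Borel map $\Phi$, so it is $\hat{\sigma}_{0,t}$-measurable, and one may pass to a Borel subset of full measure, which is all the definition of essentially non-branching requires). In short: same strategy as the paper, but your treatment of the left-non-branching transfer supplies an argument that the paper's one-line claim does not by itself provide.
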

\begin{proof}
It is easy to see that any $p$-optimal dynamical coupling $\sigma\in\OptGeo_{p}(\mu,\mu_{t})$
is obtained by restricting a $p$-optimal dynamical coupling $\tilde{\sigma}\in\OptGeo_{p}(\mu,\nu)$,
i.e. 
\[
(\restr_{0,t})_{*}\tilde{\sigma}=\sigma
\]
and hence
\[
(\restr_{0,t})_{*}\OptGeo_{p}(\mu_{0},\mu_{1})=\OptGeo_{p}(\mu_{0},\mu_{t}).
\]
Furthermore, if $\mathsf{L}$ is non-branching and measurable then
$\restr_{0,t}(\mathsf{L})$ is also non-branching and measurable.
In particular, choosing $\mathsf{L}$ such that $\tilde{\sigma}(L)=1$
implies $\sigma(\restr_{0,t}(\mathsf{L}))=1$.
\end{proof}
\begin{prop}
\label{prop:nb-implies-transport-to-delta}Let $\mu_{0}$ and $\mu_{1}$
be probability measures such that any $p$-optimal dynamical coupling
between $\mu_{0}$ and $\mu_{1}$ is concentrated on a set of non-branching
geodesics. Then for any $p$-optimal dynamical coupling $\sigma\in\OptGeo_{p}(\mu_{0},\mu_{1})$
and any $t\in(0,1)$ and $s\in[0,1]$ the $p$-optimal coupling $(e_{t},e_{s})_{*}\sigma$
are induced by a transport map. 
\end{prop}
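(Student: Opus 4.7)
The plan is to argue by contradiction: assume $(e_t,e_s)_*\sigma$ is not induced by a transport map and construct a $p$-optimal dynamical coupling between $\mu_0$ and $\mu_1$ that is not concentrated on any non-branching set, contradicting the hypothesis. The case $s=t$ is immediate, as $(e_t,e_t)_*\sigma$ is supported on the diagonal. If $s<t$, I apply the argument to the time-reversed coupling $\check\sigma$ (push-forward under $\gamma\mapsto(r\mapsto\gamma_{1-r})$), which is $p$-optimal between $\mu_1$ and $\mu_0$; the non-branching property is symmetric in left/right, so the hypothesis transfers, and $(e_{1-t},e_{1-s})_*\check\sigma=(e_t,e_s)_*\sigma$ with $1-t<1-s$. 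Thus it suffices to treat $0<t<s\le 1$.

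Set $\pi:=(e_t,e_s)_*\sigma$. If $\pi$ is not induced by a transport map, the Selection Dichotomy for Measures (Theorem~\ref{thm:selection-dichotomy-product-measures}) yields a compact $K\subset M$ with $\mu_t(K)>0$, disjoint closed bounded $A_1,A_2\subset M$, and $\pi_1,\pi_2\ll\pi$ with common first marginal $\tilde\mu_t:=\frac{1}{\mu_t(K)}\mu_t|_K$, supported in $K\times A_i$, with bounded Radon--Nikodym densities $g_i:=d\pi_i/d\pi$ (explicit from the construction in its proof). Setting $\sigma_i:=(g_i\circ(e_t,e_s))\cdot\sigma$ produces probability measures absolutely continuous with respect to $\sigma$ with bounded density, hence $p$-optimal by the restriction property (Lemma~\ref{lem:restriction-property}), satisfying $(e_t)_*\sigma_i=\tilde\mu_t$ and $(e_s)_*\sigma_i$ supported in $A_i$. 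Disintegrating $\sigma_i=\int\sigma_i^x\,d\tilde\mu_t(x)$ over $e_t$, any $\gamma\in\supp\sigma_1^x$ and $\eta\in\supp\sigma_2^x$ both lie in $\supp\sigma$, whose endpoint projection is $c_p$-cyclically monotone. Lemma~\ref{lem:cyc-mon-intersect-implies-same-length} applied via $\gamma_t=\eta_t=x$ gives $\ell(\gamma)=\ell(\eta)$, and a short triangle-inequality computation combined with $c_p$-cyclic monotonicity forces $d(\gamma_0,\eta_1)=\ell(\gamma)$, so the concatenation $\gamma\cdot_{t}\eta$ defined by $(\gamma\cdot_{t}\eta)_r=\gamma_r$ for $r\le t$ and $\eta_r$ for $r\ge t$ is itself a geodesic of that length.

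These concatenations assemble into swapped dynamical couplings $\tilde\sigma_{i,j}:=\int(\Phi_t)_*(\sigma_i^x\otimes\sigma_j^x)\,d\tilde\mu_t(x)$, where $\Phi_t(\gamma,\eta):=\gamma\cdot_{t}\eta$; they satisfy $(e_0)_*\tilde\sigma_{i,j}=(e_0)_*\sigma_i$, $(e_1)_*\tilde\sigma_{i,j}=(e_1)_*\sigma_j$, and the length identity yields $\int d(\gamma_0,\gamma_1)^p\,d\tilde\sigma_{i,j}=\int d(\gamma_0,\gamma_1)^p\,d\sigma_i$. Choosing $c>0$ small enough that $c(g_1+g_2)\le 1$, the measure
\[
\sigma_{\mathrm{new}}:=\sigma-c(\sigma_1+\sigma_2)+c(\tilde\sigma_{1,2}+\tilde\sigma_{2,1})
\]
is a non-negative probability measure with endpoint marginals $\mu_0,\mu_1$ and total $p$-cost equal to that of $\sigma$, hence $\sigma_{\mathrm{new}}\in\OptGeo_p(\mu_0,\mu_1)$. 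If $\sigma_{\mathrm{new}}$ were concentrated on a non-branching set $\mathsf L$, then $\sigma_{\mathrm{new}}\ge c\sigma_1$ and $\sigma_{\mathrm{new}}\ge c\tilde\sigma_{1,2}$ would force $\sigma_1(\mathsf L)=\tilde\sigma_{1,2}(\mathsf L)=1$; disintegrating over $e_t$ and applying Fubini through $\Phi_t$, for $\tilde\mu_t$-a.e.\ $x\in K$ and $(\sigma_1^x\otimes\sigma_2^x)$-a.e.\ $(\gamma,\eta)$ one has simultaneously $\gamma\in\mathsf L$ and $\gamma\cdot_{t}\eta\in\mathsf L$. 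But $\restr_{0,t}\gamma=\restr_{0,t}(\gamma\cdot_{t}\eta)$, while at time $s>t$ one has $\gamma_s\in A_1$ and $(\gamma\cdot_{t}\eta)_s=\eta_s\in A_2$ in disjoint sets, contradicting non-branching to the right. The main obstacle I expect is the measure-theoretic bookkeeping that keeps $\sigma_{\mathrm{new}}$ non-negative (via boundedness of the $g_i$) while simultaneously retaining both $\sigma_1$ and $\tilde\sigma_{1,2}$ inside a single set of full $\sigma_{\mathrm{new}}$-measure, so that the branching pair just produced is actually visible there.
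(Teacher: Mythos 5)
Your argument is correct, but it takes a genuinely different route from the paper's. The paper disintegrates $(e_{t},e_{0})_{*}\sigma$ and $(e_{t},e_{1})_{*}\sigma$ over the midpoint and replaces $\sigma$ wholesale by the product coupling $\tilde{\sigma}=\int(T_{0,1}^{x})_{*}(\mu_{x}\otimes\nu_{x})\,d\mu_{t}(x)$, which is optimal by the constancy of lengths through a common interior point (Lemma \ref{lem:nb-implies-transport-to-delta}); if the coupling were not induced by a map, $\mu_{x}\otimes\nu_{x}$ would fail to be a delta measure on a set of positive $\mu_{t}$-measure, and that same lemma shows the corresponding fibers of $\tilde{\sigma}$ cannot be concentrated on any non-branching set. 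You instead extract, via the Selection Dichotomy for Measures, two sub-couplings $\sigma_{1},\sigma_{2}$ with common $e_{t}$-marginal but $e_{s}$-marginals in disjoint sets, and perturb $\sigma$ by swapping a small amount of mass between them, exhibiting the branching pair $\gamma$ versus $\gamma\cdot_{t}\eta$ explicitly. Both proofs hinge on the same geometric fact, which you correctly rederive from Lemma \ref{lem:cyc-mon-intersect-implies-same-length} and the triangle inequality: concatenations through a common interior point of geodesics whose endpoints lie in a $c_{p}$-cyclically monotone set are again geodesics. What your version buys: it treats arbitrary $s\in[0,1]$ directly (the paper's proof is written for the endpoint cases and recovers general $s$ only through the subsequent corollary), and the contradiction is a concrete violation of non-branching to the right rather than an appeal to the auxiliary lemma. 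What it costs: the time-reversal reduction to $t<s$, the dichotomy machinery with its bounded-density bookkeeping, and a mild imprecision in citing Lemma \ref{lem:restriction-property} --- your weights $g_{i}\circ(e_{t},e_{s})$ depend on interior points rather than endpoints, so the optimality of $\sigma_{i}$ should instead be justified by the general restriction principle, namely that $(e_{0},e_{1})_{*}\sigma_{i}$ is concentrated on the $c_{p}$-cyclically monotone set $\supp\,(e_{0},e_{1})_{*}\sigma$. None of these affect the validity of the argument.
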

\begin{proof}
Let $\mu_{s}=(e_{s})_{*}\sigma$, $\pi_{t,0}=(e_{t},e_{0})_{*}\sigma$
and $\pi_{t,1}=(e_{t},e_{1})_{*}\sigma$. It suffices to show that
$\pi_{t,0}$ is induced by a transport map $T$, i.e. $(\operatorname{id}\times T)_{*}\mu_{t}=\pi_{t,0}$.
By disintegrating $\pi_{t,0}$ and $\pi_{t,1}$ over $(e_{t},e_{0})$
and resp. $(e_{t},e_{1})$ we get 
\begin{align*}
\pi_{0,t} & =\int\mu_{x}\otimes\delta_{x}d\mu_{t}(x)\\
\pi_{t,1} & =\int\delta_{x}\otimes\nu_{x}d\mu_{t}(x).
\end{align*}
 Let $\sigma=\int\sigma_{x}d\mu_{t}(x)$ denote the disintegration
of $\sigma$ over $e_{t}$ and define a new dynamical coupling 
\[
\tilde{\sigma}=\int(T_{0,1}^{x})_{*}(\mu_{x}\otimes\nu_{x})d\mu_{t}(x)
\]
where $T_{0,1}^{x}$ is defined as in the proof of the previous lemma.
Note that $(x_{0},x_{1},x_{t})\mapsto T_{0}^{x_{t}}(x_{0},x_{1})$
is measurable on $(e_{0},e_{1},e_{t})(\Geo_{[0,1]}(M,d))$ and hence
$x\mapsto(T_{0,1}^{x})_{*}(\mu_{x}\otimes\nu_{x})$ measurable on
$\supp\mu_{t}$. 

We claim $\tilde{\sigma}$ is $p$-optimal. Indeed, by the previous
lemma it holds 
\begin{align*}
\int d(\gamma_{0},\gamma_{1})^{p}d\tilde{\sigma}(\gamma) & =\int\int\int d(y,z)^{p}d\mu_{x}(y)d\nu_{x}(z)d\mu_{t}(x)\\
 & =\int\frac{1}{t^{p}}d(y,x)^{p}d\mu_{x}(y)d\mu_{t}(x)\\
 & =\frac{1}{t^{p}}\int d(y,x)^{p}d\pi_{0,t}(y,x)=W_{p}(\mu,\nu)^{p}.
\end{align*}

If $\pi_{0,t}$ is not induced by a transport map then there is a
Borel set $B\subset M$ of positive $\mu_{t}$-measure such that for
all $x\in B$ the measure $\mu_{x}\otimes\nu_{x}$ is not a delta
measure. This, however, implies that for all $x\in B$ the measure
$\tilde{\sigma}_{x}=(T_{0,1}^{x})_{*}(\mu_{x}\otimes\nu_{x})$ is
not concentrated on a set of non-branching geodesics. The assumption
shows that there is a non-branching measurable set $\mathsf{L}\subset\Geo_{[0,1]}(M,d)$
with $\tilde{\sigma}(\mathsf{L})=1$. But this is a contradiction
since $\sigma_{x}(\mathsf{L})<1$ for $\mu_{t}$-almost all $x\in B$
implies 
\[
\tilde{\sigma}(\mathsf{L})=\int\tilde{\sigma}_{x}(\mathsf{L})d\mu_{t}(x)<1.
\]
\end{proof}
\begin{cor}
Let $\mu_{0}$ and $\mu_{1}$ be probability measures such that any
$p$-optimal dynamical coupling between $\mu_{0}$ and $\mu_{1}$
is concentrated on a set of non-branching geodesics. Then for any
$p$-optimal dynamical coupling $\sigma\in\OptGeo_{p}(\mu_{0},\mu_{1})$
and any $t\in(0,1)$ there is a Borel map $\mathsf{T}_{t}:M\to\Geo_{[0,1]}(M,d)$
such that 
\[
\sigma=\int\delta_{\mathsf{T}_{t}(x)}d\mu_{t}(x)
\]
where $\mu_{t}=(e_{t})_{*}\sigma$. In particular, whenever $\tilde{\sigma}\in\OptGeo_{p}(\mu_{0},\mu_{1})$
with $\mu_{t}=(e_{t})_{*}\tilde{\sigma}$ then $\sigma\equiv\tilde{\sigma}$.
\end{cor}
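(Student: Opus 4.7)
The plan is to recover $\sigma$-almost every geodesic from its value at time $t$ by applying Proposition \ref{prop:nb-implies-transport-to-delta} at countably many second time parameters. For every $s\in[0,1]$ the proposition provides a Borel map $T_{t,s}:M\to M$ with $(e_t,e_s)_{*}\sigma=(\operatorname{id}\times T_{t,s})_{*}\mu_t$. Disintegrating $\sigma$ over $e_t$ as $\sigma=\int\sigma_x\,d\mu_t(x)$, each $\sigma_x$ is concentrated on $e_t^{-1}(x)$, and uniqueness of disintegration then forces $(e_s)_{*}\sigma_x=\delta_{T_{t,s}(x)}$ for $\mu_t$-almost every $x$.

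I will then fix a countable dense subset $\{s_n\}_{n\in\mathbb{N}}\subset[0,1]$ and intersect the countably many $\mu_t$-full-measure sets obtained above to produce a Borel set $\Omega\subset M$ with $\mu_t(\Omega)=1$ such that for every $x\in\Omega$ and $\sigma_x$-almost every $\gamma$ it holds $\gamma_{s_n}=T_{t,s_n}(x)$ for all $n$. Since any element of $\Geo_{[0,1]}(M,d)$ is continuous and $\{s_n\}$ is dense in $[0,1]$, any two such geodesics must agree on $[0,1]$, so $\sigma_x$ is a Dirac mass $\delta_{\mathsf{T}_t(x)}$ for $\mu_t$-almost every $x$. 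Measurability of $\mathsf{T}_t:M\to\Geo_{[0,1]}(M,d)$ is inherited from measurability of the disintegration $x\mapsto\sigma_x$ in $\mathcal{P}(\Geo_{[0,1]}(M,d))$ together with the fact that the Dirac embedding is a Borel isomorphism onto its (Borel) image; alternatively, one can construct $\mathsf{T}_t(x)$ on $\Omega$ directly by prescribing its values at all $s_n$ via the maps $T_{t,s_n}$ and extending by continuity.

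For the uniqueness assertion, given $\tilde{\sigma}\in\OptGeo_p(\mu_0,\mu_1)$ with $(e_t)_{*}\tilde{\sigma}=\mu_t$, the convex combination $\hat{\sigma}=\tfrac12(\sigma+\tilde{\sigma})$ still lies in $\OptGeo_p(\mu_0,\mu_1)$ with the same $t$-midpoint $\mu_t$, so the standing hypothesis applies to $\hat{\sigma}$. Running the first part of the argument for $\sigma$, $\tilde{\sigma}$ and $\hat{\sigma}$ separately, each of $\sigma_x$, $\tilde{\sigma}_x$ and $\hat{\sigma}_x$ is a Dirac for $\mu_t$-almost every $x$; since $\hat{\sigma}_x=\tfrac12(\sigma_x+\tilde{\sigma}_x)$ this forces $\sigma_x=\tilde{\sigma}_x$ $\mu_t$-almost everywhere, and consequently $\sigma=\tilde{\sigma}$.

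The only substantive obstacle I anticipate is the Borel measurability of the geodesic-valued selection $\mathsf{T}_t$; this is, however, a standard consequence of uniqueness of disintegration in Polish spaces combined with the Borel structure of the Dirac embedding, so the bulk of the work is the straightforward combination of the transport-map property from the preceding proposition with the continuity of geodesics.
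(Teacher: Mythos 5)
Your argument is correct and follows essentially the same route as the paper's proof: disintegrate $\sigma$ over $e_{t}$, apply the transport maps $T_{t,s}$ from Proposition \ref{prop:nb-implies-transport-to-delta} at a countable dense set of times, intersect the resulting full-measure sets, and conclude by continuity of geodesics that the fibers are Dirac masses. The uniqueness step via the convex combination $\tfrac{1}{2}(\sigma+\tilde{\sigma})$ is also the intended mechanism behind the paper's terser appeal to the preceding proof.
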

\begin{proof}
Let $\sigma=\int\sigma_{x}d\mu_{t}(x)$ be the disintegration of $\sigma$
over $e_{t}$. The proof above shows that $\sigma$ is unique among
all dynamical couplings $\tilde{\sigma}\in\OptGeo_{p}(\mu_{0},\mu_{1})$
with $\mu_{t}=(e_{t})_{*}\tilde{\sigma}$. For fixed $s\in[0,1]$
there is a transport map $T_{t,s}$ such that 
\[
(e_{t},e_{s})_{*}\sigma=\int(e_{t},e_{s})_{*}\sigma_{x}d\mu_{t}(x)=\int\delta_{x}\otimes\delta_{T_{t,s}(x)}d\mu_{t}(x).
\]
In particular, there is a Borel set $\Omega_{s}\subset M$ with $\mu_{t}(\Omega_{s})=1$
and 
\[
(e_{t},e_{s})_{*}\sigma_{x}=\delta_{x}\otimes\delta_{T_{t,s}(x)}
\]
for all $x\in\Omega_{s}$. Let $(s_{n})_{n\in\mathbb{N}}$ be dense
in $(0,1)$ and note $\mu_{t}(\Omega)=1$ where $\Omega=\cap_{n\in\mathbb{N}}\Omega_{s_{n}}$.
Define $\gamma_{s}^{x}=T_{t,s}(x)$and observe that $\gamma^{x}\in\Geo_{[0,1]}(M,d)$
and 
\[
(e_{t},e_{s_{n}})_{*}\sigma_{x}=(e_{t},e_{s_{n}})\delta_{\gamma^{x}}.
\]
This shows that $\sigma_{x}=\delta_{\gamma^{x}}$ on $\Omega$ and
thus 
\[
\sigma=\int\delta_{\mathsf{T}_{t}(x)}d\mu_{t}(x)
\]
where $\mathsf{T}_{t}:M\to\Geo_{[0,1]}(M,d)$ is any measurable map
with $\mathsf{T}_{t}(x)=\gamma^{x}$ on $\Omega$.
\end{proof}
\begin{proof}
[Proof of Theorem \ref{thm:ess-nb-summary}] By the $p$-essentially
non-branching property we see that the second statement follows directly
from the previous corollary. Furthermore, for $\Omega$ as in the
previous proof we can choose 
\[
\mathsf{L}=\supp\sigma\cap e_{t}^{-1}(\Omega)\subset\Geo_{[0,1]}(M,d).
\]
Then $\sigma$ is concentrated on $\mathsf{L}$ and whenever $\gamma,\eta\in\mathsf{L}$
with $\gamma_{t}=\eta_{t}$ then $\gamma_{t},\eta_{t}\in\Omega$ so
that $\mathsf{T}_{t}(\gamma_{t})\equiv\gamma\equiv\eta$. 
\end{proof}
\begin{proof}
[Proof of Corollary \ref{cor:ess-nb-intermediate-transport}] Assume
$t\mapsto\tilde{\mu}_{s}^{0,t}$ and $t\mapsto\tilde{\mu}_{s}^{t,1}$
is a geodesic connecting $\mu_{0}$ and $\mu_{t}$ and resp. $\mu_{t}$
and $\mu_{1}$. Then 
\[
t\mapsto\hat{\mu}_{s}=\begin{cases}
\tilde{\mu}_{\frac{s}{t}}^{0,t} & s\in[0,t]\\
\tilde{\mu}_{\frac{s-t}{1-t}}^{t,1} & s\in[t,1]
\end{cases}
\]
is also a geodesic connecting $\mu_{0}$ and $\mu_{1}$. Denote the
induced $p$-optimal dynamical coupling by $\hat{\sigma}$ and note
that $\mathring{\sigma}=\frac{1}{2}\left(\sigma+\hat{\sigma}\right)$
is also a $p$-optimal dynamical coupling between $(e_{0})_{*}\sigma$
and $(e_{1})_{*}\sigma$. Thus it holds
\[
\mathring{\sigma}=\int\delta_{\mathring{\mathsf{T}}_{t}(x)}d\mu_{t}(x)=\frac{1}{2}\int\delta_{\mathsf{T}_{t}(x)}+\delta_{\tilde{\mathsf{T}}_{t}(x)}d\mu_{t}(x)
\]
where $\mathsf{T}_{t}$, $\tilde{\mathsf{T}}_{t}$ and $\mathring{\mathsf{T}}_{t}$
are the maps given in the theorem. This, however, shows that the three
maps agree $\mu_{t}$-almost everywhere implying $\hat{\sigma}=\sigma$
and thus $\mu_{s}=\hat{\mu}_{s}$for $s\in[0,1]$. In particular,
$t\mapsto\mu_{ts}$ and $t\mapsto\mu_{s+(1-t)}$ are the unique geodesic
between $\mu_{0}$ and $\mu_{t}$ and resp. $\mu_{t}$ and $\mu_{1}$.
\end{proof}

\bibliographystyle{amsalpha}
\bibliography{bib}

\end{document}